\DeclarePairedDelimiter{\ceil}{\lceil}{\rceil}
\numberwithin{equation}{section} 
\newtheorem{lemma}{Lemma}[section]
\newtheorem{theorem}{Theorem}[section]
\newtheorem{proposition}[theorem]{Proposition}
\newtheorem{corollary}[theorem]{Corollary}
\theoremstyle{remark}
\newtheorem{remark}{Remark}[section]
\newtheorem{definition}[theorem]{Definition}
\renewcommand{\phi}{\varphi}
\renewcommand{\tilde}{\widetilde}
\renewcommand{\hat}{\widehat}
\newcommand{\bd}{\boldsymbol}
\newcommand{\vct}[1]{\bd{#1}}
\newcommand{\econst}{\mathrm{e}}
\newcommand{\zerovct}{\bd{0}} % Zero vector
\newcommand{\R}{\mathbb{R}}
\newcommand{\N}{\mathbb{N}}
\newcommand{\CalH}{{\mathcal{H}}}
\newcommand{\CalK}{{\mathcal{K}}}
\newcommand{\CalL}{{\mathcal{L}}}
\newcommand{\CalM}{{\mathcal{M}}}
\newcommand{\CalP}{{\mathcal{P}}}
\newcommand{\dist}{{\mathrm{dist}}}
\newcommand{\rd}{\mathrm{d}}
\DeclareMathOperator*{\argmin}{arg\,min}
\DeclareMathOperator*{\esssup}{ess\,sup}
\newcommand{\revise}[1]{\textcolor{black}{#1}}
\newcommand{\referee}[1]{\textcolor{black}{#1}}
\title{Sparse operator compression of higher-order elliptic operators with rough coefficients}
\author{Thomas Y. Hou}
\thanks{Applied and Computational Mathematics, Caltech, Pasadena, CA 91125, USA. {\it Email: hou@cms.caltech.edu.}}
 \author{Pengchuan Zhang}
\thanks{Applied and Computational Mathematics, Caltech, Pasadena, CA 91125, USA. {\it Email: pzzhang@cms.caltech.edu.}}
\begin{document}
\maketitle
\begin{abstract}
We introduce the sparse operator compression to compress a self-adjoint higher-order elliptic operator with rough coefficients and various boundary conditions. The operator compression is achieved by using localized basis functions, which are energy-minimizing functions on local patches. On a regular mesh with mesh size $h$, the localized basis functions have supports of diameter $O(h\log(1/h))$ and give optimal compression rate of the solution operator. We show that by using localized basis functions with supports of diameter $O(h\log(1/h))$, our method achieves the optimal compression rate of the solution operator. From the perspective of the generalized finite element method to solve elliptic equations, the localized basis functions have the optimal convergence rate $O(h^k)$ for a $(2k)$th-order elliptic problem in the energy norm. From the perspective of the sparse PCA, our results show that a large set of Mat\'{e}rn covariance functions can be approximated by a rank-$n$ operator with a localized basis and with the optimal accuracy.
\end{abstract}

\section{Introduction}
\label{sec:introduction}
\subsection{Main objectives and the problem setting}

The main purpose of this paper is to develop a general strategy to compress a class of self-adjoint higher-order elliptic operators by localized basis functions that give optimal approximation property of the solution operator. To be more specific, suppose $\CalL$ is a self-adjoint elliptic operator in the divergence form 
\begin{equation}\label{elliptic}
	\CalL u = \sum_{0 \le |\sigma|, |\gamma| \le k} (-1)^{|\sigma|} D^{\sigma} (a_{\sigma \gamma}(x) D^{\gamma} u),
\end{equation}
where the coefficients $a_{\sigma \gamma}\in L^{\infty}(D) $, $D$ is a bounded domain in $\R^d$, $\sigma = (\sigma_1, \ldots, \sigma_d)$ is a $d$-dimensional multiindex. We ask the question: given an integer $n$, what is the best rank-$n$ compression of the operator $\CalL$ with localized basis functions? This question arises in many different contexts. 

Consider the elliptic equation with the homogeneous Dirichlet boundary conditions
\begin{equation}\label{intro:ellipticequation}
	\CalL u = f, \quad u \in H_0^k(D),
\end{equation}
where the load $f \in L^2(D)$. For a self-adjoint, positive definite operator $\CalL$, Eqn.~\eqref{intro:ellipticequation} has a unique weak solution, denoted as $\CalL^{-1} f$. 
%Under the framework of the Generalized Finite Element Method (GFEM)~\cite{babuvska1983generalized, hou_multiscale_1997, strouboulis2001generalized, efendiev_generalized_2013}, given $n$ basis functions $\mathit{\Psi} =[\psi_1, \ldots, \psi_n]$ in $H_0^k(D)$, we can solve the linear system~\eqref{intro:ellipticequation} approximately by projecting it onto the subspace $\Psi$ spanned by $\mathit{\Psi}$, i.e., $u_n := \mathit{\Psi} L_n^{-1} \mathit{\Psi}^T f$. Here, $\mathit{\Psi}^T f := [(\psi_1, f), \ldots, (\psi_n, f)]^T$ and $L_n $ is the positive definite stiffness matrix. The optimal basis $\mathit{\Psi}$ should achieve small error for all $f\in L^2(D)$, i.e., should minimize the following quantity
%\begin{equation}\label{intro:ellipticerror}
%	\|\CalL^{-1} - \mathit{\Psi} L_n^{-1} \mathit{\Psi}^T \|_2 \equiv \inf_{f\in L^2(D),~f\neq \vct{0}}\frac{\|\CalL^{-1} f - \mathit{\Psi} L_n^{-1} \mathit{\Psi}^T f\|_2}{\|f\|_2}.
%\end{equation}
%
We define the operator compression error of the basis $\mathit{\Psi}$ as follows:
\begin{equation}\label{intro:compresserror}
	E_{\mathrm{oc}}(\mathit{\Psi}; \CalL^{-1}) := \min_{K_n \in \R^{n\times n},~ K_n \succeq 0} \|\CalL^{-1} - \mathit{\Psi} K_n \mathit{\Psi}^T\|_2,
\end{equation}
which is the optimal approximation error of $\CalL^{-1}$ among all positive semidefinite operators with range space spanned by $\mathit{\Psi}$. Using $E_{\mathrm{oc}}(\mathit{\Psi}; (\CalL + \lambda_G)^{-1})$ for some $\lambda_G > 0$ to quantify the compression error is useful for operators that are not invertible, such as $-\Delta$ with periodic boundary conditions.

Without imposing the sparsity constraints on the basis $\mathit{\Psi}$, the compression error $E_{\mathrm{oc}}(\mathit{\Psi}; \CalL^{-1})$ achieves its minimum $\lambda_{n+1}(\CalL^{-1})$ if we use the first $n$ eigenfunctions of $\CalL^{-1}$ to form $\mathit{\Psi}$ ($\lambda_n$ is the $n$th eigenvalue arranged in a descending order). However, the eigenfunctions are expensive to compute and do not have localized support \cite{Zou_PCA_06, ozolins_compressed_2013, hou_LocalModes_2014}. In many cases, localized/sparse basis functions are preferred. For example, in the multiscale finite element method \cite{Houbook09}, localized basis functions lead to sparse linear systems, and thus result in more efficient algorithms; see, e.g.,~\cite{babuvska1983generalized, hou_multiscale_1997, strouboulis2001generalized, hou_PetrovGalerkin_2004, babuska_optimal_2011, efendiev2011multiscale, efendiev_generalized_2013, maalqvist2014localization, owhadi_polyharmonic_2014, owhadi2015multi,efendiev16jcp}. In quantum chemistry, localized basis functions like the Wannier functions have better interpretability of the local interactions between particles (see, e.g.,~\cite{marzari1997maximally, weinan2010localized, marzari2012maximally, ozolins_compressed_2013, LaiLuOsher_2014}), and also lead to more efficient algorithms~\cite{goedecker1999linear}. In statistics, the sparse principal component analysis (SPCA) looks for sparse vectors to span the eigenspace of the covariance matrix, which leads to better interpretability compared with the PCA; see, e.g.,~\cite{Jolliffe_2003, Zou_PCA_06, dAspremont_sparsePCA, witten2009penalized, vu2013fantope}.

\subsection{Summary of our main results}

In this paper, we study operator compression for higher-order elliptic operators. We assume that the self-adjoint elliptic operator $\CalL$ is coercive, bounded and strongly elliptic (to be made precise in Section 6.2). 
%These assumptions are very general, and cover nearly all elliptic operators of interest, such as all the second-order uniformly elliptic operators and all the higher-order Laplacian operator $(-\Delta)^k$ ($k\ge1$). The coercivity and boundedness are standard assumptions, which guarantee the existence of $\CalL^{-1}$. The strong ellipticity (see Definition~3.1 in Part II) is slightly stronger than the standard uniform ellipticity. We show that uniform ellipticity and strong ellipticity are equivalent in two cases: (1) $k=1$, (2) $d=1,2$. For small physical dimensions $d$ and differential orders $k$, strongly elliptic operators approximate uniformly elliptic operators well, and counter examples are difficult to construct. 
Under these assumptions, we construct $n$ basis functions $\mathit{\Psi}^{\mathrm{loc}} = [\psi_1^{\mathrm{loc}}, \ldots, \psi_n^{\mathrm{loc}}]$ that achieve nearly optimal performance on both ends in the accuracy--sparsity trade-off~\eqref{intro:OCL1}. 
\begin{enumerate}
\item[1.] They are optimally localized up to a logarithmic factor, i.e., 
\begin{equation}\label{intro:localization}
	\left|\text{supp}(\psi_i^{\mathrm{loc}})\right| \le \frac{C_l \log(n)}{n} \quad \forall 1 \le i \le n.
\end{equation} 
Here, $|\text{supp}(\psi_i^{\mathrm{loc}})|$ denotes the area/volume of the support of the localized function $\psi_i^{\mathrm{loc}}$ in $\R^d$, and the constant $C_l$ is independent of $n$.
\item[2.] If we use a generalized finite element method \cite{babuvska1983generalized, hou_multiscale_1997, strouboulis2001generalized, efendiev_generalized_2013} to solve the elliptic equations, we achieve the optimal convergence rate in the energy norm, i.e.,
\begin{equation}\label{intro:approximationerrorH}
	\|\CalL^{-1} f - \mathit{\Psi}^{\mathrm{loc}} L_n^{-1} (\mathit{\Psi}^{\mathrm{loc}})^T f \|_H \le C_e \sqrt{\lambda_{n}(\CalL^{-1})} \|f\|_2 \quad \forall f \in L^2(D),
\end{equation}
where $L_n$ is the stiffness matrix under the basis $\mathit{\Psi}^{\mathrm{loc}}$, $\|\cdot\|_H$ is the associated energy norm, and $C_e$ is independent of $n$. 
\item[3.] For the sparse operator compression problem, we achieve the optimal approximation error up to a constant, i.e.,
\begin{equation}\label{intro:approximationerrorL2}
	E_{\mathrm{oc}}(\mathit{\Psi}^{\mathrm{loc}}; \CalL^{-1}) \le C_e^2 \lambda_n(\CalL^{-1}),
\end{equation} 
where $E_{\mathrm{oc}}(\mathit{\Psi}^{\mathrm{loc}}; \CalL^{-1})$ is the operator compression error defined in Eqn.~\eqref{intro:compresserror}.
\end{enumerate}
We will focus on the theoretical analysis of the approximation accuracy~\eqref{intro:approximationerrorH} and the localization of the basis functions~\eqref{intro:localization}.

\subsection{Our construction}
\label{intro:ourmethods}
To construct such localized basis functions $\mathit{\Psi}^{\mathrm{loc}} = [\psi_1^{\mathrm{loc}}, \ldots, \psi_n^{\mathrm{loc}}]$, we first partition the physical domain $D$ using a regular partition $\{\tau_i\}_{i=1}^m$ with mesh size $h$. We pick $\{\phi_{i,q}\}_{q=1}^Q$ to be a set of orthogonal basis functions of $\CalP_{k-1}(\tau_i)$, which is the space of all $d$-variate polynomials of degree at most $k-1$ on the patch $\tau_i \subset D$ and $Q = \binom{k+d-1}{d}$ is the dimension of the space $\CalP_{k-1}(\tau_i)$. For $r > 0$, let $S_r$ be the union of the subdomains $\tau_j$ that intersect with $B(x_i, r)$ (for some $x_i \in \tau_i$) and let $\psi_{i,q}^{\mathrm{loc}}$ be the minimizer of the following quadratic problem:
\begin{equation}\label{intro:localVar}
\begin{split}
	\psi_{i,q}^{\mathrm{loc}} = \argmin_{\psi \in H} \quad & \|\psi\|_H^2 \\
	\text{s.t.} \quad & \int_{S_r} \psi \phi_{j,q'} = \delta_{iq,jq'}, \quad \forall 1 \le j \le m,\, 1 \le q' \le Q, \\
				& \psi(x) \equiv 0, \quad x \in D\backslash S_r.
\end{split}
\end{equation}
Here, the space $H = \{\CalL^{-1} f : f\in L^2(D)\}$ is the solution space of the operator $\CalL$, and $\|\cdot\|_H$ is the energy norm associated with $\CalL$ and the prescribed boundary condition. It is important to point out that the boundary condition of the elliptic problem is already incorporated in the above optimization problem through the solution space $H$ and the definition of the energy norm $\| \cdot\|_H$. This variational formulation is very general and can take into account lower-order terms very easily.

Collecting all the $\psi_{i,q}^{\mathrm{loc}}$ for $1\le i\le m$ and $1\le q\le Q$ together, we get our basis $\mathit{\Psi}^{\mathrm{loc}}$. We will prove that for $r = \mathcal{O}( h \log(1/h))$, 
\begin{enumerate}
\item they achieve the optimal convergence rate to solve the elliptic equation, i.e.,
\begin{equation}\label{intro:localizedMsFEM}
	\|\CalL^{-1} f - \mathit{\Psi}^{\mathrm{loc}} L_n^{-1} (\mathit{\Psi}^{\mathrm{loc}})^T f \|_H \le C_e h^k \|f\|_2 \quad \forall f \in L^2(D),
\end{equation}
where the constant $C_e$ is independent of $n$.
\item they achieve the optimal approximation error to approximate the elliptic operator, i.e.,
\begin{equation}\label{intro:approximationerrorL2local}
	E_{\mathrm{oc}}(\mathit{\Psi}^{\mathrm{loc}}; \CalL^{-1}) \le C_e^2 h^{2k}.
\end{equation} 
\end{enumerate}
For $n = m Q$, we can show that the $n$th largest eigenvalue of $\CalL^{-1}$ is of the order $h^{2k}$, i.e., $\lambda_n(\CalL^{-1}) = \mathcal{O}(h^{2k})$. Therefore, the optimality above is exactly the optimality described in Eqn.~\eqref{intro:approximationerrorH} and \eqref{intro:approximationerrorL2}.

\subsection{Comparison with other existing methods} 
\revise{
Our approach for operator compression originates at the MsFEM and numerical homogenization, where localized multiscale basis functions are constructed to approximate the solution space of some elliptic PDEs with multiscale coefficients; see~\cite{babuvska1983generalized, hou_multiscale_1997, strouboulis2001generalized, Houbook09, babuska_optimal_2011, efendiev_generalized_2013, maalqvist2014localization, owhadi_polyharmonic_2014, owhadi2015bayesian, owhadi2015multi, efendiev16jcp}. Specifically, our work is inspired by the work presented in~\cite{maalqvist2014localization,owhadi2015multi}, in which multiscale basis functions with support size $O(h\log(1/h))$ are constructed for second-order elliptic equations with rough coefficients and homogeneous Dirichlet boundary conditions. In this paper, we generalize the construction~\cite{owhadi2015multi} and propose a general framework to compress higher-order elliptic operators with optimal compression accuracy and optimal localization. 
}

\revise{
We remark that although we use the framework presented in \cite{owhadi2015multi} as the direct template for our method, to the best of our knowledge, the local orthogonal decomposition (LOD)~\cite{maalqvist2014localization}, in the context of multidimensional numerical homogenization, contains the first rigorous proof of optimal exponential decay rates with a priori estimates (leading to localization to subdomains of size $h \log(1/h)$, with basis functions derived from the Clement interpolation operator). The idea of using the preimage of some continuous or discontinuous finite element space under the partial differential operator to construct localized basis functions in Galerkin-type methods was even used earlier, e.g., in \cite{Grasedyck_ALbasis_2012}, although it did not provide a constructive local basis. In addition to establishing the exponential decay of the basis (for general nonconforming measurements of the solution, we will generalize the proof of this result to higher-order PDEs and measurements formed by local polynomials), a major contribution of \cite{owhadi2015multi} was to introduce a multiresolution operator decomposition for second-order elliptic PDEs with rough coefficients.}

\revise{There are several new ingredients in our analysis that are essential for us to obtain our results for higher-order elliptic operators with rough coefficients. First of all,  we prove an inverse energy estimate for functions in $\Psi$, which is crucial in proving the exponential decay. In particular, Lemma~\ref{lem:scaling} is an essential step to obtaining the inverse energy estimate for higher-order PDEs that is not found in \cite{maalqvist2014localization} nor \cite{owhadi2015multi}. We remark that Lemma 3.12 in~\cite{owhadi2015multi} provides such an estimate for second-order elliptic operators, by utilizing a relation between the Laplacian operator $\Delta$ and the $d$-dimensional Brownian motion. It is not straightforward to extend this probabilistic argument to higher-order cases. In contrast, our inverse energy estimate is valid for any $2k$th-order elliptic operators and is tighter than the estimation in~\cite{owhadi2015multi} for the second-order case. Secondly, we prove a projection-type polynomial approximation property in $H^k(D)$. This polynomial approximation property plays an essential role in both estimating the compression accuracy and in localizing the basis functions. Thirdly, we propose the notion of the strong ellipticity to analyze the higher-order elliptic operators and show that strong ellipticity is only slightly stronger than the standard uniform ellipticity. Very recently, the authors of ~\cite{owhadi2017universal} introduce the Gaussian cylinder measure and successfully generalize the probabilistic framework in~\cite{owhadi2015bayesian, owhadi2015multi} to a much broader class of operators, including higher-order elliptic operators without requiring the strong ellipticity.}

\revise{
As in \cite{maalqvist2014localization,owhadi2015multi}, the error bound in our convergence analysis blows up for fixed oversampling ratio $r/h$. To achieve the desired $O(h^k)$ accuracy in the energy norm, we require $r/h=O(\log(1/h))$. There has been some previous attempt to study the convergence of MsFEM using oversampling techniques with $r/h$ being fixed, see, e.g., \cite{henning2013_oversample,Peterseim2016}. In particular, the authors of \cite{henning2013_oversample,Peterseim2016} showed that if the oversampling ratio $r/h$ is fixed, the accuracy of the numerical solution will depend on the regularity of the solution and cannot be guaranteed for problems with rough coefficients. By imposing $r/h=O(\log(1/h))$, the authors of \cite{henning2013_oversample,Peterseim2016} proved that the the MsFEM with constrained oversampling converges with the desired accuracy $O(h)$. 
}

\revise{
There has been some previous work for second-order elliptic PDEs by using basis functions of support size $O(h)$, see, e.g., \cite{babuska_optimal_2011,houliu2016_DCDS}. However, they need to use $O(\log(1/h))$ basis functions associated with each coarse finite element to recover the $O(h)$ accuracy. The computational complexity of this approach is comparable to the one that we present in this paper. It is worth mentioning that the authors of \cite{houliu2016_DCDS} use a local oversampling operator to construct the optimal local boundary conditions for the nodal multiscale basis and enrich the nodal multiscale basis with optimal edge multiscale basis. Moreover, the method in \cite{houliu2016_DCDS} allows an explicit control of the approximation accuracy in the offline stage by truncating the SVD of the oversampling operator. In \cite{houliu2016_DCDS}, the authors demonstrated numerically that this method is robust to high-contrast problems and the number of basis functions per coarse element is typically small. We remark that the recently developed generalized multiscale finite element method (GMsFEM) \cite{efendiev_generalized_2013,efendiev16jcp} has provided another promising approach in constructing multiscale basis functions with support size $O(h)$.
}

Another popular way to formulate the operator compression problem is to solve the following $l^1$ penalized variational problem:
\begin{equation}\label{intro:OCL1}
\begin{split}
	\min_{\mathit{\Psi}} \quad& \sum_{i=1}^n \|\psi_i\|_{H}^2 + \lambda \sum_{i=1}^n \|\psi_i\|_1,\\
	\text{s.t.} \quad& (\psi_i, \psi_j) = \delta_{i,j} \quad \forall 1\le i,j \le n,
\end{split}
\end{equation}
where $\|\psi_i\|_{H}$ is the energy norm induced by the operator $\CalL$. In problem~\eqref{intro:OCL1}, enforcing $\|\psi_i\|_{H}$ to be small leads to a small compression error, enforcing $\|\psi_i\|_1$ to be small leads to a sparse basis function, and $\lambda>0$ is a parameter to control the trade-off between the accuracy and sparsity. 

The sparse PCA (SPCA) is closely related to the above $l^1$-based optimization problem. Given a covariance function $K(x,y)$, the SPCA solves a variational problem similar to Eqn.~\eqref{intro:OCL1}:
\begin{equation}\label{intro:SPCA}
\begin{split}
	\min_{\mathit{\Psi}} \quad& -\sum_{i=1}^n (\psi_i, \CalK \psi_i) + \lambda \sum_{i=1}^n \|\psi_i\|_1,\\
	\text{s.t.} \quad& (\psi_i, \psi_j) = \delta_{i,j} \quad \forall 1\le i,j \le n,
\end{split}
\end{equation}
where $(\psi_i, \CalK \psi_i) := \int_D \int_D K(x,y) \psi_i(x) \psi_i(y)\rd x\, \rd y$. In the SPCA~\eqref{intro:SPCA}, we have the minus sign in front the variational term because we are interested in the eigenspace corresponding to the largest $n$ eigenvalues. 
%Although Eqn.~\eqref{intro:OCL1} and Eqn.~\eqref{intro:SPCA} have similarities in both formulation and algorithms, they are very different in nature. The most significant difference is that the input of the $l^1$ operator compression~\eqref{intro:OCL1} is an \textit{unbounded} elliptic operator $\CalL$, while the input of the SPCA is the \textit{bounded} covariance operator $\CalK$, which may be viewed as $\CalL^{-1}$ in some cases. 
Although the $l^1$ approach performs well in practice, neither Problem~\eqref{intro:OCL1} nor the SPCA~\eqref{intro:SPCA} is convex, and one needs to use some sophisticated techniques to solve the non-convex optimization problem or its convex relaxation; see, e.g.,~\cite{Zou_PCA_06, dAspremont_sparsePCA, ozolins_compressed_2013, vu2013fantope, LaiLuOsher_2014}. 

%There is a trade-off between the compression accuracy and the sparsity of the basis functions. In general, there is no localized basis that can achieve the optimal compression accuracy $\lambda_{n+1}(\CalL^{-1})$. In other words, the principal eigenspace cannot be spanned by a localized basis. 

%We would like to point out some similarities and differences between our approach and the $l^1$-penalty approach. There are several important differences between our approach and the $l^1$-based optimization approach. First of all, instead of adding a $l^1$ penalty term, we obtain sparsity by enforcing $\psi_{i,q}$ to have zero moments up to the $(k-1)$-th order ($2k$ is the order of ${\CalL}$) on every patch except for $\tau_i$, as required in Eqn.~\eqref{intro:localVar}.\footnote{Recall that for any $1 \le j \le m$, $\{\phi_{j,q}\}_{q=1}^Q$ spans the space of polynomials with order no more than $k-1$ on patch $\tau_j$.} 
In comparison with the $l^1$-based optimization method or the SPCA, our approach has the advantage that this construction will guarantee that $\psi_{i,q}$ decays exponentially fast away from $\tau_i$. This exponential decay justifies the local construction of the basis functions in Eqn.~\eqref{intro:localVar}. Moroever, our construction~\eqref{intro:localVar} is a quadratic optimization with linear constraints, which can be solved as efficiently as solving an elliptic problem on the local domain $S_r$. \revise{The computational complexity to obtain all $n$ localized basis functions $\{\psi_i^{\mathrm{loc}}\}_{i=1}^n$ is only of order $N\log^{3d}(N)$ if a multilevel construction is employed, where $N$ is the degree of freedom in the discretization of $\CalL$; see~\cite{owhadi2015multi}.} In contrast, the orthogonality constraint in Eqn.~\eqref{intro:OCL1} is not convex, which introduces additional difficulties in solving the problem. 
%For example, the algorithm of splitting orthogonality constraint (SOC)~\cite{lai2014splitting, ozolins_compressed_2013} solves the $l^1$ operator compression~\eqref{intro:OCL1} in an iterative manner, where the cost of \textit{each iteration} is roughly the same as the total cost of our approach. Thirdly, 
Finally, our construction of $\{\psi_i^{\mathrm{loc}}\}_{i=1}^n$ is completely decoupled, while all the basis functions in Eqn.~\eqref{intro:OCL1} are coupled together. This decoupling leads to a simple parallel execution, and thus makes the computation of $\{\psi_i^{\mathrm{loc}}\}_{i=1}^n$ even more efficient.

The rest of the paper is organized as follows. 
In Section 2, we introduce the abstract framework of the sparse operator compression. In Section~\ref{sec:generalizedPoincare}, we prove a projection-type polynomial approximation property for the Sobolev spaces, which can be seen as a generalization of the Poincare inequality for functions with higher regularity. This polynomial approximation property is critical in our analysis of the higher-order case. It plays a role similar to that of the Poincare inequality in the analysis of the second-order elliptic operator. In Section 4, we prove the inverse energy estimate by scaling. In Section 5, we use the second-order elliptic PDE to illustrate the main idea of our analysis. In Section~\ref{sec:k2case}, we first introduce the notion of strong ellipticity, and then prove the exponential decay of the constructed basis function for strongly elliptic operators. In Section~\ref{sec:localization}, we localize the basis functions, and provide the convergence rate for the corresponding MsFEM and the compression rate for the corresponding operator compression. Finally, we present several numerical results to support the theoretical findings in Section~\ref{sec:numericalExps}. Some concluding remarks are made in Section \ref{k1sec:conclusions} and a few technical proofs are deferred to the Appendix.

\section{Operator compression}
\label{sec:functionalanalysis}
In this section, we provide an abstract and general framework to compress a bounded self-adjoint positive semidefinite operator $\CalK: X \to X$, where $X$ can be any separable Hilbert space with inner product $(\cdot, \cdot)$. In the case of operator compression of an elliptic operator $\CalL$, $\CalK$ plays the role of the solution operator $\CalL^{-1}$ and $X = L^2(D)$. In the case of the SPCA, $\CalK$ plays the role of the covariance operator. 
%A probabilistic framework for the Bayesian numerical homogenization has been proposed in~\cite{owhadi2015bayesian}. However,  it is not clear that there exists a Gaussian measure for a given covariance operator. Our following framework is purely based on functional analysis, which applies to any bounded self-adjoint positive semidefinite operators.
In Section~\ref{subsec:CameronMartin}, we introduce the Cameron--Martin space, which plays the role of the solution space of $\CalL$. In Section~\ref{subsec:OCerroranalysis}, we provide our main theorem to estimate the compression error. 
%In Section~\ref{subsec:energyminimization}, we give the abstract form to construct basis functions by minimizing their energies. In Section~\ref{subsec:localminimization}, we give the abstract form to construct localized basis functions. 
We will use this abstract framework to compress elliptic operators in the rest of the paper. 

\subsection{The Cameron--Martin space}
\label{subsec:CameronMartin}
Suppose $\{(\lambda_n, e_n)\}_{n=1}^{\infty}$ are the eigen pairs of the operator $\CalK$ with the eigenvalues $\{\lambda_n\}_{n=1}^{\infty}$ in a descending order. We have $\lambda_n\ge 0$ for all $n$ since $\CalK$ is self-adjoint and positive semidefinite. From the spectral theorem of a self-adjoint operator, we know that $\{e_n)\}_{n=1}^{\infty}$ forms an orthonormal basis of $X$.

\begin{lemma}\label{lem:defnorm}
Let $\CalK(X)$ be the range space of $\CalK$. We have
\begin{enumerate}
\item[1.] $\CalK(X)$ is an inner product space with inner product defined by
\begin{equation}\label{def:Hinner}
	( \CalK \phi_1, \CalK \phi_2 )_H = (\CalK \phi_1, \phi_2) \qquad \forall \phi_1, \phi_2 \in X.
\end{equation}
\item[2.] $\CalK(X)$ is continuously imbedded in $X$. 
\item[3.] $\CalK(X)$ is dense in $X$ if the null space of $\CalK$ only contains the origin, i.e., $\mathrm{null}(\CalK) = \{\zerovct\}$.
\end{enumerate}
\end{lemma}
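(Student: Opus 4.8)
The plan is to establish the three items essentially by unwinding definitions and using the spectral expansion $\CalK\phi=\sum_{n=1}^{\infty}\lambda_n(\phi,e_n)e_n$ together with the self-adjointness and positive semidefiniteness of $\CalK$.

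For item~1, the one point requiring care is that formula~\eqref{def:Hinner} is independent of the choice of preimages under $\CalK$. I would note that if $\CalK\phi_1=\CalK\tilde\phi_1$ then $\phi_1-\tilde\phi_1\in\mathrm{null}(\CalK)$, so $(\CalK(\phi_1-\tilde\phi_1),\phi_2)=0$; hence $(\CalK\phi_1,\phi_2)$ depends only on $\CalK\phi_1$, and by self-adjointness $(\CalK\phi_1,\phi_2)=(\phi_1,\CalK\phi_2)$, which likewise shows it depends only on $\CalK\phi_2$. Thus $(\cdot,\cdot)_H$ is a well-defined symmetric bilinear form on $\CalK(X)$; bilinearity is immediate from linearity of $\CalK$. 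Positivity follows from $(\CalK\phi,\CalK\phi)_H=(\CalK\phi,\phi)=\sum_n\lambda_n(\phi,e_n)^2\ge 0$, and if this quantity vanishes then $\lambda_n(\phi,e_n)=0$ for every $n$, so $\CalK\phi=\sum_n\lambda_n(\phi,e_n)e_n=\zerovct$; hence $(\cdot,\cdot)_H$ is an inner product. (Alternatively, Cauchy--Schwarz for the nonnegative form $\phi\mapsto(\CalK\phi,\phi)$ gives $(\CalK\phi,\psi)=0$ for all $\psi$ whenever $(\CalK\phi,\phi)=0$, which again forces $\CalK\phi=\zerovct$.)

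For item~2 it suffices to bound the $X$-norm by the $H$-norm on $\CalK(X)$. Writing $u=\CalK\phi$ and $c_n=(\phi,e_n)$, one has $\|u\|_X^2=\sum_n\lambda_n^2 c_n^2$ and $\|u\|_H^2=(\CalK\phi,\phi)=\sum_n\lambda_n c_n^2$, so $\|u\|_X^2\le\lambda_1\sum_n\lambda_n c_n^2=\|\CalK\|\,\|u\|_H^2$; hence the inclusion $\CalK(X)\hookrightarrow X$ is continuous with operator norm at most $\|\CalK\|^{1/2}$. (A coordinate-free version of the same estimate is $\|\CalK\phi\|_X^2=(\CalK^{1/2}\phi,\CalK\,\CalK^{1/2}\phi)\le\|\CalK\|(\CalK\phi,\phi)$.) For item~3, assuming $\mathrm{null}(\CalK)=\{\zerovct\}$, I would prove density by showing $\CalK(X)^{\perp}=\{\zerovct\}$: if $(v,\CalK\phi)=0$ for every $\phi\in X$, then $(\CalK v,\phi)=0$ for every $\phi$ by self-adjointness, so $\CalK v=\zerovct$ and hence $v=\zerovct$; therefore $\overline{\CalK(X)}=X$. (Equivalently, $\mathrm{null}(\CalK)=\{\zerovct\}$ forces $\lambda_n>0$ for all $n$, so $e_n=\CalK(\lambda_n^{-1}e_n)\in\CalK(X)$, and $\{e_n\}$ spans a dense subspace of $X$.)

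The argument is elementary throughout; the only genuine subtlety is the well-definedness in item~1, namely recognizing that the ambiguity in a preimage of $\CalK\phi$ lies precisely in $\mathrm{null}(\CalK)$ and is annihilated by the extra factor of $\CalK$ appearing in~\eqref{def:Hinner}. The remaining steps are one-line computations with the spectral expansion or direct consequences of self-adjointness and positive semidefiniteness.
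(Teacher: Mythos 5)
Your proof is correct and follows essentially the same route as the paper's: the spectral expansion for positivity in item~1, the inequality $\lambda_n^2\le\lambda_1\lambda_n$ (equivalently $\CalK^2\preceq\lambda_1\CalK$) for the embedding in item~2, and the observation that $e_n\in\CalK(X)$ when $\mathrm{null}(\CalK)=\{\zerovct\}$ for density in item~3 (your orthogonal-complement argument is an equivalent reformulation of this). The one thing you do that the paper quietly skips is verifying that~\eqref{def:Hinner} is independent of the preimages $\phi_1,\phi_2$ of $\CalK\phi_1,\CalK\phi_2$; this well-definedness check via self-adjointness is the right thing to insist upon and is a small but genuine improvement on the paper's write-up.
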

\begin{proof}
\begin{enumerate}
\item[1.]
Since $\CalK$ is self-adjoint, we have $( \CalK \phi_1, \CalK \phi_2 )_H = ( \CalK \phi_2, \CalK \phi_1 )_H$. The linearity and nonnegativity are obvious. Finally, if $( \CalK \phi, \CalK \phi )_H = 0$ for some $\phi \in X$, then $(\CalK \phi, \phi) = 0$. Suppose that $\phi = \sum_n \alpha_n e_n$ by expanding $\phi$ with eigenvectors of $\CalK$. Then, we have $(\CalK \phi, \phi) = \sum_n \lambda_n \alpha_n^2 = 0$. Therefore, $\alpha_n = 0$ for all $\lambda_n > 0$. Equivalently, we obtain $\phi \in \mathrm{null}(\CalK)$, i.e., $\CalK \phi = 0$.
\item[2.]
Since $\lambda_n^2 \le \lambda_1 \lambda_n$ for all $n\in \N$, we have $\CalK^2 \preceq \lambda_1 \CalK$. Then, we obtain
\begin{equation}\label{eqn:embedding0}
	\sqrt{(\CalK \phi, \CalK \phi)} \le \sqrt{ \lambda_1  (\CalK \phi, \phi)} = \sqrt{ \lambda_1} \sqrt{ ( \CalK \phi, \CalK \phi )_H},
\end{equation}
where we have used the definition of $( \cdot, \cdot )_H$ in Eqn.~\eqref{def:Hinner} in the last step.
\item[3.]
If $\mathrm{null}(\CalK) = \{\zerovct\}$, we have $\text{span}\{e_n, n\ge 1\} \subset \CalK(X)$. Then, $\CalK(X)$ is dense in $X$.
\end{enumerate}
\end{proof}
We define the Cameron--Martin space $H$ as the completion of $\CalK(X)$ with respect to the norm $\sqrt{( \cdot, \cdot )_H}$. Then, $H$ is a separable Hilbert space and  we have the following lemma. 
\begin{lemma}\label{lem:innerproduct}
\begin{enumerate}
\item[1.] $H$ can be continuously embedded into $X$.
\item[2.] $H$ is dense in $X$ if $\mathrm{null}(\CalK) = \{\zerovct\}$.
\item[3.] For all $\psi \in X$ and all $f \in H$, we have
\begin{equation} \label{eqn:Hinner}
	(f, \CalK \psi)_H = (f, \psi).
\end{equation}
\end{enumerate}
\end{lemma}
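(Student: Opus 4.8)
The plan is to reduce everything to the continuous inclusion $\CalK(X)\hookrightarrow X$ established in Lemma~\ref{lem:defnorm} together with the defining identity~\eqref{def:Hinner}, and then propagate these two facts from $\CalK(X)$ to its completion $H$ by a routine density/continuity argument. No eigenfunction computation beyond what was already used for~\eqref{eqn:embedding0} should be needed.

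First I would set up the embedding for part~1. By the second assertion of Lemma~\ref{lem:defnorm} we have $\|v\|_X\le\sqrt{\lambda_1}\,\|v\|_H$ for every $v\in\CalK(X)$; since $\CalK(X)$ is dense in $H$ by construction and $X$ is complete, the bounded linear inclusion $\CalK(X)\hookrightarrow X$ extends uniquely to a bounded linear map $\iota\colon H\to X$ satisfying $\|\iota(f)\|_X\le\sqrt{\lambda_1}\,\|f\|_H$ for all $f\in H$. The only thing still to check for part~1 is that $\iota$ is \emph{injective}, so that $H$ genuinely sits inside $X$; I plan to extract this from part~3, which I would therefore prove first.

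For part~3, take $f\in H$ and $\psi\in X$, and choose a sequence $v_n=\CalK\phi_n\in\CalK(X)$ with $v_n\to f$ in $H$. Using~\eqref{def:Hinner}, for each $n$ we have $(v_n,\CalK\psi)_H=(\CalK\phi_n,\CalK\psi)_H=(\CalK\phi_n,\psi)=(v_n,\psi)$. Letting $n\to\infty$: the left side tends to $(f,\CalK\psi)_H$ by continuity of the $H$-inner product, while the right side equals $(\iota(v_n),\psi)\to(\iota(f),\psi)$ by continuity of $\iota$ and of the $X$-inner product. Hence $(f,\CalK\psi)_H=(\iota(f),\psi)$, which under the identification of $f\in H$ with $\iota(f)\in X$ is precisely~\eqref{eqn:Hinner}. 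Injectivity of $\iota$ then follows immediately: if $\iota(f)=\zerovct$, then $(f,\CalK\psi)_H=0$ for all $\psi\in X$, i.e., $f$ is $H$-orthogonal to $\CalK(X)$, which is dense in $H$, so $f=\zerovct$; this finishes part~1. Part~2 is then immediate: if $\mathrm{null}(\CalK)=\{\zerovct\}$, the third assertion of Lemma~\ref{lem:defnorm} gives that $\CalK(X)$ is dense in $X$, and since $\CalK(X)\subseteq\iota(H)$, the image of $H$ is dense in $X$ as well.

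The main (essentially the only) obstacle is the one flagged above: ensuring that the abstract completion $H$ embeds into $X$ rather than into some strictly larger object, i.e., the injectivity of $\iota$. Everything hinges on establishing the compatibility identity $(f,\CalK\psi)_H=(f,\psi)$ at the level of $H$ and then invoking the density of $\CalK(X)$ in $H$; the norm comparison from Lemma~\ref{lem:defnorm} supplies the continuity that makes the limiting argument legitimate.
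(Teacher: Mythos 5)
Your proof is correct and follows essentially the same density/continuity strategy as the paper's, but with one genuine refinement worth noting. The paper's proof of part~1 argues that a Cauchy sequence in $\CalK(X)$ under $\|\cdot\|_H$ is also Cauchy in $X$ and then simply declares $H\subset X$; this produces a well-defined bounded map $\iota\colon H\to X$ but does not explicitly verify that $\iota$ is injective, which is precisely what is needed to identify the abstract completion $H$ with a subspace of $X$. You identify that gap and close it cleanly: you first prove part~3 in the form $(f,\CalK\psi)_H=(\iota(f),\psi)$ by the same pass-to-the-limit argument the paper uses, and then deduce injectivity by the orthogonality argument (if $\iota(f)=0$ then $f\perp_H \CalK(X)$, which is dense in $H$, so $f=0$). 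Part~2 you handle exactly as the paper does, from item~3 of Lemma~\ref{lem:defnorm}. The reordering (3 before 1) is what buys you the injectivity; otherwise the content is the same.
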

\begin{proof}
\begin{enumerate}
\item[1.] By the continuous imbedding from $\CalK(X)$ to $X$, we know that a Cauchy sequence in $\CalK(X)$ is also a Cauchy sequence in $X$. Therefore, we have $H \subset X$. By Eqn.~\eqref{eqn:embedding0} and the the continuity of norms, we have $(\psi, \psi) \le \lambda_1 (\psi, \psi)_H$ for any $\psi \in H$.
\item[2.] It is obvious from item 3 in Lemma~\ref{lem:defnorm}.
\item[3.] If $f \in \CalK(X)$, Eqn.~\eqref{eqn:Hinner} is exactly the definition of $(\cdot, \cdot)_H$ in Eqn.~\eqref{def:Hinner}. By the continuity of the inner product, Eqn.~\eqref{eqn:Hinner} is true for any $f \in H$.
\end{enumerate}
\end{proof}

\subsection{Operator compression}
\label{subsec:OCerroranalysis}
Suppose $H$ is an arbitrary separable Hilbert space and $\Phi \subset H$ is $n$-dimensional subspace in $H$ with basis $\{\phi_i\}_{i=1}^n$. In the rest of the paper, $\CalP_{\Phi}^{(H)}$ denotes the orthogonal projection from a Hilbert space $H$ to its subspace $\Phi$. With this notation, we present our theorem for error estimates below.
\begin{theorem}\label{thm:conditioning1}
Suppose there is a $n$-dimensional subspace $\Phi \subset X$ with basis $\{\phi_i\}_{i=1}^n$ such that
\begin{equation}\label{eqn:poincare1}
	\| u - \CalP_{\Phi}^{(X)} u \|_X \le k_n \|u\|_{H} \qquad \forall u \in \CalK(X) \subset H.
\end{equation}
Let $\Psi$ be the $n$-dimensional subspace in $H$ (also in $X$) spanned by $\{\CalK \phi_i\}_{i=1}^n$. Then
\begin{enumerate}
\item[1.] For any $u \in \CalK(X)$ and $u = \CalK f$, we have
\begin{equation}\label{eqn:GFEMH}
	\| u - \CalP_{\Psi}^{(H)} u \|_{H} \le k_n \|f\|_X\,.
\end{equation}
\item[2.] For any $u \in \CalK(X)$ and $u = \CalK f$, we have
\begin{equation}\label{eqn:GFEML2}
	\| u - \CalP_{\Psi}^{(H)} u \|_{X} \le k_n^2 \|f\|_X\,.
\end{equation}
\item[3.] We have
\begin{equation}
	\| \CalK - \CalP_{\Psi}^{(H)} \CalK \| \le k_n^2\,,
\end{equation}
where $\|\cdot\|$ is the induced operator norm on $\mathcal{B}(X,X)$. Moreover, the rank-$n$ operator $\CalP_{\Psi}^{(H)} \CalK : X \to X $ is self-adjoint.
\end{enumerate}
\end{theorem}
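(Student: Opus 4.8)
The plan is to exploit the single key identity \eqref{eqn:Hinner} from Lemma \ref{lem:innerproduct}, namely $(f, \CalK\psi)_H = (f,\psi)$ for $f\in H$, $\psi\in X$, which turns the $H$-inner product against a function in $\Psi = \CalK\Phi$ into an $X$-inner product against the corresponding element of $\Phi$. Concretely, for $u = \CalK f$ and any $\phi\in\Phi$, I would compute $(u - \CalP_{\Psi}^{(H)}u, \CalK\phi)_H = (u,\CalK\phi)_H - (\CalP_{\Psi}^{(H)}u, \CalK\phi)_H$; since $\CalK\phi\in\Psi$ the second term equals $(u,\CalK\phi)_H$ by the definition of orthogonal projection, so the inner product is zero. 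Applying \eqref{eqn:Hinner} with the roles reversed (using that $u\in H$), $(u,\CalK\phi)_H = (u,\phi)$. The net statement is: $(u,\phi) = (\CalP_{\Psi}^{(H)}u, \phi)$ for every $\phi\in\Phi$, i.e. $u$ and its $H$-projection onto $\Psi$ have the same $X$-orthogonal projection onto $\Phi$. This is the analogue of the classical Galerkin orthogonality / ``quasi-optimality pivots on $\Phi$'' mechanism.

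For part 1, I would argue that $\CalP_\Psi^{(H)}u$ is in fact the $H$-best approximation, so $\|u - \CalP_\Psi^{(H)}u\|_H \le \|u - w\|_H$ for any $w\in\Psi$; I then need to produce one good competitor $w$. The natural choice is $w = \CalK(\CalP_\Phi^{(X)}f)$, which lies in $\Psi$ since $\CalP_\Phi^{(X)}f\in\Phi$. Then $u - w = \CalK(f - \CalP_\Phi^{(X)}f)$, and $\|u-w\|_H^2 = (\CalK(f-\CalP_\Phi^{(X)}f), f-\CalP_\Phi^{(X)}f)$ by \eqref{def:Hinner}. Now I use \eqref{eqn:poincare1} applied to the element $\CalK(f-\CalP_\Phi^{(X)}f)\in\CalK(X)$: its $X$-distance to $\Phi$ is at most $k_n\|\CalK(f-\CalP_\Phi^{(X)}f)\|_H$. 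The point is that $f - \CalP_\Phi^{(X)}f$ is $X$-orthogonal to $\Phi$, so $(\CalK(f-\CalP_\Phi^{(X)}f), f-\CalP_\Phi^{(X)}f) = (\CalK(f-\CalP_\Phi^{(X)}f) - \CalP_\Phi^{(X)}[\cdots], f-\CalP_\Phi^{(X)}f) \le \|\CalK(f-\CalP_\Phi^{(X)}f) - \CalP_\Phi^{(X)}\CalK(f-\CalP_\Phi^{(X)}f)\|_X\,\|f-\CalP_\Phi^{(X)}f\|_X \le k_n\|\CalK(f-\CalP_\Phi^{(X)}f)\|_H\,\|f\|_X$. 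Combined with $\|\CalK(f-\CalP_\Phi^{(X)}f)\|_H = \|u-w\|_H$, this gives $\|u-w\|_H^2 \le k_n\|u-w\|_H\|f\|_X$, hence $\|u-w\|_H\le k_n\|f\|_X$, and a fortiori $\|u-\CalP_\Psi^{(H)}u\|_H\le k_n\|f\|_X$.

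For part 2, I would bound the $X$-norm of the error by a duality/Aubin--Nitsche argument. Write $e := u - \CalP_\Psi^{(H)}u$. Using the continuous embedding is too lossy; instead I want $\|e\|_X^2 = (e, e)$ and I would like to replace one factor of $e$ by something I can project. Note $e = \CalK f - \CalP_\Psi^{(H)}u$; I would use \eqref{eqn:Hinner} in the form $(e,e) = (e, \CalK g)_H$ where... actually the cleaner route: since $e\in H$, pick the representative — but $e$ need not be in $\CalK(X)$. A robust approach: since $e\perp_H\Psi$, for any $\phi\in\Phi$ we showed $(e,\phi)=0$, so $\|e\|_X = \|e - \CalP_\Phi^{(X)}e\|_X$... but \eqref{eqn:poincare1} requires $e\in\CalK(X)$, which it is not in general. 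The fix is to run the estimate on $u$ before projecting: by part 1 machinery, $(e,e) = (u - \CalP_\Psi^{(H)}u,\ u - w')$ for a cleverly chosen $w'\in\Psi$ exploiting $H$-orthogonality, then bound by $\|e\|_H\|u-w'\|_H$ and invoke part 1 twice; more precisely I expect $\|e\|_X \le \|e\|_H \cdot k_n$ via one more application of \eqref{eqn:poincare1} to $u\in\CalK(X)$ itself inside a Cauchy--Schwarz, yielding $\|e\|_X\le k_n\|e\|_H\le k_n\cdot k_n\|f\|_X = k_n^2\|f\|_X$.

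For part 3, self-adjointness of $\CalP_\Psi^{(H)}\CalK$ on $X$: I would show $(\CalP_\Psi^{(H)}\CalK f, g) = (f, \CalP_\Psi^{(H)}\CalK g)$ for all $f,g\in X$. Since $\CalP_\Psi^{(H)}\CalK g\in H$, \eqref{eqn:Hinner} gives $(\CalP_\Psi^{(H)}\CalK f, g) = (\CalP_\Psi^{(H)}\CalK f, \CalK g)_H$; since $\CalP_\Psi^{(H)}$ is an $H$-orthogonal projection it is $H$-self-adjoint and idempotent, so this equals $(\CalP_\Psi^{(H)}\CalK f, \CalP_\Psi^{(H)}\CalK g)_H$, which is symmetric in $f,g$. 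The operator norm bound $\|\CalK - \CalP_\Psi^{(H)}\CalK\|\le k_n^2$ is exactly part 2 restated: for any $f\in X$ with $\|f\|_X\le 1$, $\|(\CalK - \CalP_\Psi^{(H)}\CalK)f\|_X = \|u - \CalP_\Psi^{(H)}u\|_X\le k_n^2$, using density of $\CalK(X)$ is unnecessary since every $f\in X$ gives $u = \CalK f\in\CalK(X)$ directly.

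I expect the main obstacle to be part 2: $e = u - \CalP_\Psi^{(H)}u$ lies in $H$ but typically not in $\CalK(X)$, so \eqref{eqn:poincare1} cannot be applied to $e$ directly; the argument must be arranged so that the polynomial-approximation inequality is only ever applied to genuine elements of $\CalK(X)$ (such as $u$ or $\CalK(f-\CalP_\Phi^{(X)}f)$), with the passage to $e$ handled purely through $H$-orthogonality and Cauchy--Schwarz. Getting the two factors of $k_n$ to come out cleanly — one from the energy-norm quasi-optimality of part 1, one from a duality step — is the delicate bookkeeping.
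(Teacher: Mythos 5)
Your parts 1 and 3 are correct and follow essentially the same route as the paper: in part 1 you pick the competitor $w=\CalK(\CalP_\Phi^{(X)}f)\in\Psi$, rewrite $\|u-w\|_H^2=(\CalK\tilde f,\tilde f)$ with $\tilde f=f-\CalP_\Phi^{(X)}f$, exploit $\tilde f\perp_X\Phi$ to subtract $\CalP_\Phi^{(X)}\CalK\tilde f$ for free, and then apply \eqref{eqn:poincare1} to $\CalK\tilde f\in\CalK(X)$ together with Cauchy--Schwarz; in part 3 you derive self-adjointness by moving the $H$-projector using \eqref{eqn:Hinner}. Both match the paper's argument up to cosmetic reordering.

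Part 2, however, has a genuine gap. You correctly identify the obstacle (that $e=u-\CalP_\Psi^{(H)}u$ need not lie in $\CalK(X)$, so \eqref{eqn:poincare1} cannot be applied to it directly), and you assert the correct intermediate inequality $\|e\|_X\le k_n\|e\|_H$, but the sketch you give to obtain it fails. Your proposed pairing $(e,e)_X=(u-\CalP_\Psi^{(H)}u,\,u-w')$ with $w'\in\Psi$, followed by $\le\|e\|_H\|u-w'\|_H$ and two invocations of part 1, produces $\|e\|_X^2\le k_n^2\|f\|_X^2$, hence only the bound $\|e\|_X\le k_n\|f\|_X$ --- one power of $k_n$ short. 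Likewise, ``one more application of \eqref{eqn:poincare1} to $u$'' does not yield $\|e\|_X\le k_n\|e\|_H$, because \eqref{eqn:poincare1} controls the distance of $u$ to $\Phi$ in $X$, not the $X$-norm of an $H$-orthogonal error. The missing step is the auxiliary (dual) problem: set $v:=\CalK e\in\CalK(X)$ (which is exactly the ``$(e,\CalK g)_H$'' route you started with $g=e$ and then abandoned). By \eqref{eqn:Hinner}, $\|e\|_X^2=(e,e)_X=(e,v)_H$, and since $e\perp_H\Psi$ this equals $(e,\,v-\CalP_\Psi^{(H)}v)_H\le\|e\|_H\,\|v-\CalP_\Psi^{(H)}v\|_H$. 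Now apply part 1 \emph{to $v=\CalK e$}, not to $u$, to get $\|v-\CalP_\Psi^{(H)}v\|_H\le k_n\|e\|_X$; cancel one factor of $\|e\|_X$ to obtain $\|e\|_X\le k_n\|e\|_H$, and finish with part 1 applied to $u$. This is the Aubin--Nitsche argument that the paper uses, and it is the only place where the second factor of $k_n$ can legitimately arise: one $k_n$ comes from the primal estimate with data $f$, the other from the dual estimate with data $e$.
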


In Theorem~\ref{thm:conditioning1}, by using a projection-type approximation property of $\Phi$ in $H$, i.e., Eqn.~\eqref{eqn:poincare1}, we obtain the error estimates of the multiscale finite element method with finite element basis $\{\CalK \phi_i\}_{i=1}^n$ in the energy norm, i.e., Eqn.~\eqref{eqn:GFEMH}. We will take $\Phi$ as the discontinuous piecewise polynomial space later, which is a poor finite element space for elliptic equations with rough coefficients. However, after smoothing $\Phi$ with the solution operator $\CalK$, the smoothed basis functions $\{\CalK \phi_i\}_{i=1}^n$ have the optimal convergence rate. This data-dependent methodology to construct finite element spaces was pioneered by the generalized finite element (GFEM)~\cite{babuvska1983generalized, strouboulis2001generalized}, the multiscale finite element method (MsFEM)~\cite{hou_multiscale_1997, hughes1998variational,Houbook09}, and numerical homogenization \cite{maalqvist2014localization,owhadi2015multi}.

Our error analysis is different from the traditional finite element error analysis in two aspects. First of all, the traditional error analysis relies on an interpolation type approximation property where higher regularity is required. For example, the error analysis for the FEM with standard linear nodal basis functions for the Poisson equation requires the following interpolation type approximation:
\begin{equation}\label{eqn:traditionalFEM}
	|u - \mathcal{I}_h u|_{1,2,D} \le C h |u|_{2,2,D} \quad \forall u \in H_0^2(D),
\end{equation}
where $\mathcal{I}_h u$ is the piecewise linear interpolation of the solution $u$. In Eqn.~\eqref{eqn:traditionalFEM}, one assumes $u \in H^2(D)$, but this is not the case for elliptic operators with rough coefficients. Secondly, in our projection-type approximation property~\eqref{eqn:poincare1} the error is measured by the ``weaker'' $\|\cdot\|_X$ norm, while in the traditional interpolation type approximation property the error is measured by the ``stronger'' $\|\cdot\|_H$ norm. In this sense, our error estimate relies on weaker assumptions. \revise{As far as we know, this kind of error estimate was first introduced in Proposition 3.6 in~\cite{owhadi2015multi}.}

\begin{proof}\textbf{[Proof of Theorem~\ref{thm:conditioning1}]}
\begin{enumerate}
\item[1.] For an arbitrary $v \in \Psi$, due to the definition of $\Psi$, we can write $v = \CalK( \sum_{i=1}^n c_i \phi_i )$, and thus we get $u - v = \CalK( f - \sum_{i=1}^n c_i \phi_i )$.
By Lemma~\ref{lem:innerproduct}, we have
\begin{equation*}
\begin{split}
	& \|u - v\|_{H}^2 = \left(u - v, f - \sum_{i=1}^n c_i \phi_i\right) \\
	&= \left(u - v - \CalP_{\Phi}^{(X)} (u-v), f - \sum_{i=1}^n c_i \phi_i\right) + \left(\CalP_{\Phi}^{(X)} (u-v), f - \sum_{i=1}^n c_i \phi_i\right).
\end{split}
\end{equation*}
By choosing $c_i$ such that $\sum_{i=1}^n c_i \phi_i = \CalP_{\Phi}^{(X)} (f)$, the second term vanishes. Then, we obtain
\begin{equation*}
\begin{split}
	&\|u - v\|_{H}^2 = \left(u - v - \CalP_{\Phi}^{(X)} (u-v), f - \sum_{i=1}^n c_i \phi_i\right) \\
	&\le \|u - v - \CalP_{\Phi}^{(X)} (u-v)\|_X \|f - \CalP_{\Phi}^{(X)} (f)\|_X \le k_n \|u - v\|_{H} \|f\|_X
\end{split}
\end{equation*}
Therefore, we conclude $\|u - v\|_{H} \le k_n \|f\|_X$.
\item[2.] We use the Aubin--Nistche duality argument to get the estimation in item 2. Let $v = \CalK( u - \CalP_{\Psi}^{(H)} u )$. On one hand, we get
\begin{equation*}
(u - \CalP_{\Psi}^{(H)} u, v - \CalP_{\Psi}^{(H)} v)_{H} = (u - \CalP_{\Psi}^{(H)} u, v)_{H} = (u - \CalP_{\Psi}^{(H)} u, u - \CalP_{\Psi}^{(H)} u)_{X} = \|u - \CalP_{\Psi}^{(H)} u\|_{X}^2.
\end{equation*}
On the other hand, we obtain
%\begin{equation*}
%\begin{split}
%(u - \CalP_{\Psi}^{(H)} u, v - \CalP_{\Psi}^{(H)} v)_{H} & \le \|u - \CalP_{\Psi}^{(H)} u\|_{H} \|v - \CalP_{\Psi}^{(H)} v\|_{H} \\
%&\le k_n \|f\|_X \, k_n \|u - \CalP_{\Psi}^{(H)} u\|_X.
%\end{split}
%\end{equation*}
\begin{equation*}
(u - \CalP_{\Psi}^{(H)} u, v - \CalP_{\Psi}^{(H)} v)_{H} \le \|u - \CalP_{\Psi}^{(H)} u\|_{H} \|v - \CalP_{\Psi}^{(H)} v\|_{H} \le k_n \|f\|_X \, k_n \|u - \CalP_{\Psi}^{(H)} u\|_X.
\end{equation*}
We have used the result of item 1 in the last step. Combining these two estimates, the result follows.
\item[3.] From the last item, we obtain that $\| \CalK f - \CalP_{\Psi}^{(H)} \CalK f \|_X \le k_n^2 \|f\|_X$ for any $f \in X$. Therefore, we conclude $\| \CalK - \CalP_{\Psi}^{(H)} \CalK \| \le k_n^2$. Now, we prove that $\CalP_{\Psi}^{(H)} \CalK$ is self-adjoint. For any $x_1, x_2 \in X$, by definition of $H$-norm we have 
$$(x_1, \CalP_{\Psi}^{(H)} \CalK x_2) = (\CalK x_1, \CalP_{\Psi}^{(H)} \CalK x_2)_H.$$
Since $\CalP_{\Psi}^{(H)}$ is self-adjoint in $H$, we have
$$ (\CalK x_1, \CalP_{\Psi}^{(H)} \CalK x_2)_H = (\CalP_{\Psi}^{(H)} \CalK x_1, \CalK x_2)_H = (\CalP_{\Psi}^{(H)} \CalK x_1, x_2),$$
where we have used the definition of $H$-norm again in the last step.
\end{enumerate}
\end{proof}

%In Theorem~\ref{thm:conditioning1}, based on a projection-type approximation property of $\Phi$ in $H$, i.e., Eqn.~\eqref{eqn:poincare1}, we obtain the error estimates of the MsFEM with multiscale finite element basis $\{\CalK \phi_i\}_{i=1}^n$ in the energy norm, i.e., Eqn.~\eqref{eqn:GFEMH}. We will take $\Phi$ as the discontinuous piecewise polynomial space later. It is well known that a piecewise polynomial basis gives a poor approximation to the solution of an elliptic equation with rough coefficients. However, after applying the solution operator $\CalK$ to $\Phi$, the resulting basis functions $\{\CalK \phi_i\}_{i=1}^n$ have the optimal convergence rate. This is similar in spirit to the generalized finite element (GFEM)~\cite{babuvska1983generalized, strouboulis2001generalized} and the multiscale finite element method (MsFEM)~\cite{hou_multiscale_1997, hughes1998variational,Houbook09}. 

Although the basis functions $\{\CalK \phi_i\}_{i=1}^n$ have good approximation accuracy, they are typically not localized. Therefore, we construct another set of basis functions $\{\psi_i\}_{i=1}^{n}$ for $\Psi$ via the following variational approach, which results in basis functions with good localization properties. For any given $i \in \{1, 2, \ldots, n\}$, consider the following quadratic optimization problem
\begin{equation}\label{eqn:psivariational}
\begin{split}
	\psi_i = \argmin_{\psi \in H} \quad & \|\psi\|_H^2 \\
	\text{s.t.} \quad & (\psi, \phi_j) = \delta_{i,j}, \quad j = 1,2, \ldots, n.
\end{split}
\end{equation}
Define $\Theta \in \R^{n \times n}$ by
\begin{equation} \label{def:Theta}
	\Theta_{i,j} := (\CalK \phi_i, \phi_j).
\end{equation}
It is easy to verify that $\{\CalK \phi_i\}_{i=1}^n$ are linearly independent if and only if $\Theta$ is invertible. We will write $\Theta^{-1}$ as its inverse and $\Theta^{-1}_{i,j}$ as the $(i,j)$th entry of $\Theta^{-1}$. It is not difficult to prove the following properties of $\psi_i$, which is defined as the unique minimizer of Eqn.~\eqref{eqn:psivariational}.
\begin{theorem}\label{thm:variational}
If $\mathrm{null}(\CalK) \cap \Phi = \{\zerovct\}$ holds true, then we have
\begin{enumerate}
\item[1.] The optimization problem~\eqref{eqn:psivariational} admits a unique minimizer $\psi_i$, which can be written as
\begin{equation}\label{eqn:psi1}
	\psi_i = \sum_{j=1}^n \Theta_{i,j}^{-1} \CalK \phi_j.
\end{equation}
\item[2.] For $w \in \R^n$, $\sum_{i=1}^n w_i\psi_i$ is the minimizer of $\|\psi\|_{H}$ subject to $(\phi_j, \psi) = w_j$ for $j = 1,2, \ldots, n$. Moreover, for any $\psi$ which satisfies $(\phi_j, \psi) = w_j$ for $j = 1,2, \ldots, n$, we have
\begin{equation}\label{eqn:psiortho}
	\|\psi\|_H^2 = \left\|\sum_{i=1}^n w_i\psi_i \right\|_H^2 + \left\|\psi - \sum_{i=1}^n w_i\psi_i \right\|_H^2.
\end{equation}
\item[3.] $(\psi_i, \psi_j)_H = \Theta^{-1}_{i,j}$.
\end{enumerate}
\end{theorem}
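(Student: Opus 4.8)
The plan is to exploit the variational characterization directly, using the Hilbert-space structure of $H$ and the matrix $\Theta$ defined in Eqn.~\eqref{def:Theta}. First I would set up the feasible set. The constraints $(\psi, \phi_j) = w_j$ for $j = 1, \dots, n$ define an affine subspace of $H$; by Lemma~\ref{lem:innerproduct}, $H$ embeds continuously in $X$, so each functional $\psi \mapsto (\psi, \phi_j)$ is bounded on $H$ and the feasible set is closed and convex. Nonemptiness (hence existence and uniqueness of the norm-minimizer) follows once we exhibit a feasible point: I claim $\sum_j \Theta^{-1}_{j,k}\, \CalK\phi_k$ works, using that $(\CalK\phi_k, \phi_j) = \Theta_{k,j}$, so $(\sum_k \Theta^{-1}_{i,k}\CalK\phi_k, \phi_j) = \sum_k \Theta^{-1}_{i,k}\Theta_{k,j} = \delta_{i,j}$. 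Here I need $\Theta$ invertible, which is exactly where the hypothesis $\mathrm{null}(\CalK) \cap \Phi = \{\zerovct\}$ enters: if $\Theta c = 0$ then $0 = c^T \Theta c = (\CalK(\sum c_i\phi_i), \sum c_i\phi_i)$, and by the argument in item 1 of Lemma~\ref{lem:defnorm} this forces $\sum c_i \phi_i \in \mathrm{null}(\CalK) \cap \Phi = \{0\}$, hence $c = 0$ since $\{\phi_i\}$ is a basis.

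For item 1, I would characterize the minimizer by orthogonality: $\psi_i$ minimizes $\|\psi\|_H$ over the affine set $\{(\psi,\phi_j) = \delta_{i,j}\}$ iff $\psi_i$ is $H$-orthogonal to the homogeneous space $V_0 := \{v \in H : (v, \phi_j) = 0,\ \forall j\}$. So it suffices to check that $\tilde\psi_i := \sum_j \Theta^{-1}_{i,j}\CalK\phi_j$ is feasible (done above) and that $(\tilde\psi_i, v)_H = 0$ for all $v \in V_0$. But by Eqn.~\eqref{eqn:Hinner} of Lemma~\ref{lem:innerproduct}, $(\CalK\phi_j, v)_H = (\phi_j, v) = 0$ for $v \in V_0$, so $(\tilde\psi_i, v)_H = \sum_j \Theta^{-1}_{i,j}(\CalK\phi_j, v)_H = 0$. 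Hence $\tilde\psi_i = \psi_i$, giving Eqn.~\eqref{eqn:psi1}.

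For item 2, the same projection principle applies with $\delta_{i,j}$ replaced by $w_j$: $\sum_i w_i \psi_i$ is feasible for the constraints $(\phi_j, \psi) = w_j$ (linearity in the constraints) and lies in the $H$-orthogonal complement of $V_0$ (being a linear combination of the $\psi_i$, each orthogonal to $V_0$). Any other feasible $\psi$ differs from $\sum_i w_i\psi_i$ by an element of $V_0$, so $\psi = \sum_i w_i\psi_i + (\psi - \sum_i w_i\psi_i)$ is an orthogonal decomposition in $H$, which is both the minimality claim and the Pythagorean identity~\eqref{eqn:psiortho}. For item 3, I would compute directly: $(\psi_i, \psi_j)_H = (\sum_k \Theta^{-1}_{i,k}\CalK\phi_k,\ \psi_j)_H = \sum_k \Theta^{-1}_{i,k}(\phi_k, \psi_j) = \sum_k \Theta^{-1}_{i,k}\delta_{k,j} = \Theta^{-1}_{i,j}$, using Eqn.~\eqref{eqn:Hinner} and the constraint $(\phi_k, \psi_j) = \delta_{k,j}$.

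The only genuine subtlety — and the step I would be most careful about — is the invertibility of $\Theta$ and its equivalence to linear independence of $\{\CalK\phi_i\}$; everything else is the standard "minimum-norm element of an affine subspace of a Hilbert space is the one orthogonal to the associated linear subspace" argument, repeated three times. A minor point to state cleanly is that $\mathrm{null}(\CalK)\cap\Phi=\{0\}$ is equivalent to $\CalK$ being injective on $\Phi$, which is what makes the quadratic form $c \mapsto (\CalK(\sum c_i\phi_i), \sum c_i\phi_i)$ positive definite and hence $\Theta \succ 0$.
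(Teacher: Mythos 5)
Your proof is correct. The paper itself states Theorem~\ref{thm:variational} without proof (``It is not difficult to prove the following properties\ldots''), so there is no author argument to compare against, but your development is exactly the natural one: reduce invertibility of $\Theta$ to the hypothesis $\mathrm{null}(\CalK)\cap\Phi=\{\zerovct\}$ via positive definiteness of the quadratic form $c\mapsto(\CalK\sum c_i\phi_i,\sum c_i\phi_i)$; identify the feasible set as the affine subspace $\tilde\psi_i + V_0$ with $V_0=\{v\in H:(v,\phi_j)=0\ \forall j\}$ closed by continuity of the embedding $H\hookrightarrow X$; and invoke the minimum-norm characterization (orthogonality to $V_0$), which you verify for $\tilde\psi_i=\sum_j\Theta_{i,j}^{-1}\CalK\phi_j$ directly from the reproducing identity~\eqref{eqn:Hinner}. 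Item 2 then follows from $\sum_i w_i\psi_i$ being feasible and $H$-orthogonal to $V_0$, giving the Pythagorean decomposition~\eqref{eqn:psiortho}, and item 3 is the one-line computation $(\psi_i,\psi_j)_H=\sum_k\Theta_{i,k}^{-1}(\CalK\phi_k,\psi_j)_H=\sum_k\Theta_{i,k}^{-1}(\phi_k,\psi_j)=\Theta_{i,j}^{-1}$. One small presentational point: in the existence claim you wrote ``$\sum_j\Theta^{-1}_{j,k}\,\CalK\phi_k$'' with mismatched free/dummy indices before the corrected display $(\sum_k\Theta^{-1}_{i,k}\CalK\phi_k,\phi_j)=\delta_{i,j}$; and it is worth noting explicitly that $\Theta$ is symmetric (since $\CalK$ is self-adjoint), which is what lets you pass freely between $\Theta^{-1}_{i,k}\Theta_{k,j}$ and $\Theta^{-1}_{i,k}\Theta_{j,k}$.
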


With a good choice of the space $\Phi$ and its basis $\{\phi_i\}_{i=1}^n$, the energy-minimizing basis $\psi_i$, defined in Eqn.~\eqref{eqn:psivariational}, enjoys good localization properties. 
%For example, when $\CalK$ is the solution operator of a second-order elliptic operator, we take $\Phi$ as the space of piecewise constant functions and its basis $\{\phi_i\}_{i=1}^n$ as the indicator function of each patch. 
We will prove that the energy-minimizing basis function $\psi_i$ decays exponentially fast away from its associated patch. The localization property justifies the following local construction of the basis functions:
\begin{equation}\label{eqn:psivariational_local}
\begin{split}
	\psi_{i}^{\mathrm{loc}} = \argmin_{\psi \in H} \quad & \|\psi\|_H^2 \\
	\text{s.t.} \quad & (\psi, \phi_j) = \delta_{i,j}, \quad j = 1,2, \ldots, n,\\
				& \psi(x) \equiv 0, \quad x \in D\backslash S_i,
\end{split}
\end{equation}
where $S_i \subset D$ is a neighborhood of the patch that $\psi_i$ is associated with. Compared with Eqn.~\eqref{eqn:psivariational}, the localized basis $\psi_{i}^{\mathrm{loc}}$ is obtained by solving exactly the same quadratic problem but on a local domain $S_i$. 

To compress elliptic operators with order $2k$, we take $\Phi$ as the space of (discontinuous) piecewise polynomials, with degree no more than $k-1$. We take its basis as $\{\phi_{i,q}\}_{i=1, q=1}^{m, Q}$, where $Q := \binom{k+d-1}{d}$ is the dimension of the $d$-variate polynomial space with degree no more than $k-1$ and $\{\phi_{i,q}\}_{q=1}^{Q}$ is an orthonormal basis of the polynomial space on the patch $\tau_i$. Two main theoretical results in this paper are as follows.
\begin{enumerate}
\item[1.] The basis function $\psi_i$ decays exponentially fast away from its associated patch; see Theorem~\ref{thm:expdecay2simple} and Theorem~\ref{thm:expdecay2}.
\item[2.] The localized basis function $\psi_i^{\mathrm{loc}}$ approximates $\psi_i$ accurately; see Theorem~\ref{thm:localization}. Meanwhile, the compression rate $E_{\mathrm{oc}}(\mathit{\Psi}^{\mathrm{loc}}; \CalL^{-1})$ is the same as $E_{\mathrm{oc}}(\mathit{\Psi}; \CalL^{-1})$; see Theorem~\ref{thm:localizedMsFEM} and Corollary~\ref{col:localizedOC}.
\end{enumerate}

\section{A Projection-type Polynomial Approximation Property}
\label{sec:generalizedPoincare}
The following projection-type polynomial approximation property in the Sobolev space $H^k(D)$ plays an essential role in both obtaining the optimal approximation error and proving the exponential decay of the energy-minimizing basis functions. It can be viewed as a generalized Poincare inequality.
\begin{theorem}\label{thm:generalPoincare}
Suppose $\Omega \subset \R^d$ is affine equivalent to $\hat{\Omega}$, i.e., there exists an invertible affine mapping
\begin{equation}\label{def:affineequiv}
	F : \hat{x} \in \hat{\Omega} \to F(\hat{x}) = B \hat{x} + b \in \Omega
\end{equation}
such that $F(\hat{\Omega}) = \Omega$. Let $h$ be the diameter of $\Omega$ and $\delta h$ be the maximum diameter of a ball inscribed in $\Omega$. Let the mapping $\Pi: H^{k+1}(\Omega) \to \CalP_k(\Omega)$ be the projection onto the polynomial space with degree no greater than $k$ in $L^2(\Omega)$. Then, there exists a constant $C(k, \hat{\Omega})$ such that for any $u \in H^{k+1}(\Omega)$ and any $0 \le p \le k+1$
\begin{equation}\label{eqn:generalPoincare0}
	|u - \Pi u|_{p,2,\Omega} \le C(k, \hat{\Omega}) \delta^{-p} h^{k-p+1} |u|_{k+1,2,\Omega}.
\end{equation}
\end{theorem}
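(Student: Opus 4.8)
The plan is to reduce everything to the reference domain $\hat\Omega$ via the affine map $F$, prove the estimate there with a compactness/Bramble--Hilbert argument, and then pull back, tracking the powers of $h$ and $\delta$ carefully through the chain rule and the change-of-variables formula. First I would establish, for $\hat u = u\circ F \in H^{k+1}(\hat\Omega)$, the scaled equivalences between Sobolev seminorms on $\Omega$ and on $\hat\Omega$: since $\|B\|\lesssim h$ and $\|B^{-1}\|\lesssim (\delta h)^{-1}$ (the latter because a ball of radius $\delta h/2$ sits inside $\Omega$, so $B^{-1}$ maps it into a set containing a ball of fixed radius in $\hat\Omega$), one gets $|v|_{p,2,\Omega}\lesssim h^{p}\,|\det B|^{-1/2}\,|\hat v|_{p,2,\hat\Omega}$ and $|\hat v|_{p,2,\hat\Omega}\lesssim (\delta h)^{p}\,|\det B|^{1/2}\,|v|_{p,2,\Omega}$ for any admissible $v$, with constants depending only on $p$, $k$, $d$. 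These are the standard affine-scaling lemmas (as in Ciarlet) and I would just cite or quickly reprove them.

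Next I would need the key commutation fact that the $L^2(\Omega)$-orthogonal projection $\Pi$ onto $\CalP_k(\Omega)$ corresponds, under pullback, to the $L^2(\hat\Omega)$-orthogonal projection $\hat\Pi$ onto $\CalP_k(\hat\Omega)$ up to the Jacobian weight. Since $F$ is affine, $\CalP_k(\Omega)\circ F = \CalP_k(\hat\Omega)$, and the change of variables turns the $L^2(\Omega)$ inner product into a constant multiple ($|\det B|$) of the $L^2(\hat\Omega)$ inner product; hence $\widehat{\Pi u} = \hat\Pi\hat u$ exactly. This is what lets me write $u-\Pi u$ as the pullback of $\hat u - \hat\Pi\hat u$ and apply the reference-domain estimate to the latter.

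On $\hat\Omega$ I would prove: for all $\hat u\in H^{k+1}(\hat\Omega)$ and $0\le p\le k+1$,
\begin{equation*}
	|\hat u - \hat\Pi\hat u|_{p,2,\hat\Omega} \le C(k,\hat\Omega)\,|\hat u|_{k+1,2,\hat\Omega}.
\end{equation*}
For $p=k+1$ this is immediate since $\hat\Pi\hat u$ is a polynomial of degree $k$, so its $(k+1)$st derivatives vanish, giving equality with constant $1$. For $0\le p\le k$ this is a Bramble--Hilbert argument: the linear functional $\hat u\mapsto |\hat u - \hat\Pi\hat u|_{p,2,\hat\Omega}$ is bounded on $H^{k+1}(\hat\Omega)$, and it annihilates $\CalP_k(\hat\Omega)$ (because $\hat\Pi$ fixes polynomials of degree $\le k$), so by the Deny--Lions / Bramble--Hilbert lemma it is controlled by $|\hat u|_{k+1,2,\hat\Omega}$ alone, with a constant depending on $\hat\Omega$, $k$, $p$. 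I would take $C(k,\hat\Omega)$ to be the maximum of these finitely many constants over $0\le p\le k+1$.

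Finally I would assemble the bound: starting from $|u-\Pi u|_{p,2,\Omega}$, apply the first scaling inequality with $v = u-\Pi u$ (whose pullback is $\hat u-\hat\Pi\hat u$), then the reference estimate, then the second scaling inequality with $v=u$ applied to $|\hat u|_{k+1,2,\hat\Omega}$; the $|\det B|^{\pm1/2}$ factors cancel, and the $h$ and $\delta$ powers combine to $h^{p}\cdot(\delta h)^{k+1} \cdot h^{-(k+1)} \sim \delta^{k+1}h^{p}$ — wait, that is the wrong direction, so I must be careful which scaling inequality carries the $\delta$: the estimate from $\hat\Omega$ back to $\Omega$ on the highest-order term uses $\|B^{-1}\|$, contributing $(\delta h)^{-(k+1)}$, while bounding $|\hat u-\hat\Pi\hat u|$ by its pullback from $\Omega$ uses $\|B\|\lesssim h$, contributing $h^{p}$; the net is $h^{p}(\delta h)^{-(k+1)}h^{k+1}\cdot h^{\,?}$, and matching against the claimed $\delta^{-p}h^{k-p+1}$ tells me the correct split is to scale the $p$-seminorm of $u-\Pi u$ down to $\hat\Omega$ (factor $\sim(\delta h)^{p}$ from $\|B^{-1}\|^p$) and scale $|\hat u|_{k+1}$ from $\hat\Omega$ down... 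The hard part — and the step I would be most careful about — is exactly this bookkeeping of which of $\|B\|\sim h$ and $\|B^{-1}\|\sim(\delta h)^{-1}$ enters each of the two seminorm transfers, since the anisotropy parameter $\delta$ only enters through $\|B^{-1}\|$; once the correct pairing is fixed (scale $u-\Pi u$ using $B^{-1}$, giving the $\delta^{-p}$ after one more scaling of the reference bound's right side back through $B$ using $\|B\|$, i.e. using $|\hat u|_{k+1,2,\hat\Omega}\le C\|B\|^{k+1}|\det B|^{1/2}|u|_{k+1,2,\Omega}\lesssim h^{k+1}|\det B|^{1/2}|u|_{k+1}$), the $h^{k-p+1}$ and $\delta^{-p}$ fall out and the determinant cancels. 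Everything else is routine.
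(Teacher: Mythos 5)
Your proposal is correct and follows essentially the same route as the paper: the central observation in both is that the $L^2(\Omega)$-orthogonal projection onto $\CalP_k(\Omega)$ commutes with affine pullback (i.e.\ $\widehat{\Pi u}=\hat\Pi\hat u$), after which the estimate reduces to a Bramble--Hilbert/Deny--Lions bound on $\hat\Omega$ together with the standard affine scaling of Sobolev seminorms. The only difference is packaging: the paper cites the combination of those last two steps as Ciarlet's Theorem 3.1.4 (its Theorem~\ref{thm:Ciarlet3.1.4}), whereas you unpack and reprove it; the slightly muddled bookkeeping in your last paragraph (and the sign typo in the exponent of $|\det B|$ for the second scaling, which should be $-1/2$ so that the determinants cancel) does resolve to the correct final accounting $\|B^{-1}\|^{p}\,\|B\|^{k+1}\lesssim (\delta h)^{-p}h^{k+1}=\delta^{-p}h^{k-p+1}$.
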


To prove Theorem~\ref{thm:generalPoincare}, we use a basic result about the Sobolev spaces, due to J. Deny and J.L. Lions, which pervades the mathematical analysis of the finite element method: over the quotient space $H^{k+1}(D)/\CalP_k(D)$, the seminorm $|\cdot|_{k+1, D}$ is a norm equivalent to the quotient norm. We will use the following theorem (Theorem 3.1.4 in \cite{ciarlet2002finite}), to prove Theorem~\ref{thm:generalPoincare}.
\begin{theorem}\label{thm:Ciarlet3.1.4}
For some integers $k \ge 0$ and $m \ge 0$, let $H^{k+1}(\hat{\Omega})\equiv W^{k+1,2}(\hat{\Omega})$ and $H^m(\hat{\Omega})\equiv W^{m,2}(\hat{\Omega})$ be Sobolev spaces satisfying the inclusion
\begin{equation*}
	H^{k+1}(\hat{\Omega}) \subset H^m(\hat{\Omega}),
\end{equation*}
and let $\hat{\Pi} : H^{k+1}(\hat{\Omega}) \to H^m(\hat{\Omega})$ be a continuous linear mapping such that 
\begin{equation*}
	\hat{\Pi} \hat{p} = \hat{p}, \qquad \forall \hat{p}\in \CalP_k(\hat{\Omega}).
\end{equation*}
For any open set $\Omega$ which is affine equivalent to the set $\hat{\Omega}$ (see Eqn.~\eqref{def:affineequiv}), let the mapping $\Pi_{\Omega}$ be defined by
\begin{equation*}
	\widehat{\Pi_{\Omega} v} = \hat{\Pi} \hat{v},
\end{equation*}
for all functions $\hat{v} \in H^{k+1}(\hat{\Omega})$ and $v \in H^{k+1}(\Omega)$ in the correspondence $(\hat{v}: \hat{\Omega}\to \R) \to (v = \hat{v}\circ F^{-1}: \Omega \to \R)$. Then, there exists a constant $C(\hat{\Pi}, \hat{\Omega})$ such that, for all affine-equivalent sets $\Omega$, 
\begin{equation}\label{eqn:Ciarlet3.1.4}
	|v - \Pi_{\Omega} v|_{m,2,\Omega} \le C(\hat{\Pi}, \hat{\Omega}) \delta^{-m} h^{k-m+1} |v|_{k+1,2,\Omega}, \qquad \forall v \in H^{k+1}(\Omega),
\end{equation}
where $h = \text{diam}(\Omega)$ and $\delta h$ is the diameter of the biggest ball contained in $\Omega$.
\end{theorem}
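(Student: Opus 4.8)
The plan is to run the classical affine-equivalence argument: establish the estimate first on the fixed reference set $\hat\Omega$, where it collapses to the Deny--Lions lemma quoted above, and then push it forward to an arbitrary affine-equivalent $\Omega$ by change of variables, tracking the geometric constants through the affine map $F$. Throughout, write $\hat h = \text{diam}(\hat\Omega)$ and let $\hat\rho$ denote the diameter of the largest ball contained in $\hat\Omega$.

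First, on $\hat\Omega$ I would argue as follows: the map $I-\hat\Pi:H^{k+1}(\hat\Omega)\to H^m(\hat\Omega)$ is continuous and linear and vanishes on $\CalP_k(\hat\Omega)$, so for every $\hat v\in H^{k+1}(\hat\Omega)$ and every $\hat p\in\CalP_k(\hat\Omega)$ one has $|\hat v-\hat\Pi\hat v|_{m,2,\hat\Omega}=|(I-\hat\Pi)(\hat v+\hat p)|_{m,2,\hat\Omega}\le \|I-\hat\Pi\|\,\|\hat v+\hat p\|_{k+1,2,\hat\Omega}$, where $\|I-\hat\Pi\|$ is the operator norm on $\mathcal{L}(H^{k+1}(\hat\Omega),H^m(\hat\Omega))$. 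Taking the infimum over $\hat p$ and invoking the Deny--Lions result (the seminorm $|\cdot|_{k+1,2,\hat\Omega}$ is equivalent to the quotient norm on $H^{k+1}(\hat\Omega)/\CalP_k(\hat\Omega)$, with constant $C(\hat\Omega)$) yields
\begin{equation*}
|\hat v-\hat\Pi\hat v|_{m,2,\hat\Omega}\le \hat C\,|\hat v|_{k+1,2,\hat\Omega},\qquad \hat C:=\|I-\hat\Pi\|\cdot C(\hat\Omega),
\end{equation*}
a constant depending only on $\hat\Pi$ and $\hat\Omega$.

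Next, the transfer step. For $w$ on $\Omega$ and $\hat w=w\circ F$ on $\hat\Omega$, the chain rule (with $F$ affine, so no lower-order terms appear) together with $\rd\hat x=|\det B|^{-1}\rd x$ gives the standard scaling bounds $|\hat w|_{m,2,\hat\Omega}\le C\,\|B\|^{m}\,|\det B|^{-1/2}\,|w|_{m,2,\Omega}$ and $|v|_{k+1,2,\Omega}\le C\,\|B^{-1}\|^{k+1}\,|\det B|^{1/2}\,|\hat v|_{k+1,2,\hat\Omega}$, with $C$ depending only on $d,m,k$. I apply the first with $w=v-\Pi_\Omega v$, noting $\hat w=\hat v-\hat\Pi\hat v$ by the defining relation $\widehat{\Pi_\Omega v}=\hat\Pi\hat v$; then the reference estimate; then the second bound. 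The factors $|\det B|^{\pm1/2}$ cancel, giving
\begin{equation*}
|v-\Pi_\Omega v|_{m,2,\Omega}\le C^{2}\hat C\,\|B^{-1}\|^{m}\,\|B\|^{k+1}\,|v|_{k+1,2,\Omega}.
\end{equation*}
It then remains to bound the operator norms geometrically: since $\hat\Omega$ contains a ball of diameter $\hat\rho$, any $\hat z$ with $|\hat z|=\hat\rho$ is a difference $\hat x-\hat y$ of two points of $\hat\Omega$, hence $|B\hat z|=|F(\hat x)-F(\hat y)|\le\text{diam}(\Omega)=h$, so $\|B\|\le h/\hat\rho$; symmetrically, using the ball of diameter $\delta h$ inside $\Omega$ and $\text{diam}(\hat\Omega)=\hat h$, one gets $\|B^{-1}\|\le \hat h/(\delta h)$. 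Substituting, $\|B^{-1}\|^m\|B\|^{k+1}\le \hat h^m\hat\rho^{-(k+1)}\,\delta^{-m}h^{k-m+1}$, whence
\begin{equation*}
|v-\Pi_\Omega v|_{m,2,\Omega}\le C(\hat\Pi,\hat\Omega)\,\delta^{-m}\,h^{k-m+1}\,|v|_{k+1,2,\Omega},\qquad C(\hat\Pi,\hat\Omega):=C^{2}\hat C\,\hat h^{m}\hat\rho^{-(k+1)},
\end{equation*}
which depends only on $\hat\Pi$ and $\hat\Omega$ (and $d,m,k$). The bound is uniform over all affine-equivalent $\Omega$ precisely because $C(\hat\Pi,\hat\Omega)$ carries no dependence on $B$ or $b$.

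The only genuinely nontrivial analytic ingredient is the Deny--Lions lemma used in the first step (which rests on the compact embedding $H^{k+1}(\hat\Omega)\hookrightarrow H^{k}(\hat\Omega)$ and is taken as given here); everything else is bookkeeping. The part that needs care rather than cleverness is the transfer step: one must verify that differentiating $w\circ F$ to order $j$ produces only order-$j$ derivatives of $w$, each multiplied by a product of $j$ entries of $B$, so that summing squares and changing variables produces exactly the claimed powers of $\|B\|$, $\|B^{-1}\|$, and $|\det B|$, with the Jacobian factors cancelling between the two one-sided estimates.
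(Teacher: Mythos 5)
The paper does not prove this statement; it is imported verbatim from Ciarlet's book (Theorem~3.1.4 there), so there is no internal proof to compare against. Your argument reproduces the standard one in that reference: a Deny--Lions estimate on $\hat\Omega$ for $I-\hat\Pi$ (exploiting that $I-\hat\Pi$ kills $\CalP_k(\hat\Omega)$), an affine change of variables using the scaling law for Sobolev seminorms, and the geometric bounds $\|B\|\le h/\hat\rho$, $\|B^{-1}\|\le\hat h/(\delta h)$.

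One slip worth fixing: the two scaling inequalities you display are each stated in the direction opposite to what your chain needs. To start the chain you need
\begin{equation*}
|v-\Pi_\Omega v|_{m,2,\Omega}\le C\,\|B^{-1}\|^{m}\,|\det B|^{1/2}\,|\hat v-\hat\Pi\hat v|_{m,2,\hat\Omega},
\end{equation*}
and to finish it you need
\begin{equation*}
|\hat v|_{k+1,2,\hat\Omega}\le C\,\|B\|^{k+1}\,|\det B|^{-1/2}\,|v|_{k+1,2,\Omega},
\end{equation*}
whereas what you wrote bounds $|\hat w|_{m,2,\hat\Omega}$ by $|w|_{m,2,\Omega}$ and $|v|_{k+1,2,\Omega}$ by $|\hat v|_{k+1,2,\hat\Omega}$; as written these go the wrong way and cannot be chained. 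Both directions of the scaling lemma are standard and your final constant $C^2\hat C\,\hat h^m\hat\rho^{-(k+1)}$ and powers of $\|B\|,\|B^{-1}\|$ are the correct ones, so this is a transcription error rather than a conceptual gap, but the displayed inequalities should be swapped.
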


By specializing the operator $\hat{\Pi}$ to be the projection of $H^{k+1}(\hat{\Omega})$ to the polynomial space $\CalP_k(\hat{\Omega})$ in $L^2(\hat{\Omega})$, we can prove Theorem~\ref{thm:generalPoincare}.
\begin{proof} [\bf{Proof of Theorem~\ref{thm:generalPoincare}}]
Let $\hat{\Pi}: H^{k+1}(\hat{\Omega}) \to \CalP_k(\hat{\Omega})$ be the orthogonal projection in $L^2(\hat{\Omega})$. Let $F:\hat{\Omega} \to \Omega$ be the invertible linear map and write $F(\hat{x}) = B \hat{x} + b$. Define $\Pi_{\Omega}$ as 
\begin{equation*}
	\widehat{\Pi_{\Omega} v} = \hat{\Pi} \hat{v},
\end{equation*}
for all functions $\hat{v} \in H^{k+1}(\hat{\Omega})$ and $v \in H^{k+1}(\Omega)$ in the correspondence of the linear mapping. In the following, we prove that $\Pi_{\Omega}:H^{k+1}(\Omega) \to H^{k+1}(\Omega)$ is indeed the orthogonal projection from $H^{k+1}(\Omega)$ to $\CalP_k(\Omega)$ in $L^2(\Omega)$.

First of all, we have $\Pi_{\Omega} v = (\hat{\Pi} \hat{v}) \circ F^{-1}$ from definition. Since $\hat{\Pi} \hat{v} \in \CalP_k(\hat{\Omega})$, we have $\Pi_{\Omega} v \in \CalP_k(\Omega)$. Secondly, for any $v \in \CalP_k(\Omega)$, $\hat{v} = v\circ F \in \CalP_k(\hat{\Omega})$, and thus $\hat{\Pi} \hat{v} = \hat{v}$ by the definition of $\hat{\Pi}$. Therefore, we have $\Pi_{\Omega} v = \hat{v} \circ F^{-1} = v$ for any $v \in \CalP_k(\Omega)$. Thirdly, by changing variable with $x = F(\hat{x})$, for any $v \in H^{k+1}(\Omega)$ and any $p(x) \in \CalP_k(\Omega)$, we have
\begin{equation*}
	\int_{\Omega} \left(v(x) - (\Pi_{\Omega} v) (x)\right) p(x) \rd x  = \int_{\hat{\Omega}} \left(\hat{v}(\hat{x}) - (\hat{\Pi} \hat{v}) (\hat{x})\right) \hat{p}(\hat{x}) \rd \hat{x} \det{B} = 0.
\end{equation*}
In the last equality, we have used the fact that $\hat{p} \in \CalP_k(\hat{\Omega})$ if $p \in \CalP_k(\Omega)$ and the fact that $\hat{\Pi}: H^{k+1}(\hat{\Omega}) \to \CalP_k(\hat{\Omega})$ is the orthogonal projection in $L^2(\hat{\Omega})$. Therefore, the kernel space of $\Pi_{\Omega}$ is orthogonal to its range space, i.e., $\CalP_k(\Omega)$. With the three points above, we have proved that $\Pi_{\Omega}$ is the orthogonal projection from $H^{k+1}(\Omega)$ to $\CalP_k(\Omega)$ in $L^2(\Omega)$. 

Finally, applying Theorem~\ref{thm:Ciarlet3.1.4} with $\hat{\Pi}$ and $\Pi_{\Omega}$ above, we prove Theorem~\ref{thm:generalPoincare} with the constant $C(k, \hat{\Omega}) := C(\hat{\Pi}, \hat{\Omega})$ in Eqn.~\eqref{eqn:Ciarlet3.1.4}.
\end{proof}

We also give the following theorem, which is a direct result of the Friedrichs' inequality; see, e.g., \cite{Nikolskii1975}.
\begin{theorem}\label{thm:generalFriedrich}
Let $\Omega_h$ be a smooth, bounded, open subset of $\R^d$ with diameter at most $h$. There exists a positive constant $C_f$ such that
\begin{equation}\label{eqn:generalFriedrich}
	|u|_{p,2,\Omega_h} \le C_f h^{k-p} |u|_{k,2,\Omega_h} \quad \forall u \in H_0^k(\Omega_h).
\end{equation}
Here, $C_f = C_f(d,k)$ depends only on the physical dimension $d$ and the order of the derivative $k$.
\end{theorem}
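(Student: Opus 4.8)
The plan is to transplant the inequality onto a fixed reference cube by means of the zero extension and a dilation, and then to deduce the resulting scale-invariant statement from the classical Friedrichs (Poincar\'e) inequality for functions with vanishing trace. Since $\Omega_h$ has diameter at most $h$, after a translation it is contained in a cube $Q_h$ of side length $2h$. Given $u \in H_0^k(\Omega_h)$, extend it by zero to all of $\R^d$; the extension, still denoted $u$, belongs to $H_0^k(Q_h)$, and for every $0 \le p \le k$ one has $|u|_{p,2,\Omega_h} = |u|_{p,2,Q_h}$ because all weak derivatives of $u$ vanish outside $\Omega_h$. Hence it suffices to prove the estimate with $\Omega_h$ replaced by $Q_h$.

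Next I would rescale to the unit cube. After a further translation assume $Q_h = (0,2h)^d$, and set $\hat u(\hat x) := u(2h\,\hat x)$ for $\hat x \in Q_1 := (0,1)^d$, so that $\hat u \in H_0^k(Q_1)$. The chain rule $D^\sigma \hat u(\hat x) = (2h)^{|\sigma|}(D^\sigma u)(2h\hat x)$ together with the change of variables $x = 2h\hat x$ gives, for every $0 \le q \le k$,
\begin{equation*}
	|\hat u|_{q,2,Q_1} = (2h)^{\,q-d/2}\,|u|_{q,2,Q_h}.
\end{equation*}
Consequently the desired bound on $Q_h$ is equivalent to the scale-invariant inequality
\begin{equation*}
	|\hat u|_{p,2,Q_1} \le C'\,|\hat u|_{k,2,Q_1}\qquad\forall\,\hat u \in H_0^k(Q_1),\ 0 \le p \le k,
\end{equation*}
with $C' = C'(d,k)$ independent of $h$; the two constants differ only by a power of $2$, which can be absorbed since $p\le k$.

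Finally I would prove this reference inequality by iterating the classical Friedrichs inequality $\|v\|_{0,2,Q_1} \le C_0\,|v|_{1,2,Q_1}$ for $v \in H_0^1(Q_1)$. If $\hat u \in H_0^k(Q_1)$ and $|\sigma| = p \le k-1$, then $D^\sigma \hat u \in H_0^{k-p}(Q_1) \subset H_0^1(Q_1)$; applying the inequality to $v = D^\sigma \hat u$ and summing over $|\sigma| = p$ gives $|\hat u|_{p,2,Q_1}^2 \le C_0^2\,d\,|\hat u|_{p+1,2,Q_1}^2$, where the factor $d$ bounds the multiplicity with which a multiindex of order $p+1$ is produced. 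Iterating from $p$ up to $k$ yields $|\hat u|_{p,2,Q_1} \le (C_0\sqrt d)^{\,k-p}|\hat u|_{k,2,Q_1}$, and tracing this back through the rescaling and the extension gives $|u|_{p,2,\Omega_h} \le C_f\,h^{k-p}|u|_{k,2,\Omega_h}$ with $C_f = C_f(d,k)$.

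There is no genuine obstacle here beyond bookkeeping; the only points deserving care are that the zero extension of an $H_0^k(\Omega_h)$ function across $\partial\Omega_h$ remains in $H_0^k$ of the surrounding cube, and that $D^\sigma \hat u \in H_0^1(Q_1)$ whenever $|\sigma| \le k-1$ — both of which follow by approximating $\hat u$ in $H^k$ by functions in $C_c^\infty(Q_1)$ and commuting differentiation with the limit. This is why the statement can legitimately be advertised as ``a direct result of the Friedrichs' inequality.''
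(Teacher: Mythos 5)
Your proof is correct. The paper does not actually supply a proof of this theorem; it only remarks that it ``is a direct result of the Friedrichs' inequality'' and points to a reference, so there is no argument in the paper to compare against. Your three-step derivation---zero extension of a $H_0^k(\Omega_h)$ function to an enclosing cube of side $2h$, rescaling to the unit cube to make the claim scale-invariant, then iterating the classical $H_0^1$ Friedrichs inequality on the reference cube while tracking the multiplicity factor $d$---is exactly the standard way to make that remark precise, and all the scale factors check out: the chain rule gives $|\hat u|_{q,2,Q_1}=(2h)^{q-d/2}|u|_{q,2,Q_h}$, so the $h$-dependence cancels and returns as the desired power $h^{k-p}$ times $(2C_0\sqrt d)^{k-p}$, which is bounded by a constant depending only on $d,k$ because $0\le k-p\le k$. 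The two technical points you flagged are also handled correctly: the zero extension of an $H_0^k(\Omega_h)$ function lies in $H_0^k$ of the enclosing cube, and $D^\sigma\hat u\in H_0^{k-|\sigma|}(Q_1)$ for $|\sigma|\le k-1$, both by approximating from $C_c^\infty$ and passing to the limit. One minor observation: your argument never uses the smoothness assumption on $\Omega_h$, since zero extension across $\partial\Omega_h$ works for $H_0^k$ functions on any open set; so your proof actually yields the conclusion for an arbitrary bounded open $\Omega_h$ of diameter at most $h$.
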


\section{An inverse energy estimation by scaling}
\label{sec:inverseenergy}
In the sparse operator compression, we will show that for a large set of compact operators, the basis functions $\{\psi_i\}_{i=1}^n$ constructed in~\eqref{eqn:psivariational} have exponentially decaying tails, which makes localization of these basis functions possible. The following lemma plays a key role in proving such exponential decay property.
\begin{lemma}\label{lem:scaling}
Let $\Omega_{h}$ be a smooth, bounded, open subset of $\R^d$ with diameter at most $h$ and $B(0, \delta h/2) \subset \Omega_{h}$ for some $\delta > 0$. For $k \in \N$, consider the operator $\CalL = (-1)^k \sum_{|\sigma| = k} D^{2 \sigma}$ with the homogeneous Dirichlet boundary condition on $\partial \Omega_{h}$, i.e.,
\begin{equation}\label{eqn:highorderL}
\begin{split}
	(-1)^k \sum_{|\sigma| = k} D^{2 \sigma} u_h(x) = f(x) &\qquad x \in \Omega_{h},\\
	u_h \in H_0^k(\Omega_{h}).
\end{split}
\end{equation}
Let $\CalP_{s}$ be the space of polynomials with order not greater than $s$. For $\gamma \ge 0$, there exists $C(k, s, d, \delta) > 0$, such that
\begin{equation}\label{eqn:invPoincare}
	\|\CalL u_h\|_{L^2(\Omega_{h})} \le C(k, s, d, \delta) h^{-k}  | u_h|_{k,2,\Omega_{h}}, \quad \forall u_h \in \CalL^{-1} \CalP_{s-1}.
\end{equation}
\end{lemma}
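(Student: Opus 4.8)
The plan is to reduce the estimate on $\Omega_h$ to a scale-invariant estimate on a reference domain of unit size, and then to establish that reference-scale estimate by a compactness/finite-dimensionality argument. First I would introduce the affine rescaling $x = h\hat x$, so that $\Omega_h = h\hat\Omega$ with $\hat\Omega$ a smooth bounded domain of diameter at most $1$ containing $B(0,\delta/2)$, and track how each object transforms: if $u_h(x) = \hat u(\hat x)$, then $|u_h|_{k,2,\Omega_h} = h^{d/2-k}|\hat u|_{k,2,\hat\Omega}$, $\|\CalL u_h\|_{L^2(\Omega_h)} = h^{d/2-2k}\|\hat{\CalL}\hat u\|_{L^2(\hat\Omega)}$ where $\hat{\CalL} = (-1)^k\sum_{|\sigma|=k}D^{2\sigma}$ is the same operator on $\hat\Omega$, and the polynomial space $\CalP_{s-1}$ is invariant under the rescaling. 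Consequently $\CalL^{-1}\CalP_{s-1}$ on $\Omega_h$ (with homogeneous Dirichlet data) corresponds exactly to $\hat{\CalL}^{-1}\CalP_{s-1}$ on $\hat\Omega$, and the claimed inequality \eqref{eqn:invPoincare} is equivalent to the scale-free statement
\begin{equation}\label{eqn:refscaling}
	\|\hat{\CalL}\hat u\|_{L^2(\hat\Omega)} \le C(k,s,d,\delta)\,|\hat u|_{k,2,\hat\Omega}, \qquad \forall \hat u \in \hat{\CalL}^{-1}\CalP_{s-1}(\hat\Omega).
\end{equation}

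Next I would prove \eqref{eqn:refscaling}. The key observation is that $W := \hat{\CalL}^{-1}\CalP_{s-1}(\hat\Omega)$ is a finite-dimensional subspace of $H_0^k(\hat\Omega)$, of dimension at most $\dim\CalP_{s-1} = \binom{s-1+d}{d}$, since $\hat{\CalL}: H_0^k(\hat\Omega)\to H^{-k}(\hat\Omega)$ is an isomorphism (the bilinear form $\sum_{|\sigma|=k}\int D^\sigma u\, D^\sigma v$ is coercive on $H_0^k$ by the Friedrichs/Poincaré inequality, Theorem~\ref{thm:generalFriedrich}, which controls all lower derivatives by $|\cdot|_{k,2}$). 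On the finite-dimensional space $W$, both $\hat u\mapsto\|\hat{\CalL}\hat u\|_{L^2(\hat\Omega)}$ and $\hat u\mapsto|\hat u|_{k,2,\hat\Omega}$ are norms: the latter is a norm on $H_0^k(\hat\Omega)$ by Friedrichs, and the former is a norm because $\hat{\CalL}\hat u = 0$ forces $\hat u = 0$ (again by coercivity/injectivity of $\hat{\CalL}$). Since all norms on a finite-dimensional space are equivalent, there is a constant $C$ with $\|\hat{\CalL}\hat u\|_{L^2(\hat\Omega)}\le C|\hat u|_{k,2,\hat\Omega}$ for all $\hat u\in W$; this is \eqref{eqn:refscaling}. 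Rescaling back via the transformation rules above, the powers of $h$ combine as $h^{d/2-2k} = h^{-k}\cdot h^{d/2-k}$, so the $h^{d/2}$ factors cancel and we obtain exactly $\|\CalL u_h\|_{L^2(\Omega_h)}\le C\,h^{-k}|u_h|_{k,2,\Omega_h}$.

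The one subtlety — and the main thing to be careful about — is that the constant $C$ in \eqref{eqn:refscaling} a priori depends on the specific reference domain $\hat\Omega$, whereas the lemma asserts dependence only on $(k,s,d,\delta)$. To handle this one must argue uniformity over the family of all smooth bounded $\hat\Omega$ of diameter $\le 1$ containing $B(0,\delta/2)$. I would do this by tracing the two places where $\hat\Omega$ enters: the Friedrichs constant in Theorem~\ref{thm:generalFriedrich} already depends only on $(d,k)$ (and the diameter bound, here normalized to $1$), and the norm-equivalence constant can be made uniform because the relevant quantities are controlled by the inclusions $B(0,\delta/2)\subset\hat\Omega\subset B(0,1)$ — e.g. one can bound $\|\hat{\CalL}\hat u\|_{L^2(\hat\Omega)}$ below by testing $\hat{\CalL}\hat u = p\in\CalP_{s-1}$ against itself, using that $\hat u\in H_0^k$ so $\int p\,\hat u = \int \sum_{|\sigma|=k}(D^\sigma\hat u)^2 = |\hat u|_{k,2,\hat\Omega}^2$, and then bounding $|\hat u|_{k,2}$ in terms of $\|p\|_{L^2}$ via the interior ball $B(0,\delta/2)$ together with the equivalence of the $L^2(B(0,\delta/2))$ and $L^2(\hat\Omega)$ norms on the finite-dimensional space $\CalP_{s-1}$ (whose comparison constant depends only on $s,d,\delta$). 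This gives $|\hat u|_{k,2,\hat\Omega}^2 = \int p\,\hat u \le \|p\|_{L^2(\hat\Omega)}|\hat u|_{0,2,\hat\Omega}\le C(d,k)\|p\|_{L^2(\hat\Omega)}|\hat u|_{k,2,\hat\Omega}$, hence $|\hat u|_{k,2,\hat\Omega}\le C(d,k)\|\hat{\CalL}\hat u\|_{L^2(\hat\Omega)}$; running a similar chain in the other direction (now using the interior ball to see $\|p\|_{L^2(\hat\Omega)}$ is comparable to $\|p\|_{L^2(B(0,\delta/2))}$, and controlling $\hat{\CalL}^{-1}$ on that ball) yields the reverse inequality with a constant depending only on $(k,s,d,\delta)$. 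Assembling the uniform reference estimate and undoing the rescaling completes the proof.
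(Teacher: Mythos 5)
The rescaling step and the finite-dimensionality observation are both correct and match the paper's strategy: the paper packages the resulting norm equivalence as a generalized eigenvalue problem $M_1\preceq C^2 S_1$ between the mass matrix $M_1$ and the Green's-function matrix $S_1$ of the monomial basis on the unit-scale domain, which is precisely the statement that your two norms on the $Q$-dimensional space $\CalP_{s-1}$ (equivalently $\hat{\CalL}^{-1}\CalP_{s-1}$) are equivalent. Up to that point the two arguments coincide.

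The genuine gap is in your uniformity paragraph. The inequality the lemma asserts is $\|p\|_{L^2(\hat\Omega)}\le C(k,s,d,\delta)\,|\hat u|_{k,2,\hat\Omega}$ (with $p=\hat{\CalL}\hat u$). The explicit chain you write out,
\begin{equation*}
|\hat u|_{k,2,\hat\Omega}^2 = \int p\,\hat u \le \|p\|_{L^2(\hat\Omega)}\|\hat u\|_{L^2(\hat\Omega)}\le C(d,k)\|p\|_{L^2(\hat\Omega)}|\hat u|_{k,2,\hat\Omega},
\end{equation*}
proves the \emph{reverse} bound $|\hat u|_{k,2}\le C\|p\|_{L^2}$, not the one needed. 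For the direction you actually need you write only that one should run "a similar chain in the other direction, \ldots controlling $\hat{\CalL}^{-1}$ on that ball," but there is no obvious symmetric argument: the problem is to bound the energy $\int p\,\hat u = |\hat u|_{k,2,\hat\Omega}^2$ \emph{from below} by $\|p\|_{L^2(\hat\Omega)}^2$ with a constant independent of $\hat\Omega$, and this is exactly the crux of the lemma, not a routine variant of the Cauchy--Schwarz/Friedrichs chain. What is missing is the domain monotonicity of the Green's-function quadratic form, the content of the paper's Lemma~\ref{lem:compare2}: for $B(0,\delta/2)\subset\hat\Omega$ and any $p\in L^2$,
\begin{equation*}
\int_{B(0,\delta/2)}p\,\bigl(\hat{\CalL}^{-1}_{B(0,\delta/2)}p\bigr) \;\le\; \int_{\hat\Omega}p\,\bigl(\hat{\CalL}^{-1}_{\hat\Omega}p\bigr),
\end{equation*}
which follows from minimizing the Dirichlet functional $\tfrac12|u|_{k,2}^2-\int up$ over the nested spaces $H_0^k(B(0,\delta/2))\subset H_0^k(\hat\Omega)$. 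Without this (or an equivalent) step, the passage from the $\hat\Omega$-dependent norm-equivalence constant to a constant depending only on $(k,s,d,\delta)$ is unjustified; with it, the remainder of your sketch (the comparison of $\|p\|_{L^2(\hat\Omega)}$ with $\|p\|_{L^2(B(0,\delta/2))}$ via the finite-dimensionality of $\CalP_{s-1}$, and the reference calculation on the fixed ball) does close the argument and is what the paper carries out in Appendix~\ref{app:moreonscaling}.
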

\begin{proof}
Let $G_h$ be the Green's function of Eqn.~\eqref{eqn:highorderL}. After multiplying $u_h$ on both sides of Eqn.~\eqref{eqn:highorderL} and integration by parts, we have $| u_h|_{k,2,\Omega_{h}} = \int_{\Omega_{h}} u_h(x) f(x)\rd x$. Recall that $\CalL u_h \in \CalP_{s-1}$, and thus Eqn.~\eqref{eqn:invPoincare} is equivalent to
\begin{equation}\label{eqn:invPoincare1}
	\int_{\Omega_{h}} p^2(x)\rd x \le \left(C(k, s, d, \delta)\right)^{2} h^{- 2 k}  \int_{\Omega_{h}} \int_{\Omega_{h}} G_h(x,y)p(x)p(y)\rd x\, \rd y, \quad \forall p \in \CalP_{s-1}.
\end{equation}
Let $\{p_1, p_2, \ldots, p_Q\}$ be all the monomials that span $\CalP_{s-1}$. It is easy to see $Q = \binom{s+d-1}{d}$. For convenience, we assume that $\{p_i\}_{i=1}^Q$ are in non-decreasing order with respect to its degree. Specifically, $p_1 = 1$. Let $u_{h,i}$ be the solution of Eqn.~\eqref{eqn:highorderL} with right-hand side $p_i$, and $S_h, M_h \in \R^{Q\times Q}$ be defined as follows:
\begin{equation}\label{eqn:SM}
	S_h(i,j) = \int_{\Omega_{h}} \int_{\Omega_{h}} G_h p_i p_j = \int_{\Omega_{h}} u_{h,i} p_j, \qquad M_h(i,j) = \int_{\Omega_{h}} p_i p_j.
\end{equation}
Then, Eqn.~\eqref{eqn:invPoincare1} is equivalent to 
\begin{equation}\label{eqn:invPoincare2}
	M_h \preceq \left(C(k, s, d, \delta)\right)^{2} h^{- 2 k} S_h,
\end{equation}
where $A \preceq B$ means that $B-A$ is positive semidefinite. The change of variable $x = h z$ leads to $u_i(x) = h^{2k + o_i} u_{1,i}(z)$ where $u_{1,i}$ is the solution of the following PDE on $\Omega_1 \equiv \{x/h~:~x \in \Omega_h\}$:
\begin{equation}\label{eqn:highorderL1}
\begin{split}
	(-1)^k \sum_{|\sigma| = k} D^{2 \sigma} u_{1,i}(x) = p_i(x), &\qquad x \in \Omega_{1},\\
	u_{1,i} \in H_0^k(\Omega_{1}),
\end{split}
\end{equation}
and $o_i$ is the degree of $p_i$. Therefore, it is easy to check that
\begin{equation}\label{eqn:SM2}
	S_h(i,j) = h^{2k + o_i + o_j + d} S_1(i,j), \qquad M_h(i,j) = h^{o_i + o_j + d} M_1(i,j),
\end{equation}
where $S_1(i,j) = \int_{\Omega_{1}} \int_{\Omega_{1}} G_1 p_i p_j = \int_{\Omega_{1}} u_{1,i} p_j$ and $M_1(i,j) = \int_{\Omega_{1}} p_i p_j$, which are independent of $h$. Notice that both $S_1$ and $M_1$ are symmetric positive definite, and let $\lambda_{\max}(M_1, S_1) > 0$ be the largest generalized eigenvalue of $M_1$ and $S_1$. By choosing
\begin{equation}\label{eqn:Cbyeig}
	C(k, s, d, \Omega_1) = \sqrt{\lambda_{\max}(M_1, S_1)},
\end{equation}
we have
\begin{equation}\label{eqn:Cchoose}
	M_1 \preceq \left( C(k, s, d, \Omega_1) \right)^2 S_1.
\end{equation}
Combining~\eqref{eqn:SM2} and~\eqref{eqn:Cchoose}, Eqn.~\eqref{eqn:invPoincare2} naturally follows. In Appendix~\ref{app:moreonscaling}, we prove that $C(k, s, d, \Omega_1)$ can be bounded by $C(k, s, d, \delta)$, and this proves the lemma.
\end{proof}

For the case $s = k = 1$, we can take 
\begin{equation*}
	C(1, 1, d, \delta) = 2 \sqrt{d (d+2)} \delta^{-1 - d/2}.
\end{equation*}
as proved in Proposition~\eqref{prop:Clowerbound}. In this case, we have the estimate
\begin{equation*}
	|u_h|_{1,2,\Omega_h}^2 \ge \frac{\delta^{d+2} h^2 |\Omega_h|}{ 4 d (d+2) },
\end{equation*}
where $|\Omega_{h}|$ is the volume of $\Omega_h$. The above bound is tight: when $\Omega_h$ is a ball with diameter $h$, the equality holds true. Making use of the mean exit time of a Brownian motion, the author of \cite{owhadi2015multi} obtained a different bound
\begin{equation*}
	|u_h|_{1,2,\Omega_h}^2 \ge \frac{\delta^{d+2} h^{2+d} V_d}{ 2^{5+2d} },
\end{equation*}
where $V_d$ is the volume of a unit $d$-dimensional ball. The two estimates have the same order of $\delta$ and $h$, but our estimates from Lemma~\ref{lem:scaling} is much tighter. Moreover, Lemma~\ref{lem:scaling} give estimates for any order $k$ and any degree $s$, which plays a key role in proving the exponential decay in high-order cases, but the mean exit time of a Brownian motion is difficult to generalize to get these higher-order results. 

\section{Exponential decay of basis functions: the second-order case}\label{sec:k1case}
\revise{The analysis for a general higher-order elliptic PDE is quite technical. In this section, we will prove that the basis function $\psi_i$ for a second-order elliptic PDE has exponential decay away from $\tau_i$. When $c \equiv 0$, this problem has been studied in~\cite{owhadi2015multi}. When $c \neq 0$, it has been recently studied in~\cite{owhadi2016gamblets} independently of our work. The results presented in this second-order case are not new~\cite{owhadi2015multi}. We would like to use the simpler second-order elliptic PDE example to illustrate the main ingredients in the proof of exponential decay for a higher-order elliptic PDE, namely the recursive argument, the projection-type approximation property and the inverse energy estimate. }

Consider the following second-order elliptic equation:
\begin{equation}\label{eqn:elliptic}
\begin{split}
	\CalL u &:= -\nabla\cdot (a(x) \nabla u(x) ) + c(x) u(x) = f(x) \qquad x \in D,\\
	u &\in H_0^1(D),
\end{split}
\end{equation}
where $D$ is an open bounded domain in $\R^d$, the potential $c(x) \ge 0$ and the diffusion coefficient $a(x)$ is a symmetric, uniformly elliptic $d\times d$ matrix with entries in $L^{\infty}(D)$. For simplicity, we consider the homogeneous Dirichlet boundary condition here. We emphasize that all our analysis can be carried over for other types of homogeneous boundary conditions. We assume that there exist $0 < a_{\min} \le a_{\max}$ and $c_{\max}$ such that
\begin{equation}\label{eqn:coersive}
	a_{\min} I_d \ \preceq a(x) \preceq  a_{\max} I_d, \quad 0 \le c(x) \le c_{\max} \qquad  x \in D.
\end{equation}

To simply our notations, for any $\psi \in H$ and any subdomain $S \subset D$, $\|\psi\|_{H(S)}$ denotes $\left(\int_{S} \nabla \psi \cdot a \nabla \psi  + c \psi^2\right)^{1/2}$. For the second-order case, the projection-type approximation property is simply the Poincare inequality. The following lemma provides us the inverse energy estimate. It is a special case of Lemma~\ref{lem:boundLf2}, and can be proved by using Lemma~\ref{lem:scaling}. 
\begin{lemma}\label{lem:boundLf}
For any domain partition with $h \le h_0 \equiv \pi \sqrt{\frac{a_{\max}}{2 c_{\max}}}$, we have
\begin{equation}\label{k1eqn:invenergy}
	\|\CalL v\|_{L^2(\tau_j)} \le \sqrt{a_{\max}} C(d, \delta) h^{-1} \|v\|_{H(\tau_j)} \quad \forall v \in \Psi, \quad \forall j = 1,2,\ldots,m,
\end{equation}
where $C(d, \delta) = \sqrt{8 d(d+2)} \delta^{-1-d/2}$. If $c_{\max} = 0$, i.e., $c(x) \equiv 0$, Eqn.~\eqref{k1eqn:invenergy} holds true for all $h > 0$ and $C(d, \delta) = \sqrt{4 d(d+2)} \delta^{-1-d/2}$.
\end{lemma}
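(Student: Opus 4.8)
The starting point is the structural observation that membership in $\Psi$ makes $\CalL v$ \emph{piecewise constant}. Here $\Phi$ is the space of piecewise polynomials of degree $k-1=0$, so $v=\CalK\big(\sum_i c_i\phi_i\big)=\CalL^{-1}\big(\sum_i c_i\phi_i\big)$ gives $\CalL v=\sum_i c_i\phi_i$ in $L^2(D)$; on the interior of each $\tau_j$ this is a constant, which I will call $p=p_j$. Hence $\|\CalL v\|_{L^2(\tau_j)}=\|p\|_{L^2(\tau_j)}$, and it suffices to prove $\|p\|_{L^2(\tau_j)}\le\sqrt{a_{\max}}\,C(d,\delta)\,h^{-1}\|v\|_{H(\tau_j)}$ for each fixed $j$.

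The plan is to transfer the estimate to the constant-coefficient model operator via an auxiliary Dirichlet solve. Let $w\in H_0^1(\tau_j)$ solve $-\Delta w=p$ on $\tau_j$; the key point is that the homogeneous boundary condition is placed on $w$, not on $v|_{\tau_j}$ (which solves no local boundary value problem). Since $p\in\CalP_0$, we have $w\in(-\Delta)^{-1}\CalP_{s-1}$ with $s=1$, and $\tau_j$ contains an inscribed ball of radius $\delta h/2$, so Lemma~\ref{lem:scaling} (with $k=1$) applies on $\tau_j$ and gives
\begin{equation*}
\|p\|_{L^2(\tau_j)}=\|(-\Delta)w\|_{L^2(\tau_j)}\le C(1,1,d,\delta)\,h^{-1}\,|w|_{1,2,\tau_j},
\end{equation*}
with $C(1,1,d,\delta)=2\sqrt{d(d+2)}\,\delta^{-1-d/2}$. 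Next I extend $w$ by zero to $D$, use it as a test function in the weak form of $\CalL v=g$ restricted to $\tau_j$ (where $g|_{\tau_j}=p$), and apply Cauchy--Schwarz in the local energy inner product:
\begin{equation*}
|w|_{1,2,\tau_j}^2=\int_{\tau_j}|\nabla w|^2=\int_{\tau_j}pw=\int_{\tau_j}\big(\nabla w\cdot a\nabla v+cwv\big)\le\|w\|_{H(\tau_j)}\,\|v\|_{H(\tau_j)}.
\end{equation*}

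It remains to bound $\|w\|_{H(\tau_j)}$ by $|w|_{1,2,\tau_j}$. Using $a(x)\preceq a_{\max}I_d$, $c(x)\le c_{\max}$, together with the Friedrichs inequality on the patch (Theorem~\ref{thm:generalFriedrich} with $p=0$, $k=1$), $\|w\|_{L^2(\tau_j)}^2\le C_f^2 h^2|w|_{1,2,\tau_j}^2$, one gets $\|w\|_{H(\tau_j)}^2\le(a_{\max}+c_{\max}C_f^2h^2)|w|_{1,2,\tau_j}^2$; the hypothesis $h\le h_0=\pi\sqrt{a_{\max}/(2c_{\max})}$ makes the zeroth-order contribution at most $a_{\max}$, so $\|w\|_{H(\tau_j)}^2\le 2a_{\max}|w|_{1,2,\tau_j}^2$ (and $\|w\|_{H(\tau_j)}^2\le a_{\max}|w|_{1,2,\tau_j}^2$ for every $h$ when $c\equiv0$). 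Substituting this into the Cauchy--Schwarz bound yields $|w|_{1,2,\tau_j}\le\sqrt{2a_{\max}}\,\|v\|_{H(\tau_j)}$, and combining with the Lemma~\ref{lem:scaling} estimate gives $\|p\|_{L^2(\tau_j)}\le\sqrt{2a_{\max}}\,C(1,1,d,\delta)\,h^{-1}\|v\|_{H(\tau_j)}$, which is exactly the claim with $C(d,\delta)=\sqrt{2}\,C(1,1,d,\delta)=\sqrt{8d(d+2)}\,\delta^{-1-d/2}$, and with $C(d,\delta)=C(1,1,d,\delta)=\sqrt{4d(d+2)}\,\delta^{-1-d/2}$ in the case $c\equiv0$.

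The main obstacle is precisely the bridge between the variable-coefficient operator $\CalL$ and the constant-coefficient model operator of Lemma~\ref{lem:scaling}: the scaling lemma cannot be applied to $v|_{\tau_j}$ directly because $v$ satisfies no local boundary condition, so the device is to move the analysis onto the auxiliary Dirichlet solution $-\Delta w=\CalL v|_{\tau_j}$ and then couple $w$ back to $v$ through the energy pairing. The remaining technical points are routine but worth care: absorbing the zeroth-order term $c(x)u$ (this is where the threshold $h\le h_0$ enters, through Friedrichs), and the smoothness of the patches $\tau_j$ required to invoke Lemma~\ref{lem:scaling} and its Green's-function representation, which I would either assume or obtain by a domain-approximation argument.
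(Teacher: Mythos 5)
Your argument is correct and produces the same constants as the paper's, but it is a genuinely different route. The paper obtains Lemma~\ref{lem:boundLf} as the $k=1$ special case of Lemma~\ref{lem:boundLf2}, whose proof works with the orthogonal splitting $v=v_1+v_2$ on $\tau_j$ (where $v_1$ is the local Dirichlet solve of the \emph{variable-coefficient} problem $\CalL v_1=\CalL v|_{\tau_j}$, $v_1\in H_0^k(\tau_j)$), then invokes a Green's-function monotonicity lemma (Lemma~\ref{lem:compare12}) to pass from $\CalL$ to the constant-coefficient model operator, and a further comparison (Lemma~\ref{lem:compare32}) to strip off the zeroth-order term, before finally applying Lemma~\ref{lem:scaling}. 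You instead let $w$ be the \emph{constant-coefficient} local Dirichlet solve $-\Delta w=\CalL v|_{\tau_j}$, $w\in H_0^1(\tau_j)$, apply Lemma~\ref{lem:scaling} directly to $w$, and couple $w$ back to $v$ through the energy pairing $|w|_{1,2,\tau_j}^2=B_{\tau_j}(w,v)\le\|w\|_{H(\tau_j)}\|v\|_{H(\tau_j)}$, then absorb the $c$-term via Friedrichs. Your Cauchy--Schwarz step is essentially the primal form of the dual (Green's-function) comparison in Lemma~\ref{lem:compare12}, but invoked in one line; it bypasses both the harmonic splitting and Lemma~\ref{lem:compare32} entirely. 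The approach also generalizes verbatim to the $2k$th-order case (take $s=k$ in Lemma~\ref{lem:scaling} and pair in $B_{\tau_j}$ as before), so it would give an alternative, shorter proof of Lemma~\ref{lem:boundLf2} as well. Two small points of hygiene: (i) the threshold $h\le h_0$ gives $c_{\max}C_f^2h^2\le a_{\max}/2$, not $\le a_{\max}$ as you state, though the conclusion $\|w\|_{H(\tau_j)}^2\le 2a_{\max}|w|_{1,2,\tau_j}^2$ (and hence the advertised $C(d,\delta)$) still holds; (ii) the Cauchy--Schwarz in $B_{\tau_j}$ uses that the local bilinear form is positive semidefinite, which follows from $a\succeq 0$, $c\ge 0$, and should be said aloud.
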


Now, we are ready to prove the exponential decay of the basis function $\psi_i$.

\begin{theorem}\label{thm:expdecay1}
For $h \le h_0 \equiv \pi \sqrt{\frac{a_{\max}}{2 c_{\max}}}$, it holds true that 
\begin{equation}\label{eqn:expdecay1}
	\|\psi_i\|_{H(D \cap (B(x_i,r))^c)}^2 \le \exp\left(1 - \frac{r}{l h}\right) \|\psi_i\|_{H(D)}^2
\end{equation}
with $l = \frac{e-1}{\pi}(1+C(d, \delta))\sqrt{\frac{a_{\max}}{a_{\min}}}$ and $C(d, \delta) = \sqrt{8 d(d+2)} (1/\delta)^{d/2+1}$. If $c_{\max} = 0$, i.e., $c(x) \equiv 0$, Eqn.~\eqref{eqn:expdecay1} holds true for all $h > 0$ with $l = \frac{e-1}{\pi}(1+C(d, \delta))\sqrt{\frac{a_{\max}}{a_{\min}}}$ and $C(d, \delta) = \sqrt{4 d(d+2)} \delta^{-1-d/2}$.
\end{theorem}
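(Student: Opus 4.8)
The plan is to prove the exponential decay of $\psi_i$ by a recursive (Caccioppoli-type) iteration over concentric shells around $\tau_i$, combining three ingredients: the energy-orthogonality of $\psi_i$ to $\mathrm{null}(\CalP_\Phi^{(X)})$ encoded in Theorem~\ref{thm:variational}, the Poincaré inequality (the $k=1$ projection-type approximation property of Theorem~\ref{thm:generalPoincare}), and the inverse energy estimate of Lemma~\ref{lem:boundLf}. The key quantities to control are the ``outside'' energies $E_t := \|\psi_i\|_{H(D\cap B(x_i, th)^c)}^2$, and the goal is to establish a one-step contraction of the form $E_{t+1} \le \theta\, E_t$ for some fixed $\theta = \frac{l}{1+l} < 1$ with $l$ as in the statement; iterating this $\lfloor r/h\rfloor$ times then gives~\eqref{eqn:expdecay1} after absorbing the initial step into the factor $e^1$.

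\textbf{Step 1: a cutoff and a good comparison function.} Fix an annulus between radius $th$ and $(t+1)h$, and pick a Lipschitz cutoff $\eta$ that is $0$ on $B(x_i, th)$, $1$ outside $B(x_i,(t+1)h)$, with $|\nabla\eta|\lesssim 1/h$. The function $\phi := (1-\eta)\psi_i$ agrees with $\psi_i$ near $\tau_i$ and vanishes far away, so $w := \psi_i - \phi = \eta\psi_i$ is supported in the outer region. One then wants to show $w$ is (up to a controlled correction) an admissible competitor in the variational problem~\eqref{eqn:psivariational} — more precisely, that $\psi_i$ restricted to the exterior can be compared against a function that is energy-minimizing subject to the same linear constraints $(\psi,\phi_j)=0$ for all patches $\tau_j$ not meeting the support. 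This is where Theorem~\ref{thm:variational}, item 2, is used: any $\psi$ with the correct moments decomposes orthogonally in the $H$-norm, so $\psi_i$ is $H$-orthogonal to every $v\in H$ with $(\phi_j,v)=0$ for all $j$.

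\textbf{Step 2: the energy estimate on the annulus.} Write $\|\psi_i\|_{H(B(x_i,th)^c)}^2 = (\psi_i,\psi_i)_{H(\cdots)}$ and integrate by parts to move derivatives onto a suitably chosen test function. The cross terms produced by $\nabla\eta$ live only on the annulus $A_t := B(x_i,(t+1)h)\setminus B(x_i,th)$ and are bounded by $\frac{C}{h}\|\psi_i\|_{L^2(A_t)}\|\psi_i\|_{H(A_t)}$ via Cauchy–Schwarz. The $L^2$ factor $\|\psi_i\|_{L^2(A_t)}$ is then traded for energy: since $\psi_i$ has zero polynomial moments on each patch in $A_t$, the Poincaré inequality (Theorem~\ref{thm:generalPoincare} with $k=0$, i.e.\ mean-zero Poincaré on each $\tau_j$) gives $\|\psi_i\|_{L^2(\tau_j)} \le C h\,|\psi_i|_{1,2,\tau_j}$, hence $\|\psi_i\|_{L^2(A_t)} \le C h\, a_{\min}^{-1/2}\|\psi_i\|_{H(A_t)}$. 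Combining, the annulus energy bounds the exterior-tail energy: $E_{t+1} \le \|\psi_i\|_{H(A_t)}^2 \cdot (\text{const})$, and since $\|\psi_i\|_{H(A_t)}^2 = E_t - E_{t+1}$, one rearranges to $E_{t+1}\le \frac{l}{1+l}E_t$ with $l$ carrying the factor $\sqrt{a_{\max}/a_{\min}}$ and the constants from Poincaré and from Lemma~\ref{lem:boundLf} (the $(e-1)/\pi$ factor comes from optimizing the shell width / the relation $1 - 1/l \ge$ something that reproduces the $\exp(1-r/(lh))$ form).

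\textbf{Main obstacle.} The delicate point is Step 1 — making rigorous the claim that the truncated/cutoff function can be used as a competitor, because $\eta\psi_i$ does \emph{not} exactly satisfy the moment constraints $(\phi_j,\cdot)=\delta_{ij}$ near the inner boundary of the annulus: multiplying by $\eta$ destroys the vanishing-moment property on the transition patches. The fix (as in~\cite{owhadi2015multi}) is to subtract a correction supported on those transition patches that restores the moments, using that $\mathrm{null}(\CalK)\cap\Phi=\{0\}$ so the local moment map is invertible; one then needs the inverse energy estimate, Lemma~\ref{lem:boundLf}, precisely to bound the energy of this correction in terms of $\|\psi_i\|_{H}$ on the same patches — this is why the inverse estimate enters and why the argument is clean only for $h\le h_0$ (so the lower-order term $c$ does not spoil the scaling). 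Once this correction is controlled, the recursion closes and the geometric decay follows; the constant bookkeeping to land exactly on $l=\frac{e-1}{\pi}(1+C(d,\delta))\sqrt{a_{\max}/a_{\min}}$ is routine but tedious.
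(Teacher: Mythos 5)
Your ingredients are right — recursive shell iteration, a Lipschitz cutoff with $|\nabla\eta|\lesssim 1/(lh)$, the mean-zero Poincar\'e inequality to trade $L^2$ for energy, and the inverse energy estimate from Lemma~\ref{lem:boundLf} — and your Step~2 correctly estimates the term coming from $\nabla\eta$. However, there is a genuine gap in where you think the difficulty lies and how the inverse estimate enters.

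The ``main obstacle'' you describe (a correction term supported on transition patches to restore the moment constraints $\int\psi\,\phi_j=\delta_{ij}$, so that a cutoff of $\psi_i$ becomes an admissible competitor in~\eqref{eqn:psivariational}) is \emph{not} part of this proof; that construction is used in the paper only in the proof of the localization result (Theorem~\ref{thm:localization}), where one actually needs a competitor for the local minimizer $\psi_i^{\mathrm{loc}}$. Here no variational comparison is performed at all. The actual proof is a direct energy identity: integrating by parts against the cutoff $\eta$ yields
\begin{equation*}
\int_D \eta\,\nabla\psi_i\cdot a\nabla\psi_i + \int_D \eta\,c\,\psi_i^2 \;=\; \underbrace{\int_D \eta\,\psi_i\,\CalL\psi_i}_{I_2} \;-\; \underbrace{\int_D \psi_i\,\nabla\eta\cdot a\nabla\psi_i}_{I_1},
\end{equation*}
whose left-hand side dominates $\|\psi_i\|_{H(S_1)}^2$. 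Your Step~2 handles $I_1$, but you never address $I_2$, and there is nothing like a correction term to restore moments. What actually happens is: $\CalL\psi_i\in\Phi$ (piecewise constant) by Theorem~\ref{thm:variational}, so on each transition patch $\tau_j\subset S^*$ one has $\int_{\tau_j}\psi_i=0$ and therefore $\int_{\tau_j}\eta\,\psi_i\,\CalL\psi_i = \int_{\tau_j}(\eta-\eta_j)\,\psi_i\,\CalL\psi_i$, which is small because $|\eta-\eta_j|\lesssim 1/l$. On $S_1$ the contribution vanishes identically because $\eta\equiv 1$ and $\int_{\tau_j}\psi_i\,\phi_j=0$ for $j\ne i$. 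The inverse energy estimate of Lemma~\ref{lem:boundLf} is then used precisely to bound $\|\CalL\psi_i\|_{L^2(\tau_j)}$ by $h^{-1}\|\psi_i\|_{H(\tau_j)}$ in this $I_2$ term — not to bound the energy of any correction as you suggest. Without this treatment of $I_2$, the recursion $b_{k+1}\le C(b_k-b_{k+1})$ does not close, so as written the proof would fail. The source of your confusion is that the moment-restoring competitor does appear in the paper, but three sections later and for a different theorem.
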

\begin{proof}
Let $k\in \N$, $l > 0$ and $i \in \{1, 2, \ldots, m\}$. Let $S_0$ be the union of all the domains $\tau_j$ that are contained in the closure of $B(x_i, k l h) \cap D$, let $S_1$ be the union of all the domains $\tau_j$ that are not contained in the closure of $B(x_i, (k+1) l h) \cap D$ and let $S^*=S_0^c \cap S_1^c \cap D$ (be the union of all the remaining elements $\tau_j$ not contained in $S_0$ or $S_1$), as illustrated in Figure~\ref{fig:illustrate1}. 
\begin{figure}[ht]
\centering
\includegraphics[width = 0.4\textwidth]{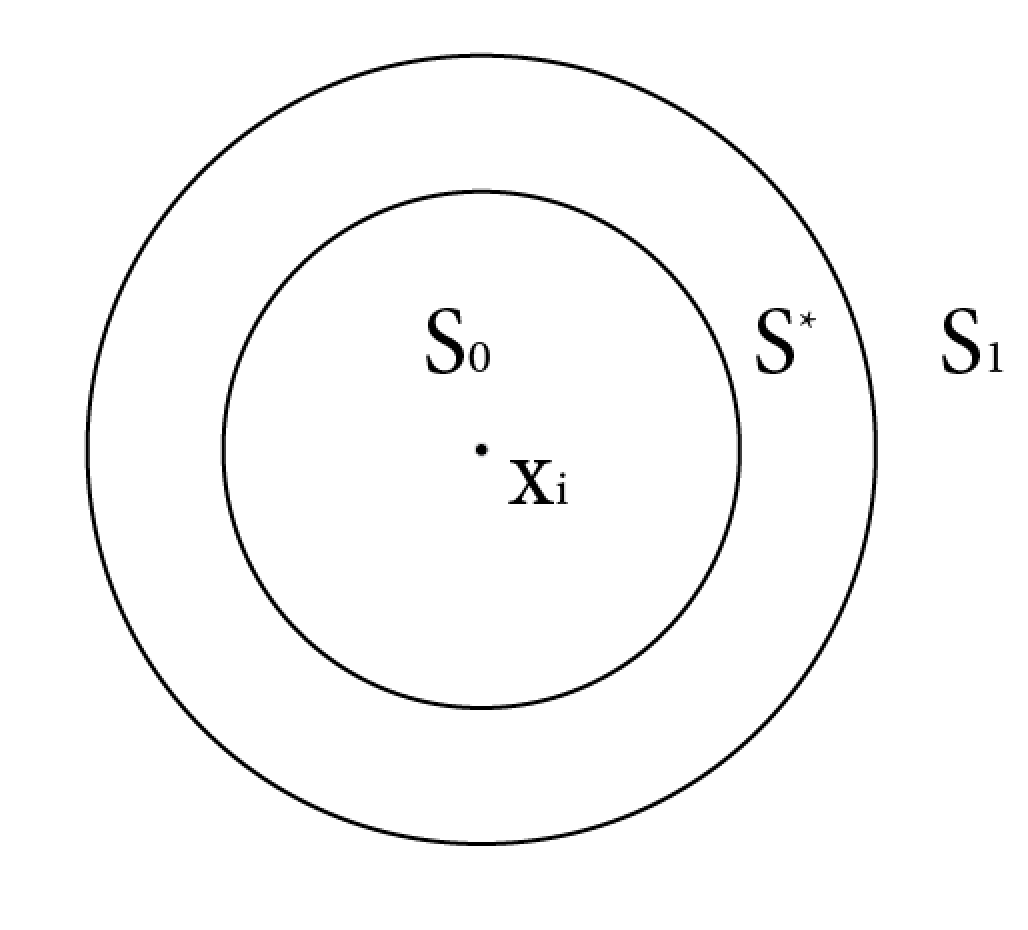}
\caption{Illustration of $S_0$, $S_1$ and $S^*$.}\label{fig:illustrate1}
\end{figure}

Let $b_k := \|\psi_i\|_{H(S_0^c)}^2$, and from definition we have $b_{0} = \|\psi_i\|_{H(D)}^2$, $b_{k+1} = \|\psi_i\|_{H(S_1)}^2$ and $b_{k}-b_{k+1} =  \|\psi_i\|_{H(S^*)}^2$. The strategy is to prove that for any $k \ge 1$, there exists constant $C$ such that $b_{k+1} \le C(b_{k} - b_{k+1})$. Then, we have $b_{k+1} \le \frac{C}{C+1} b_k$ for any $k \ge 1$ and thus we get the exponential decay $b_k \le (\frac{C}{C+1})^{k-1} b_1 \le (\frac{C}{C+1})^{k-1} b_0$. We will choose $l$ such that $C \le \frac{1}{\econst - 1}$ and thus get $b_k \le e^{1-k} b_0$, which gives the result~\eqref{eqn:expdecay1}. We start from $k=1$ because we want to make sure $\tau_i \in S_0$; otherwise, $S_0 = \emptyset$ and $\tau_i \in S^*$.

Now, we prove that for any $k \ge 1$, there exists constant $C$ such that $b_{k+1} \le C(b_{k} - b_{k+1})$, i.e., $ \|\psi_i\|_{H(S_1)}^2 \le C \|\psi_i\|_{H(S^*)}^2$. Let $\eta$ be the function on $D$ defined by $\eta(x) = \dist(x, S_0)/\left(\dist(x, S_0) + \dist(x, S_1)\right)$. Observe that (1) $0 \le \eta \le 1$ (2) $\eta$ is equal to zero on $S_0$ (3) $\eta$ is equal to one on $S_1$ (4) $\|\nabla \eta\|_{L^{\infty}(D)} \le \frac{1}{l h}$. \footnote{$\|\nabla \eta\|_{L^{\infty}(D)} := \esssup\limits_{x\in D} |\nabla \eta (x)|$.} 

By integration by parts, we obtain
\begin{equation}\label{eqn:intbyparts}
	\int_D \eta \nabla \psi_i \cdot a \nabla \psi_i + \int_D \eta c |\psi_i|^2 = \underbrace{\int_D \eta \psi_i (-\nabla\cdot (a \nabla \psi_i ) + c \psi_i)}_{I_2} \underbrace{- \int_D \psi_i \nabla \eta \cdot a \nabla \psi_i}_{I_1}.
\end{equation}
Since $a \succeq 0$ and $c \ge 0$, the left-hand side gives an upper bound for $\|\psi_i\|_{H(S_1)}$. Combining $\nabla \eta \equiv 0$ on $S_0 \cup S_1$ and the Cauchy--Schwarz inequality, we obtain
\begin{equation}\label{eqn:k1_I1}
\begin{split}
	 I_1 &\le \|\nabla \eta\|_{L^{\infty}(D)} \|\psi_i\|_{L^2(S^*)} \left(\int_{S^*} \nabla \psi_i \cdot a \nabla \psi_i \right)^{1/2} \sqrt{a_{\max}} \\
	 & \le \frac{1}{l h} \|\psi_i\|_{L^2(S^*)} \|\psi_i\|_{H(S^*)} \sqrt{a_{\max}}.
\end{split}
\end{equation}
We have used $c \ge 0$ to get $\left(\int_{S^*} \nabla \psi_i \cdot a \nabla \psi_i \right)^{1/2} \le \|\psi_i\|_{H(S^*)}$ in the last inequality. By the construction of $\psi_i$~\eqref{eqn:psivariational}, we have $\int_D \psi_i \phi_j = 0$ for $i \neq j$. Thanks to~\eqref{eqn:psi1}, we have $-\nabla\cdot (a \nabla \psi_i ) + c \psi_i \in \Phi$. Therefore, we have $\int_{S_1} \eta \psi_i (-\nabla\cdot (a \nabla \psi_i ) + c \psi_i) = 0$. Denoting $\eta_j$ as the volume average of $\eta$ over $\tau_j$, we have
\begin{equation}\label{eqn:k1_I2}
\begin{split}
	 I_2 &= - \int_{S^*} \eta \psi_i (-\nabla\cdot (a \nabla \psi_i ) + c \psi_i) = -\sum_{\tau_j \in S^*}\int_{\tau_j} (\eta-\eta_j) \psi_i (-\nabla\cdot (a \nabla \psi_i ) + c \psi_i) \\
	 & \le \frac{1}{l} \sum_{\tau_j \in S^*} \|\psi_i\|_{L^2(\tau_j)} \|\CalL \psi_i\|_{L^2(\tau_j)}.
\end{split}
\end{equation}
Up to now, $I_1$ and $I_2$ are some quantities of $\psi_i$ purely on $S^*$, and we only need to prove that both of them can be bounded by $\|\psi_i\|_{H(S^*)}^2$ (up to a constant). By applying the Poincare inequality, we can easily do this for $I_1$, as we will see soon. However, $I_2$ involves the high-order term $\|\CalL \psi_i\|_{L^2(\tau_j)}$ which in general may not be bounded by the lower-order term $\|\psi_i\|_{H(S^*)}$. Fortunately, this can be proved since $\CalL \psi_i \in \Phi$, the piecewise constant function space. For the current operator $\CalL u= -\nabla\cdot (a(x) \nabla u) + c(x)u$ with rough coefficient $a$ and nonzero potential $c$, Lemma~\ref{lem:boundLf} implies $\|\CalL \psi_i\|_{L^2(\tau_j)} \le \sqrt{a_{\max}}C(d, \delta) h^{-1} \|\psi_i\|_{H(\tau_j)}$ when $h \le h_0\equiv \pi \sqrt{\frac{a_{\max}}{2 c_{\max}}}$. Then, we obtain
\begin{equation}\label{eqn:I2}
	 I_2 \le \frac{\sqrt{a_{\max}}C(d, \delta)}{l h} \|\psi_i\|_{L^2(S^*)} \|\psi_i\|_{H(S^*)}\quad \forall h \le h_0.
\end{equation}

By the construction of $\psi_i$~\eqref{eqn:psivariational}, we have $\int_{\tau_j} \psi_i = 0$ for all $\tau_j \in S^*$. By the Poincare inequality, we have $\|\psi_i\|_{L^2(\tau_j)}\le \|\nabla \psi_i\|_{L^2(\tau_j)} h/\pi$, and then we obtain
\begin{equation}\label{eqn:I1}
	\|\psi_i\|_{H(S_1)}^2 \le I_1 + I_2 \le \frac{1 + C(d, \delta)}{\pi l} \sqrt{\frac{a_{\max}}{a_{\min}}} \|\psi_i\|_{H(S^*)}^2.
\end{equation}

By taking $l \ge \frac{\econst-1}{\pi}(1+C(d, \delta))\sqrt{\frac{a_{\max}}{a_{\min}}}$, we have the constant $\frac{1 + C(d, \delta)}{\pi l} \sqrt{\frac{a_{\max}}{a_{\min}}} \le\frac{1}{\econst - 1}$. With the iterative argument given before, we have proved the exponential decay.
\end{proof}

\begin{remark}\label{rem:otherBCk1}
We point out that boundary conditions may be important in several applications. For example, the Robin boundary condition is useful in the application of the SPCA. The periodic boundary condition is useful in compressing a Hamiltonian with a periodic boundary condition in quantum physics.

The above proof can be applied to the operator $\CalL$ in~\eqref{eqn:elliptic} with other boundary conditions as long as the corresponding problem $\CalL u = f$ has a unique solution $u \in H^k(D)$ for every $f\in L^2(D)$. For other homogeneous boundary condition, the Cameron--Martin space is not $H_0^1(D)$. Instead, we should use the solution space associated with the corresponding boundary condition. The proof of Theorem~\ref{thm:expdecay1} can be easily carried over to other homogeneous boundary conditions, and the only difference is that a different boundary condition leads to slightly different integration by parts in \eqref{eqn:intbyparts}. For the homogeneous Neumann boundary condition or the periodic boundary condition, the proof is exactly the same because the integration by parts~\eqref{eqn:intbyparts} can be carried out in exactly the same way. For the problems with the Robin boundary condition, i.e.,
\begin{equation}\label{eqn:ellipticRobin}
\begin{split}
	\CalL u := -\nabla\cdot (a(x) \nabla u(x) ) + c(x) u(x) &= f(x) \qquad x \in D,\\
	\frac{\partial u}{\partial n} + \alpha(x) u(x) & = 0 \qquad x \in \partial D,
\end{split}
\end{equation}
where $\alpha(x) \ge 0$, the Cameron--Martin space is the subspace of $H^1(D)$ in which all elements satisfy the Robin boundary condition and the associated energy norm is defined as
\begin{equation}\label{eqn:energyRobin}
	\|u\|_H^2 = \int_D \nabla u \cdot a\nabla u + \int_D c u^2 + \int_{\partial D} \alpha u^2.
\end{equation}
In this case, for a subdomain $S \subset D$, the local energy norm on $S$ should be modified as follows:
\begin{equation}\label{eqn:energyRobinlocal}
	\|u\|_{H(S)}^2 = \int_S \nabla u \cdot a\nabla u + \int_S c u^2 + \int_{\partial D \cap \partial S} \alpha u^2.
\end{equation}
Similarly, we can define the Cameron--Martin space and the associated energy norm for the homogeneous mixed boundary conditions.
\end{remark}

\section{Exponential decay of basis functions: the higher-order case}\label{sec:k2case}
In this section, we will study the case when $\CalK:L^2(D) \to L^2(D)$ is the solution operator of the following higher-order elliptic equation:
\begin{equation}\label{eqn:highorder}
\begin{split}
	& \CalL u := \sum_{0 \le |\sigma|, |\gamma| \le k} (-1)^{|\sigma|} D^{\sigma} (a_{\sigma \gamma}(x) D^{\gamma} u) = f,\\
	& f \in L^2(D), \qquad u \in H_0^k(D).
\end{split}
\end{equation}
Here, we only consider the case when $\CalL$ (thus $\CalK$) is self-adjoint, i.e., 
\begin{equation}\label{eqn:selfadjoint}
	\int_D (\CalL u) v = \int_D u (\CalL v) \qquad \forall u,v\in H_0^k(D).
\end{equation} 
The corresponding symmetric bilinear form on $H_0^k(D)$ is denoted as
\begin{equation}\label{eqn:bilinearB}
	B(u, v) = \sum_{0 \le |\sigma|, |\gamma| \le k} \int_D a_{\sigma \gamma}(x) D^{\sigma} u D^{\gamma} v.
\end{equation}
%where
%\begin{equation}\label{eqn:I1I2}
%\begin{split}
%	I_1 &= \sum_{ |\sigma| = |\gamma| = k} \int_D a_{\sigma \gamma}(x) D^{\sigma} u D^{\gamma} v,\\
%	I_2 &= \sum_{ |\sigma| + |\gamma| < 2 k} \int_D a_{\sigma \gamma}(x) D^{\sigma} u D^{\gamma} v.
%\end{split}
%\end{equation}
We assume that $B$ is an inner product on $H_0^k(D)$ and the induced norm $\left(B(u,u)\right)^{1/2}$ is equivalent to the $H_0^k(D)$ norm, i.e., there exists $0 < a_{\min} \le a_{\max}$ such that
\begin{equation}\label{eqn:normequiv}
	a_{\min} |u|_{k,2,D}^2 \le B(u,u) \le a_{\max} |u|_{k,2,D}^2 \qquad \forall u \in H_0^k(D).
\end{equation}
Thanks to the Riesz representation lemma,  Eqn.~\eqref{eqn:highorder} has a unique weak solution in $H_0^k(D)$ for  $f \in L^2(D)$. 
%We further assume that $a_{\gamma \sigma} \in L^{\infty}(D)$ for all $\gamma$ and  $\sigma$ and that there exists a constant $\theta > 0$ such that
%\begin{equation}\label{eqn:strongelliptic}
%	I_1 \ge \theta \sum_{|\sigma| = k} \int_D |D^{\sigma} u|^2 \equiv \theta |u|_{k,2,D}^2.
%\end{equation}
%\textcolor{red}{Notice that the condition~\eqref{eqn:strongelliptic} is different from the standard uniformly elliptic condition which requires $\sum_{ |\sigma| = |\gamma| = k} \vct{\xi}^{\sigma} a_{\sigma \gamma}(x) \vct{\xi}^{\gamma} \ge \theta |\vct{\xi}|^{2 k}$ for every $x \in D$ and $\vct{\xi} \in \R^d$. Our results are expected to be true under the standard uniformly elliptic condition, but the proof needs more details.}

\subsection{Construction of basis functions and the approximation rate}\label{subsec:k2construction}
Suppose $D$ is divided into elements $\{\tau_i\}_{1\le i \le m}$, where each element $\tau_i$ is a triangle or a quadrilateral in 2D, or a tetrahedron or hexahedron in 3D. Denote the maximum element diameter by $h$. We also assume that the subdivision is regular~\cite{ciarlet2002finite}. This means that if $h_i$ denotes the diameter of $\tau_i$ and $\rho_i$ denotes the maximum diameter of a ball inscribed in $\tau_i$, there is a constant $\delta>0$ such that
\begin{equation*}
	\frac{\rho_i}{h_i} \ge \delta \qquad \forall i = 1, 2, \ldots, m.
\end{equation*}
Applying Theorem~\ref{thm:generalPoincare} to $\Omega = \tau_j$, for any $u \in H^k(D)$ and any $0 \le p \le k$, we have
\begin{equation*}\
	|u - \Pi_i u|_{p,2,\tau_i} \le C(k-1,\hat{\tau}_i) \delta^{-p} h^{k-p} |u|_{k,2,\tau_i},
\end{equation*}
where $\Pi_i:H^k(\tau_i) \to \CalP_{k-1}(\tau_i)$ is the orthogonal projection to the polynomial space $\CalP_{k-1}(\tau_i)$ in $L^2(\tau_i)$, and $\hat{\tau}_i$ is some reference domain that is affine equivalent to $\tau_i$. Notice that the constant $C(k-1, \hat{\tau}_i) \delta^{-p}$ can be bounded from above by a constant $C_p$ for all the elements $\{\tau_i\}_{1\le i \le m}$, because all elements in $\{\tau_i\}_{1\le i \le m}$ are affine equivalent to an equilateral triangle or square in 2D, or a equilateral 3-simplex or cubic in 3D. Therefore, for any $u \in H^k(D)$, any $1 \le i \le m$ and any $0 \le p \le k$, we have
\begin{equation}\label{eqn:generalPoincare}
	|u - \Pi_i u|_{p,2,\tau_i} \le C_p h^{k-p} |u|_{k,2,\tau_i}.
\end{equation}
Specifically for $p=0$, $\tilde{u} \in L^2(D)$ with $\tilde{u}|_{\tau_i} = \Pi_i u$, we conclude that
\begin{equation}\label{eqn:Poincare2L2}
	\|u - \tilde{u}\|_{L^2(D)} \le C_p h^{k} |u|_{k,2,D}.
\end{equation}

Let $X = L^2(D)$ and $H = H_0^k(D)$. We use the standard inner product for $L^2(D)$ and use the inner product $\langle u, v \rangle = B(u,v)$ for $H$. Further, we denote $\CalK : L^2(D) \to L^2(D)$ as the operator mapping $f$ to the solution $u$ in Eqn.~\eqref{eqn:highorder}. Let $\{\phi_{i,q}\}_{q=1}^{Q}$ be an orthogonal basis of $\CalP_{k-1}(\tau_i)$ with respect to the inner product in $L^2(\tau_i)$, where $Q = \binom{k+d-1}{d}$ is the number of $d$-variate monomials with degree at most $k-1$. We take
\begin{equation}\label{eqn:PhiPsik2}
	\Phi = \text{span}\{\phi_{i,q}: 1\le q\le Q, 1\le i\le m\}, \quad \Psi = \CalK \Phi.
\end{equation}
Without loss of generality, we normalize these basis functions such that 
\begin{equation}\label{eqn:phinormalize}
	\int_{\tau_i} \phi_{i,q} \phi_{i,q'} = |\tau_i| \delta_{q,q'}.
\end{equation}
A set of basis functions of $\Psi$ is defined by Eqn.~\eqref{eqn:psivariational} accordingly, i.e.,
\begin{equation}\label{eqn:psivariationalk2}
\begin{split}
	\psi_{i,q} = &\argmin_{\psi \in H_0^k(D)} \quad \|\psi\|_H^2 \\
			& \text{s.t.} \quad \int_D \psi_{i,q} \phi_{j,q'} = \delta_{iq,jq'} \quad \forall 1 \le q' \le Q, \quad 1\le j \le m.
\end{split}	 
\end{equation}

Combining Eqn.~\eqref{eqn:normequiv} and \eqref{eqn:Poincare2L2}, we have
\begin{equation}\label{eqn:Poincare2L2Hnorm}
	\|u - \CalP_{\Phi}^{(X)} u\|_{L^2(D)} \le \frac{C_p h^k}{\sqrt{a_{\min}}} \|u\|_{H}, \quad \forall u \in H.
\end{equation}
Applying Theorem~\ref{thm:conditioning1} with $X$ and $H$ defined above, we have
\begin{enumerate}
\item[1.] For any $u \in H$ and $\CalL u = f$, we have
\begin{equation}\label{eqn:Herror2}
	\| u - \CalP_{\Psi}^{(H)} u \|_{H} \le \frac{C_p h^k}{\sqrt{a_{\min}}} \|f\|_{L^2(D)}\,.
\end{equation}
Here, $C_p$ plays the role of the Poincare constant $1/\pi$.
\item[2.] For any $u \in H$ and $\CalL u = f$, we have
\begin{equation}\label{eqn:Xerror2}
	\| u - \CalP_{\Psi}^{(H)} u \|_{L^2(D)} \le \frac{C_p^2 h^{2 k}}{a_{\min}} \|f\|_{L^2(D)}\,.
\end{equation}
\item[3.] We have
\begin{equation}\label{eqn:Kerror2}
	\| \CalK - \CalP_{\Psi}^{(H)} \CalK \| \le \frac{C_p^2 h^{2 k}}{a_{\min}}\,.
\end{equation}
\end{enumerate}

Notice that the eigenvalues of the operator $\CalL$ (with the homogeneous Dirichlet boundary conditions) in~\eqref{eqn:highorder} grow like $\lambda_n(\CalL) \sim n^{2 k/d}$ (see, e.g., \cite{MELENK2000272, DAHLKE200629}), and thus, the eigenvalues of $\CalK$ decay like $\lambda_n(\CalK) \sim n^{-2 k/d}$. Meanwhile, the rank of the operator $\CalP_{\Psi}^{(H)} \CalK$, denoted as $n$, roughly scales like $Q/h^d$ where $1/h^d$ is roughly the number of patches. Plugging $n = Q/h^d$ into Eqn.~\eqref{eqn:Kerror2}, we have
\begin{equation}
	\| \CalK - \CalP_{\Psi}^{(H)} \CalK \| \le \frac{C_p^2 Q^{2 k/d}}{a_{\min}} n^{-2k/d} \underset{\sim}{<}  \lambda_n(\CalK)\,.
\end{equation}
Therefore, our construction of the $m$-dimensional subspace $\Psi$ approximates $\CalK$ at the optimal rate. In Subsection~\ref{subsec:strongellipticity}, we introduce the concept of {\it strong ellipticity} that enables us to prove exponential decay results. In Subsection~\ref{subsec:k2decay}, we will prove that the basis functions $\psi_{i,q}$ defined in Eqn.~\eqref{eqn:psivariationalk2} have exponential decay away from $\tau_i$. 

\subsection{The strong ellipticity condition}\label{subsec:strongellipticity}
In our proof, we need the following {\it strong ellipticity condition} of the operator $\CalL$ to obtain the exponential decay. 
\begin{definition}\label{def:strongellipticity}
An operator in the divergence form $\CalL u := \sum\limits_{0 \le |\sigma|, |\gamma| \le k} (-1)^{|\sigma|} D^{\sigma} (a_{\sigma \gamma}(x) D^{\gamma} u)$ is strongly elliptic if there exists $\theta_{k,\min} > 0$ such that
\begin{equation}\label{eqn:strongelliptic0}
	\sum_{|\sigma|= |\gamma| = k} a_{\sigma \gamma}(x) \vct{\zeta}_{\sigma} \vct{\zeta}_{\gamma} \ge \theta_{k,\min} \sum_{|\sigma| = k} \vct{\zeta}_{\sigma}^2 \quad \forall x \in D, \quad \forall \vct{\zeta} \in \R^{\binom{k+d-1}{k}},
\end{equation}
where $\vct{\zeta}_{\sigma}$ and $\vct{\zeta}_{\gamma}$ are the $\sigma$'th and $\gamma$'th entry of $\vct{\zeta}$, respectively. One can check that $\binom{k+d-1}{k}$ is exactly the number of all possible $k$th derivatives, i.e., $\#\{D^{\sigma} u: |\sigma| = k\}$.

For a $2k$th-order partial differential operator $\CalL u = (-1)^k \sum\limits_{|\alpha|\le 2k} a_{\alpha} D^{\alpha} u$, $\CalL$ is strongly elliptic if there exists a strongly elliptic operator in the divergence form $\tilde{\CalL}$ such that $\CalL u = \tilde{\CalL} u$ for all $u \in C^{2k}(D)$.
\end{definition}

\begin{remark}\label{rem:strongellipticex}
For a $2k$th-order partial differential operator $\CalL u = (-1)^k \sum\limits_{|\alpha|\le 2k} a_{\alpha} D^{\alpha} u$, its divergence form may not be unique. It is possible that it has two divergence forms, and one does not satisfy the {\it strong ellipticity condition}~\eqref{def:strongellipticity} while the other does. For example, the biharmonic operator $\CalL = \Delta^2$ in two space dimensions have the following two different divergence forms:
\begin{equation}\label{ex:bihamornic1}
	\CalL u = \sum\limits_{|\sigma| = |\gamma| = 2} D^{\sigma} (a_{\sigma \gamma} D^{\gamma} u) = \sum\limits_{|\sigma| = |\gamma| = 2} D^{\sigma} (\tilde{a}_{\sigma \gamma}(x) D^{\gamma} u),
\end{equation}
where
\begin{equation}\label{ex:bihamornic2}
	(a_{\sigma \gamma}) = \begin{bmatrix} 1 & 1 & 0 \\ 1 & 1 & 0 \\ 0 & 0 & 0\end{bmatrix}, \quad (\tilde{a}_{\sigma \gamma}) = \begin{bmatrix} 1 & 0 & 0 \\ 0 & 1 & 0 \\ 0 & 0 & 2 \end{bmatrix},
\end{equation}
when $\{D^{\sigma} u: |\sigma| = 2\}$ is ordered as $(\partial_{x_1}^2, \partial_{x_2}^2, \partial_{x_1} \partial_{x_2})$. Obviously, the first one does not satisfy the {\it strong ellipticity condition}~\eqref{def:strongellipticity} while the second one does. These two divergence forms correspond to two bilinear forms on $H_0^2(D)$:
 \begin{equation}\label{ex:bihamornic3}
 	B(u, v) = \int_D \Delta u \Delta v, \quad \tilde{B}(u,v) = \int_D D^2 u : D^2 v,
 \end{equation}
 where $D^2 u : D^2 v = \sum_{i,j} \frac{\partial^2 u}{\partial x_i \partial x_j} \frac{\partial^2 v}{\partial x_i \partial x_j}$.
\end{remark}

The {\it strong ellipticity condition} guarantees that for any local subdomain $S \subset D$, the seminorm $|\cdot|_{k,2,S}$ can be controlled by the {\it local energy norm} $\|\cdot\|_{H(S)}$.
\begin{lemma}\label{lem:coercive}
Suppose $\CalL u = \sum\limits_{0 \le |\sigma|, |\gamma| \le k} (-1)^{|\sigma|} D^{\sigma} (a_{\sigma \gamma}(x) D^{\gamma} u)$ is self-adjoint. Assume that $a_{\sigma \gamma}(x) \in L^{\infty}(D)$ for all $0 \le |\sigma|, |\gamma| \le k$ and that for any $x \in D$
\begin{itemize}
\item $\CalL$ is nonnegative, i.e.,
	\begin{equation}\label{eqn:Lnonnegativelem}
		\sum_{ 0 \le |\sigma|, |\gamma| \le k} a_{\sigma \gamma}(x) \vct{\zeta}_{\sigma} \vct{\zeta}_{\gamma} \ge 0 \qquad \forall \vct{\zeta} \in \R^{\binom{k+d}{k}},
	\end{equation}
\item $\CalL$ is bounded, i.e., there exist $\theta_{0,\max}\ge 0$ and $\theta_{k,\max} > 0$ such that
	\begin{equation}\label{eqn:Lboundedlem}
		\sum_{ 0 \le |\sigma|, |\gamma| \le k} a_{\sigma \gamma}(x) \vct{\zeta}_{\sigma} \vct{\zeta}_{\gamma} \le \theta_{k,\max} \sum_{|\sigma| = k} \vct{\zeta}_{\sigma}^2 + \theta_{0,\max} \sum_{|\sigma| < k} \vct{\zeta}_{\sigma}^2 \qquad \forall x \in D, \quad \forall \vct{\zeta} \in \R^{\binom{k+d}{k}},
	\end{equation}
\item and $\CalL$ is strongly elliptic, i.e., there exists $\theta_{k,\min} > 0$ such that
	\begin{equation}\label{eqn:strongellipticlem}
		\sum_{ |\sigma| = |\gamma| = k} a_{\sigma \gamma}(x) \vct{\zeta}_{\sigma} \vct{\zeta}_{\gamma} \ge \theta_{k,\min} \sum_{|\sigma| = k} \vct{\zeta}_{\sigma}^2 \qquad \forall x \in D, \quad \forall \vct{\zeta} \in \R^{\binom{k+d-1}{k}}.
	\end{equation}
\end{itemize}

For any subdomain $S \subset D$ and any $\psi \in H^{k}(D)$, define 
\begin{equation}\label{eqn:localHnorm}
	\|\psi\|_{H(S)}^2 = \sum_{0 \le |\sigma|, |\gamma| \le k} \int_S a_{\sigma \gamma}(x) D^{\sigma} \psi D^{\gamma} \psi.
\end{equation}
Then, the following two claims hold true.
\begin{itemize}
	\item If $\CalL$ contains only highest order terms, i.e., $\CalL u = \sum\limits_{|\sigma| = |\gamma| = k} (-1)^{|\sigma|} D^{\sigma} (a_{\sigma \gamma}(x) D^{\gamma} u)$, then we have
		\begin{equation}\label{eqn:kleH1}
			|\psi|_{k,2,S} \le \theta_{k,\min}^{-1/2} \|\psi\|_{H(S)} \qquad \forall \psi \in H^{k}(D).
		\end{equation}
	\item If $\CalL$ contains lower-order terms, for any regular domain partition $D = \cup_{i=1}^m \tau_i$ with diameter $h>0$ satisfying $\frac{h^2(1-h^{2k})}{1-h^2} \le \frac{\theta_{k,\min}^2}{16 \theta_{0,\max}\theta_{k,\max} C_p^2}$, and any subdomain $S = \cup_{_{j\in \Lambda}} \tau_j$, we have
		\begin{equation}\label{eqn:kleH2}
			|\psi_{i,q}|_{k,2,S} \le \left(2/\theta_{k,\min}\right)^{1/2} \|\psi_{i,q}\|_{H(S)} \qquad \forall \tau_i \not\in \mathcal{S}, \quad \forall 1\le q\le Q.
		\end{equation}
		Here, $\Lambda$ is any subset of $\{1, 2, \ldots, m\}$, and $\psi_{i,q}$ is defined by Eqn.~\eqref{eqn:psivariationalk2}.
\end{itemize}
\end{lemma}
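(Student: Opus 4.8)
The plan is to prove the two claims separately; the first is an immediate pointwise consequence of strong ellipticity, while the second requires combining the pointwise ellipticity structure of $A(x):=(a_{\sigma\gamma}(x))$ with the projection-type polynomial approximation property \eqref{eqn:generalPoincare} and the orthogonality constraints built into $\psi_{i,q}$.

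For \eqref{eqn:kleH1}: since $\CalL$ has only top-order terms, $\|\psi\|_{H(S)}^2=\int_S\sum_{|\sigma|=|\gamma|=k}a_{\sigma\gamma}(x)D^{\sigma}\psi\,D^{\gamma}\psi\,dx$ by \eqref{eqn:localHnorm}; applying \eqref{eqn:strongellipticlem} pointwise with $\vct{\zeta}_{\sigma}=D^{\sigma}\psi(x)$ shows the integrand is $\ge\theta_{k,\min}\sum_{|\sigma|=k}(D^{\sigma}\psi(x))^2$, and integrating over $S$ yields $\|\psi\|_{H(S)}^2\ge\theta_{k,\min}|\psi|_{k,2,S}^2$. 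No restriction on $h$ or on $\psi$ is needed in this case.

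For \eqref{eqn:kleH2} I would first establish a pointwise lower bound for the full quadratic form. Split $\vct{\zeta}=(\vct{\zeta}^{(k)},\vct{\zeta}^{(<k)})$ according to $|\sigma|=k$ versus $|\sigma|<k$ and block the (symmetric, by self-adjointness) matrix $A(x)$ into $A_{kk},A_{k,<k},A_{<k,<k}$. By \eqref{eqn:Lnonnegativelem} the bilinear form $(\vct{\zeta},\vct{\eta})\mapsto\vct{\zeta}^T A(x)\vct{\eta}$ is positive semidefinite, hence a semi-inner product, so Cauchy--Schwarz applied to $(\vct{\zeta}^{(k)},\zerovct)$ and $(\zerovct,\vct{\zeta}^{(<k)})$ gives $|(\vct{\zeta}^{(k)})^T A_{k,<k}\vct{\zeta}^{(<k)}|\le\big((\vct{\zeta}^{(k)})^T A_{kk}\vct{\zeta}^{(k)}\big)^{1/2}\big((\vct{\zeta}^{(<k)})^T A_{<k,<k}\vct{\zeta}^{(<k)}\big)^{1/2}$. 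The two diagonal forms are bounded by $\theta_{k,\max}|\vct{\zeta}^{(k)}|^2$ and $\theta_{0,\max}|\vct{\zeta}^{(<k)}|^2$ from boundedness \eqref{eqn:Lboundedlem} (take $\vct{\zeta}^{(<k)}=\zerovct$, resp.\ $\vct{\zeta}^{(k)}=\zerovct$), and $A_{kk}$ is bounded below by \eqref{eqn:strongellipticlem}. Discarding the nonnegative term $(\vct{\zeta}^{(<k)})^T A_{<k,<k}\vct{\zeta}^{(<k)}$, I obtain $\vct{\zeta}^T A(x)\vct{\zeta}\ge\theta_{k,\min}|\vct{\zeta}^{(k)}|^2-2\sqrt{\theta_{k,\max}\theta_{0,\max}}\,|\vct{\zeta}^{(k)}|\,|\vct{\zeta}^{(<k)}|$. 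Setting $\vct{\zeta}_{\sigma}=D^{\sigma}\psi(x)$, integrating over a single patch $\tau_j$, and applying Cauchy--Schwarz in $L^2(\tau_j)$ to the cross term gives
\[
\|\psi\|_{H(\tau_j)}^2\ \ge\ \theta_{k,\min}\,|\psi|_{k,2,\tau_j}^2\ -\ 2\sqrt{\theta_{k,\max}\theta_{0,\max}}\;|\psi|_{k,2,\tau_j}\Big(\textstyle\sum_{p=0}^{k-1}|\psi|_{p,2,\tau_j}^2\Big)^{1/2}.
\]
Now I use $\psi=\psi_{i,q}$ and $\tau_i\notin\mathcal{S}$: every patch $\tau_j\subset S$ has $j\ne i$, so the constraints in \eqref{eqn:psivariationalk2} force $\int_{\tau_j}\psi_{i,q}\phi_{j,q'}=0$ for all $q'$, i.e.\ the $L^2(\tau_j)$-orthogonal projection of $\psi_{i,q}$ onto $\CalP_{k-1}(\tau_j)$ vanishes. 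Hence \eqref{eqn:generalPoincare} applied with $\Pi_j\psi_{i,q}=0$ gives $|\psi_{i,q}|_{p,2,\tau_j}\le C_p h^{k-p}|\psi_{i,q}|_{k,2,\tau_j}$ for $0\le p\le k-1$, so $\sum_{p=0}^{k-1}|\psi_{i,q}|_{p,2,\tau_j}^2\le C_p^2\,\frac{h^2(1-h^{2k})}{1-h^2}\,|\psi_{i,q}|_{k,2,\tau_j}^2$. Under the stated mesh hypothesis $\frac{h^2(1-h^{2k})}{1-h^2}\le\frac{\theta_{k,\min}^2}{16\,\theta_{0,\max}\theta_{k,\max}C_p^2}$, the subtracted term in the display is at most $\tfrac12\theta_{k,\min}|\psi_{i,q}|_{k,2,\tau_j}^2$, giving $\|\psi_{i,q}\|_{H(\tau_j)}^2\ge\tfrac12\theta_{k,\min}|\psi_{i,q}|_{k,2,\tau_j}^2$ on each $\tau_j\subset S$. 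Summing over the patches composing $S$ (both $\|\cdot\|_{H(\cdot)}^2$ and $|\cdot|_{k,2,\cdot}^2$ are additive over the partition) yields \eqref{eqn:kleH2}.

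The crux is the indefinite cross block $A_{k,<k}$: strong ellipticity by itself controls only the top-order block, so the mixed terms must be absorbed via the semi-inner-product Cauchy--Schwarz combined with boundedness, which only trades them for a product of the top-order seminorm with the lower-order seminorms. Those lower-order seminorms are controllable only for the specific energy-minimizing functions $\psi_{i,q}$ and only on patches away from $\tau_i$, where \eqref{eqn:generalPoincare} converts them into $O(h)$ times the top-order seminorm; this is precisely why the statement restricts to $\tau_i\notin\mathcal{S}$ and imposes a smallness condition on $h$, whereas \eqref{eqn:kleH1} needs neither.
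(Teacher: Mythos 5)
Your proof is correct and follows essentially the same route as the paper: bound the top-order block by strong ellipticity, discard the nonnegative lower-order block, control the indefinite cross block via the semi-inner-product Cauchy--Schwarz combined with the boundedness assumption, and then absorb the resulting lower-order seminorms using the projection-type approximation property \eqref{eqn:generalPoincare}, which applies on patches $\tau_j\neq\tau_i$ because the orthogonality constraints in \eqref{eqn:psivariationalk2} make the $L^2(\tau_j)$-projection of $\psi_{i,q}$ onto $\CalP_{k-1}(\tau_j)$ vanish. The only cosmetic difference is that you carry out the estimate patch-by-patch and then sum, whereas the paper works directly on $S$; the two are equivalent.
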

\begin{proof}
The first point can be obtained directly from the definition of strong ellipticity. In the following, we provide the proof of the second point. For $S$ stated in the second point and any $\psi \in H^{k}(D)$, we have
\begin{equation}\label{eqn:kleHproof1}
\begin{split}
	\|\psi\|_{H(S)}^2 =& \underbrace{\sum_{|\sigma|= |\gamma|= k} \int_S a_{\sigma \gamma} D^{\sigma} \psi D^{\gamma} \psi}_{J_1} + \underbrace{\sum_{|\sigma|, |\gamma|< k} \int_S a_{\sigma \gamma} D^{\sigma} \psi D^{\gamma} \psi}_{J_2} \\
	&+ \underbrace{\sum_{|\sigma|=k, |\gamma|< k} \int_S (a_{\sigma \gamma}+a_{\gamma \sigma}) D^{\sigma} \psi D^{\gamma} \psi}_{J_3}.
\end{split}
\end{equation}
From the strong ellipticity~\eqref{eqn:strongellipticlem}, we have 
\begin{equation}\label{eqn:J1}
	J_1 \ge \theta_{k,\min} |\psi|_{k,2,S}^2.
\end{equation}
From the nonnegativity~\eqref{eqn:Lnonnegativelem}, we have
\begin{equation}\label{eqn:J2}
	J_2 \ge 0.
\end{equation}
Combining the nonnegativity~\eqref{eqn:Lnonnegativelem} and the boundedness~\eqref{eqn:Lboundedlem}, we can prove that
\begin{equation*}
	\left| \sum_{|\sigma|=k, |\gamma|< k} (a_{\sigma \gamma}+a_{\gamma \sigma}) D^{\sigma} \psi D^{\gamma} \psi \right| \le 2 \left(\theta_{0,\max}\theta_{k,\max} \sum_{|\sigma|=k} |D^{\sigma} \psi|^2 \sum_{|\sigma|<k} |D^{\sigma} \psi|^2 \right)^{1/2}.
\end{equation*}
Therefore, using the Cauchy--Schwarz inequality, we obtain
\begin{equation}\label{eqn:J3}
	|J_3| \le 2 \theta_{0,\max}^{1/2}\theta_{k,\max}^{1/2} |\psi|_{k,2,S} \|\psi\|_{k-1,2,S}.
\end{equation}
Thanks to the polynomial approximation property, for any $\tau_i \not\in \mathcal{S}$ and $1\le q\le Q$, we have
\begin{equation}\label{eqn:km1tok}
	\|\psi_{i,q}\|_{k-1,2,S}^2 \le C_p^2 \frac{h^2(1-h^{2k})}{1-h^2} |\psi_{i,q}|_{k,2,S}^2.
\end{equation}
Combining Eqn.~\eqref{eqn:J3} and \eqref{eqn:km1tok}, for $\frac{h^2(1-h^{2k})}{1-h^2} \le \frac{\theta_{k,\min}^2}{16 \theta_{0,\max}\theta_{k,\max} C_p^2}$, we have
\begin{equation}\label{eqn:J3_2}
	|J_3| \le \frac{\theta_{k,\min}}{2} |\psi|_{k,2,S}^2.
\end{equation}
Combining Eqn.~\eqref{eqn:kleHproof1}, \eqref{eqn:J1}, \eqref{eqn:J2} and \eqref{eqn:J3_2}, we prove the second point.
\end{proof}
\begin{remark}\label{rem:kleH2}
When $\CalL$ contains lower-order terms but there is no crossing term between $D^{\sigma} u$ ($|\sigma| = k$) and $D^{\sigma} u$ ($|\sigma| < k$), i.e., $J_3 = 0$, we can directly get the same bound in Eqn.~\eqref{eqn:kleH1} for all $h >0$.
\end{remark}

The strong ellipticity condition above is different from the standard uniformly elliptic condition (see Definition 9.2 in \cite{renardy2006introduction}), i.e., a linear partial differential operator $\CalL u= (-1)^k \sum\limits_{|\alpha|\le 2k} a_{\alpha} D^{\alpha} u$ is uniformly elliptic if there exists a constant $\theta_{k,\min}>0$ such that
\begin{equation}\label{eqn:uniformelliptic}
	\sum_{|\alpha| = 2k} a_{\alpha}(x) \vct{\xi}^{\alpha} \ge \theta_{k,\min} |\vct{\xi}|^{2 k}, \quad \forall x \in D, \vct{\xi} \in \R^{d}.
\end{equation}

On one hand, it is obvious that a strongly elliptic operator with smooth coefficients is uniformly elliptic, by taking $\vct{\zeta}_{\sigma} := \vct{\xi}^{\sigma}$ in Eqn.~\eqref{eqn:strongelliptic0}. On the other hand, the relation between the uniform ellipticity and the strong ellipticity turns out to be closely related to the relation between nonnegative polynomials and sum-of-square (SOS) polynomials. In fact, the strongly ellipticity condition~\eqref{eqn:strongelliptic0} is equivalent to that there exists $\theta_{k,\min} > 0$ such that 
\begin{equation*}
	\sum_{|\sigma|= |\gamma| = k} a_{\sigma \gamma}(x) \vct{\xi}^{\sigma} \vct{\xi}^{\gamma} - \theta_{k,\min} \sum_{|\sigma| = k} |\vct{\xi}|^{2 k} = \text{Sum-Of-Squares (SOS) polynomials}.
\end{equation*}
Using the famous Hilbert's theorem (1888) on nonnegative polynomials and SOS polynomials, we have the following theorem. Readers can find the proof and more discussions in \cite{Pengchuan-thesis-2017}.
\begin{theorem}\label{thm:hilbert}
Let $a_{\alpha} \in C^{|\alpha|-k}(\overline{D})$ for $k < |\alpha| \le 2k$, $a_{\alpha} \in C(\overline{D})$ for $|\alpha| \le k$, and $\CalL u = (-1)^k \sum\limits_{|\alpha|\le 2k} a_{\alpha} D^{\alpha} u$ for all $u \in C^{2 k} (D)$. Then, in the following two cases, if $\CalL$ is uniformly elliptic it is also strongly elliptic.
\begin{itemize}
\item $d=1$ or $2$ : one- or two-dimensional physical domain,
\item $k=1$ : second-order partial differential operators.  
\end{itemize}
For the case $(d, k)=(3, 2)$, i.e., fourth-order partial differential operators in 3-dimensional physical domain, all uniformly elliptic operators with constant coefficients are also strongly elliptic. 
\end{theorem}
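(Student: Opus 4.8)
The plan is to translate the gap between uniform ellipticity and strong ellipticity into the classical question of whether a nonnegative form is a sum of squares, and then invoke Hilbert's $1888$ theorem. The key algebraic observation to set up first is a \emph{Gram-matrix reformulation of strong ellipticity}. Let $\vct{V}(\vct{\xi}) := (\vct{\xi}^{\sigma})_{|\sigma| = k} \in \R^{\binom{k+d-1}{k}}$ be the Veronese vector, so that a symmetric array $(a_{\sigma\gamma}(x))_{|\sigma| = |\gamma| = k}$ yields the leading symbol $\vct{V}(\vct{\xi})^{T} A(x) \vct{V}(\vct{\xi}) = \sum_{|\sigma|=|\gamma|=k} a_{\sigma\gamma}(x)\vct{\xi}^{\sigma+\gamma}$, where $A(x) = (a_{\sigma\gamma}(x))$. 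Such a divergence-form leading part agrees with $(-1)^k \sum_{|\alpha| = 2k} a_{\alpha}(x) D^{\alpha}u$ in top order precisely when $\sum_{\sigma+\gamma = \alpha} a_{\sigma\gamma}(x) = a_{\alpha}(x)$, i.e.\ exactly when $A(x)$ is a Gram matrix of the leading symbol $p(x,\vct{\xi}) := \sum_{|\alpha| = 2k} a_{\alpha}(x)\vct{\xi}^{\alpha}$; the residual lower-order terms coming from the Leibniz rule (these use the smoothness $a_{\alpha} \in C^{k}$ for $|\alpha| = 2k$) together with the genuine lower-order part of $\CalL$ can then be matched by choosing the lower-order coefficients of $\tilde{\CalL}$, keeping $\tilde{\CalL}$ self-adjoint since $\CalL$ is. Finally, by Definition~\ref{def:strongellipticity}, $\tilde{\CalL}$ is strongly elliptic iff $A(x) \succeq \theta_{k,\min}\Id$ for some $\theta_{k,\min} > 0$, and such a positive lower bound is free: writing $|\vct{\xi}|^{2k} = \sum_{|\sigma| = k}\binom{k}{\sigma}\vct{\xi}^{2\sigma}$ exhibits a diagonal Gram matrix $D \succeq \Id$ of $|\vct{\xi}|^{2k}$, so if $p(x,\cdot) - \theta|\vct{\xi}|^{2k}$ has a positive semidefinite Gram matrix $B(x)$, then $A(x) := B(x) + \theta D \succeq \theta\Id$ is a Gram matrix of $p(x,\cdot)$. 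Hence: \emph{$\CalL$ admits a strongly elliptic divergence-form representative if and only if, for each $x$, the form $q_{x}(\vct{\xi}) := p(x,\vct{\xi}) - \theta_{k,\min}|\vct{\xi}|^{2k}$ is a sum of squares of degree-$k$ forms in $\vct{\xi}$}, with the Gram-matrix selection $x \mapsto B(x)$ regular enough to define $\tilde{\CalL}$.

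With this reformulation the hypothesis does the rest. By \eqref{eqn:uniformelliptic}, uniform ellipticity says precisely that $q_{x}$ is, for every $x \in \overline{D}$, a nonnegative real form of degree $2k$ in the $d$ variables $\vct{\xi}$. Hilbert's theorem states that every such form is a sum of squares exactly when $d \le 2$, or $2k = 2$, or $(d,2k) = (3,4)$ --- that is, exactly in the three cases listed: one- or two-dimensional physical domains ($d \le 2$), second-order operators ($k = 1$), and fourth-order operators in three dimensions ($(d,k) = (3,2)$). In each case $q_{x}$ is SOS for every $x$, hence $p(x,\cdot)$ has a Gram matrix $A(x) \succeq \theta_{k,\min}\Id$, and by the first step $\CalL$ coincides with a strongly elliptic divergence-form operator.

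The remaining --- and the only genuinely delicate --- point is the regularity of the chosen Gram matrix $A(x)$ in $x$, which is also what forces the ``constant coefficients'' restriction when $(d,k) = (3,2)$. When $k = 1$ there is nothing to prove: $A(x) = (a_{ij}(x))$ is already a symmetric Gram matrix with $A(x) \succeq \theta_{k,\min}\Id$, so $\CalL$ is already in strongly elliptic divergence form. When $d = 1$ the Veronese vector is a scalar and $A(x) = [a_{2k}(x)]$. When $d = 2$ one must select, for each $x$, a positive semidefinite Gram matrix $B(x)$ of the nonnegative binary form $q_{x}$ inheriting the regularity of the $a_{\alpha}$; this can be done because the set of admissible Gram matrices is a nonempty, compact, convex spectrahedron varying continuously with $q_{x}$ (and $q_{x}$ varies continuously with $x$), so a continuous selection --- e.g.\ the minimal-Frobenius-norm element --- exists, with the uniform bound $A(x) \succeq \theta_{k,\min}\Id$ preserved. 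For $(d,k) = (3,2)$ Hilbert's theorem still gives an SOS representation at each $x$, but producing a uniformly bounded, sufficiently regular selection of Gram matrices for a family of ternary quartics is more subtle; assuming constant coefficients removes the $x$-dependence entirely and a single application of Hilbert's theorem suffices. Thus the main obstacle is not the algebraic core, which is exactly Hilbert's $1888$ theorem, but this measurable/continuous Gram-matrix selection with the ellipticity constant carried through; the details are carried out in \cite{Pengchuan-thesis-2017}.
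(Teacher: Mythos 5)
The paper does not actually give a proof of this theorem; it defers entirely to the thesis \cite{Pengchuan-thesis-2017}, so there is no ``paper proof'' to compare against line by line. That said, your reduction to Hilbert's 1888 classification of nonnegative forms that are sums of squares is exactly the route the paper indicates in the surrounding discussion: you correctly identify that $(d,2k)\in\{(\le 2,\cdot),(\cdot,2),(3,4)\}$ are Hilbert's cases, you correctly reformulate strong ellipticity as the existence of a Gram matrix $A(x)\succeq\theta_{k,\min}\Id$ of the principal symbol, and the trick of peeling off $\theta_{k,\min}|\vct{\xi}|^{2k}$ (whose Veronese--basis Gram matrix is diagonal with multinomial entries $\ge 1$) to keep the ellipticity constant is a clean way to recover a uniform lower bound.

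The genuine gap is the regularity of the Gram-matrix selection $x\mapsto A(x)$, and you underestimate how much regularity is required. For $\tilde{\CalL}u=\sum(-1)^{|\sigma|}D^\sigma(\tilde a_{\sigma\gamma}D^\gamma u)$ to agree with $\CalL u$ as a function (or even a distribution sitting in $L^1_{\mathrm{loc}}$) for all $u\in C^{2k}(D)$, the Leibniz expansion that matches the lower-order terms requires the leading coefficients $\tilde a_{\sigma\gamma}$ with $|\sigma|=|\gamma|=k$ to be $C^k$, not merely continuous or Lipschitz. Your proposed selector --- the minimal-Frobenius-norm point of the Gram spectrahedron --- is a metric projection onto a convex set moving with $x$, and such projections are generically only Lipschitz, not $C^1$ (let alone $C^k$). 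So the $d=2$, $k\ge 2$ case is not actually established by your argument; some additional device (an explicit or spectrally centered selection, or a degree-lowering trick specific to binary forms) is needed, and this is presumably what the thesis supplies. Relatedly, your proposed explanation for the constant-coefficient restriction in $(d,k)=(3,2)$ --- ``producing a regular Gram selection for ternary quartics is more subtle'' --- does not distinguish that case from $d=2$: the convex-selection difficulty you appeal to is of the same character for binary forms. The paper's own remark immediately following the theorem acknowledges that in $(d,k)=(3,2)$ the authors simply were unable to handle variable coefficients (and suspect it is true), which is a weaker and more honest statement of the obstruction than the one you offer. In short: the algebraic skeleton is the right one, the $k=1$ and $d=1$ cases are correct as written, but the claim that a sufficiently smooth Gram matrix can be selected in the $d=2$ case (and cannot be in the $(3,2)$ case) is asserted rather than proved, and the specific construction you suggest does not deliver the $C^k$ smoothness that the divergence-form rewriting requires.
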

For the case $(d, k)= (3, 2)$, we are not able to prove that strong ellipticity is equivalent to uniform ellipticity for elliptic operators with smooth and multiscale coefficients, but we suspect that it is true. For all other cases, there are uniformly but not strongly elliptic operators. Fortunately, for small physical dimensions $d$ and differential orders $k$, strongly elliptic operators approximate uniformly elliptic operators well and counter examples are difficult to construct.

\subsection{Exponential decay of basis functions I}\label{subsec:k2decaysimple}
In this subsection, we prove the exponential decay of basis functions constructed in Eqn.~\eqref{eqn:psivariationalk2} for higher-order elliptic operators that contain only the highest order terms. We will leave the proof for the general operators to the next subsection. The proof follows exactly the same structure as that in the second-order elliptic case.  

\begin{theorem}\label{thm:expdecay2simple}
Let $\CalL u = (-1)^k \sum\limits_{|\sigma|=|\gamma|=k} D^{\sigma}(a_{\sigma \gamma} D^{\gamma} u)$ and $a_{\sigma \gamma}(x) \in L^{\infty}(D)$ for all $|\sigma|= |\gamma| = k$. 
Assume that for any $x \in D$
\begin{itemize}
\item $\CalL$ is bounded, i.e., there exist nonnegative $\theta_{k,\max}$ such that
	\begin{equation}\label{eqn:Lboundedsimple}
		\sum_{ |\sigma|= |\gamma|= k} a_{\sigma \gamma}(x) \vct{\zeta}_{\sigma} \vct{\zeta}_{\gamma} \le \theta_{k,\max} \sum_{|\sigma| = k} \vct{\zeta}_{\sigma}^2 \qquad \forall \vct{\zeta} \in \R^{\binom{k+d-1}{k}},
	\end{equation}
\item and $\CalL$ is strongly elliptic, i.e., there exists $\theta_{k,\min} > 0$ such that
	\begin{equation}\label{eqn:strongellipticsimple}
		\sum_{ |\sigma| = |\gamma| = k} a_{\sigma \gamma}(x) \vct{\zeta}_{\sigma} \vct{\zeta}_{\gamma} \ge \theta_{k,\min} \sum_{|\sigma| = k} \vct{\zeta}_{\sigma}^2 \qquad \forall \vct{\zeta} \in \R^{\binom{k+d-1}{k}}.
	\end{equation}
\end{itemize}
Then, for any $1\le i \le m$ and $1\le q\le Q$, it holds true that
\begin{equation}\label{eqn:expdecay2simple}
	\|\psi_{i,q}\|_{H(D \cap (B(x_i,r))^c)}^2 \le \exp\left(1 - \frac{r}{l h} \right) \|\psi_{i,q}\|_{H(D)}^2
\end{equation}
with $\sqrt{l^2-1} \ge (e-1) C_{\eta} C_p (C_1 + C(k,d,\delta)) \sqrt{\frac{\theta_{k,\max}}{\theta_{k,\min}}}$. Here, $C_1$ and $C_{\eta}$ only depends on $k$ and $d$, $C_p$ is the constant in Eqn.~\eqref{eqn:generalPoincare} and $C(k, d, \delta) := C(k, k, d, \delta)$ from Lemma~3.1.
\end{theorem}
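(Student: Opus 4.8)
The plan is to run, at order $2k$, the layer-by-layer recursion that proves Theorem~\ref{thm:expdecay1} in the second-order case, upgrading its three ingredients — a Lipschitz cut-off, the Galerkin-type orthogonality $\CalL\psi_i\in\Phi$, and the inverse energy estimate of Lemma~\ref{lem:boundLf} — to their $2k$th-order analogues. Fix $i$ and $q$ and write $\psi:=\psi_{i,q}$. For an integer $t\ge1$, let $S_0$ be the union of the elements $\tau_j$ contained in $\overline{B(x_i,tlh)\cap D}$, let $S_1$ be the union of those not contained in $\overline{B(x_i,(t+1)lh)\cap D}$, and let $S^\ast=S_0^c\cap S_1^c\cap D$, as in Figure~\ref{fig:illustrate1}. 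Put $b_t:=\|\psi\|_{H(S_0^c)}^2$, so that $b_0=\|\psi\|_{H(D)}^2$, $b_{t+1}=\|\psi\|_{H(S_1)}^2$ and $b_t-b_{t+1}=\|\psi\|_{H(S^\ast)}^2$. As in Theorem~\ref{thm:expdecay1}, it suffices to produce a constant $C\le 1/(\econst-1)$ with $b_{t+1}\le C\,(b_t-b_{t+1})$ for every $t\ge1$: iterating gives $b_t\le\econst^{1-t}b_0$, and choosing $t$ comparable to $r/(lh)$ yields~\eqref{eqn:expdecay2simple}. The constant $C$ will come out proportional to $C_\eta C_p(C_1+C(k,d,\delta))\sqrt{\theta_{k,\max}/\theta_{k,\min}}$ divided, roughly, by $l$; a careful accounting of the $S_0/S_1$ geometry sharpens this to a division by $\sqrt{l^2-1}$, which is where the stated condition on $l$ originates.

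Let $\eta\in W^{k,\infty}(D)$ satisfy $0\le\eta\le1$, $\eta\equiv0$ on $S_0$, $\eta\equiv1$ on $S_1$, and $\|D^j\eta\|_{L^\infty(D)}\le C_\eta(lh)^{-j}$ for $1\le j\le k$, with $C_\eta=C_\eta(k,d)$; such a cut-off exists by mollifying the distance-ratio function used for Theorem~\ref{thm:expdecay1} (this is where the small geometric correction is paid). Since $\eta\psi\in H_0^k(D)$ and, by Eqn.~\eqref{eqn:psi1} together with the definition of the Cameron--Martin inner product, $B(\psi,v)=\int_D(\CalL\psi)v$ for all $v\in H_0^k(D)$, we get $B(\psi,\eta\psi)=\int_D(\CalL\psi)\eta\psi$. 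Expanding $D^\gamma(\eta\psi)$ by the Leibniz rule, writing $u_\gamma:=D^\gamma(\eta\psi)-\eta D^\gamma\psi=\sum_{0<\beta\le\gamma}\binom{\gamma}{\beta}D^\beta\eta\,D^{\gamma-\beta}\psi$ (supported in $S^\ast$), and using that $\eta\equiv1$ on $S_1$ while the integrand $\sum a_{\sigma\gamma}D^\sigma\psi D^\gamma\psi$ is nonnegative by strong ellipticity,
\begin{equation*}
  \|\psi\|_{H(S_1)}^2 \le \int_D \eta\!\!\sum_{|\sigma|=|\gamma|=k}\!\! a_{\sigma\gamma}\,D^\sigma\psi\,D^\gamma\psi = \underbrace{B(\psi,\eta\psi)}_{I_2}\;\underbrace{-\sum_{|\sigma|=|\gamma|=k}\int_D a_{\sigma\gamma}\,D^\sigma\psi\,u_\gamma}_{I_1}.
\end{equation*}
It remains to bound $|I_1|$ and $|I_2|$ by a small multiple of $\|\psi\|_{H(S^\ast)}^2=b_t-b_{t+1}$.

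For $I_2$: since $\CalL\psi\in\Phi$ (Theorem~\ref{thm:variational}), on each $\tau_r$ with $r\ne i$ the constraints in~\eqref{eqn:psivariationalk2} say precisely that $\psi$ is $L^2(\tau_r)$-orthogonal to $\CalP_{k-1}(\tau_r)$, so $\int_{\tau_r}(\CalL\psi)\psi=0$ and $\int_{\tau_r}(\CalL\psi)\eta_r\psi=0$ for the average $\eta_r$ of $\eta$ over $\tau_r$; hence the $S_0$- and $S_1$-parts of $\int_D(\CalL\psi)\eta\psi$ vanish ($\tau_i\subset S_0$ because $t\ge1$), leaving $I_2=\sum_{\tau_r\subset S^\ast}\int_{\tau_r}(\CalL\psi)(\eta-\eta_r)\psi$, and thus $|I_2|\le\sum_{\tau_r\subset S^\ast}\|\CalL\psi\|_{L^2(\tau_r)}\|\eta-\eta_r\|_{L^\infty(\tau_r)}\|\psi\|_{L^2(\tau_r)}$. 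Here $\|\eta-\eta_r\|_{L^\infty(\tau_r)}\le h\|\nabla\eta\|_{L^\infty}\le C_\eta/l$ and $\|\psi\|_{L^2(\tau_r)}=\|\psi-\Pi_r\psi\|_{L^2(\tau_r)}\le C_p h^k|\psi|_{k,2,\tau_r}$ by~\eqref{eqn:generalPoincare} (since $\Pi_r\psi=0$). For $\|\CalL\psi\|_{L^2(\tau_r)}$ I would prove directly from Lemma~\ref{lem:scaling} the higher-order inverse energy estimate $\|\CalL\psi\|_{L^2(\tau_r)}\le C(k,d,\delta)\sqrt{\theta_{k,\max}}\,h^{-k}\|\psi\|_{H(\tau_r)}$ with $C(k,d,\delta):=C(k,k,d,\delta)$: writing $p:=\CalL\psi|_{\tau_r}\in\CalP_{k-1}$ and letting $w\in H_0^k(\tau_r)$ solve $\CalL_0 w=p$ for $\CalL_0=(-1)^k\sum_{|\sigma|=k}D^{2\sigma}$, Lemma~\ref{lem:scaling} gives $\|p\|_{L^2(\tau_r)}\le C(k,k,d,\delta)h^{-k}|w|_{k,2,\tau_r}$, while integration by parts and Cauchy--Schwarz for the form $B$ give $|w|_{k,2,\tau_r}^2=\int_{\tau_r}(\CalL\psi)w\le\|\psi\|_{H(\tau_r)}\|w\|_{H(\tau_r)}\le\sqrt{\theta_{k,\max}}\|\psi\|_{H(\tau_r)}|w|_{k,2,\tau_r}$ by boundedness~\eqref{eqn:Lboundedsimple}. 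Combining with $|\psi|_{k,2,\tau_r}\le\theta_{k,\min}^{-1/2}\|\psi\|_{H(\tau_r)}$ from Lemma~\ref{lem:coercive} and summing over $\tau_r\subset S^\ast$ yields $|I_2|\le\frac{C_\eta C_p\,C(k,d,\delta)}{l}\sqrt{\theta_{k,\max}/\theta_{k,\min}}\,\|\psi\|_{H(S^\ast)}^2$.

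The estimate for $I_1$ is the genuinely new step and the one I expect to be the main obstacle. Unlike the second-order case, where $u_\gamma$ is the single term $\psi\nabla\eta$ handled by the Poincar\'e inequality, here $u_\gamma$ contains Leibniz remainders $D^{\gamma-\beta}\psi$ of all orders $k-1,\dots,0$, each of which must be absorbed into $\|\psi\|_{H(S^\ast)}^2$ with the power of $h$ that cancels the $(lh)^{-|\beta|}$ from $\eta$, and with only the \emph{first} power of $\theta_{k,\max}/\theta_{k,\min}$. The device is to apply the Cauchy--Schwarz inequality for the pointwise positive semidefinite form $(a_{\sigma\gamma}(x))$ \emph{before} estimating coefficients:
\begin{equation*}
  |I_1| \le \Big(\int_{S^\ast}\!\!\sum a_{\sigma\gamma}D^\sigma\psi D^\gamma\psi\Big)^{1/2}\Big(\int_{S^\ast}\!\!\sum a_{\sigma\gamma}u_\sigma u_\gamma\Big)^{1/2} \le \|\psi\|_{H(S^\ast)}\,\sqrt{\theta_{k,\max}}\,\Big(\sum_{|\sigma|=k}\|u_\sigma\|_{L^2(S^\ast)}^2\Big)^{1/2},
\end{equation*}
where the last step uses boundedness. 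On each $\tau_r\subset S^\ast$ we have $r\ne i$, hence $\Pi_r\psi=0$, so~\eqref{eqn:generalPoincare} applied with $p=k-j$ gives $|\psi|_{k-j,2,\tau_r}=|\psi-\Pi_r\psi|_{k-j,2,\tau_r}\le C_{k-j}h^j|\psi|_{k,2,\tau_r}$; combined with $\|D^j\eta\|_{L^\infty}\le C_\eta(lh)^{-j}$ this yields $\|u_\sigma\|_{L^2(\tau_r)}\le C(k,d)(C_\eta/l)|\psi|_{k,2,\tau_r}$, whence $\big(\sum_{|\sigma|=k}\|u_\sigma\|_{L^2(S^\ast)}^2\big)^{1/2}\le\frac{C_\eta C_1}{l}|\psi|_{k,2,S^\ast}\le\frac{C_\eta C_1}{l\sqrt{\theta_{k,\min}}}\|\psi\|_{H(S^\ast)}$ with $C_1=C_1(k,d)$ (Lemma~\ref{lem:coercive} again). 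Thus $|I_1|\le\frac{C_\eta C_1}{l}\sqrt{\theta_{k,\max}/\theta_{k,\min}}\,\|\psi\|_{H(S^\ast)}^2$; adding the two bounds gives $b_{t+1}\le C(b_t-b_{t+1})$ with $C$ proportional to $C_\eta(C_1+C_pC(k,d,\delta))\sqrt{\theta_{k,\max}/\theta_{k,\min}}$ over (essentially) $l$, so that requiring $\sqrt{l^2-1}\ge(\econst-1)C_\eta C_p(C_1+C(k,d,\delta))\sqrt{\theta_{k,\max}/\theta_{k,\min}}$ forces $C\le1/(\econst-1)$ and closes the recursion. The two points that make this nontrivial, and that must be handled with care, are exactly the observation $\Pi_r\psi_{i,q}=0$ off the diagonal (so that the generalized Poincar\'e inequality controls every intermediate seminorm) and the form-version of Cauchy--Schwarz; the higher-order inverse energy estimate for $I_2$ and the construction of the $W^{k,\infty}$ cut-off are the secondary technical tasks.
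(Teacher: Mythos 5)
Your proof is correct and follows essentially the same recursive argument as the paper's own proof of Theorem~\ref{thm:expdecay2simple}: the same layered sets $S_0,S^*,S_1$, the same smooth cut-off with $\|D^\sigma\eta\|_\infty\le C_\eta(lh)^{-|\sigma|}$, the same decomposition into $I_1$ (Leibniz remainders, treated by form-Cauchy--Schwarz and the projection-type Poincar\'e inequality) and $I_2$ (killed on $S_0\cup S_1$ by the orthogonality $\CalL\psi_{i,q}\in\Phi$ and the mean-value trick, then bounded by the inverse energy estimate), and the same final pass through strong ellipticity to convert $|\psi_{i,q}|_{k,2,S^*}$ into $\|\psi_{i,q}\|_{H(S^*)}$. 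The only point where you deviate, and it is a genuine if small improvement for this highest-order-only case, is the inverse energy estimate: rather than invoking Lemma~\ref{lem:boundLf2} (whose proof routes through the Green's-function comparison Lemma~\ref{lem:compare12}), you derive $\|\CalL\psi_{i,q}\|_{L^2(\tau_r)}\le C(k,d,\delta)\sqrt{\theta_{k,\max}}\,h^{-k}\|\psi_{i,q}\|_{H(\tau_r)}$ directly from Lemma~\ref{lem:scaling} by solving $\CalL_0 w = \CalL\psi_{i,q}\big|_{\tau_r}$ with $\CalL_0=(-1)^k\sum_{|\sigma|=k}D^{2\sigma}$ and using $|w|_{k,2,\tau_r}^2 = B_{\tau_r}(\psi_{i,q},w)\le\sqrt{\theta_{k,\max}}\,\|\psi_{i,q}\|_{H(\tau_r)}|w|_{k,2,\tau_r}$, which is shorter and avoids the variational comparison of Green's functions while producing the same constant. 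One bookkeeping slip: your intermediate display $\bigl(\sum_{|\sigma|=k}\|u_\sigma\|^2_{L^2(S^*)}\bigr)^{1/2}\le (C_\eta C_1/l)\,|\psi_{i,q}|_{k,2,S^*}$ omits the factor $C_p$ coming from the Poincar\'e step $|\psi_{i,q}|_{k-j,2,\tau_r}\le C_p h^j|\psi_{i,q}|_{k,2,\tau_r}$ (and $1/l$ should be $1/\sqrt{l^2-1}$ after summing the geometric tail), but your final condition on $l$ already carries the correct $C_p$ factor, so this is a notational inconsistency rather than an error in the argument.
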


\begin{proof}
\revise{The proof follows the same structure as that of Theorem~\ref{thm:expdecay1}  and \cite{owhadi2015multi} (Thm. 3.9).} Let $k\in \N$, $l > 0$ and $i \in \{1, 2, \ldots, m\}$. Let $S_0$ be the union of all the domains $\tau_j$ that are contained in the closure of $B(x_i, k l h) \cap D$, let $S_1$ be the union of all the domains $\tau_j$ that are not contained in the closure of $B(x_i, (k+1) l h) \cap D$ and let $S^*=S_0^c \cap S_1^c \cap D$ (be the union of all the remaining elements $\tau_j$ not contained in $S_0$ or $S_1$). In the following, we will prove that for any $k \ge 1$, there exists constant $C$ such that $\|\psi_{i,q}\|^2_{H(S_1)} \le C \|\psi_{i,q}\|^2_{H(S^*)}$. Then, the same recursive argument in the proof of Theorem~\ref{thm:expdecay1} can be used to prove the exponential decay. 

Let $\eta(x)$ be a smooth function which satisfies (1) $0 \le \eta \le 1$, (2) $\eta|_{B(x_i, k l h)} = 0$, (3) $\eta|_{B^c(x_i, (k+1) l h)} = 1$ and (4) $\|D^{\sigma} \eta\|_{L^{\infty}(D)} \le \frac{C_{\eta}}{(l h)^{|\sigma|}}$ for all $\sigma$.

By integration by parts, we have
\begin{equation*}
	\int_D \eta \psi_{i,q} \CalL \psi_{i,q} = \sum_{|\sigma|= |\gamma| = k} \int_D a_{\sigma \gamma}(x) D^{\sigma}(\eta \psi_{i,q}) D^{\gamma} \psi_{i,q}.
\end{equation*}
Making use of the binomial theorem $D^{\sigma}(\eta \psi_{i,q}) = \eta D^{\sigma} \phi_{i,q} + \sum_{\stackrel{\sigma_1 + \sigma_2 = \sigma}{|\sigma_1| \ge 1}} \binom{\sigma}{\sigma_1} D^{\sigma_1} \eta D^{\sigma_2} \psi_{i,q}$, we obtain
\begin{equation}\label{eqn:intbyparts2simple}
\begin{split}
	& \sum_{|\sigma|= |\gamma| = k} \int_D \eta a_{\sigma \gamma}(x) D^{\sigma}(\psi_{i,q}) D^{\gamma} \psi_{i,q} = \underbrace{\int_D \eta \psi_{i,q} \CalL \psi_{i,q}}_{I_2} \\
	& \underbrace{- \sum_{|\sigma|= |\gamma| = k}\sum_{\stackrel{\sigma_1 + \sigma_2 = \sigma}{|\sigma_1| \ge 1}} \binom{\sigma}{\sigma_1} \int_D a_{\sigma \gamma}(x) D^{\sigma_1} \eta D^{\sigma_2} \psi_{i,q} D^{\gamma} \psi_{i,q}}_{I_1}.
\end{split}
\end{equation}
Since $\sum\limits_{|\sigma|= |\gamma| = k} a_{\sigma \gamma}(x) D^{\sigma}\psi_{i,q} D^{\gamma} \psi_{i,q} \ge 0$ for every $x\in D$, the left-hand side gives an upper bound for $\|\psi_{i,q}\|_{H(S^1)}^2$. 
Since $D^{\sigma_1} \eta = 0$ ($|\sigma_1| \ge 1$) on both $S_0$ and $S_1$, we obtain
\begin{eqnarray}
	  I_1 &=& - \sum_{|\sigma|= |\gamma| = k}\sum_{\stackrel{\sigma_1 + \sigma_2 = \sigma}{|\sigma_1| \ge 1}} \binom{\sigma}{\sigma_1} \int_{S^*} a_{\sigma \gamma}(x) D^{\sigma_1} \eta D^{\sigma_2} \psi_{i,q} D^{\gamma} \psi_{i,q}  \label{eqn:I1_1_1simple} \\
	 &\le& \left( \sum_{|\sigma| = k} \int_{S^*} \left| \sum_{\stackrel{\sigma_1 + \sigma_2 = \sigma}{|\sigma_1| \ge 1}} \binom{\sigma}{\sigma_1} D^{\sigma_1} \eta D^{\sigma_2} \psi_{i,q} \right|^2 \right)^{1/2} \|\psi_{i,q}\|_{H(S^*)} \sqrt{\theta_{k,\max}} \label{eqn:I1_1_2simple}\\
	 &\le& C_1 C_{\eta} \left( \sum_{s'=1}^k (l h)^{-2 s'} |\psi_{i,q}|_{k-s', 2, S^*}^2 \right)^{1/2} \|\psi_{i,q}\|_{H(S^*)} \sqrt{\theta_{k,\max}} \; . \label{eqn:I1_1_3simple}
\end{eqnarray}
Here, $C_1$ is a constant only dependent on $k$ and $d$. We have used the Cauchy--Schwarz inequality and the bound~\eqref{eqn:Lboundedsimple} in Eqn.~\eqref{eqn:I1_1_2simple}. We will defer the proof of the last step in Eqn.~\eqref{eqn:I1_1_3simple} to the Appendix. Since $\psi_{i,q} \perp \CalP_{k-1}$ locally in $L^2$, we obtain from Theorem~\ref{thm:generalPoincare} that
\begin{equation*}
	|\psi_{i,q}|_{k-s', 2, S^*} \le C_p h^{s'} |\psi_{i,q}|_{k, 2, S^*}.
\end{equation*}
Therefore, we get
\begin{eqnarray}
	  I_1 &\le& C_1 C_{\eta} \sqrt{\theta_{k,\max}} C_p \left(\sum_{s'=1}^k l^{- 2s'} |\psi_{i,q}|_{k, 2, S^*}^2\right)^{1/2} \|\psi_{i,q}\|_{H(S^*)}  \label{eqn:I1_2_1simple}\\
	 &\le& \frac{C_1 C_{\eta} \sqrt{\theta_{k,\max}} C_p}{\sqrt{l^2-1}} |\psi_{i,q}|_{k,2,S^*} \|\psi_{i,q}\|_{H(S^*)} \label{eqn:I1_2_3simple}. 
\end{eqnarray}
In the last inequality, we have used $\sum_{s'=1}^k l^{- 2s'} = \frac{1-l^{-2k}}{l^2 -1} \le \frac{1}{l^2 -1}$.

By the construction of $\psi_{i,q}$ given in~\eqref{eqn:psivariationalk2}, we have $\int_D \psi_{i,q} \phi_{j,q'} = 0$ for $i \neq j$. Thanks to~\eqref{eqn:psi1}, we have $\CalL \psi_{i,q} \in \Phi$. Therefore, we get $\int_{S_1} \eta \psi_{i,q} \CalL \psi_{i,q} = 0$. Denoting $\eta_j$ as the volume average of $\eta$ over $\tau_j$, we obtain
\begin{equation}\label{eqn:I2_1simple}
\begin{split}
	 I_2 &= \int_{S^*} \eta \psi_{i,q} \CalL \psi_{i,q} = \sum_{\tau_j \in S^*}\int_{\tau_j} (\eta-\eta_j) \psi_{i,q} \CalL \psi_{i,q} \le \frac{C_{\eta}}{l} \sum_{\tau_j \in S^*} \|\psi_{i,q}\|_{L^2(\tau_j)} \|\CalL \psi_{i,q}\|_{L^2(\tau_j)}.
\end{split}
\end{equation}
By using Lemma~\ref{lem:boundLf2}, which is stated in the beginning of Section 6.5, we have $\|\CalL \psi_{i,q}\|_{L^2(\tau_j)} \le \sqrt{\theta_{k,\max}}C(k, d, \delta) h^{-k} \|\psi_{i,q}\|_{H(\tau_j)}$ for any $h > 0$ because $\CalL$ contains only the highest order derivatives. Then, we obtain
\begin{equation}\label{eqn:I2_2simple}
\begin{split}
	 I_2 &\le \frac{\sqrt{\theta_{k,\max}} C_{\eta} C(k, d, \delta)}{l h^k} \|\psi_{i,q}\|_{L^2(S^*)} \|\psi_{i,q}\|_{H(S^*)} \\
	       &\le \frac{\sqrt{\theta_{k,\max}} C_{\eta} C(k, d, \delta) C_p}{l} |\psi_{i,q}|_{k,2,S^*} \|\psi_{i,q}\|_{H(S^*)},
\end{split}
\end{equation}
where we have used Eqn.~\eqref{eqn:generalPoincare} in the last step. 

Combining Eqn.~\eqref{eqn:I1_2_3simple} and \eqref{eqn:I2_2simple}, we obtain
\begin{equation*}
	I_1 + I_2 \le \sqrt{\frac{\theta_{k,\max}}{l^2-1}} C_{\eta} C_p (C_1 + C(k,d,\delta)) |\psi_{i,q}|_{k,2,S^*} \|\psi_{i,q}\|_{H(S^*)}\,.
\end{equation*}
By the strong ellipticity~\eqref{eqn:strongellipticsimple} and Eqn.~\eqref{eqn:kleH1}, we have $|\psi_{i,q}|_{k,2,S^*} \le \theta_{k,\min}^{-1/2} \|\psi_{i,q}\|_{H(S^*)}$. Therefore, we have
\begin{equation}\label{eqn:recursivek2simple}
	\|\psi_{i,q}\|_{H(S^1)}^2 \le \sqrt{\frac{\theta_{k,\max}}{(l^2-1) \theta_{k,\min}}} C_{\eta} C_p (C_1 + C(k,d,\delta)) \|\psi_{i,q}\|_{H(S^*)}^2.
\end{equation}
By taking $\sqrt{l^2-1} \ge (e-1) C_{\eta} C_p (C_1 + C(k,d,\delta)) \sqrt{\frac{\theta_{k,\max}}{\theta_{k,\min}}}$, the exponential decay naturally follows.
\end{proof}

\subsection{Exponential decay of basis functions II}\label{subsec:k2decay}
The following theorem gives the exponential decay property of $\psi_{i,q}$ for an operator $\CalL$ with lower-order terms. Similar to the proof of Theorem~\ref{thm:expdecay2}, we need the polynomial approximation property~\eqref{eqn:generalPoincare} and the Friedrichs' inequality~\eqref{eqn:generalFriedrich} to bound the lower-order terms, and we get an extra factor of 2 in our error bound. 
\begin{theorem}\label{thm:expdecay2}
Suppose $\CalL u = \sum\limits_{0 \le |\sigma|, |\gamma| \le k} (-1)^{|\sigma|} D^{\sigma} (a_{\sigma \gamma}(x) D^{\gamma} u)$ is self-adjoint. Assume that $a_{\sigma \gamma}(x) \in L^{\infty}(D)$ for all $0 \le |\sigma|, |\gamma| \le k$ and that for any $x \in D$
\begin{itemize}
\item $\CalL$ is nonnegative, i.e.,
	\begin{equation}\label{eqn:Lnonnegative}
		\sum_{ 0 \le |\sigma|, |\gamma| \le k} a_{\sigma \gamma}(x) \vct{\zeta}_{\sigma} \vct{\zeta}_{\gamma} \ge 0, \qquad \forall x \in D , \quad \forall \vct{\zeta} \in \R^{\binom{k+d}{k}},
	\end{equation}
\item $\CalL$ is bounded, i.e., there exist $\theta_{0,\max}\ge 0$ and $\theta_{k,\max} > 0$ such that
	\begin{equation}\label{eqn:Lbounded}
		\sum_{ 0 \le |\sigma|, |\gamma| \le k} a_{\sigma \gamma}(x) \vct{\zeta}_{\sigma} \vct{\zeta}_{\gamma} \le \theta_{k,\max} \sum_{|\sigma| = k} \vct{\zeta}_{\sigma}^2 + \theta_{0,\max} \sum_{|\sigma| < k} \vct{\zeta}_{\sigma}^2, \qquad \forall x \in D , \quad  \forall \vct{\zeta} \in \R^{\binom{k+d}{k}},
	\end{equation}
\item and $\CalL$ is strongly elliptic, i.e., there exists $\theta_{k,\min} > 0$ such that
	\begin{equation}\label{eqn:strongelliptic}
		\sum_{ |\sigma| = |\gamma| = k} a_{\sigma \gamma}(x) \vct{\zeta}_{\sigma} \vct{\zeta}_{\gamma} \ge \theta_{k,\min} \sum_{|\sigma| = k} \vct{\zeta}_{\sigma}^2, \qquad \forall \vct{\zeta} \in \R^{\binom{k+d-1}{k}}.
	\end{equation}
\end{itemize}
Then, there exists $h_0 > 0$ such that for any $h \le h_0$, $1\le i \le m$ and $1\le q\le Q$, it holds true that
\begin{equation}\label{eqn:expdecay2}
	\|\psi_{i,q}\|_{H(D \cap (B(x_i,r))^c)}^2 \le \exp\left(1 - \frac{r}{l h} \right) \|\psi_{i,q}\|_{H(D)}^2
\end{equation}
with $\sqrt{l^2-1} \ge 2 (e-1) C_{\eta} C_p (C_1 + C(k,d,\delta)) \sqrt{\frac{\theta_{k,\max}}{\theta_{k,\min}}}$. Here, $C_1$ and $C_{\eta}$ depend on $k$ and $d$ only, $C_p$ is the constant given in Eqn.~\eqref{eqn:generalPoincare}, $C(k, d, \delta) := C(k, k, d, \delta)$ is given in Lemma~4.1 and $\theta_{k,\max} := \max(\theta_{0,\max}, \theta_{k,\max})$. The constant $h_0$ can be taken as
\begin{equation*}
	h_0 = \sup \left\{h > 0 : \frac{h^2-h^{2k}}{1-h^2} \le \frac{1}{C_p^2}, \frac{h^2(1-h^{2k})}{1-h^2} \le \min\left(\frac{\theta_{k,\max}}{2 \theta_{0,\max} C_f^2}, \frac{\theta_{k,\min}^2}{16 \theta_{0,\max}\theta_{k,\max} C_p^2}\right)\right\},
\end{equation*}
where $C_f$ is the constant in the Friedrichs' inequality~\eqref{eqn:generalFriedrich}.
\end{theorem}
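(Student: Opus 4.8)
The plan is to reproduce, verbatim in structure, the recursive argument used for Theorem~\ref{thm:expdecay1} and Theorem~\ref{thm:expdecay2simple}; the only genuinely new work is absorbing the lower-order terms, which forces the mesh restriction $h\le h_0$ and a harmless extra factor of two in the constant. Fix $i$, $q$, a positive integer $k$, and a parameter $l>0$. As before, let $S_0$ be the union of the $\tau_j$ contained in $\overline{B(x_i,klh)}\cap D$, let $S_1$ be the union of the $\tau_j$ \emph{not} contained in $\overline{B(x_i,(k+1)lh)}\cap D$, let $S^*=S_0^c\cap S_1^c\cap D$, and pick a smooth cutoff $\eta$ with $0\le\eta\le1$, $\eta\equiv0$ on $B(x_i,klh)$, $\eta\equiv1$ on $B^c(x_i,(k+1)lh)$, and $\|D^\sigma\eta\|_{L^\infty(D)}\le C_\eta(lh)^{-|\sigma|}$. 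It suffices to find a constant $C$, independent of $k$, with $C\le 1/(\econst-1)$ and $\|\psi_{i,q}\|_{H(S_1)}^2\le C\,\|\psi_{i,q}\|_{H(S^*)}^2$; the geometric-decay iteration of Theorem~\ref{thm:expdecay1} then yields~\eqref{eqn:expdecay2}.

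To produce such a $C$, I would test $\CalL\psi_{i,q}$ against $\eta\psi_{i,q}$. By self-adjointness the left side below equals $\int_D\eta\psi_{i,q}\CalL\psi_{i,q}$ after the Leibniz rule is applied to $D^\sigma(\eta\psi_{i,q})$ for every $0\le|\sigma|\le k$:
\begin{equation*}
\sum_{0\le|\sigma|,|\gamma|\le k}\int_D\eta\,a_{\sigma\gamma}\,D^\sigma\psi_{i,q}\,D^\gamma\psi_{i,q}
=\underbrace{\int_D\eta\,\psi_{i,q}\,\CalL\psi_{i,q}}_{I_2}\underbrace{-\sum_{0\le|\sigma|,|\gamma|\le k}\sum_{\substack{\sigma_1+\sigma_2=\sigma\\|\sigma_1|\ge1}}\binom{\sigma}{\sigma_1}\int_D a_{\sigma\gamma}\,D^{\sigma_1}\eta\,D^{\sigma_2}\psi_{i,q}\,D^\gamma\psi_{i,q}}_{I_1}.
\end{equation*}
Pointwise nonnegativity~\eqref{eqn:Lnonnegative}, together with $\eta\ge0$ and $\eta\equiv1$ on $S_1$, shows the left side is at least $\|\psi_{i,q}\|_{H(S_1)}^2$, so $\|\psi_{i,q}\|_{H(S_1)}^2\le I_1+I_2$; and since $|\sigma_1|\ge1$ forces every $D^{\sigma_1}\eta$ to vanish outside $S^*$, both $I_1$ and $I_2$ are integrals over $S^*$ only.

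For $I_1$ I would use boundedness~\eqref{eqn:Lbounded} and the Cauchy--Schwarz inequality for the nonnegative quadratic form $(a_{\sigma\gamma}(x))$ to get $I_1\le C_1C_\eta\sqrt{\theta_{k,\max}}\bigl(\sum_{s'=1}^k(lh)^{-2s'}\|\psi_{i,q}\|_{k-s',2,S^*}^2\bigr)^{1/2}\|\psi_{i,q}\|_{H(S^*)}$, where $C_1=C_1(k,d)$ collects the multinomial bookkeeping (identical to Theorem~\ref{thm:expdecay2simple}, to be deferred to the Appendix) and $\theta_{k,\max}:=\max(\theta_{0,\max},\theta_{k,\max})$. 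Because $\tau_i\notin S^*$, $\psi_{i,q}$ is $L^2(\tau_j)$-orthogonal to $\CalP_{k-1}(\tau_j)$ on every $\tau_j\subset S^*$, so Theorem~\ref{thm:generalPoincare} applied cellwise (with the Friedrichs inequality of Theorem~\ref{thm:generalFriedrich} handling the lowest derivative orders of the lower-order part) gives $\|\psi_{i,q}\|_{k-s',2,S^*}\le C_ph^{s'}|\psi_{i,q}|_{k,2,S^*}$; the powers of $h$ cancel, $\sum_{s'=1}^kl^{-2s'}\le(l^2-1)^{-1}$, and hence $I_1\le\frac{C_1C_\eta C_p\sqrt{\theta_{k,\max}}}{\sqrt{l^2-1}}\,|\psi_{i,q}|_{k,2,S^*}\|\psi_{i,q}\|_{H(S^*)}$. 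For $I_2$, Theorem~\ref{thm:variational} gives $\CalL\psi_{i,q}\in\Phi$ and the constraints in~\eqref{eqn:psivariationalk2} give $\int_{\tau_j}\psi_{i,q}p=0$ for every $p\in\CalP_{k-1}(\tau_j)$ with $j\ne i$; hence the $S_1$-part of $I_2$ and the cell average $\eta_j$ of $\eta$ on each $\tau_j\subset S^*$ both drop out, leaving $I_2=\sum_{\tau_j\subset S^*}\int_{\tau_j}(\eta-\eta_j)\psi_{i,q}\CalL\psi_{i,q}\le\frac{C_\eta}{l}\sum_{\tau_j\subset S^*}\|\psi_{i,q}\|_{L^2(\tau_j)}\|\CalL\psi_{i,q}\|_{L^2(\tau_j)}$. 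The inverse energy estimate, Lemma~\ref{lem:boundLf2} — valid for $h\le h_0$ once lower-order terms are present — gives $\|\CalL\psi_{i,q}\|_{L^2(\tau_j)}\le\sqrt{\theta_{k,\max}}\,C(k,d,\delta)h^{-k}\|\psi_{i,q}\|_{H(\tau_j)}$ with $C(k,d,\delta)$ the constant of Lemma~\ref{lem:scaling}, and one more application of~\eqref{eqn:generalPoincare} to $\|\psi_{i,q}\|_{L^2(S^*)}$ yields $I_2\le\frac{C_\eta C_p\sqrt{\theta_{k,\max}}\,C(k,d,\delta)}{l}\,|\psi_{i,q}|_{k,2,S^*}\|\psi_{i,q}\|_{H(S^*)}$.

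Adding the two estimates, $\|\psi_{i,q}\|_{H(S_1)}^2\le\frac{C_\eta C_p(C_1+C(k,d,\delta))\sqrt{\theta_{k,\max}}}{\sqrt{l^2-1}}\,|\psi_{i,q}|_{k,2,S^*}\|\psi_{i,q}\|_{H(S^*)}$, and the last step is to dominate $|\psi_{i,q}|_{k,2,S^*}$ by the local energy norm. This is exactly where strong ellipticity is used: since $\tau_i\notin S^*$ and $h\le h_0$, Lemma~\ref{lem:coercive}, Eqn.~\eqref{eqn:kleH2}, gives $|\psi_{i,q}|_{k,2,S^*}\le(2/\theta_{k,\min})^{1/2}\|\psi_{i,q}\|_{H(S^*)}$ — the factor $2$ (versus $\theta_{k,\min}^{-1/2}$ in Theorem~\ref{thm:expdecay2simple}) being precisely the cost of absorbing the cross terms $J_3$ between top- and low-order derivatives in the proof of Lemma~\ref{lem:coercive}. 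Substituting, $\|\psi_{i,q}\|_{H(S_1)}^2$ is bounded by a constant multiple of $(l^2-1)^{-1/2}\sqrt{\theta_{k,\max}/\theta_{k,\min}}$ times $\|\psi_{i,q}\|_{H(S^*)}^2$, so choosing $l$ with $\sqrt{l^2-1}\ge 2(\econst-1)C_\eta C_p(C_1+C(k,d,\delta))\sqrt{\theta_{k,\max}/\theta_{k,\min}}$ makes the constant at most $1/(\econst-1)$, and the recursion of Theorem~\ref{thm:expdecay1} closes the proof. The main obstacle, and the only step beyond the highest-order-only case, is controlling the lower-order contributions \emph{uniformly in $h$}: both the inverse energy estimate (Lemma~\ref{lem:boundLf2}) and the coercivity estimate (Lemma~\ref{lem:coercive}) deteriorate unless $h\le h_0$, and it is the interplay of strong ellipticity with the polynomial approximation property~\eqref{eqn:generalPoincare} and Friedrichs' inequality~\eqref{eqn:generalFriedrich} — all three constraints encoded in the definition of $h_0$ — that lets one dominate the crossing terms and recover the clean one-step recursion.
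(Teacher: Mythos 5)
Your proposal follows the paper's proof essentially verbatim in structure: the same annular decomposition $S_0, S^*, S_1$ with cutoff $\eta$, the same split into $I_1$ (Leibniz error) and $I_2$ (cell-average error), the same three analytic inputs (projection-type polynomial approximation, inverse energy estimate from Lemma~\ref{lem:boundLf2}, and the strong-ellipticity coercivity from Lemma~\ref{lem:coercive}), and the same recursive iteration. The argument is correct.

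One bookkeeping remark: your intermediate estimates each silently drop a $\sqrt{2}$. You quote Lemma~\ref{lem:boundLf2} as $\|\CalL\psi_{i,q}\|_{L^2(\tau_j)}\le\sqrt{\theta_{k,\max}}\,C(k,d,\delta)h^{-k}\|\psi_{i,q}\|_{H(\tau_j)}$, but when lower-order terms are present the lemma only gives $\sqrt{2\theta_{k,\max}}$ under the stated restriction on $h$; and your reduction $\|\psi_{i,q}\|_{k-s',2,S^*}\le C_p h^{s'}|\psi_{i,q}|_{k,2,S^*}$ is likewise off by a $\sqrt{2}$ because that Sobolev norm packs all seminorms of order $\le k-s'$, which under $\frac{h^2-h^{2k}}{1-h^2}\le C_p^{-2}$ only yields a factor $\sqrt{2}C_p h^{s'}$. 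Those two $\sqrt{2}$'s, \emph{together with} the $\sqrt{2}$ in Eqn.~\eqref{eqn:kleH2}, are how the final factor of $2$ in $\sqrt{l^2-1}\ge 2(\econst-1)C_{\eta}C_p(C_1+C(k,d,\delta))\sqrt{\theta_{k,\max}/\theta_{k,\min}}$ arises; attributing it entirely to the coercivity bound understates the contribution of the $I_1$ and $I_2$ estimates. Also, Friedrichs' inequality does not enter directly in the $I_1$ bound (the polynomial approximation property alone handles all the $|\psi_{i,q}|_{s-s',2,S^*}$ terms because $\psi_{i,q}\perp\CalP_{k-1}$ locally for $\tau_j\neq\tau_i$); Friedrichs is used inside the proof of Lemma~\ref{lem:boundLf2} via Lemma~\ref{lem:compare32}. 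None of this breaks your argument, since you state the correct (stronger) constraint on $l$, but the accounting is what the ``extra factor of $2$'' in this theorem relative to Theorem~\ref{thm:expdecay2simple} actually tracks.
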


\begin{proof}
The proof follows the same structure as the proof of Theorem~\ref{thm:expdecay2simple}. All we need to do is to use the polynomial approximation property~\eqref{eqn:generalPoincare} and the Friedrichs' inequality~\eqref{eqn:generalFriedrich} to bound the lower-order terms when they appear. First, the $I_1$ in Eqn.~\eqref{eqn:intbyparts2simple} contains all the lower-order terms and its estimation should be modified as follows:
\begin{eqnarray}
	  I_1 &=& - \sum_{0 \le |\sigma|, |\gamma| \le k}\sum_{\stackrel{\sigma_1 + \sigma_2 = \sigma}{|\sigma_1| \ge 1}} \binom{\sigma}{\sigma_1} \int_{S^*} a_{\sigma \gamma}(x) D^{\sigma_1} \eta D^{\sigma_2} \psi_{i,q} D^{\gamma} \psi_{i,q}  \label{eqn:I1_1_1} \\
	 &\le& \left( \sum_{|\sigma| \le k} \int_{S^*} \left| \sum_{\stackrel{\sigma_1 + \sigma_2 = \sigma}{|\sigma_1| \ge 1}} \binom{\sigma}{\sigma_1} D^{\sigma_1} \eta D^{\sigma_2} \psi_{i,q} \right|^2 \right)^{1/2} \|\psi_{i,q}\|_{H(S^*)} \sqrt{\theta_{k,\max}} \label{eqn:I1_1_2}\\
	 &\le& C_1 C_{\eta} \left( \sum_{s=1}^k \sum_{s'=1}^s (l h)^{-2 s'} |\psi_{i,q}|_{s-s', 2, S^*}^2 \right)^{1/2} \|\psi_{i,q}\|_{H(S^*)} \sqrt{\theta_{k,\max}} \label{eqn:I1_1_3}.
\end{eqnarray}
Here, $\theta_{k,\max} := \max(\theta_{0,\max}, \theta_{k,\max})$. We have used the Cauchy--Schwarz inequality and the bound~\eqref{eqn:Lbounded} in Eqn.~\eqref{eqn:I1_1_2}. We will defer the proof of the last step in Eqn.~\eqref{eqn:I1_1_3} to the Appendix. Since $\psi_{i,q} \perp \CalP_{k-1}$ locally in $L^2$, we obtain from Theorem~\ref{thm:generalPoincare} that
\begin{equation*}
	|\psi_{i,q}|_{s-s', 2, S^*} \le C_p h^{s'} |\psi_{i,q}|_{s, 2, S^*} \quad \forall \, 0 \le s' \le s \le k.
\end{equation*}
Therefore, we have
\begin{eqnarray}
	  I_1 &\le& C_1 C_{\eta} \sqrt{\theta_{k,\max}} C_p \left(\sum_{s=1}^k \sum_{s'=1}^s l^{- 2s'} |\psi_{i,q}|_{s, 2, S^*}^2\right)^{1/2} \|\psi_{i,q}\|_{H(S^*)}  \label{eqn:I1_2_1}\\
	 &\le& \frac{C_1 C_{\eta} \sqrt{\theta_{k,\max}} C_p}{\sqrt{l^2-1}} \left(\sum_{s=1}^k |\psi_{i,q}|_{s, 2, S^*}^2\right)^{1/2} \|\psi_{i,q}\|_{H(S^*)} \label{eqn:I1_2_2}\\
	 &\le& \frac{C_1 C_{\eta} \sqrt{2 \theta_{k,\max}} C_p}{\sqrt{l^2-1}} |\psi_{i,q}|_{k,2,S^*} \|\psi_{i,q}\|_{H(S^*)} \label{eqn:I1_2_3}. 
\end{eqnarray}
If we compare the above estimate with Eqn.~\eqref{eqn:I1_2_3simple}, we conclude that Eqn.~\eqref{eqn:I1_2_2} contains all the lower-order terms. We will use the polynomial approximation property~\eqref{eqn:generalPoincare} and take $\frac{h^2-h^{2k}}{1-h^2} \le 1/C_p^2$ to guarantee that Eqn.~\eqref{eqn:I1_2_3} is valid. When $\CalL$ contains lower-order terms, by Lemma~\ref{lem:boundLf2}, we have $\|\CalL \psi_{i,q}\|_{L^2(\tau_j)} \le \sqrt{2 \theta_{k,\max}}C(k, d, \delta) h^{-k} \|\psi_{i,q}\|_{H(\tau_j)}$ for any $h >0$ satisfying $\frac{h^2(1-h^{2k})}{1-h^2} \le \frac{\theta_{k,\max}}{2 \theta_{0,\max} C_f^2}$. Therefore, using Eqn.~\eqref{eqn:I2_2simple} we get
\begin{equation}\label{eqn:I2_2}
	 I_2 \le \frac{\sqrt{2 \theta_{k,\max}} C_{\eta} C(k, d, \delta) C_p}{l} |\psi_{i,q}|_{k,2,S^*} \|\psi_{i,q}\|_{H(S^*)},
\end{equation}
when $h$ satisfies $\frac{h^2(1-h^{2k})}{1-h^2} \le \frac{\theta_{k,\max}}{2 \theta_{0,\max} C_f^2}$. Finally, we need to use Eqn.~\eqref{eqn:kleH2} instead of Eqn.~\eqref{eqn:kleH1} to bound $|\psi_{i,q}|_{k,2,S^*}$. We get
\begin{equation}\label{eqn:recursivek2}
	\|\psi_{i,q}\|_{H(S^1)}^2 \le 2 \sqrt{\frac{\theta_{k,\max}}{(l^2-1) \theta_{k,\min}}} C_{\eta} C_p (C_1 + C(k,d,\delta)) \|\psi_{i,q}\|_{H(S^*)}^2,
\end{equation}
where we have imposed another condition on $h$, i.e., $\frac{h^2(1-h^{2k})}{1-h^2} \le \frac{\theta_{k,\min}^2}{16 \theta_{0,\max}\theta_{k,\max} C_p^2}$.
By taking $\sqrt{l^2-1} \ge 2 (e-1) C_{\eta} C_p (C_1 + C(k,d,\delta)) \sqrt{\frac{\theta_{k,\max}}{\theta_{k,\min}}}$, we prove the exponential decay.
\end{proof}

\begin{remark}\label{rem:expdecay2}
As we have pointed out in Remark~\ref{rem:kleH2}, when $\CalL$ contains lower-order terms but there is no crossing term between $D^{\sigma} u$ ($|\sigma| = k$) and $D^{\sigma} u$ ($|\sigma| < k$), Eqn.~\eqref{eqn:kleH1} can be used to bound $|\psi_{i,q}|_{k,2,S^*}$. In this case, the constraint on $l$ is
\begin{equation*}
	\sqrt{l^2-1} \ge \sqrt{2} (e-1) C_{\eta} C_p (C_1 + C(k,d,\delta)) \sqrt{\frac{\theta_{k,\max}}{\theta_{k,\min}}}
\end{equation*}
and the $h_0$ can be taken as
\begin{equation*}
	h_0 = \sup \left\{h > 0 : \frac{h^2-h^{2k}}{1-h^2} \le \frac{1}{C_p^2}, \frac{h^2(1-h^{2k})}{1-h^2} \le \frac{\theta_{k,\max}}{2 \theta_{0,\max} C_f^2}\right\}.
\end{equation*}
\end{remark}

\subsection{Lemmas}\label{sec:k2lemmas}
In this subsection, we will prove the following lemma, which is used in the proof of Theorem~\ref{thm:expdecay2simple} and Theorem~\ref{thm:expdecay2}. 
\begin{lemma}\label{lem:boundLf2}
$\CalL$ is defined in Eqn.~\eqref{eqn:highorder} and the space $\Psi$ is defined as above. Assume that for any $x \in D$
\begin{equation}\label{eqn:bounded}
	\sum_{ 0 \le |\sigma|, |\gamma| \le k} a_{\sigma \gamma}(x) \vct{\zeta}_{\sigma} \vct{\zeta}_{\gamma} \le \theta_{k,\max} \sum_{|\sigma| = k} \vct{\zeta}_{\sigma}^2 + \theta_{0,\max} \sum_{|\sigma| < k} \vct{\zeta}_{\sigma}^2, \qquad \forall \vct{\zeta} \in \R^{\binom{k+d}{k}}.
\end{equation}
Let $C_f$ be the constant in the Friedrichs' inequality~\eqref{eqn:generalFriedrich}. Then, for any domain partition with $\frac{h^2(1-h^{2k})}{1-h^2} \le \frac{\theta_{k,\max}}{2 \theta_{0,\max} C_f^2}$, we have
\begin{equation}\label{eqn:invenergy2}
	\|\CalL v\|_{L^2(\tau_j)} \le \sqrt{2 \theta_{k,\max}} C(k, d, \delta) h^{-k} \|v\|_{H(\tau_j)} \quad \forall v \in \Psi, \, \forall j = 1, 2, \ldots, m,
\end{equation}
where $C(k, d, \delta) = C(k,k,d,\delta)$ from Lemma~4.1. 

If the operator $\CalL$ contains only the highest order terms, i.e., $\CalL u = (-1)^k \sum\limits_{|\sigma|=|\gamma|=k} D^{\sigma}(a_{\sigma \gamma} D^{\gamma} u)$, we have $\|\CalL v\|_{L^2(\tau_j)} \le \sqrt{\theta_{k,\max}} C(k,d,\delta) h^{-k} \|v\|_{H(\tau_j)}$ for all $h > 0$.
\end{lemma}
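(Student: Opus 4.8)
The plan is to localize $\CalL v$ to a single element $\tau_j$, where it is a polynomial of low degree, and then to transfer the question to an auxiliary constant-coefficient problem to which the scaling estimate of Lemma~\ref{lem:scaling} applies. First I would record that for $v\in\Psi=\CalK\Phi$ one has $\CalL v\in\Phi$: writing $v=\CalK\phi$ with $\phi\in\Phi$ and using $\CalK=\CalL^{-1}$ gives $\CalL v=\phi$, so $p:=\CalL v|_{\tau_j}$ is an element of $\CalP_{k-1}(\tau_j)$. Hence $\|\CalL v\|_{L^2(\tau_j)}=\|p\|_{L^2(\tau_j)}$, and the whole task reduces to estimating the $L^2(\tau_j)$-norm of this polynomial by the local energy $\|v\|_{H(\tau_j)}$.

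Next, for each fixed $\tau_j$ I would introduce $w_j\in H_0^k(\tau_j)$ solving the constant-coefficient model problem $(-1)^k\sum_{|\sigma|=k}D^{2\sigma}w_j=p$ in $\tau_j$ with homogeneous Dirichlet data; this is well posed because the associated bilinear form $\sum_{|\sigma|=k}\int_{\tau_j}D^\sigma u\,D^\sigma w$ is exactly $|\cdot|_{k,2,\tau_j}^2$, which is coercive on $H_0^k(\tau_j)$ by the Friedrichs inequality (Theorem~\ref{thm:generalFriedrich}). Since $p\in\CalP_{k-1}(\tau_j)$, the function $w_j$ lies in the preimage of $\CalP_{k-1}$ under this model operator, so Lemma~\ref{lem:scaling} applied on $\tau_j$ with $s=k$ yields $\|p\|_{L^2(\tau_j)}\le C(k,k,d,\delta)\,h^{-k}\,|w_j|_{k,2,\tau_j}$. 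It then remains to control $|w_j|_{k,2,\tau_j}$ by $\|v\|_{H(\tau_j)}$, which is where the data $v$ re-enters.

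For this I would integrate by parts twice, both times legitimately because $w_j\in H_0^k(\tau_j)$ vanishes to order $k$ on $\partial\tau_j$ (whereas $v$ satisfies no boundary condition on $\partial\tau_j$). Testing the model equation against $w_j$ gives $|w_j|_{k,2,\tau_j}^2=\int_{\tau_j}w_j\,p=\int_{\tau_j}w_j\,\CalL v$; integrating the divergence-form expression of $\CalL v$ by parts against $w_j$ moves every derivative off the rough coefficients and onto $w_j$, giving the local bilinear form $\int_{\tau_j}w_j\,\CalL v=\sum_{0\le|\sigma|,|\gamma|\le k}\int_{\tau_j}a_{\sigma\gamma}D^\sigma w_j\,D^\gamma v=:B_{\tau_j}(w_j,v)$. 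A pointwise Cauchy--Schwarz for the coefficient matrix $(a_{\sigma\gamma}(x))$, followed by Cauchy--Schwarz in $L^2(\tau_j)$, gives $B_{\tau_j}(w_j,v)\le\|w_j\|_{H(\tau_j)}\|v\|_{H(\tau_j)}$. Finally the boundedness hypothesis~\eqref{eqn:bounded}, combined with the Friedrichs inequality to absorb the lower-order derivatives of $w_j$ (using $\sum_{p=0}^{k-1}h^{2(k-p)}=\frac{h^2(1-h^{2k})}{1-h^2}$ and the stated mesh restriction to bound that sum by $\theta_{k,\max}/(2\theta_{0,\max}C_f^2)$), gives $\|w_j\|_{H(\tau_j)}^2\le 2\theta_{k,\max}\,|w_j|_{k,2,\tau_j}^2$. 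Hence $|w_j|_{k,2,\tau_j}\le\sqrt{2\theta_{k,\max}}\,\|v\|_{H(\tau_j)}$, and chaining this with the scaling estimate proves Eqn.~\eqref{eqn:invenergy2}. When $\CalL$ carries only top-order terms the lower-order absorption step is empty, $\|w_j\|_{H(\tau_j)}^2\le\theta_{k,\max}|w_j|_{k,2,\tau_j}^2$ for every $h>0$, and the constant improves to $\sqrt{\theta_{k,\max}}$ with no restriction on the mesh.

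The crux — and the ingredient with no analogue in the elementary second-order approach — is the $h^{-k}$-scaling $\|p\|_{L^2(\tau_j)}\le C h^{-k}|w_j|_{k,2,\tau_j}$: one cannot extract it directly from $\CalL$, whose coefficients are only $L^\infty$ and respect no scaling, which is precisely why the clean constant-coefficient model operator is interposed so that the homogeneity argument behind Lemma~\ref{lem:scaling} can run. The remaining work — the Friedrichs absorption of the lower-order terms and the elementary inequality $3/2\le 2$ — is routine, and the only other point needing care is keeping $w_j$ in $H_0^k(\tau_j)$ so that neither integration by parts produces a boundary term.
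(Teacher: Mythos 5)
Your proof is correct, and it takes a genuinely different route from the one in the paper. The paper decomposes $v$ itself on $\tau_j$ as $v=v_1+v_2$ with $v_1\in H_0^k(\tau_j)$ solving the \emph{same} variable-coefficient operator $\CalL v_1=g_j$ and $v_2$ $\CalL$-harmonic, uses the energy orthogonality $\|v\|_{H(\tau_j)}^2=\|v_1\|_{H(\tau_j)}^2+\|v_2\|_{H(\tau_j)}^2$ to reduce to $v_1$, then passes from the variable-coefficient Green's function $G_j$ to a constant-coefficient one by two separate comparison lemmas (Lemma~\ref{lem:compare12} to remove the $x$-dependence of the coefficients, then Lemma~\ref{lem:compare32} to drop the lower-order terms) before invoking the scaling estimate of Lemma~\ref{lem:scaling}. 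You instead leave $v$ untouched and interpose a single \emph{auxiliary} function $w_j\in H_0^k(\tau_j)$ solving the already-constant-coefficient, top-order-only model problem with data $p=\CalL v|_{\tau_j}$, applying Lemma~\ref{lem:scaling} immediately; the link back to $v$ is the duality identity $|w_j|_{k,2,\tau_j}^2=\int_{\tau_j}w_j\,p=B_{\tau_j}(w_j,v)$ followed by local Cauchy--Schwarz, and the Friedrichs absorption of lower-order derivatives is performed once, on $w_j$. This shortcut bypasses both Green's function comparison lemmas and is arguably cleaner; it arrives at the same constants, with the identical mesh restriction and the same removal of both the factor $\sqrt{2}$ and the mesh restriction in the purely top-order case. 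One small point worth making explicit: the pointwise Cauchy--Schwarz step $B_{\tau_j}(w_j,v)\le\|w_j\|_{H(\tau_j)}\|v\|_{H(\tau_j)}$ requires pointwise nonnegativity of the coefficient matrix $(a_{\sigma\gamma}(x))$, which is not listed among the hypotheses of Lemma~\ref{lem:boundLf2} but is assumed (Eqn.~\eqref{eqn:Lnonnegative}) in the theorem where the lemma is used; the paper's own proof implicitly needs it too, for $\|v_2\|_{H(\tau_j)}^2\ge0$.
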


We will use Lemma~4.1 to prove this result, but we need to deal with the variable coefficients $a_{\sigma \gamma}$ and the lower-order terms $a_{\sigma \gamma}$ with $|\sigma|+|\gamma| < 2 k$ before we can apply Lemma~4.1. Our strategy is to transfer the variable coefficients to constant ones by the variational formulation (see Lemma~\ref{lem:compare12}), and to use the polynomial approximation property to deal with the lower-order terms; see Lemma~\ref{lem:compare32}. For this purpose, we first introduce the following two lemmas.
\begin{lemma}\label{lem:compare12}
Let $\Omega$ be a smooth, bounded, open subset of $\R^d$. $\CalL u = \sum\limits_{0 \le |\sigma|, |\gamma| \le k} (-1)^{|\sigma|} D^{\sigma} (a_{\sigma \gamma}(x) D^{\gamma} u)$ and $\CalM u = \sum\limits_{0 \le |\sigma|, |\gamma| \le k} (-1)^{|\sigma|} D^{\sigma} (b_{\sigma \gamma}(x) D^{\gamma} u)$ are two symmetric operators on $H_0^k(\Omega)$. Moreover, we assume that the bilinear forms induced by both $\CalL$ and $\CalM$ are equivalent to the standard norm on $H_0^k(\Omega)$. Let $G_{\CalL}$ and $G_{\CalM}$ be the Green's functions of $\CalL$ and $\CalM$ respectively. If for any $x \in D$ we have
\begin{equation}\label{eqn:majorizek2}
	 \sum_{ 0 \le |\sigma|, |\gamma| \le k} a_{\sigma \gamma}(x) \vct{\zeta}_{\sigma} \vct{\zeta}_{\gamma} \le \sum_{ 0 \le |\sigma|, |\gamma| \le k} b_{\sigma \gamma}(x) \vct{\zeta}_{\sigma} \vct{\zeta}_{\gamma} \qquad \forall \vct{\zeta} \in \R^{\binom{k+d}{k}}.
\end{equation}
 then for all $f \in L^2(\Omega)$, 
\begin{equation}\label{eqn:compare12}
	\int_{\Omega} \int_{\Omega} G_{\CalM}(x,y) f(x) f(y) \rd x\, \rd y \le \int_{\Omega} \int_{\Omega} G_{\CalL}(x,y) f(x) f(y) \rd x\, \rd y.
\end{equation}
\end{lemma}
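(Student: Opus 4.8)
The plan is to reduce both sides of \eqref{eqn:compare12} to a Dirichlet-type variational principle and then compare the two functionals term by term. First I would record the identity that, for $u_{\CalL} := \CalL^{-1} f \in H_0^k(\Omega)$ (the weak solution of $B_{\CalL}(u_{\CalL}, v) = (f,v)$ for all $v \in H_0^k(\Omega)$, where $B_{\CalL}$ denotes the symmetric bilinear form induced by $\CalL$), one has
\[
	\int_{\Omega}\int_{\Omega} G_{\CalL}(x,y) f(x) f(y)\,\rd x\,\rd y = (\CalL^{-1} f, f)_{L^2(\Omega)} = B_{\CalL}(u_{\CalL}, u_{\CalL}),
\]
and similarly with $\CalL$ replaced by $\CalM$. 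This is legitimate since, by hypothesis, $B_{\CalL}$ and $B_{\CalM}$ are coercive and bounded on $H_0^k(\Omega)$, so the Riesz representation lemma produces unique weak solutions; the double integral equals $(\CalL^{-1}f,f)$ by Fubini, using that $G_{\CalL}$ is the kernel of the bounded operator $\CalL^{-1}:L^2(\Omega)\to L^2(\Omega)$ and $f\in L^2(\Omega)$.

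Next I would invoke the Dirichlet principle. Because $B_{\CalL}$ is symmetric and coercive, the concave functional $J_{\CalL}(v) := 2(f,v) - B_{\CalL}(v,v)$ attains its maximum over $H_0^k(\Omega)$ uniquely at $v = u_{\CalL}$, and
\[
	\max_{v \in H_0^k(\Omega)} J_{\CalL}(v) = 2(f,u_{\CalL}) - B_{\CalL}(u_{\CalL}, u_{\CalL}) = (f, u_{\CalL}) = \int_{\Omega}\int_{\Omega} G_{\CalL}(x,y) f(x) f(y)\,\rd x\,\rd y,
\]
and the same identity holds for $J_{\CalM}$ and the kernel $G_{\CalM}$.

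Then I would translate the pointwise coefficient bound \eqref{eqn:majorizek2} into a comparison of the bilinear forms: substituting $\vct{\zeta} = (D^{\sigma} v(x))_{0\le |\sigma|\le k}$ into \eqref{eqn:majorizek2} and integrating over $x \in \Omega$ gives $B_{\CalL}(v,v) \le B_{\CalM}(v,v)$ for every $v \in H_0^k(\Omega)$. Hence $J_{\CalL}(v) = 2(f,v) - B_{\CalL}(v,v) \ge 2(f,v) - B_{\CalM}(v,v) = J_{\CalM}(v)$ pointwise in $v$, and taking the supremum over $v \in H_0^k(\Omega)$ on both sides yields
\[
	\int_{\Omega}\int_{\Omega} G_{\CalL}(x,y) f(x) f(y)\,\rd x\,\rd y = \max_v J_{\CalL}(v) \ge \max_v J_{\CalM}(v) = \int_{\Omega}\int_{\Omega} G_{\CalM}(x,y) f(x) f(y)\,\rd x\,\rd y,
\]
which is exactly \eqref{eqn:compare12}.

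I do not expect a serious obstacle here; the only points requiring a little care are the justification that the double integral against the Green's function coincides with the $L^2$ inner product $(\CalL^{-1}f,f)$ (which follows from the definition of $G_{\CalL}$ as the Schwartz kernel of the bounded solution operator together with Fubini) and the passage from \eqref{eqn:majorizek2} to $B_{\CalL}(v,v)\le B_{\CalM}(v,v)$, which is a pointwise substitution followed by integration. One could alternatively phrase the argument as operator monotonicity of the inverse, $0 \prec \CalL \preceq \CalM$ on $H_0^k(\Omega)$ implies $\CalM^{-1} \preceq \CalL^{-1}$ on $L^2(\Omega)$, but the variational proof above is self-contained and avoids spectral-theoretic bookkeeping for the unbounded operators.
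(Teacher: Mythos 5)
Your proof is correct and uses essentially the same argument as the paper: the paper works with the minimization functional $I_{\CalL}(u,f)=\tfrac12 B_{\CalL}(u,u)-(u,f)$ and the chain $I_{\CalL}(\psi_{\CalL},f)\le I_{\CalL}(\psi_{\CalM},f)\le I_{\CalM}(\psi_{\CalM},f)$, which is precisely your maximization of $J_{\CalL}(v)=-2I_{\CalL}(v,f)$ with a sign flipped. Both reduce \eqref{eqn:compare12} to the Dirichlet variational principle together with the pointwise inequality $B_{\CalL}(v,v)\le B_{\CalM}(v,v)$ obtained by substituting $\vct{\zeta}=(D^{\sigma}v(x))_{|\sigma|\le k}$ into \eqref{eqn:majorizek2} and integrating.
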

\begin{proof}
Let $f\in L^2(\Omega)$. Let $\psi_{\CalL}$ and $\psi_{\CalM}$ be the weak solutions of $\CalL \psi_{\CalL} = f$ and $\CalM \psi_{\CalM} = f$ with the homogeneous Dirichlet boundary conditions on $\partial \Omega$. Observe that $\psi_{\CalL}$ and $\psi_{\CalM}$ are the unique minimizers of $I_{\CalL}(u, f)$ and $I_{\CalM}(u, f)$ with 
\begin{equation}\label{eqn:variationalvark2}
\begin{split}
	I_{\CalL}(u, f) &= \frac{1}{2} \sum_{0 \le |\sigma|, |\gamma| \le k} \int_D a_{\sigma \gamma}(x) D^{\sigma} u D^{\gamma} u  - \int_{\Omega} u f, \quad u \in H_0^k(\Omega),\\
	I_{\CalM}(u, f) &= \frac{1}{2} \sum_{0 \le |\sigma|, |\gamma| \le k} \int_D b_{\sigma \gamma}(x) D^{\sigma} u D^{\gamma} u  - \int_{\Omega} u f, \quad u \in H_0^k(\Omega).
\end{split}	
\end{equation}
At the minima $\psi_{\CalL}$ and $\psi_{\CalM}$, we have
\begin{equation} \label{eqn:minimalvark2}
\begin{split}
	I_{\CalL}(\psi_{\CalL}, f) &= - \frac{1}{2} \int_{\Omega} \psi_{\CalL} f = -\frac{1}{2}\sum_{0 \le |\sigma|, |\gamma| \le k} \int_D a_{\sigma \gamma}(x) D^{\sigma} \psi_{\CalL} D^{\gamma} \psi_{\CalL},\\
	I_{\CalM}(\psi_{\CalM}, f) &= - \frac{1}{2} \int_{\Omega} \psi_{\CalM} f = -\frac{1}{2}\sum_{0 \le |\sigma|, |\gamma| \le k} \int_D a_{\sigma \gamma}(x) D^{\sigma} \psi_{\CalM} D^{\gamma} \psi_{\CalM}.
\end{split}	
\end{equation}
Observe that
\begin{equation}\label{eqn:variationalcomparek2}
	I_{\CalL}(\psi_{\CalL},f) \le I_{\CalL}(\psi_{\CalM}, f) \le I_{\CalM}(\psi_{\CalM}, f), 
\end{equation}
where the first inequality is true because $\psi_{\CalL}$ is the minimizer of $I_{\CalL}$, and the second inequality is true because $I_{\CalL}(u,f)\le I_{\CalM}(u,f)$ for any $u \in H_0^k(\Omega)$. Combining Eqn.~\eqref{eqn:minimalvark2} and \eqref{eqn:variationalcomparek2}, we obtain $\int_{\Omega} \psi_{\CalM} f \le \int_{\Omega} \psi_{\CalL} f$. This proves the lemma.
\end{proof}

\begin{lemma}\label{lem:compare32}
Let $\Omega_h$ be a smooth, convex, bounded, open subset of $\R^d$ with diameter at most $h$. Let $G_{h}$ be the Green's function of $\CalL u = (-1)^k \sum_{|\sigma| = k} D^{2 \sigma} u + c \sum_{|\sigma| < k} (-1)^{\sigma} D^{2 \sigma} u$ with the homogeneous Dirichlet boundary condition on $\partial \Omega_h$ and $G_{h,0}$ be the Green's function of $\CalL_0 u = (-1)^k \sum_{|\sigma| = k} D^{2 \sigma} u$ with the homogeneous Dirichlet boundary condition on $\partial \Omega_h$. Here, $c > 0$ is a positive constant. Then, for any $f\in L^2(\Omega_h)$
\begin{equation}\label{eqn:compare322}
	\lim_{h \to 0} \frac{\int_{\Omega_h} \int_{\Omega_h} G_{h}(x,y) f(x) f(y) \rd x\, \rd y}{\int_{\Omega_h} \int_{\Omega_h} G_{h,0}(x,y) f(x) f(y)\rd x\, \rd y} = 1.
\end{equation}
Moreover, $\frac{\int_{\Omega_h} \int_{\Omega_h} G_{h}(x,y) f(x) f(y) \rd x\, \rd y}{\int_{\Omega_h} \int_{\Omega_h} G_{h,0}(x,y) f(x) f(y)\rd x\, \rd y} \ge 1/2$ for all $h>0$ such that $\frac{h^2(1-h^{2k})}{1-h^2} \le \frac{1}{2 c C_f^2}$.
\end{lemma}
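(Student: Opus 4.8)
The plan is to reduce the statement to the variational characterization of the two quadratic functionals already used in the proof of Lemma~\ref{lem:compare12}. Integration by parts identifies the bilinear form of $\CalL$ as $B(u,u) = |u|_{k,2,\Omega_h}^2 + c\sum_{j=0}^{k-1}|u|_{j,2,\Omega_h}^2$ and that of $\CalL_0$ as $B_0(u,u) = |u|_{k,2,\Omega_h}^2$; both are coercive on $H_0^k(\Omega_h)$ (for $B_0$ by Friedrichs' inequality, and $B\ge B_0$), so with $Q(f) := \int_{\Omega_h}\int_{\Omega_h} G_h f f$ and $Q_0(f) := \int_{\Omega_h}\int_{\Omega_h} G_{h,0} f f$ one has
\[
	Q(f) = \max_{u\in H_0^k(\Omega_h)}\Bigl(2\int_{\Omega_h} uf - B(u,u)\Bigr), \qquad Q_0(f) = \max_{u\in H_0^k(\Omega_h)}\Bigl(2\int_{\Omega_h} uf - B_0(u,u)\Bigr),
\]
the maxima being attained at the weak solutions $\psi_{\CalL}$ and $\psi_{\CalL_0}$ respectively.

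For the inequality $Q(f)\le Q_0(f)$ (the ratio never exceeds $1$), one can either invoke Lemma~\ref{lem:compare12} directly — since $c>0$, the coefficient form of $\CalL$ pointwise majorizes that of $\CalL_0$ — or argue in one line: inserting the maximizer $\psi_{\CalL}$ of the first problem into the second functional gives $Q_0(f)\ge 2\int_{\Omega_h}\psi_{\CalL}f - B_0(\psi_{\CalL},\psi_{\CalL})\ge 2\int_{\Omega_h}\psi_{\CalL}f - B(\psi_{\CalL},\psi_{\CalL}) = Q(f)$.

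The substance is the lower bound, and this is where the Friedrichs inequality (Theorem~\ref{thm:generalFriedrich}, Eqn.~\eqref{eqn:generalFriedrich}) enters: for every $u\in H_0^k(\Omega_h)$ and $0\le j\le k-1$, $|u|_{j,2,\Omega_h}^2\le C_f^2 h^{2(k-j)}|u|_{k,2,\Omega_h}^2$ with $C_f=C_f(d,k)$ independent of $\Omega_h$. Summing over $j$ and using $\sum_{j=0}^{k-1}h^{2(k-j)} = \sum_{i=1}^{k}h^{2i} = \frac{h^2(1-h^{2k})}{1-h^2}$ yields $B(u,u)\le\mu(h)\,B_0(u,u)$ for all $u\in H_0^k(\Omega_h)$, where $\mu(h) := 1 + c\,C_f^2\,\frac{h^2(1-h^{2k})}{1-h^2}$. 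Now take $u_0 := \psi_{\CalL_0}$, the maximizer for $Q_0$, and test the $Q$-functional with the rescaled competitor $v := u_0/\mu(h)\in H_0^k(\Omega_h)$:
\[
	Q(f)\ \ge\ 2\int_{\Omega_h} vf - B(v,v)\ \ge\ \frac{2}{\mu(h)}\int_{\Omega_h} u_0 f - \frac{1}{\mu(h)}B_0(u_0,u_0)\ =\ \frac{1}{\mu(h)}\Bigl(2\int_{\Omega_h} u_0 f - B_0(u_0,u_0)\Bigr)\ =\ \frac{Q_0(f)}{\mu(h)}.
\]
Thus $\tfrac{1}{\mu(h)}\le Q(f)/Q_0(f)\le 1$. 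Since $\mu(h)\to 1$ as $h\to 0$, the squeeze gives the limit in Eqn.~\eqref{eqn:compare322}; and whenever $\frac{h^2(1-h^{2k})}{1-h^2}\le\frac{1}{2cC_f^2}$ we have $\mu(h)\le\frac32$, hence $Q(f)/Q_0(f)\ge\frac23\ge\frac12$, which is the stated bound.

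I expect the only genuine obstacle to be recognizing that the comparison $B\le\mu(h)B_0$ is a functional estimate, not a pointwise one on the coefficient matrices — it uses the vanishing boundary trace through Friedrichs' inequality — so Lemma~\ref{lem:compare12} cannot be applied directly for the lower bound and one must instead rescale the optimal test function by the factor $\mu(h)$ in the max-formulation. The remaining ingredients (the geometric-series identity, tracking inequality directions, the final squeeze) are routine, and uniformity in $\Omega_h$ is automatic because the Friedrichs constant in Theorem~\ref{thm:generalFriedrich} depends only on $d$ and $k$.
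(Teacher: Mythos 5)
Your proof is correct, and the overall strategy — variational characterization of the two Green's‐function quadratic forms plus Friedrichs' inequality to control $\|u\|_{k-1,2}^2$ by $|u|_{k,2}^2$ — is the same as the paper's. The one difference in detail is the competitor: the paper simply inserts the $\CalL_0$-minimizer $\psi_{h,0}$ into $I_{\CalL}$ and obtains $Q(f)/Q_0(f)\ge 2-\mu(h)$, whereas you insert the rescaled $\psi_{h,0}/\mu(h)$ and obtain $Q(f)/Q_0(f)\ge 1/\mu(h)$; since $1/\mu\ge 2-\mu$ for $\mu\ge 1$, your choice gives a marginally sharper constant (your $2/3$ versus the paper's $1/2$ at the threshold), and both imply the stated $\ge 1/2$.
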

\begin{proof}
Let $\psi_{h}$ be the solution of $\CalL \psi_{h} = f$ with the homogeneous Dirichlet boundary conditions on $\partial \Omega_h$ and $\psi_{h,0}$ be the solution of $\CalL_0 \psi_{h,0} = f$ with the homogeneous Dirichlet boundary conditions on $\partial \Omega_h$. Let
\begin{equation}\label{eqn:variational3k2}
\begin{split}
	I_{\CalL}(u, f) &= \frac{1}{2} |u|_{k,2,\Omega_h}^2 +  \frac{c}{2}\|u\|_{k-1,2,\Omega_h}^2 - \int_{\Omega_h} u f, \\
	I_{\CalL_0}(u, f) &=  \frac{1}{2} |u|_{k,2,\Omega_h}^2  - \int_{\Omega_h} u f.
\end{split}
\end{equation}
At the minima $\psi_{h}$ and $\psi_{h,0}$, we have
\begin{equation} \label{eqn:minimal3k2}
\begin{split}
	I_{\CalL}(\psi_{h}, f) &= - \frac{1}{2} \int_{\Omega_h} \psi_{h} f = -\frac{1}{2}\left( |\psi_{h}|_{k,2,\Omega_h}^2 + c \|\psi_{h}\|_{k-1,2,\Omega_h}^2 \right),\\
	I_{\CalL_0}(\psi_{\CalL_0}, f) &= - \frac{1}{2} \int_{\Omega_h} \psi_{h,0} f = -\frac{1}{2} |\psi_{h,0}|_{k,2,\Omega_h}^2.
\end{split}	
\end{equation}
Note that Eqn.~\eqref{eqn:minimal3k2} implies that $I_{\CalL_0}(\psi_{h,0}, f) < 0$. By the definition of Green's function, we further have
\begin{equation}\label{eqn:minimal32k2}
\begin{split}
	\int_{\Omega_h} \int_{\Omega_h} G_{h}(x,y) f(x) f(y) \rd x\, \rd y &= \int_{\Omega_h} \psi_h f =-2 I_{\CalL}(\psi_h, f) = |\psi_{h}|_{k,2,\Omega_h}^2 + c \|\psi_{h}\|_{k-1,2,\Omega_h}^2,\\
	\int_{\Omega_h} \int_{\Omega_h} G_{h,0}(x,y) f(x) f(y) \rd x\, \rd y &= \int_{\Omega_h} \psi_{h,0} f =-2 I_{\CalL_0}(\psi_{h,0}, f)= |\psi_{h,0}|_{k,2,\Omega_h}^2.
\end{split}
\end{equation}
Since $I_{\CalL_0}(u, f) \le I_{\CalL}(u, f)$ for any $u \in H_0^k(\Omega)$, we have $\frac{\int_{\Omega_h} \int_{\Omega_h} G_{h}(x,y)f(x) f(y) \rd x\, \rd y}{\int_{\Omega_h} \int_{\Omega_h} G_{h,0}(x,y) f(x) f(y) \rd x\, \rd y} \le 1$ for any $h > 0$. Applying the Friedrich's inequality~\eqref{eqn:generalFriedrich} to $\|\psi_{h,0}\|_{k-1,2,\Omega_h}^2$, we get
\begin{equation*}
\begin{split}
	-2 I_{\CalL}(\psi_{h,0}, f) &\ge -2 I_{\CalL_0}(\psi_{h,0}, f) - \frac{c C_f^2 h^2(1-h^{2k})}{1-h^2} |\psi_{h,0}|_{k,2,\Omega_h}^2 \\
	& = - 2 \left(1 - \frac{c C_f^2 h^2(1-h^{2k})}{1-h^2} \right)) I_{\CalL_0}(\psi_{h,0}, f).
\end{split}
\end{equation*}
Here, we have used Eqn.~\eqref{eqn:minimal32k2} in the last equality. Therefore, we have
\begin{equation*}
	\frac{\int_{\Omega_h} \int_{\Omega_h} G_{h}(x,y) f(x) f(y)  \rd x\, \rd y}{\int_{\Omega_h} \int_{\Omega_h} G_{h,0}(x,y) f(x) f(y) \rd x\, \rd y} = \frac{-2 I_{\CalL}(\psi_h, f)}{-2 I_{\CalL_0}(\psi_{h,0}, f)} \ge \frac{-2 I_{\CalL}(\psi_{h,0}, f)}{-2 I_{\CalL_0}(\psi_{h,0}, f)} \ge 1 - \frac{c C_f^2 h^2(1-h^{2k})}{1-h^2},
\end{equation*}
where we have used $I_{\CalL}(\psi_{h}, f) \le I_{\CalL}(\psi_{h,0}, f)$ in the first inequality. By using the above upper bound, we prove the lemma.
\end{proof}

Now, we are ready to prove Lemma~\ref{lem:boundLf2}.
\begin{proof}[\bf{Proof of Lemma~\ref{lem:boundLf2}}]
Let $v = \sum_{i=1}^m \sum_{q=1}^{Q} c_{i,q} \psi_{i,q}$. Thanks to Eqn.~\eqref{eqn:psi1}, we have
$$\CalL v = \sum_{i,q} \sum_{j,q'} c_{i,q} \Theta_{iq,jq'}^{-1} \phi_{j,q'}.$$
Let $g_{j} = \sum_{q'=1}^{Q} \sum_{i,q} c_{i,q} \Theta_{iq,jq'}^{-1} \phi_{j,q'}$. Due to the construction of $\phi_{j,q'}$, we have
\begin{equation}\label{eqn:Lvlocal2}
	\|\CalL v\|_{L^2(\tau_j)}^2 = \| g_j \|_{L^2(\tau_j)}^2
\end{equation}
Furthermore, $v$ can be decomposed over $\tau_j$ as $v = v_1 + v_2$, where $v_1$ solves $\CalL v_1 = g_j(x)$ in $\tau_j$ with $v_1 \in H_0^k(\tau_j)$, and $v_2$ solves $\CalL v_2 = 0$ with $v_2 - v \in H_0^k(\tau_j)$. It is easy to check that $\|v\|_{H(\tau_j)}^2 = \|v_1\|_{H(\tau_j)}^2 + \|v_2\|_{H(\tau_j)}^2$. We denote $G_j$ as the Green's function of the operator $\CalL$ with the homogeneous Dirichlet boundary condition on $\tau_j$, then
\begin{equation*}
	\|v_1\|_{H(\tau_j)}^2 = \int_{\tau_j} v_1(x) g_j \rd x = \int_{\tau_j} \int_{\tau_j} G_j(x,y) g_j(x) g_j(y)\rd x\, \rd y.
\end{equation*}
Thanks to Lemma~\ref{lem:compare12}, we have
\begin{equation}\label{eqn:normv1boundk2}
	\|v_1\|_{H(\tau_j)}^2 \ge \frac{1}{\theta_{k,\max}} \int_{\tau_j} \int_{\tau_j} G_j^*(x,y)g_j(x) g_j(y) \rd x\, \rd y,
\end{equation}
where $G_j^*$ is the Green's function of the operator $(-1)^k \sum\limits_{|\sigma| = k} D^{2 \sigma} u + \frac{\theta_{k,\max}}{\theta_{0,\max}} \sum\limits_{|\sigma| < k} (-1)^{\sigma} D^{2 \sigma} u$ with the homogeneous Dirichlet boundary condition on $\partial \tau_j$. Thanks to Lemma~\ref{lem:compare32}, for all $h>0$ such that $\frac{h^2(1-h^{2k})}{1-h^2} \le \frac{\theta_{k,\max}}{2 \theta_{0,\max} C_f^2}$ we have
\begin{equation}\label{eqn:nonzerock2}
	\int_{\tau_j} \int_{\tau_j} G_j^*(x,y) g_j(x) g_j(y) \rd x\, \rd y \ge \frac{1}{2}\int_{\tau_j} \int_{\tau_j} G_{j,0}^*(x,y) g_j(x) g_j(y) \rd x\, \rd y,
\end{equation}
where $G_{j,0}^*$ is the Green's function of the operator $(-1)^k \sum_{|\sigma| = k} D^{2 \sigma} u$ with the homogeneous Dirichlet boundary condition on $\partial \tau_j$. Denote $v_{1,0}$ as the solution of $(-1)^k \sum_{|\sigma| = k} D^{2 \sigma} v_{1,0} = g_j$ on $\tau_j$ with the homogeneous Dirichlet boundary condition, i.e., $v_{1,0}(x) = \int_{\tau_j} G_{j,0}^*(x,y) g_j(y) \rd y$. Since $g_j \in \CalP_{k-1}$ in $\tau_j$ in this case, Lemma~4.1 shows that
\begin{equation}\label{eqn:citelemmak2}
	\|g_j\|_{L^2(\tau_j)}^2 \le \left(C(k,k,d,\delta)\right)^2 h^{-2} \int_{\tau_j} \int_{\tau_j} G_{j,0}^*(x,y) g_j(x) g_j(y) \rd x\, \rd y.
\end{equation}
Combining Eqn.~\eqref{eqn:normv1boundk2}, \eqref{eqn:nonzerock2} and \eqref{eqn:citelemmak2}, we have
\begin{equation*}
	\|g_j\|_{L^2(\tau_j)}^2 \le 2 \left(C(k,k,d,\delta)\right)^2 h^{-2k} \theta_{k,\max} \|v_1\|_{H(\tau_j)}^2 \le 2 \left(C(k,k,d,\delta)\right)^2 h^{-2k} \theta_{k,\max} \|v\|_{H(\tau_j)}^2.
\end{equation*}
Therefore, we have proved Lemma~\ref{lem:boundLf2}. We point out that when the operator $\CalL$ contains only the highest order terms, i.e., $\CalL u = (-1)^k \sum\limits_{|\sigma|=|\gamma|=k} D^{\sigma}(a_{\sigma \gamma} D^{\gamma} u)$, we don't need to pay a factor of $2$ in Eqn.~\eqref{eqn:nonzerock2}, and thus, $\|g_j\|_{L^2(\tau_j)}^2 \le  \left(C(k,k,d,\delta)\right)^2 h^{-2k} \theta_{k,\max} \|v\|_{H(\tau_j)}^2$ for all $h > 0$ in this special case.
\end{proof}

\referee{
Let $\CalL_0^{-1} f \in H_{0}^k(\tau_i)$ be the unique weak solution of the following elliptic equation with the homogeneous Dirichlet boundary condition 
\begin{equation}\label{eqn:localelliptic}
	\CalL u = f(x) \qquad x \in \tau_i, \quad u \in H_{0}^k(\tau_i).
\end{equation}
We define $M_0, A_0 \in \R^{Q \times Q}$ as follows:
\begin{equation}\label{def:M0A0}
	M_0(q, q') = \int_{\tau_i} \phi_{i,q}\phi_{i,q'}, \qquad A_0(q, q') = \int_{\tau_i} \phi_{i,q} \CalL_0^{-1} (a\phi_{i,q'}).
\end{equation}
Let $\lambda_{\max}(M_0, A_0)$ be the largest generalized eigenvalue of the eigenvalue problem $M_0 \alpha = \lambda A_0 \alpha$, which can be written as
\begin{equation}\label{eqn:varM0A0}
	\lambda_{\max}(M_0, A_0) = \sup_{v\in \R^Q} \frac{v^T M_0 v}{v^T A_0 v} = \sup_{\phi \in \CalP_k(\tau_i)} \frac{\|\phi\|_{L^2(\tau_i)}^2}{ \|\CalL_0^{-1}\phi\|_{H(\tau_i)}^2 } .
\end{equation}
The proof of Lemma~\ref{lem:boundLf2} also implies that
 \begin{equation}\label{eqn:boundM0A0}
	\sqrt{\lambda_{\max}(M_0, A_0)} \le \sqrt{2 \theta_{k,\max}} C(k,d,\delta) h^{-k}.
\end{equation}
If the operator $\CalL$ contains only the highest order terms, we have
 \begin{equation}\label{eqn:bound2M0A0}
	\sqrt{\lambda_{\max}(M_0, A_0)} \le \sqrt{\theta_{k,\max}} C(k,d,\delta) h^{-k}.
\end{equation}
}

\section{Localization of the basis functions}\label{sec:localization}
Theorem~\ref{thm:expdecay1} or Theorem~\ref{thm:expdecay2} allows us to localize the construction of basis functions $\psi_{i,q}$ as follows. For $r > 0$, let $S_r$ be the union of the subdomains $\tau_j$ that intersect with $B(x_i, r)$ (recall that $B(x_i, \delta h_i/2) \subset \tau_i$) and let $\psi_{i,q}^{\mathrm{loc}}$ be the minimizer of the following quadratic problem:
\begin{equation}\label{eqn:localVar}
\begin{split}
	\psi_{i,q}^{\mathrm{loc}} = \argmin_{\psi \in H_0^k(S_r)} \quad & \|\psi\|_H^2 \\
	\text{s.t.} \quad & \int \phi_{j,q'} \psi = \delta_{iq,jq'} \quad \forall 1 \le j \le m,\quad \forall 1 \le q' \le Q.
\end{split}
\end{equation}
We will naturally identify $\psi_{i,q}^{\mathrm{loc}}$ with its extension to $H_0^k(D)$ by setting $\psi_{i,q}^{\mathrm{loc}} = 0$ outside of $S_r$. 

If the elliptic operator $\CalL$ is given with some other homogeneous boundary condition, the localized problem~\eqref{eqn:localVar} should be slightly modified as follows such that the basis function $\psi_{i,q}$ honors the given boundary condition on $\partial D$:
\begin{equation}\label{eqn:localVarOtherBC}
\begin{split}
	\psi_{i,q}^{\mathrm{loc}} = \argmin_{\psi \in H} \quad & \|\psi\|_H^2 \\
	\text{s.t.} \quad & \int \phi_{j,q'} \psi = \delta_{iq,jq'} \quad \forall 1 \le j \le m, \quad \forall 1 \le q' \le Q, \\
				& \psi(x) \equiv 0 \quad \forall x \in D\backslash S_r.
\end{split}
\end{equation}
When $\partial S_r \cap \partial D = \emptyset$,  Eqn.~\eqref{eqn:localVarOtherBC} is equivalent to Eqn.~\eqref{eqn:localVar}. However, when $\partial S_r \cap \partial D \neq \emptyset$, Eqn.~\eqref{eqn:localVarOtherBC} only enforces the zero Dirichlet boundary condition on $\partial S_r \backslash \partial D$, but honors the original boundary condition on $\partial D$. 

From now on, to simplify the expression of constants, we will assume without loss of generality that the domain is rescaled so that $\text{diam}(D) \le 1$.

\referee{
\begin{lemma}\label{lem:phinorm}
For any domain partition with $\frac{h^2(1-h^{2k})}{1-h^2} \le \frac{\theta_{k,\max}}{2 \theta_{0,\max} C_f^2}$, it holds true that 
\begin{equation}\label{eqn:phinorm}
	\|\psi_{i,q}^{\mathrm{loc}}\|_H \le C(k,d,\delta) \left(\frac{2^{d+1} \theta_{k,\max}}{V_d \delta^{d}} \right)^{1/2} h^{-d/2 - k}.
\end{equation}
If the operator $\CalL$ contains only the highest order terms, it holds true that $\|\psi_{i,q}^{\mathrm{loc}}\|_H \le C(k,d,\delta) \left(\frac{2^{d} \theta_{k,\max}}{V_d \delta^{d}} \right)^{1/2} h^{-d/2 - k}$ for any $h>0$.
\end{lemma}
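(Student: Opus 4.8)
The plan is to bound $\|\psi_{i,q}^{\mathrm{loc}}\|_H$ from above by exhibiting one admissible competitor in the local variational problem \eqref{eqn:localVar} and invoking minimality. Since $x_i\in\tau_i$, the element $\tau_i$ meets $B(x_i,r)$ for every $r>0$, so $\tau_i\subset S_r$; hence any $\psi\in H_0^k(\tau_i)$ (extended by $0$ outside $\tau_i$) with $\int_{\tau_i}\phi_{i,q'}\psi=\delta_{q,q'}$ for $1\le q'\le Q$ is admissible — the constraints on the other patches holding automatically — and therefore $\|\psi_{i,q}^{\mathrm{loc}}\|_H^2\le\|\psi\|_H^2$ for every such $\psi$. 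The competitor we take is the purely local energy minimizer supported on $\tau_i$.

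\textbf{The optimal local competitor.} Let $\CalL_0^{-1}$ be the solution operator of $\CalL$ on $\tau_i$ with homogeneous Dirichlet boundary condition on $\partial\tau_i$ (cf.\ \eqref{eqn:localelliptic}), and let $A_0\in\R^{Q\times Q}$, $A_0(q,q')=(\CalL_0^{-1}\phi_{i,q},\phi_{i,q'})_{L^2(\tau_i)}$, as in \eqref{eqn:varM0A0}. By the local instance of Lemma~\ref{lem:innerproduct}(3), $(\CalL_0^{-1}\phi_{i,q},\CalL_0^{-1}\phi_{i,q'})_{H(\tau_i)}=(\CalL_0^{-1}\phi_{i,q},\phi_{i,q'})_{L^2(\tau_i)}=A_0(q,q')$, so $A_0$ is the $H(\tau_i)$-Gram matrix of the linearly independent family $\{\CalL_0^{-1}\phi_{i,q}\}_{q=1}^Q$ and is symmetric positive definite. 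Put $\psi^\star:=\sum_{q'}(A_0^{-1})_{q,q'}\CalL_0^{-1}\phi_{i,q'}\in H_0^k(\tau_i)$. Symmetry of $A_0$ gives the feasibility $\int_{\tau_i}\phi_{i,q''}\psi^\star=\sum_{q'}(A_0^{-1})_{q,q'}A_0(q'',q')=\delta_{q,q''}$, and expanding the $H(\tau_i)$-norm gives $\|\psi^\star\|_{H(\tau_i)}^2=(A_0^{-1})_{q,q}$ (equivalently, $\psi^\star$ is the minimizer furnished by Theorem~\ref{thm:variational} applied on $\tau_i$). Since $\psi^\star$ is supported in $\tau_i$, $\|\psi^\star\|_H=\|\psi^\star\|_{H(\tau_i)}$, so $\|\psi_{i,q}^{\mathrm{loc}}\|_H^2\le(A_0^{-1})_{q,q}$, and it remains to control this diagonal entry.

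\textbf{Bounding $(A_0^{-1})_{q,q}$ and concluding.} With $M_0(q,q')=\int_{\tau_i}\phi_{i,q}\phi_{i,q'}$, the normalization \eqref{eqn:phinormalize} gives $M_0=|\tau_i|\,I_Q$. For $v\in\R^Q$ and $\phi=\sum_qv_q\phi_{i,q}$ one has $v^TA_0v=(\CalL_0^{-1}\phi,\phi)_{L^2(\tau_i)}=\|\CalL_0^{-1}\phi\|_{H(\tau_i)}^2$, so by the characterization \eqref{eqn:varM0A0} of $\lambda_{\max}(M_0,A_0)$,
\[
	v^TA_0v\ \ge\ \frac{v^TM_0v}{\lambda_{\max}(M_0,A_0)}\ =\ \frac{|\tau_i|}{\lambda_{\max}(M_0,A_0)}\,|v|^2,\qquad v\in\R^Q,
\]
i.e.\ $A_0\succeq\frac{|\tau_i|}{\lambda_{\max}(M_0,A_0)}I_Q$, hence $(A_0^{-1})_{q,q}\le\|A_0^{-1}\|_2\le\lambda_{\max}(M_0,A_0)/|\tau_i|$. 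Invoking \eqref{eqn:boundM0A0}, which holds under the hypothesis $\frac{h^2(1-h^{2k})}{1-h^2}\le\frac{\theta_{k,\max}}{2\theta_{0,\max}C_f^2}$ by Lemma~\ref{lem:boundLf2}, yields $(A_0^{-1})_{q,q}\le 2\theta_{k,\max}C(k,d,\delta)^2h^{-2k}/|\tau_i|$. Finally, shape-regularity gives $|\tau_i|\ge V_d(\delta h/2)^d=V_d\delta^dh^d/2^d$, so
\[
	\|\psi_{i,q}^{\mathrm{loc}}\|_H^2\ \le\ \frac{2\theta_{k,\max}C(k,d,\delta)^2h^{-2k}}{V_d\delta^dh^d/2^d}\ =\ \frac{2^{d+1}\theta_{k,\max}}{V_d\delta^d}\,C(k,d,\delta)^2\,h^{-2k-d},
\]
and \eqref{eqn:phinorm} follows on taking square roots. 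In the highest-order-only case, \eqref{eqn:bound2M0A0} removes the factor $2$ in the eigenvalue bound and the restriction on $h$, replacing $2^{d+1}$ by $2^d$ and making the estimate valid for all $h>0$.

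\textbf{Expected difficulty.} There is no genuine obstacle: the one substantive observation is that the energy of the best purely local competitor equals the diagonal of the inverse local Gram matrix $A_0$, which is then controlled by the already-established generalized-eigenvalue estimate \eqref{eqn:boundM0A0} together with the elementary identity $M_0=|\tau_i|I_Q$. Everything else — feasibility of $\psi^\star$, the Gram-matrix identities via Lemma~\ref{lem:innerproduct}(3), and the volume lower bound $|\tau_i|\gtrsim(\delta h)^d$ from shape-regularity — is routine bookkeeping.
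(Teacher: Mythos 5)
Your proposal is correct and follows essentially the same route as the paper: you construct the same competitor $\psi^\star=\sum_{q'}(A_0^{-1})_{q,q'}\CalL_0^{-1}\phi_{i,q'}$ supported in $\tau_i$ (the paper calls it $\zeta_{i,q}$), identify $\|\psi^\star\|_H^2=(A_0^{-1})_{q,q}$, and bound it by $\lambda_{\max}(M_0,A_0)/|\tau_i|$ via $M_0=|\tau_i|I_Q$, then invoke \eqref{eqn:boundM0A0} or \eqref{eqn:bound2M0A0} and the shape-regularity volume bound. The only cosmetic difference is that you rederive $A_0\succeq\frac{|\tau_i|}{\lambda_{\max}(M_0,A_0)}I_Q$ from the Rayleigh quotient where the paper writes down the equivalent identity $\lambda_{\max}(A_0^{-1})=\lambda_{\max}(M_0,A_0)/|\tau_i|$ directly.
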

\begin{proof}
Consider
\begin{equation*}
	\zeta_{i,q} = \sum_{q=1}^Q A_0^{-1}(q,q') \CalL_0^{-1} \phi_{i,q'},
\end{equation*}
where $ A_0^{-1}$ is the inverse of $A_0$ (defined in Eqn.~\eqref{eqn:boundM0A0}) and $\CalL_0^{-1} \phi_{i,q'}$ is the weak solution of the local problem~\eqref{eqn:localelliptic} with right-hand side $\phi_{i,q'}$. From the definition of $A_0$, we know that $\int_{\tau_i} \phi_{i,q}\zeta_{i,q'} = \delta_{q,q'}$. Notice that $\zeta_{i,q} \in H_{0}^{k} \subset H_{0}^k(S_r)$. Therefore, $\zeta_{i,q}$ satisfies all constraints of $\psi_{i,q}^{\mathrm{loc}}$ (see Eqn.~\eqref{eqn:localVar}), and thus, 
\begin{equation}\label{eqn:psizeta}
	\|\psi_{i,q}^{\mathrm{loc}}\|_H \le \|\zeta_{i,q}\|_H.
\end{equation}
Making use of $(\CalL_0^{-1} \phi_{i,q}, \CalL_0^{-1} \phi_{i,q'})_H = \int_{\tau_i} \phi_{i,q} \CalL_0^{-1} \phi_{i,q'} = A_0(q,q')$, we obtain
\begin{equation} \label{def:zetanorm}
	\|\zeta_{i,q}\|_H^2 = A_0^{-1}(q,q) \le \lambda_{\max}(A_0^{-1}) = \frac{\lambda_{\max}(M_0, A_0)}{|\tau_i|}.
\end{equation}
We have used $M_0(q,q') = |\tau_i| \delta_{i,j}$ (due to the normalization~\eqref{eqn:phinormalize}) in the last inequality. Combining Eqn.~\eqref{eqn:bound2M0A0} (or \eqref{eqn:boundM0A0}), \eqref{eqn:psizeta} and \eqref{def:zetanorm} and $|\tau_i| \ge V_d (\delta h/2)^d$, we complete the proof of Eqn.~\eqref{eqn:phinorm}.
\end{proof}
}

\begin{theorem}\label{thm:localization}
Under the same assumptions as those in Theorem~\ref{thm:expdecay2}, there exists $h_0 > 0$ such that for any $h \le h_0$, $1\le i \le m$ and $1\le q\le Q$, it holds true that
\begin{equation}\label{eqn:localization}
	\|\psi_{i,q} - \psi_{i,q}^{\mathrm{loc}}\|_{H(D)} \le C_3 h^{-d/2 - k} \exp\left(-\frac{r - 2h}{2 l h} \right),
\end{equation}
where 
$$C_3 = C(k,d,\delta) \left(\frac{e 2^{d+1} \theta_{k,\max}}{V_d \delta^{d}} \right)^{1/2} \left( \left( 2 C_1 C_{\eta} C_p \sqrt{\frac{ k \theta_{k,\max}}{\theta_{k,\min}}} + 1 \right)^2 + 2 \sqrt{\frac{ \theta_{k,\max}}{\theta_{k,\min}}} C(k, d, \delta) C_p \right)^{1/2}.$$
Here, all the parameters are the same as those in Theorem~\ref{thm:expdecay2}. 

When the operator $\CalL$ contains only the highest order terms, i.e., $\CalL u = (-1)^k \sum\limits_{|\sigma|=|\gamma|=k} D^{\sigma}(a_{\sigma \gamma} D^{\gamma} u)$, Eqn.~\eqref{eqn:localization} holds true for all $h > 0$. In this case, the constant $C_3$ can be taken as 
$$C_3 = C(k,d,\delta) \left(\frac{e 2^{d} \theta_{k,\max}}{V_d \delta^{d}} \right)^{1/2} \left( \left( C_1 C_{\eta} C_p \sqrt{\frac{k \theta_{k,\max}}{\theta_{k,\min}}} + 1 \right)^2 + \sqrt{\frac{\theta_{k,\max}}{\theta_{k,\min}}} C(k, d, \delta) C_p \right)^{1/2}.$$
\end{theorem}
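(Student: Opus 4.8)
The plan is to run the standard cut-off-and-correct argument from the LOD/gamblet circle of ideas, now fed by the three ingredients the paper has assembled: the orthogonality structure of the energy-minimizing basis (Theorem~\ref{thm:variational}), the polynomial approximation property (Theorem~\ref{thm:generalPoincare}) together with the strong-ellipticity estimate (Lemma~\ref{lem:coercive}), and the exponential decay already proved in Theorem~\ref{thm:expdecay2}. First I would reduce the claim to the construction of a single good competitor. Both $\psi_{i,q}$ and $\psi_{i,q}^{\mathrm{loc}}$ satisfy the linear constraints $(\psi,\phi_{j,q'})=\delta_{iq,jq'}$, so item~2 of Theorem~\ref{thm:variational}, applied to each and to any $\tilde\psi$ supported in $S_r$ satisfying the same constraints, gives $\|\psi_{i,q}^{\mathrm{loc}}\|_H^2=\|\psi_{i,q}\|_H^2+\|\psi_{i,q}^{\mathrm{loc}}-\psi_{i,q}\|_H^2$ and $\|\tilde\psi\|_H^2=\|\psi_{i,q}\|_H^2+\|\tilde\psi-\psi_{i,q}\|_H^2$. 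Since $\psi_{i,q}^{\mathrm{loc}}$ is the energy minimizer over such $\tilde\psi$, we obtain $\|\psi_{i,q}^{\mathrm{loc}}-\psi_{i,q}\|_H\le\|\tilde\psi-\psi_{i,q}\|_H$, so it suffices to exhibit one admissible $\tilde\psi$ with $\|\tilde\psi-\psi_{i,q}\|_H$ exponentially small.

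\textbf{Construction of the competitor.} Fix a smooth cut-off $\eta$ with $\eta\equiv1$ on $B(x_i,r-2h)$, $\eta\equiv0$ off $B(x_i,r-h)$, and $\|D^\sigma\eta\|_{L^\infty(D)}\le C_\eta h^{-|\sigma|}$; assuming $r$ is at least a small multiple of $h$ we have $\tau_i\subset\{\eta\equiv1\}$ and $\eta\psi_{i,q}$ is supported inside $S_r$ (for $r$ comparable to $h$ the bound is trivial because $\|\psi_{i,q}-\psi_{i,q}^{\mathrm{loc}}\|_H\le\|\psi_{i,q}\|_H+\|\psi_{i,q}^{\mathrm{loc}}\|_H$ is already of the asserted order). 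The function $\eta\psi_{i,q}$ honours all constraints on patches where $\eta$ is constant and violates them only on the thin shell of transition patches $T=\{\tau_j:\tau_j\cap (B(x_i,r-h)\setminus B(x_i,r-2h))\neq\emptyset\}$, by amounts $d_{j,q'}:=\int_{\tau_j}(1-\eta)\psi_{i,q}\phi_{j,q'}$. Let $\zeta_{j,q'}=\sum_{\ell}A_0^{-1}(q',\ell)\CalL_0^{-1}\phi_{j,\ell}\in H_0^k(\tau_j)$ be the local dual functions from the proof of Lemma~\ref{lem:phinorm}, so that $\int_{\tau_j}\phi_{j,\ell}\zeta_{j,q'}=\delta_{\ell,q'}$, $\zeta_{j,q'}$ is supported in $\tau_j\subset S_r$, and $\zeta_{j,q'}$ does not interact with constraints on other patches. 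Then $\tilde\psi:=\eta\psi_{i,q}+\sum_{\tau_j\in T}\sum_{q'}d_{j,q'}\zeta_{j,q'}$ is supported in $S_r$, matches every constraint, and $\tilde\psi-\psi_{i,q}=-(1-\eta)\psi_{i,q}+\sum_{\tau_j\in T}\sum_{q'}d_{j,q'}\zeta_{j,q'}$.

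\textbf{The estimates and conclusion.} I would split $\|\tilde\psi-\psi_{i,q}\|_H\le\|(1-\eta)\psi_{i,q}\|_H+\big\|\sum_{\tau_j\in T}\sum_{q'}d_{j,q'}\zeta_{j,q'}\big\|_H$. For the first term, expand $D^\sigma((1-\eta)\psi_{i,q})$ by Leibniz: the term $(1-\eta)D^\sigma\psi_{i,q}$ lives on $B(x_i,r-2h)^c$, while each commutator term $D^{\sigma_1}\eta\,D^{\sigma_2}\psi_{i,q}$ ($|\sigma_1|\ge1$) lives on $T$ and, using $\|D^{\sigma_1}\eta\|_{L^\infty}\le C_\eta h^{-|\sigma_1|}$ and Theorem~\ref{thm:generalPoincare} (recall $\psi_{i,q}\perp\CalP_{k-1}$ on each patch of $T$), is controlled by $C_\eta C_p|\psi_{i,q}|_{k,2,T}$ up to combinatorial factors; the boundedness~\eqref{eqn:Lbounded} and the estimate~\eqref{eqn:kleH2} of Lemma~\ref{lem:coercive} then convert $|\psi_{i,q}|_{k,2,\cdot}$ into $\|\psi_{i,q}\|_{H(\cdot)}$, giving $\|(1-\eta)\psi_{i,q}\|_H\le\big(2C_1C_\eta C_p\sqrt{k\theta_{k,\max}/\theta_{k,\min}}+1\big)\|\psi_{i,q}\|_{H(B(x_i,r-2h)^c)}$. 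For the correction term, the $\zeta_{j,q'}$ have pairwise disjoint supports, so its squared $H$-norm splits over $\tau_j\in T$ with $(\zeta_{j,q'},\zeta_{j,q''})_H=A_0^{-1}(q',q'')$ and $\|A_0^{-1}\|\le\lambda_{\max}(M_0,A_0)/|\tau_j|\le2\theta_{k,\max}C(k,d,\delta)^2h^{-2k}/|\tau_j|$ by~\eqref{eqn:boundM0A0}; bounding $\sum_{q'}d_{j,q'}^2\le|\tau_j|\,\|(1-\eta)\psi_{i,q}\|_{L^2(\tau_j)}^2\le|\tau_j|\,\|\psi_{i,q}\|_{L^2(\tau_j)}^2$, then $\|\psi_{i,q}\|_{L^2(\tau_j)}\le C_ph^{k}|\psi_{i,q}|_{k,2,\tau_j}$ (Theorem~\ref{thm:generalPoincare} with $p=0$) and~\eqref{eqn:kleH2}, yields a bound of the form $\big(2\sqrt{\theta_{k,\max}/\theta_{k,\min}}\,C(k,d,\delta)C_p\big)^{1/2}\|\psi_{i,q}\|_{H(T)}$. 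Since $T\subset B(x_i,r-2h)^c$, combining the two pieces, invoking Theorem~\ref{thm:expdecay2} in the form $\|\psi_{i,q}\|_{H(B(x_i,r-2h)^c)}\le\sqrt{e}\,\exp\!\big(-\tfrac{r-2h}{2lh}\big)\|\psi_{i,q}\|_H$, and bounding $\|\psi_{i,q}\|_H$ by Lemma~\ref{lem:phinorm}, produces exactly~\eqref{eqn:localization} with the stated $C_3$. The highest-order-only case is identical except that it uses~\eqref{eqn:kleH1} in place of~\eqref{eqn:kleH2},~\eqref{eqn:bound2M0A0} in place of~\eqref{eqn:boundM0A0}, and the sharper form of Lemma~\ref{lem:phinorm}; since Lemma~\ref{lem:boundLf2} is then valid for all $h>0$, the restriction $h\le h_0$ disappears and the constant shrinks accordingly.

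\textbf{Main obstacle.} The conceptual steps (cut off, restore the $Q$ local moments on each transition patch, use Pythagoras to reduce to a competitor bound) are routine; the real work is making the constants close. One has to track the combinatorial Leibniz factors (absorbed into $C_1$), the factor-$2$ loss in the strong-ellipticity estimate~\eqref{eqn:kleH2} caused by the crossing terms between top- and low-order derivatives, and — most delicately — the exact cancellation between $\lambda_{\max}(M_0,A_0)\asymp h^{-2k}$ and the $h^{2k}$ gained from the $L^2$ polynomial approximation of $\psi_{i,q}$, so that every power of $h$ collapses and only the $h^{-d/2-k}$ prefactor inherited from $\|\psi_{i,q}\|_H$ survives. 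Once those powers are seen to cancel, the exponential factor is handed to us directly by Theorem~\ref{thm:expdecay2}.
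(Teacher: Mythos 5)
Your proposal is correct in its main ideas and establishes the theorem, but it proceeds by a genuinely different route from the paper, and the constant you obtain is not quite the $C_3$ stated in the theorem.

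The paper does not construct an admissible competitor and apply the triangle inequality. Instead, it works entirely with the auxiliary local minimizers $\psi_{j,q'}^{i,r}$ on $S_r$, expands $\psi_w^{iq,r} = \psi_{i,q}^{\mathrm{loc}} + \sum_{\tau_j\subset S^*}\sum_{q'} w_{jq'}\psi_{j,q'}^{i,r}$ (with $w_{jq'}=\int\eta\psi_{i,q}\phi_{j,q'}$), and combines the competitor inequality $\|\psi_w^{iq,r}\|_H\le\|\eta\psi_{i,q}\|_H$ with the Pythagoras relation~\eqref{eqn:Hdif1} to reach $\|\psi_{i,q}-\psi_{i,q}^{\mathrm{loc}}\|_H^2\le I_1+I_2$, where $I_1=\|\eta\psi_{i,q}\|_H^2-\|\psi_{i,q}\|_H^2$ and $I_2=2\bigl|\sum_{\tau_j\subset S^*}\sum_{q'} w_{jq'}\Theta^{i,-1}_{iq,jq'}\bigr|$. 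The off-diagonal term $I_2$ is then controlled directly via $\CalL\psi_{i,q}^{\mathrm{loc}}\mid_{\tau_j}=\sum_{q'}\Theta^{i,-1}_{iq,jq'}\phi_{j,q'}$ and Lemma~\ref{lem:boundLf2}. Your approach — cutting off, then restoring the violated moments on each transition patch with the local Dirichlet dual functions $\zeta_{j,q'}$ from Lemma~\ref{lem:phinorm}, and taking $\tilde\psi$ as an explicit competitor — is a legitimate and arguably more transparent alternative. The construction is sound: $\tilde\psi$ is admissible, the $\zeta_{j,q'}$ have disjoint supports so the correction energy splits patchwise, and the cancellation $\lambda_{\max}(M_0,A_0)\asymp h^{-2k}$ against the $h^{2k}$ from the patchwise $L^2$ polynomial approximation of $\psi_{i,q}$ does collapse all powers of $h$ as you say.

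The one place you overclaim is asserting that you obtain ``exactly the stated $C_3$.'' With the triangle inequality $\|\tilde\psi-\psi_{i,q}\|_H\le\|(1-\eta)\psi_{i,q}\|_H+\|\text{correction}\|_H$, writing $A=2C_1C_\eta C_p\sqrt{k\theta_{k,\max}/\theta_{k,\min}}+1$ and $B=2\sqrt{\theta_{k,\max}/\theta_{k,\min}}C(k,d,\delta)C_p$, you get a prefactor proportional to $A+B$, whereas the paper's $I_1+I_2$ decomposition (with $I_2$ linear in $\|\psi_{i,q}\|_H\|\psi_{i,q}^{\mathrm{loc}}\|_H$ rather than quadratic) yields the sharper $\sqrt{A^2+B}$ appearing in the theorem. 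Since $A\ge1$, one can check $\sqrt{A^2+B}\le A+B$, so the paper's constant is the smaller one; in other words, your route proves the statement with a weaker constant of the same form. Also, your displayed bound ``$\bigl(2\sqrt{\theta_{k,\max}/\theta_{k,\min}}C(k,d,\delta)C_p\bigr)^{1/2}\|\psi_{i,q}\|_{H(T)}$'' for the correction term has a spurious square root; the correct form is $2\sqrt{\theta_{k,\max}/\theta_{k,\min}}\,C(k,d,\delta)\,C_p\,\|\psi_{i,q}\|_{H(T)}$, as your own intermediate computation shows. Neither of these affects the validity of the argument, only the exact constant.
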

\begin{proof}
Let $S_0$ be the union of the subdomains $\tau_j$ that are not contained in $S_r$ and let $S_1$ be the union of the subdomains $\tau_j$ that are at distance at least $h$ from $S_0$. (We will assume that $S_0 \neq \emptyset$ and $S_1 \neq \emptyset$. If $S_0 \neq \emptyset$, the proof is trivial. We can choose $r \ge 2h$ such that $S_1 \neq \emptyset$.) Let $S^*$ be the union of the subdomains $\tau_j$ that are not contained in either $S_0$ or $S_1$, as illustrated in Figure~\ref{fig:illustrate2}. Note that in this case, we have $S_1$ in the inner region and $S_0$ in the outer region. This is the opposite of the scenario that we consider in Figure \ref{fig:illustrate1}.
\begin{figure}[ht]
\centering
\includegraphics[width = 0.4\textwidth]{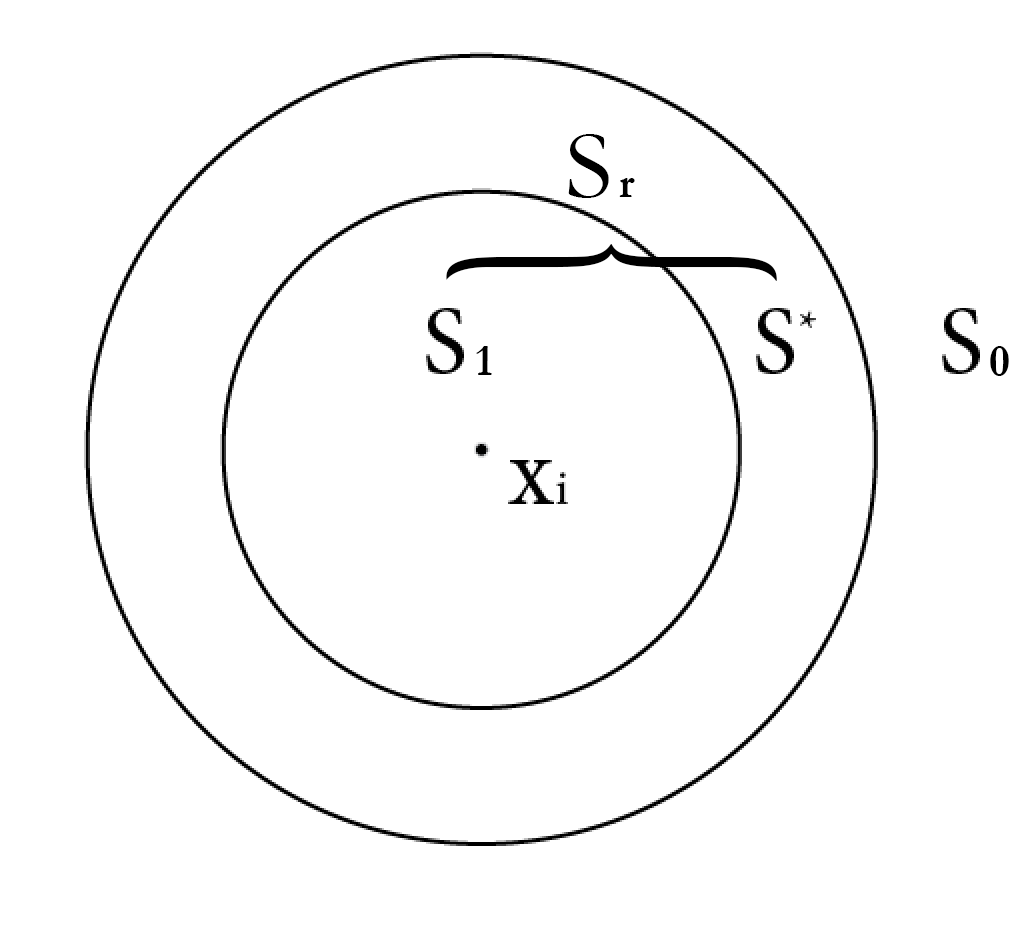}
\caption{Illustration of $S_r$, $S_0$, $S_1$ and $S^*$.}\label{fig:illustrate2}
\end{figure}

\noindent
Let $\eta$ be a smooth cut-off function such that $0 \le \eta \le 1$, $\eta|_{S_1} \equiv 1$, $\eta|_{S_0} \equiv 0$ and $\|D^{\sigma} \eta\|_{L^{\infty}(D)} \le \frac{C_{\eta}}{h^{|\sigma|}}$ for all $\sigma$. 
Since $\psi_{i,q}^{\mathrm{loc}}$ satisfies the same constraints as those in the definition of $\psi_{i,q}$, thanks to Eqn.~\eqref{eqn:psiortho} we have
\begin{equation}\label{eqn:Hdif1}
	\|\psi_{i,q} - \psi_{i,q}^{\mathrm{loc}}\|_{H(D)}^2 = \|\psi_{i,q}^{\mathrm{loc}}\|_{H(D)}^2 - \|\psi_{i,q}\|_{H(D)}^2.
\end{equation}
Define $\psi_{j,q}^{i,r}$ as the (unique) minimizer of the following quadratic optimization:
\begin{equation}\label{eqn:defphikt}
\begin{split}
	\psi_{j,q}^{i,r} := \argmin_{\psi \in H_0^k(S_r)} \quad & \|\psi\|_{H(S_r)}^2 \\
	\text{s.t.} \quad & \int_{S_r} \psi \phi_{j',q'} = \delta_{jq, j'q'} \quad \forall 1\le j' \le m, \quad \forall 1 \le q' \le Q.
\end{split}
\end{equation}
Note that $\psi_{i,q}^{\mathrm{loc}} = \psi_{i,q}^{i,r}$. Let $w_{jq'} = \int_D \eta \psi_{i,q} \phi_{j,q'}$ and $\psi_{w}^{iq,r} = \sum_{j=1}^m \sum_{q'=1}^Q w_{jq'} \psi_{j,q'}^{i,r}$. Thanks to the orthogonality between $\psi_{i,q}$ and $\phi_{j,q'}$, i.e., the constraints in Eqn.~\eqref{eqn:psivariationalk2}, we have
\begin{equation*}
	\psi_{w}^{iq,r} = \psi_{i,q}^{\mathrm{loc}} + \sum_{\tau_j \subset S^*} \sum_{q'=1}^Q w_{jq'} \psi_{j,q'}^{i,r}.
\end{equation*}
Using (3) of Theorem~\ref{thm:variational}, we have $(\psi_{i,q}^{\mathrm{loc}}, \psi_{j,q'}^{i,r})_H = \Theta_{iq, jq'}^{i,-1}$, where $\Theta^{i}$ is defined by Eqn.~\eqref{def:Theta} with $\CalK: L^2(S_r) \to L^2(S_r)$ being the inverse of $\CalL$ with the homogeneous Dirichlet boundary condition on $\partial S_r$. Therefore, we have
\begin{equation}\label{eqn:Hpsiw}
	\|\psi_{w}^{iq,r}\|_H^2 = \|\psi_{i,q}^{\mathrm{loc}}\|_H^2 + \|\sum_{\tau_j \subset S^*} \sum_{q'=1}^Q w_{jq'} \psi_{j,q'}^{i,r}\|_H^2 + 2 \sum_{\tau_j \subset S^*} \sum_{q'=1}^Q w_{jq'} \Theta_{iq, jq'}^{i,-1}.
\end{equation}
By (2) of Theorem~\ref{thm:variational}, we know that $\psi_{w}^{iq,r}$ is the minimizer of the following quadratic problem:
\begin{equation}\label{eqn:defphiw}
\begin{split}
	\psi_{w}^{iq,r} = \argmin_{\psi \in H_0^k(S_r)} \quad & \|\psi\|_{H(S_r)}^2 \\
	\text{s.t.} \quad & \int_{S_r} \psi \phi_{j,q'} = \int_{D} \eta \psi_{i,q} \phi_{j,q'} \quad \forall 1 \le j \le m, \quad \forall 1 \le q' \le Q.
\end{split}
\end{equation}
Noting that $\eta \psi_{i,q}$ satisfies the same constraint, we have $\|\psi_{w}^{iq,r}\|_H^2 \le \|\eta \psi_{i,q}\|_H^2$. By using this estimate with \eqref{eqn:Hdif1} and \eqref{eqn:Hpsiw}, we obtain
\begin{equation}\label{eqn:Hdif2}
	\|\psi_{i,q} - \psi_{i,q}^{\mathrm{loc}}\|_{H(D)}^2 \le \underbrace{\|\eta \psi_{i,q}\|_H^2 - \|\psi_{i,q}\|_H^2}_{I_1} + \underbrace{2 \left|\sum_{\tau_j \subset S^*} \sum_{q'=1}^Q w_{jq'} \Theta_{iq, jq'}^{i,-1}\right|}_{I_2}.
\end{equation}
It turns out that $I_1$ and $I_2$ play almost the same role as $I_1$ and $I_2$ did in the proof of Theorem~\ref{thm:expdecay2} and can be estimated in a similar way. We will estimate these two terms as follows.

%% I_1
Let's first deal with $I_1$. Since $\eta|_{S_1} \equiv 1$ and $\eta|_{S_0} \equiv 0$, we have $I_1 = \|\eta \psi_{i,q}\|_{H(S^*)}^2 - \|\psi_{i,q}\|_{H(S^*\cup S_0)}^2 \le \|\eta \psi_{i,q}\|_{H(S^*)}^2$. In Appendix~\ref{subapp:l1_localization}, we give a bound for $\|\eta \psi_{i,q}\|_{H(S^*)}$ using a similar technique that we used to obtain Eqn.~\eqref{eqn:I1_2_3} from Eqn.~\eqref{eqn:I1_1_1} in the proof of Theorem~\ref{thm:expdecay2}. With this bound, we obtain
\begin{equation}\label{eqn:estimateI1_1}
	I_1 \le \left(\frac{C_3}{2} |\psi_{i,q}|_{k,2,S^*} + \sqrt{\frac{C_3^2}{4} |\psi_{i,q}|_{k,2,S^*}^2 + C_3 |\psi_{i,q}|_{k,2,S^*} \|\psi_{i,q}\|_{H(S^*)} + \|\psi_{i,q}\|_{H(S^*)}^2} \right)^2,
\end{equation}
where $C_3 = C_1 C_{\eta} C_p \sqrt{2 k \theta_{k,\max}}$. With the strong ellipticity~\eqref{eqn:strongelliptic} and the bound~\eqref{eqn:kleH2}, we conclude
\begin{equation}\label{eqn:estimateI1_2}
	I_1 \le \left( 2 C_1 C_{\eta} C_p \sqrt{\frac{k \theta_{k,\max}}{\theta_{k,\min}}} + 1 \right)^2 \|\psi_{i,q}\|_{H(S^*)}^2.
\end{equation}
Applying the exponential decay of Theorem~\ref{thm:expdecay2} to $\|\psi_{i,q}\|_{H(S^*)}$, we get
\begin{equation}\label{eqn:estimateI1_3}
	I_1 \le \left( 2 C_1 C_{\eta} C_p \sqrt{\frac{k \theta_{k,\max}}{\theta_{k,\min}}} + 1 \right)^2 e^{1-\frac{r-2h}{lh}} \|\psi_{i,q}\|_{H(D)}^2.
\end{equation}

%% I_2
We now estimate $I_2$. Combining (3) of Theorem~\ref{thm:variational} with the definition of $H$-norm~\eqref{def:Hinner}, we have
\begin{equation*}
	\Theta_{iq,jq'}^{i,-1} = (\psi_{i,q}^{\mathrm{loc}}, \psi_{j,q'}^{i,r})_{H(S_r)} = (\CalL \psi_{i,q}^{\mathrm{loc}}, \psi_{j,q'}^{i,r})_{L^2(S_r)}.
\end{equation*}
Thanks to $\CalL \psi_{i,q}^{\mathrm{loc}}\mid_{\tau_j} \in \text{span}\{\phi_{j,q'}\}_{q=1}^Q$ and the orthogonality between $\Phi$ and $\psi_{j,q'}^{i,r}$, we have
\begin{equation*}
	\CalL \psi_{i,q}^{\mathrm{loc}}\mid_{\tau_j} = \sum_{q'=1}^Q \Theta_{iq,jq'}^{i,-1} \phi_{j,q'}.
\end{equation*}
Since $\{\phi_{j,q'}\}_{q=1}^Q$ is orthogonal and normalized such that $\int \phi_{j,q} \phi_{j,q'} = |\tau_j| \delta_{q,q'}$, we get
\begin{equation}\label{eqn:I21}
	\|\CalL \psi_{i,q}^{\mathrm{loc}}\|_{L^2(\tau_j)} = |\tau_j|^{1/2} \left( \sum_{q'=1}^Q (\Theta_{iq,jq'}^{i,-1})^2\right)^{1/2}.
\end{equation}
Moreover, we obtain $w_{jq'} = \int_D \eta \psi_{i,q} \phi_{j,q'}$ by definition, and thus we get
\begin{equation}\label{eqn:I22}
	|\tau_j|^{-1/2} \left(\sum_{q'=1}^Q |w_{jq'}|^2\right)^{1/2} \le \|\eta \psi_{i,q}\|_{L^2(\tau_j)} \le \|\psi_{i,q}\|_{L^2(\tau_j)}.
\end{equation}
Here, we have made use of $0 \le \eta \le 1$ in the last step. Combining \eqref{eqn:I21} and \eqref{eqn:I22}, we get
\begin{equation*}
\begin{split}
I_2 &= 2 |\sum_{\tau_j \subset S^*} \sum_{q'=1}^Q w_{jq'} \Theta_{iq, jq'}^{i,-1}| \\
& \le 2 \sum_{\tau_j \subset S^*} \left( \sum_{q'=1}^Q (\Theta_{iq,jq'}^{i,-1})^2\right)^{1/2} \left(\sum_{q'=1}^Q |w_{jq'}|^2\right)^{1/2} \\
& \le 2 \sum_{\tau_j \subset S^*} \|\CalL \psi_{i,q}^{\mathrm{loc}}\|_{L^2(\tau_j)} \|\psi_{i,q}\|_{L^2(\tau_j)}.
\end{split}
\end{equation*}
Now, we arrive at exactly the same situation as $I_2$ (see~\eqref{eqn:I2_1simple}) in the proof of Theorem~\ref{thm:expdecay2simple}. With the same derivation from Eqn.~\eqref{eqn:I2_1simple} to Eqn.~\eqref{eqn:I2_2simple}, i.e., applying Lemma~\ref{lem:boundLf2} to $\|\CalL \psi_{i,q}^{\mathrm{loc}}\|_{L^2(\tau_j)}$ and Theorem~\ref{thm:generalPoincare} to $\|\psi_{i,q}\|_{L^2(\tau_j)}$, we obtain
\begin{equation}\label{eqn:estimateI2_1}
\begin{split}
	I_2 &\le 2 \sqrt{2 \theta_{k,\max}} C(k, d, \delta) C_p |\psi_{i,q}|_{k,2,S^*} \|\psi_{i,q}^{\mathrm{loc}}\|_{H(S^*)} \\
	&\le 4 \sqrt{\frac{\theta_{k,\max}}{\theta_{k,\min}}} C(k, d, \delta) C_p \|\psi_{i,q}\|_{H(S^*)} \|\psi_{i,q}^{\mathrm{loc}}\|_{H(S^*)},
\end{split}
\end{equation}
where we have used $\theta_{k,\max} := \max(\theta_{0,\max}, \theta_{k,\max})$, the strong ellipticity~\eqref{eqn:strongelliptic} and the bound~\eqref{eqn:kleH2} in the last step. Applying the exponential decay of Theorem~\ref{thm:expdecay2} to both $\|\psi_{i,q}\|_{H(S^*)}$ and $\|\psi_{i,q}^{\mathrm{loc}}\|_{H(S^*)}$, we obtain
\begin{equation}\label{eqn:estimateI2_2}
	I_2 \le 2 \sqrt{\frac{\theta_{k,\max}}{\theta_{k,\min}}} C(k, d, \delta) C_p e^{1-\frac{r-2h}{lh}} \|\psi_{i,q}\|_{H(D)} \|\psi_{i,q}^{\mathrm{loc}}\|_{H(D)}.
\end{equation}

Combining Eqn.~\eqref{eqn:Hdif2}, \eqref{eqn:estimateI1_3} and \eqref{eqn:estimateI2_2}, and using Eqn.~\eqref{eqn:phinorm} to bound $\|\psi_{i,q}^{\mathrm{loc}}\|_{H(D)}$ and $\|\psi_{i,q}\|_{H(D)}$ (recall $\|\psi_{i,q}\|_{H(D)} \le \|\psi_{i,q}^{\mathrm{loc}}\|_{H(D)}$), we complete the proof of Eqn.~\eqref{eqn:localization}. 

When the operator $\CalL$ contains only the highest order terms, i.e., $\CalL u = (-1)^k \sum\limits_{|\sigma|=|\gamma|=k} D^{\sigma}(a_{\sigma \gamma} D^{\gamma} u)$, Eqn.~\eqref{eqn:estimateI1_3} and \eqref{eqn:estimateI2_2} hold true for all $h > 0$. In this case, we can get rid of the factor ``2'' in both Eqn.~\eqref{eqn:estimateI1_3} and \eqref{eqn:estimateI2_2}. Therefore, we obtain the estimate on $C_3$ stated in the theorem.
\end{proof}

\begin{theorem}\label{thm:localizedMsFEM}
Let $u\in H_0^k(D)$ be the weak solution of $\CalL u = f$ and $\psi_{i,q}^{\mathrm{loc}}$ be the localized basis functions defined in Eqn.~\eqref{eqn:localVar}. Then, for $ r \ge (d+4k) l h \log(1/h) + 2 (1 + l \log C_4) h$, we have
\begin{equation}\label{eqn:localizedMsFEM}
	\inf_{v \in \Psi^{\mathrm{loc}}} \|u - v\|_{H(D)} \le \frac{2 C_p}{\sqrt{a_{\min}}} h^k \|f\|_{L^2(D)},
\end{equation}
where $C_4 = \frac{C_3 C_e}{C_p} (Q a_{\min})^{1/2}$, and $C_3$ is defined in Theorem~\ref{thm:localization}, $a_{\min}$ comes from the norm-equivalence~\eqref{eqn:normequiv}, and $C_e$ is the constant such that $\|u\|_{L^2(D)} \le C_e \|f\|_{L^2(D)}$ holds true.
\end{theorem}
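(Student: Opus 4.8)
The plan is to measure the localized space $\Psi^{\mathrm{loc}}$ against the (non-localized) space $\Psi=\CalK\Phi$, for which Theorem~\ref{thm:conditioning1} (namely Eqn.~\eqref{eqn:Herror2}) already gives $\|u-\CalP_{\Psi}^{(H)}u\|_{H(D)}\le \tfrac{C_p h^k}{\sqrt{a_{\min}}}\|f\|_{L^2(D)}$, and then to control the perturbation caused by localization through the exponential-decay estimate of Theorem~\ref{thm:localization}. The standing hypotheses are those of Theorem~\ref{thm:expdecay2}, and (through the use of $C_3$) the implicit restriction $h\le h_0$.

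First I would make the projection $\CalP_{\Psi}^{(H)}u$ explicit in the basis $\{\psi_{i,q}\}$. Writing $\CalP_{\Psi}^{(H)}u=\sum_{i,q}c_{i,q}\psi_{i,q}$, the Galerkin identity $(\CalP_{\Psi}^{(H)}u,\psi_{j,q'})_H=(u,\psi_{j,q'})_H$ (valid for all $j,q'$) has left-hand side $(\Theta^{-1}c)_{jq'}$ by part~3 of Theorem~\ref{thm:variational}, while the right-hand side equals $B(u,\psi_{j,q'})=\int_D(\CalL\psi_{j,q'})u=\sum_{l,q''}\Theta^{-1}_{jq',lq''}(u,\phi_{l,q''})$, using integration by parts (legitimate since $u\in H_0^k(D)$) together with $\CalL\psi_{j,q'}=\sum_{l,q''}\Theta^{-1}_{jq',lq''}\phi_{l,q''}$ from Eqn.~\eqref{eqn:psi1}. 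Invertibility of $\Theta$ then forces $c_{i,q}=(u,\phi_{i,q})_{L^2(D)}$. Since $\{|\tau_i|^{-1/2}\phi_{i,q}\}_{q=1}^{Q}$ is orthonormal in $L^2(\tau_i)$ and $|\tau_i|\le h^d$, Bessel's inequality gives $\sum_{i,q}|c_{i,q}|^2\le h^d\sum_i\|u\|_{L^2(\tau_i)}^2=h^d\|u\|_{L^2(D)}^2\le h^dC_e^2\|f\|_{L^2(D)}^2$, i.e. $\|c\|_2\le h^{d/2}C_e\|f\|_{L^2(D)}$.

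Next I would take the candidate $\tilde v:=\sum_{i,q}c_{i,q}\psi_{i,q}^{\mathrm{loc}}\in\Psi^{\mathrm{loc}}$ and split
$$\inf_{v\in\Psi^{\mathrm{loc}}}\|u-v\|_{H(D)}\;\le\;\|u-\CalP_{\Psi}^{(H)}u\|_{H(D)}\;+\;\Big\|\sum_{i,q}c_{i,q}\bigl(\psi_{i,q}-\psi_{i,q}^{\mathrm{loc}}\bigr)\Big\|_{H(D)}.$$
The first term is at most $\tfrac{C_p h^k}{\sqrt{a_{\min}}}\|f\|_{L^2(D)}$ by Eqn.~\eqref{eqn:Herror2}. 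For the second, Cauchy--Schwarz, the uniform bound $\|\psi_{i,q}-\psi_{i,q}^{\mathrm{loc}}\|_{H(D)}\le C_3h^{-d/2-k}\exp(-\tfrac{r-2h}{2lh})$ of Theorem~\ref{thm:localization}, the count $mQ\le Qh^{-d}$ of basis functions (a consequence of mesh regularity and $\mathrm{diam}(D)\le 1$), and the bound on $\|c\|_2$ give
$$\Big\|\sum_{i,q}c_{i,q}\bigl(\psi_{i,q}-\psi_{i,q}^{\mathrm{loc}}\bigr)\Big\|_{H(D)}\le \|c\|_2\,(mQ)^{1/2}C_3h^{-d/2-k}e^{-\frac{r-2h}{2lh}}\le C_eC_3Q^{1/2}h^{-d/2-k}e^{-\frac{r-2h}{2lh}}\|f\|_{L^2(D)}.$$
It then remains to verify that the stated choice of $r$, with $C_4=\tfrac{C_3C_e}{C_p}(Qa_{\min})^{1/2}$, makes this $\le\tfrac{C_p h^k}{\sqrt{a_{\min}}}\|f\|_{L^2(D)}$: that inequality is exactly $e^{-(r-2h)/(2lh)}\le\tfrac{C_p}{\sqrt{a_{\min}}\,C_eC_3Q^{1/2}}h^{2k+d/2}$, i.e. $\tfrac{r-2h}{2lh}\ge\log C_4+(2k+\tfrac d2)\log(1/h)$, which rearranges to $r\ge(d+4k)lh\log(1/h)+2(1+l\log C_4)h$. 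Adding the two contributions yields the claimed bound $\tfrac{2C_p}{\sqrt{a_{\min}}}h^k\|f\|_{L^2(D)}$.

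The Cameron--Martin bookkeeping in the first two steps is routine; the only genuinely delicate point is the constant matching in the last step, where the exponential factor of Theorem~\ref{thm:localization} must beat the combined algebraic powers $h^{-d/2-k}$ (localization error plus the inverse energy estimate folded into $C_3$), $h^{d/2}$ (from $\|c\|_2$) and $h^{-d/2}$ (from $(mQ)^{1/2}$) with a clean surplus $h^{2k}$ — this is precisely what pins down the oversampling radius $r=O(h\log(1/h))$ and the constant $C_4$. When $\CalL$ contains only the highest-order terms, Theorem~\ref{thm:localization} holds for all $h>0$, so the same argument applies without the restriction $h\le h_0$.
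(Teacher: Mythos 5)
Your proof is correct and follows essentially the same route as the paper: take $v_1=\sum c_{iq}\psi_{i,q}$ with $c_{iq}=(u,\phi_{i,q})$, use Eqn.~\eqref{eqn:Herror2} for $\|u-v_1\|_H$, bound $\|v_1-v_2\|_H$ by the uniform localization error of Theorem~\ref{thm:localization} times a bound on the coefficients, and then tune $r$ so that the exponential factor absorbs the algebraic powers $h^{-d/2-k}\cdot h^{d/2}\cdot h^{-d/2}$ with a surplus of $h^{2k}$. Two small remarks: (i) you explicitly verify $v_1=\CalP_{\Psi}^{(H)}u$ via the normal equations and Theorem~\ref{thm:variational}(3), which the paper tacitly assumes, so this is a useful elaboration rather than a detour; (ii) where the paper runs Cauchy--Schwarz as $\|c\|_1\le Q^{1/2}\sum_i|\tau_i|^{1/2}\|u\|_{L^2(\tau_i)}\le(Q|D|)^{1/2}\|u\|_{L^2(D)}$, you use $\|c\|_1\le\|c\|_2\sqrt{mQ}$ together with $m\lesssim h^{-d}$; the latter count requires quasi-uniformity of the mesh (regularity $\rho_i/h_i\ge\delta$ alone does not prevent very small elements), though the paper implicitly makes the same assumption elsewhere (e.g.\ Lemma~\ref{lem:phinorm} uses $|\tau_i|\ge V_d(\delta h/2)^d$), so this is consistent with the paper's framework.
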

\begin{proof}
Let $v_1 := \sum_{i=1}^m \sum_{q=1}^Q c_{iq} \psi_{i,q}$ and $v_2 := \sum_{i=1}^m \sum_{q=1}^Q c_{iq} \psi_{i,q}^{\mathrm{loc}}$ with $c_{iq} = \int_D u \phi_{i,q}$. Estimation~\eqref{eqn:Herror2} gives that 
\begin{equation}\label{eqn:localized_part1}
	\|u - v_1\|_H \le \frac{C_p h^k}{\sqrt{a_{\min}}} \|f\|_{L^2(D)}.
\end{equation}
Using the Cauchy inequality, we have
\begin{equation*}
	\|v_1 - v_2\|_H \le \max_{i,q} \|\psi_{i,q} - \psi_{i,q}^{\mathrm{loc}}\|_H \sum_{i=1}^m \sum_{q=1}^Q |c_{iq}| \le \max_{i,q} \|\psi_{i,q} - \psi_{i,q}^{\mathrm{loc}}\|_H  \sum_{i=1}^m Q^{1/2} (\sum_{q=1}^Q |c_{iq}|^2)^{1/2}.
\end{equation*}
Thanks to the orthogonality of $\{\phi_{i,q}\}_{q=1}^Q$~\eqref{eqn:phinormalize}, we have $|\tau_i|^{-1/2} (\sum_{q=1}^Q |c_{iq}|^2)^{1/2} \le \|u\|_{L^2(\tau_i)}$. Then, we obtain
\begin{equation*}
	\|v_1 - v_2\|_H \le \max_{i,q} \|\psi_{i,q} - \psi_{i,q}^{\mathrm{loc}}\|_H Q^{1/2} \sum_{i=1}^m|\tau_i|^{1/2} \|u\|_{L^2(\tau_i)} \le \max_{i,q} \|\psi_{i,q} - \psi_{i,q}^{\mathrm{loc}}\|_H (Q |D|)^{1/2} \|u\|_{L^2(D)}.
\end{equation*}
Using the energy estimation $\|u\|_{L^2(D)} \le C_e \|f\|_{L^2(D)}$ and Theorem~\ref{thm:localization}, we obtain
\begin{equation}\label{eqn:localized_part2}
	\|v_1 - v_2\|_H \le C_3 C_e Q^{1/2} h^{-\frac{d}{2}-k} \exp(- \frac{r-2h}{2 l h}) \|f\|_{L^2(D)}.
\end{equation}
Combining Eqn.~\eqref{eqn:localized_part1} and \eqref{eqn:localized_part2} together, we conclude the proof.
\end{proof}

By applying the Aubin--Nistche duality argument, we can get the following corollary.
\begin{corollary}\label{col:localizedOC}
Let $\psi_{i,q}^{\mathrm{loc}}$ be the localized basis functions defined in Eqn.~\eqref{eqn:localVar}. Then, for $ r \ge (d+4k) l h \log(1/h) + 2 (1 + l \log C_4) h$, we have
\begin{equation}\label{eqn:localizedOC}
	\|\CalK - \CalP_{\Psi^{\mathrm{loc}}}^{(H)} \CalK\| \le \frac{4 C_p^2}{a_{\min}} h^{2k},
\end{equation}
where all the constants are the same as those defined in Theorem~\ref{thm:localizedMsFEM}.
\end{corollary}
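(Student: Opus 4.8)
The plan is to derive Corollary~\ref{col:localizedOC} from Theorem~\ref{thm:localizedMsFEM} in exactly the same way that item~2 of Theorem~\ref{thm:conditioning1} was derived from item~1: an Aubin--Nitsche duality argument, but now using the localized space $\Psi^{\mathrm{loc}}$ in place of $\Psi$. First I would observe that by item~3 of Theorem~\ref{thm:conditioning1} (applied conceptually, not verbatim, since $\Psi^{\mathrm{loc}}$ need not equal $\CalK\Phi$), it suffices to show $\|\CalK f - \CalP_{\Psi^{\mathrm{loc}}}^{(H)} \CalK f\|_{L^2(D)} \le \frac{4 C_p^2}{a_{\min}} h^{2k} \|f\|_{L^2(D)}$ for all $f \in L^2(D)$, and then take the supremum over $\|f\|_{L^2(D)} = 1$; the self-adjointness of $\CalP_{\Psi^{\mathrm{loc}}}^{(H)}\CalK$ follows from the same computation as in Theorem~\ref{thm:conditioning1}(3) since that argument only used that $\CalP_{\Psi^{\mathrm{loc}}}^{(H)}$ is an $H$-orthogonal projection.

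The key steps, in order: (i) Fix $f \in L^2(D)$, set $u = \CalK f$, and let $u^{\mathrm{loc}} := \CalP_{\Psi^{\mathrm{loc}}}^{(H)} u$. Introduce the dual solution $w := \CalK(u - u^{\mathrm{loc}}) \in H$, so that, using Eqn.~\eqref{eqn:Hinner} and the fact that $\CalP_{\Psi^{\mathrm{loc}}}^{(H)}$ is the $H$-orthogonal projection onto $\Psi^{\mathrm{loc}}$,
\begin{equation*}
\|u - u^{\mathrm{loc}}\|_{L^2(D)}^2 = (u - u^{\mathrm{loc}}, u - u^{\mathrm{loc}}) = (u - u^{\mathrm{loc}}, w)_H = (u - u^{\mathrm{loc}}, w - \CalP_{\Psi^{\mathrm{loc}}}^{(H)} w)_H,
\end{equation*}
where the last equality uses $u - u^{\mathrm{loc}} \perp_H \Psi^{\mathrm{loc}}$. (ii) Apply Cauchy--Schwarz in the $H$ inner product to bound this by $\|u - u^{\mathrm{loc}}\|_H \, \|w - \CalP_{\Psi^{\mathrm{loc}}}^{(H)} w\|_H$. (iii) Bound each factor by Theorem~\ref{thm:localizedMsFEM}: since $u = \CalK f$ with $\|f\|_{L^2}$ given, $\|u - u^{\mathrm{loc}}\|_H \le \inf_{v \in \Psi^{\mathrm{loc}}}\|u - v\|_H + (\text{projection is the infimizer in }H) = \inf_{v}\|u-v\|_{H} \le \frac{2 C_p}{\sqrt{a_{\min}}} h^k \|f\|_{L^2(D)}$; and since $w = \CalK(u - u^{\mathrm{loc}})$ with right-hand side $u - u^{\mathrm{loc}} \in L^2(D)$, the same theorem gives $\|w - \CalP_{\Psi^{\mathrm{loc}}}^{(H)} w\|_H \le \frac{2 C_p}{\sqrt{a_{\min}}} h^k \|u - u^{\mathrm{loc}}\|_{L^2(D)}$. (iv) Combine: $\|u - u^{\mathrm{loc}}\|_{L^2}^2 \le \frac{4 C_p^2}{a_{\min}} h^{2k} \|f\|_{L^2} \|u - u^{\mathrm{loc}}\|_{L^2}$, divide through, and take the supremum over $f$.

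The only subtlety — and the one place I would be careful rather than terse — is step (iii): Theorem~\ref{thm:localizedMsFEM} is stated as a bound on $\inf_{v \in \Psi^{\mathrm{loc}}}\|u-v\|_{H(D)}$, and since $\CalP_{\Psi^{\mathrm{loc}}}^{(H)}$ is precisely the $H$-orthogonal projection onto $\Psi^{\mathrm{loc}}$, we have $\|u - \CalP_{\Psi^{\mathrm{loc}}}^{(H)}u\|_H = \inf_{v \in \Psi^{\mathrm{loc}}}\|u - v\|_H$, so the theorem applies directly to $\|u - u^{\mathrm{loc}}\|_H$, and likewise to the dual quantity $\|w - \CalP_{\Psi^{\mathrm{loc}}}^{(H)}w\|_H$ since $w$ is again of the form $\CalK(\text{something in }L^2)$. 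One must also check the constant $C_e$ in Theorem~\ref{thm:localizedMsFEM} does not re-enter here: it does not, because we invoke only the conclusion~\eqref{eqn:localizedMsFEM}, whose right-hand side involves only $C_p$ and $a_{\min}$, under the stated condition $r \ge (d+4k)lh\log(1/h) + 2(1 + l\log C_4)h$, which is exactly the hypothesis of the corollary. Thus no genuine obstacle arises; the content is entirely in Theorem~\ref{thm:localizedMsFEM}, and the corollary is the standard "square the energy-norm rate via duality" step. I would close by remarking that this reproduces the optimal compression rate $h^{2k} \sim \lambda_n(\CalK)$ announced in Eqn.~\eqref{intro:approximationerrorL2local}.
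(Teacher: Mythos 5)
Your proposal is correct and is exactly the argument the paper intends: the paper offers no written proof beyond the remark that the corollary follows from Theorem~\ref{thm:localizedMsFEM} "by applying the Aubin--Nistche duality argument," and your steps (dual solution $w=\CalK(u-u^{\mathrm{loc}})$, Galerkin orthogonality, Cauchy--Schwarz, two applications of the energy-norm estimate) reproduce verbatim the duality argument used for item~2 of Theorem~\ref{thm:conditioning1}, now with $\Psi^{\mathrm{loc}}$ in place of $\Psi$. Your observation that $\|u-\CalP_{\Psi^{\mathrm{loc}}}^{(H)}u\|_H$ equals the infimum appearing in Theorem~\ref{thm:localizedMsFEM} is the only detail needing care, and you handle it correctly.
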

Corollary~\ref{col:localizedOC} shows that we can compress the symmetric positive semidefinite operator $\CalK$ with the optimal rate $h^{2 k}$ and with the nearly optimal localized basis (with support size of order $h \log(1/h)$).

\begin{remark}
All the results and proofs presented above can be carried over to other homogeneous boundary conditions. Given a specific homogeneous boundary condition, one only needs to modify the proof of Lemma~\ref{lem:phinorm}. Specifically, when the patch $\tau_i$ intersects with the boundary of $D$, the constructed function $\zeta_{i,q}$ should honor the same boundary condition on $\partial D$. The scaling argument in the proof of Lemma~\ref{lem:phinorm} still works for other homogeneous boundary conditions.
\end{remark}

\section{Numerical Examples}
\label{sec:numericalExps}
In this section, we present several numerical results to support the theoretical findings and to show how the sparse operator compression is utilized in higher-order elliptic operators.  In Section~\ref{subsec:1dexponential}, we apply our method to compress the Mat\'{e}rn covariance function~\eqref{intro:matern} with $\nu = 1/2$. We show that our method is able to achieve the optimal compression error with nearly optimally localized basis functions, which means that we are able to get optimality on both ends of the accuracy--sparsity trade-off in the sparse PCA. In Section~\ref{subsec:1dbiharmonic}, we apply our method to a 1D fourth-order elliptic equation with the homogeneous Dirichlet boundary condition and show that our basis functions, when used as multiscale finite element basis, can achieve the optimal $h^2$ convergence rate in the energy norm. In Section~\ref{subsec:2dbiharmonic}, we apply our method to a 2D fourth-order elliptic equation and show that the energy-minimizing basis functions decays exponentially fast away from its associated patch.

\subsection{The compression of a Mat\'{e}rn covariance kernel}
\label{subsec:1dexponential}
In spatial statistics, geostatistics, machine learning and image analysis, the Mat\'{e}rn covariance~\cite{matern2013spatial} is used to model random fields with smooth samples; see, e.g.,~\cite{stein2012interpolation, guttorp2006studies, gneiting2012matern}. The Mat\'{e}rn covariance between two points $x, y \in D \subset \R^d$ is given by 
\vspace{-0.05in}
\begin{equation}\label{intro:matern}
	K_{\nu}(x, y) = \sigma^2 \frac{2^{1-\nu}}{\Gamma(\nu)} \left( \sqrt{2\nu} \frac{|x - y|}{\rho} \right)^{\nu} K_{\nu}\left(\sqrt{2\nu} \frac{|x - y|}{\rho}\right),
\end{equation}
\vspace{-0.05in}
where $\Gamma$ is the gamma function, $K_{\nu}$ is the modified Bessel function of the second kind, and $\rho$ and $\nu$ are nonnegative parameters of the covariance. Its Fourier transform is given by 
\begin{equation}\label{eqn:maternFourier}
	\hat{k} (\omega) = c_{\nu, \lambda} \sigma^2 \left( \frac{2 \nu}{\lambda^2} + |\omega|^2 \right)^{-(\nu + d/2)}, \quad c_{\nu, \lambda} := \frac{2^d \pi^{d/2} \Gamma(\nu + d/2) (2 \nu)^{\nu}}{\Gamma(\nu) \lambda^{2\nu}},
\end{equation}
where $\hat{f}(\omega)$ is the Fourier transform of $f$. For both sampling from the random fields and performing basic computations like marginalization and conditioning, we need to compress the Mat\'{e}rn covariance operator $\CalK: L^2(D) \to L^2(D)$, which is defined through the Hilbert--Schmidt operator with kernel $K_{\nu}(x, y)$, by a rank-$n$ covariance operator:
\begin{equation}\label{intro:PCA}
	E_{\mathrm{oc}}(\mathit{\Psi}; \CalK) := \min_{K_n \in \R^{n\times n},~ K_n \succeq 0} \|\CalK - \mathit{\Psi} K_n \mathit{\Psi}^T\|_2,
\end{equation}
where $\mathit{\Psi} =[\psi_1, \ldots, \psi_n]$ spans the range space of the approximate operator $\mathit{\Psi} K_n \mathit{\Psi}^T$. Recent study ~\cite{lindgren2011explicit, bolin2011spatial} shows that the Mat\'{e}rn covariance and the elliptic operators are closely connected.
With proper homogeneous boundary conditions, the Mat\'{e}rn covariance operator with $\nu + d/2$ being an integer is the solution operator of an elliptic operator of order $2\nu + d$. For example, 
the Mat\'{e}rn covariance operator with $\nu = 1/2$ is the solution operator of a second-order elliptic operator $(2 l \sigma^2)^{-1} \left( 1 - \rho^2 \frac{\rd^2}{\rd x^2} \right)$ when the physical dimension $d = 1$, and is the solution operator of a fourth-order elliptic operator $(8 \pi \rho^3 \sigma^2)^{-1} \left( 1 - 2 \rho^2 \Delta + \rho^4 \Delta^2\right)$ when $d = 3$. 
%the Mat\'{e}rn covariance operator with $\nu = 1$ is the solution operator of the fourth-order elliptic operator $(4 \pi \rho^2 \sigma^2)^{-1} \left( 1 - 2 \rho^2 \Delta + \rho^4 \Delta^2\right)$ when $d = 2$. 

Based on Eqn.~\eqref{def:Theta} and \eqref{eqn:psi1}, we can also compute the exponentially decaying basis functions from the covariance operator $\CalK$. In this example, we apply our method to compress the following exponential kernel
\begin{equation}\label{eqn:expkernel}
	K(x,y) = \exp(-|x-y|) \qquad x, y \in [0,1],
\end{equation}
which is exactly the Mat\'{e}rn covariance~\eqref{intro:matern} with $\nu = 1/2$, $\sigma = 1$ and $\rho = 1$. This problem has been studied by different groups; see, e.g.,~\cite{gittelson2012representation, d2013coarse, hou_LocalModes_2014, bachmayr2016representations}. \referee{We remark that since the Mat\'{e}rn covariance function corresponds to the solution operator of an elliptic PDE with constant coefficient, one can compress the Mat\'{e}rn covariance kernel by using a piecewise linear polynomial or wavelets with optimal locality and accuracy. It is not necessary to use the exponential decaying basis to perform the operator compression. We use this example to illustrate that our method can be also applied to compress a general kernel function.}
%As far as we know, our result is the first result that achieves optimal compression error and nearly optimal localization with theoretical guarantee. 

We partition the interval $[0,1]$ uniformly into $m=2^6$ patches and follow our strategy to construct basis functions. By the Fourier transform, we know that it is associated with the second-order elliptic operator $\frac{1}{2}\left( 1 - \frac{\rd^2}{\rd x^2} \right)$. Therefore, we take $\Phi$ as piecewise constant functions and then compute $\mathit{\Psi}$ by Eqn.~\eqref{def:Theta} and \eqref{eqn:psi1}. In Figure~\ref{fig:exponential10}, we plot $\phi_{32}$ and $\psi_{32}$, which is associated with the patch $[1/2-h, 1/2]$. We can see that the basis function $\psi_{32}$ clearly has an exponential decay.
\begin{figure}[ht]
\centering
\includegraphics[width = 0.7\textwidth]{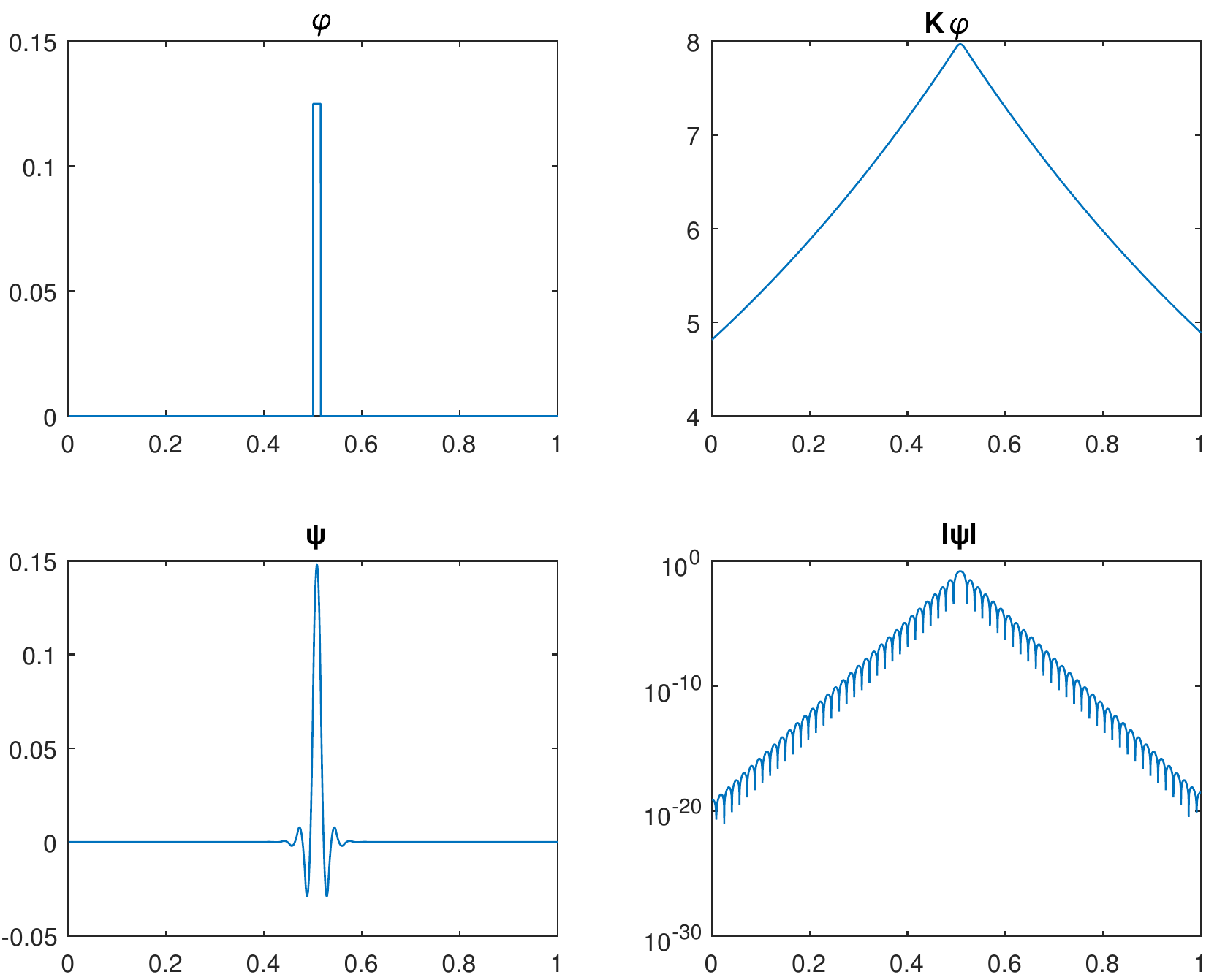}
\caption{Basis function associated with patch $[1/2-h, 1/2]$.}\label{fig:exponential10}
\end{figure}
We take $m = 2^{i}$ for $0\le i \le 7$ and compute the compression error $E(\mathit{\Psi}; \CalK)$. The result is shown in Figure~\ref{fig:experror_decay}. We can see that the exponentially decaying basis functions $\mathit{\Psi}$ have nearly the same compression rate as that of the eigendecomposition.
\begin{figure}[ht]
\centering
\includegraphics[width = 0.36\textwidth]{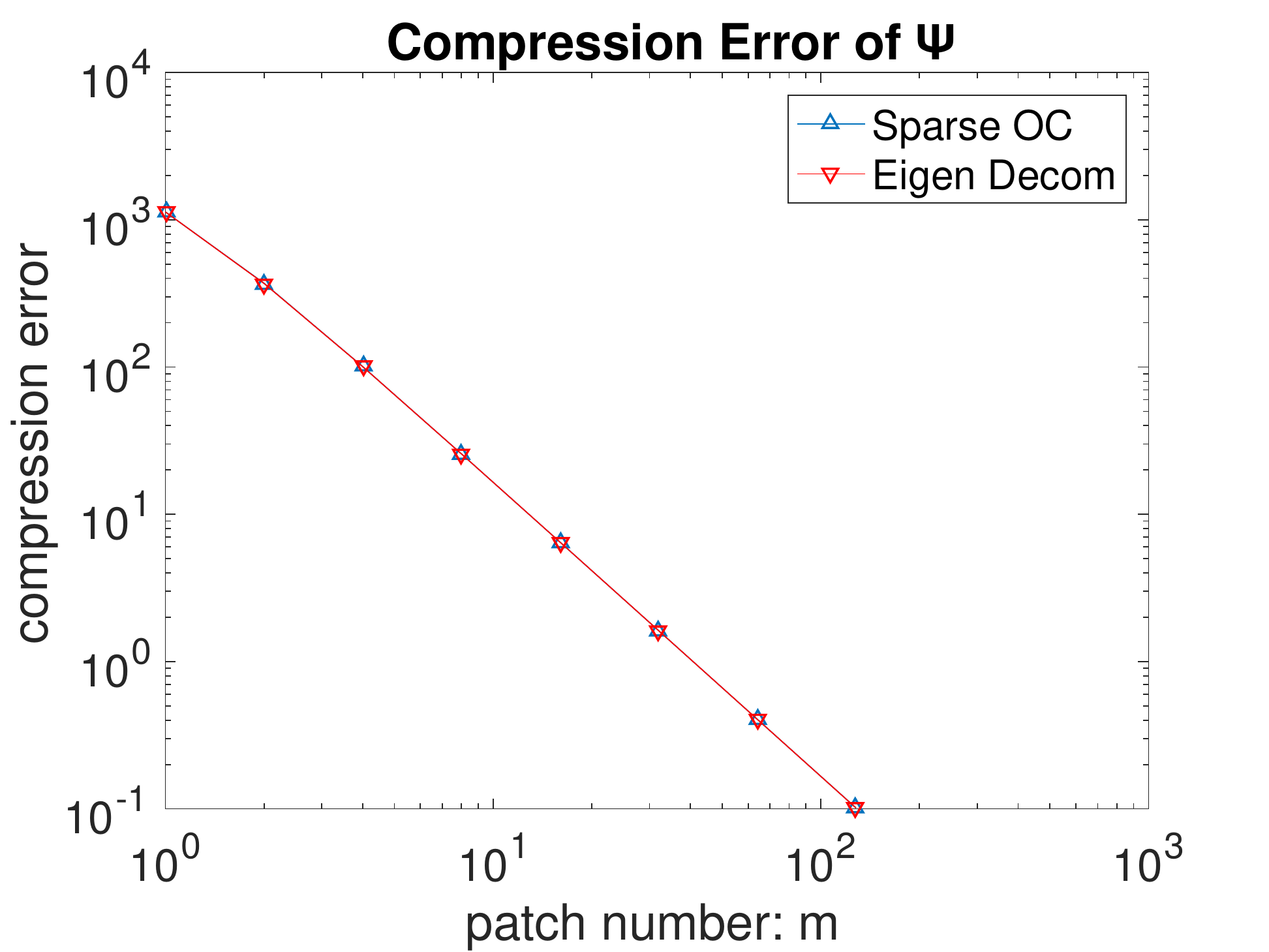}
\caption{Operator compression error $E(\mathit{\Psi}; \CalK)$~\eqref{intro:PCA} for the exponential kernel~\eqref{eqn:expkernel} with exponentially decaying basis functions $\mathit{\Psi}$. They have nearly same compression error as that given by the global eigenfunctions of $\CalK$.}\label{fig:experror_decay}
\end{figure}

One can easily verify that the exponential kernel~\eqref{eqn:expkernel} is the Green's function of the following second-order elliptic equation
\begin{equation}\label{eqn:expelliptic}
	 - \frac{1}{2} u''(x) + \frac{1}{2} u  = f(x), \qquad 0 < x < 1,
	u(0) - u'(0)=0, \quad  u(1) + u'(1) = 0,
\end{equation}
with boundary condition $u(0) - u'(0)=0, \quad  u(1) + u'(1) = 0$.
The associated energy norm is 
\begin{equation}\label{eqn:expenergynorm}
	\|u\|_{H(D)}^2 = \frac{1}{2}\left(u(0)^2 + u(1)^2 + \int_0^1 (u')^2 + \int_0^1 u^2 \right).
\end{equation}
Solving the localized variational problem~\eqref{eqn:localVarOtherBC}, we can get localized basis functions $\mathit{\Psi}^{\mathrm{loc}}$. With different sizes of the support $S_r$, we compute the compression error $E(\mathit{\Psi}^{\mathrm{loc}}; \CalK)$ for $m = 2^{i}$ ($0\le i \le 7$). The results are summarized in Figure~\ref{fig:experror_local}. In the left subfigure of Figure~\ref{fig:experror_local}, we take the support with size $C h$, for $C = 3, 5, 7, 9$ and $11$. In the right subfigure of Figure~\ref{fig:experror_local}, we take the support with size $C h \log_2(1/h)$, for $C = 2, 2.1$ and $2.4$. For a support of size $C h \log_2(1/h)$, it contains $\ceil{C \log_2(1/h)}$ patches, where $\ceil{C \log_2(1/h)}$ is the smallest integer of $C \log_2(1/h)$. We can see that the oversampling strategy with $r= c h$ does not give the optimal convergence rate , while the oversampling strategy with $r= c h \log_2(1/h)$ gives the optimal second-order convergence rate as guaranteed by Corollary~\ref{col:localizedOC}. For $m=2^7$ and $r = 2.4 h \log_2(1/h)$, the constructed localized basis functions achieves the same operator compression error as that using 128 eignefunctions. 
%We show several basis functions $\psi_i^{\mathrm{loc}}$ in Figure~\ref{fig:expbasis_local}. One can see that the basis functions on the boundary honor the Robin boundary conditions. 
\begin{figure}[ht]
\centering
\includegraphics[width = 0.45\textwidth]{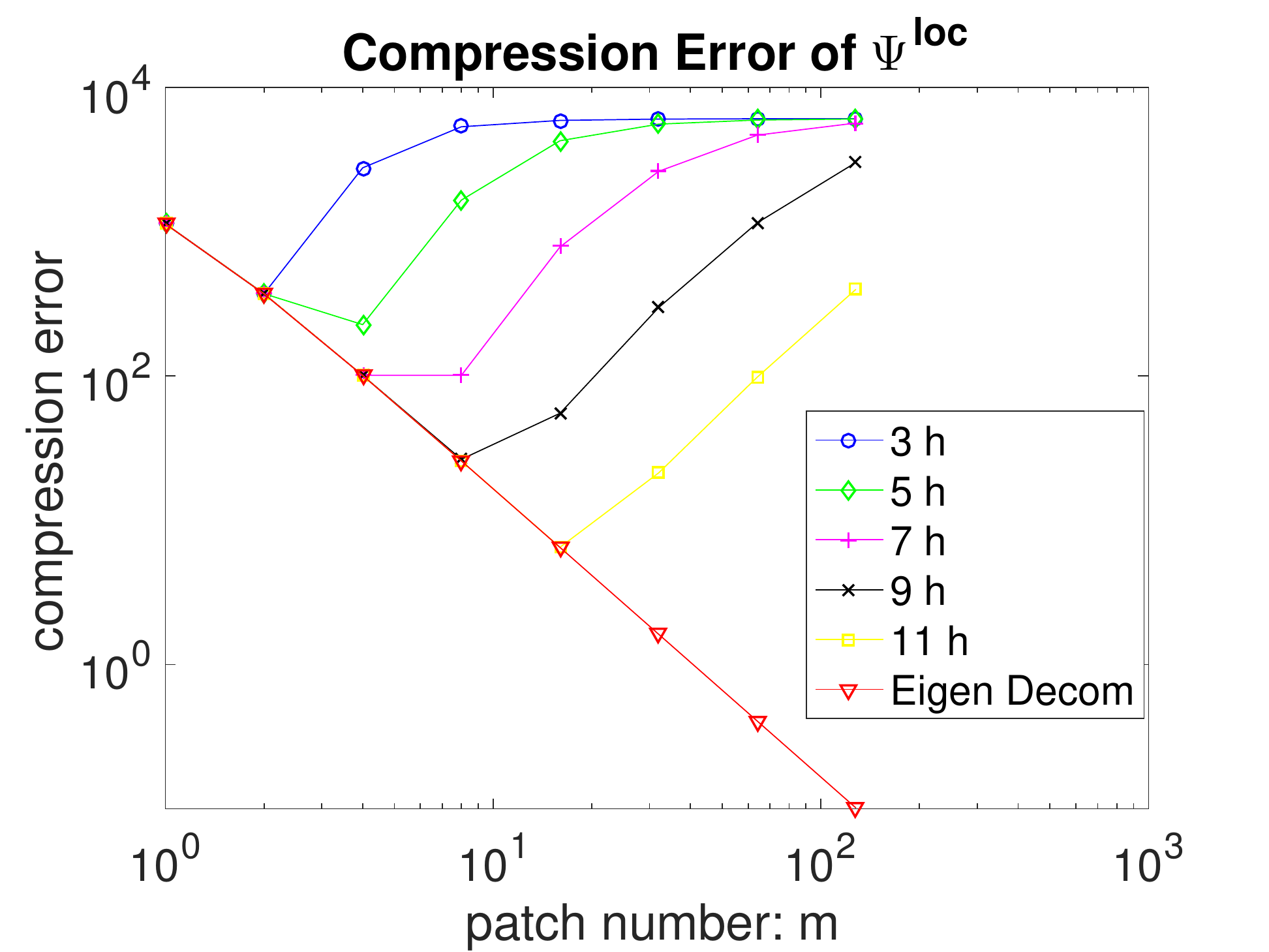}
\includegraphics[width = 0.45\textwidth]{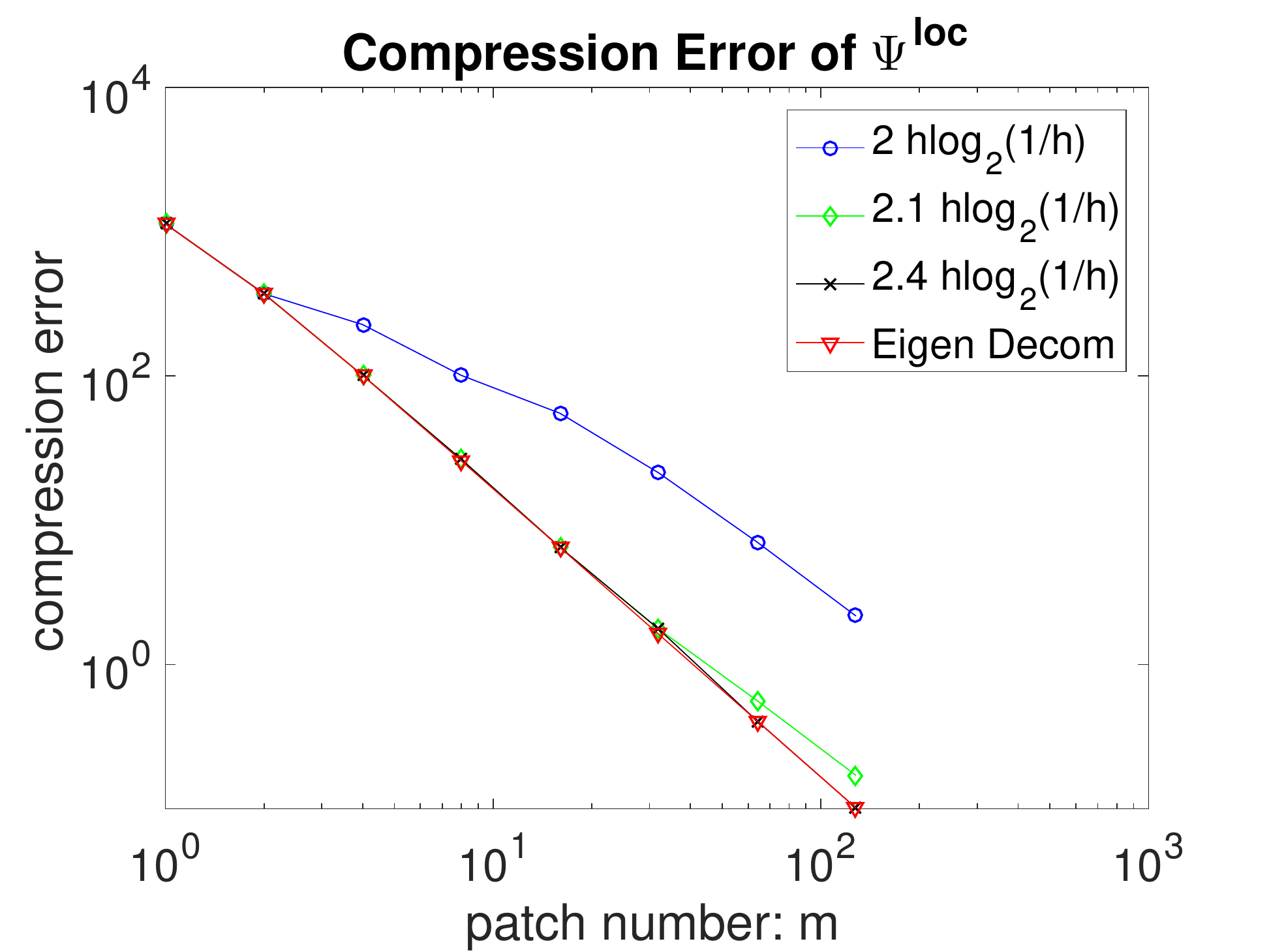}
\caption{Operator compression error $E(\mathit{\Psi}^{\mathrm{loc}}; \CalK)$~\eqref{intro:PCA} with basis functions $\mathit{\Psi}^{\mathrm{loc}}$. The oversampling strategy with $r = c h$ (left) does not work well, while the oversampling strategy with $r = c h \log_2(1/h)$ (right) gives the optimal second-order convergence rate.}\label{fig:experror_local}
\end{figure}
%\begin{figure}[ht]
%\centering
%\includegraphics[width = 0.4\textwidth]{exponential1d/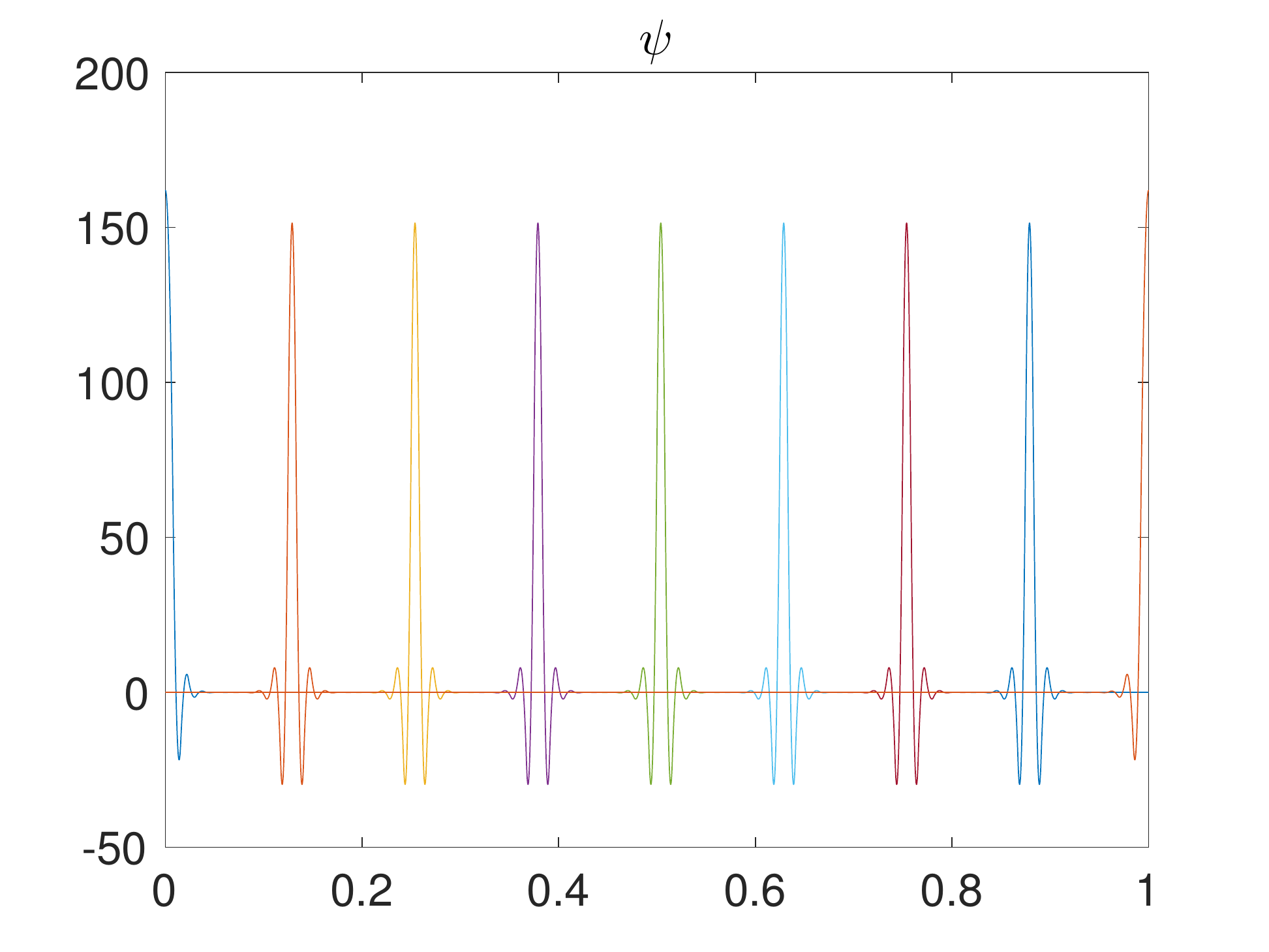}
%\includegraphics[width = 0.4\textwidth]{exponential1d/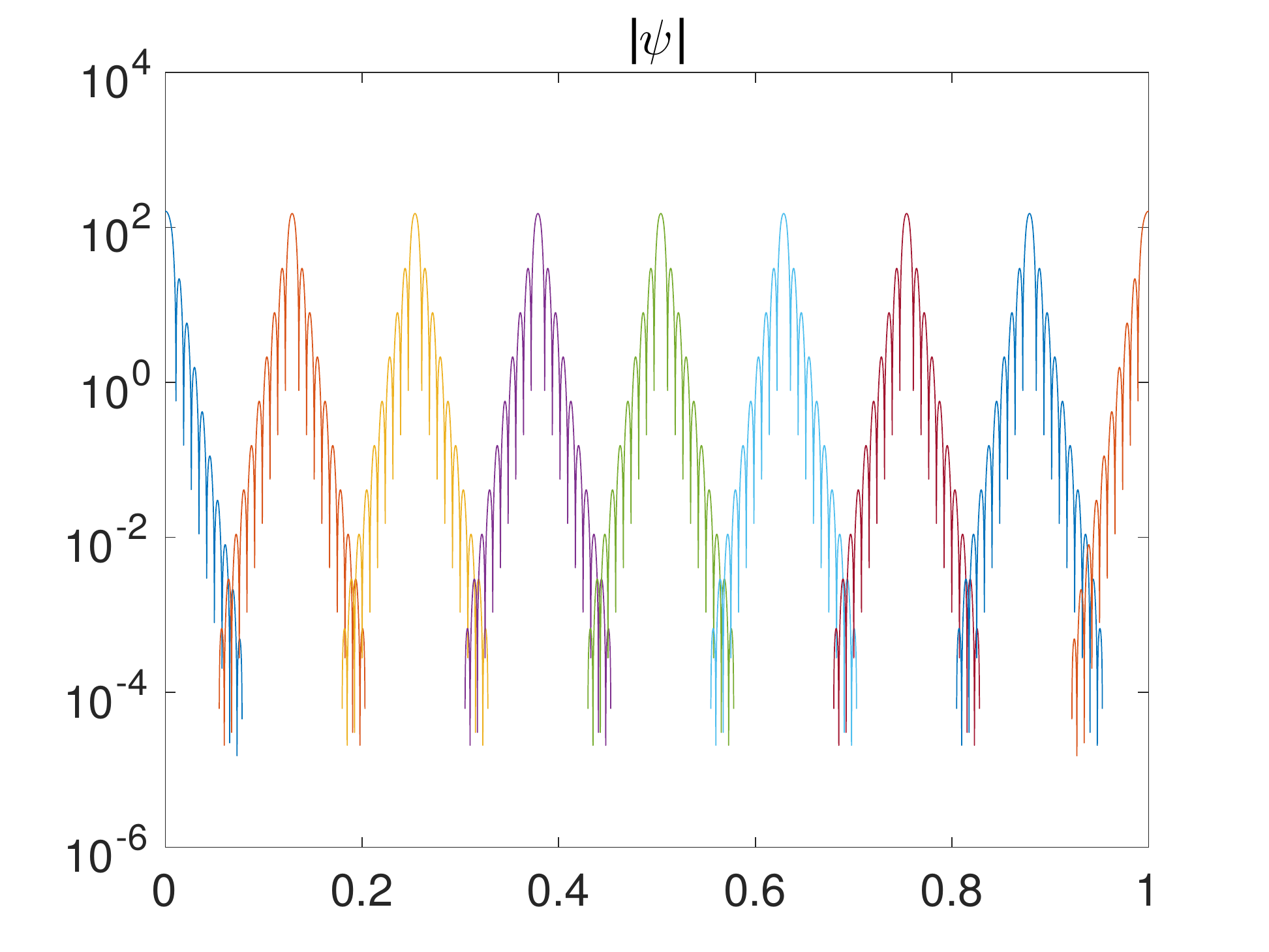}
%\caption{A few basis functions for the case $m=2^7$ and $r = 2.4 h \log_2(1/h)$.}\label{fig:expbasis_local}
%\end{figure}

\subsection{The 1D fourth-order elliptic operator}
\label{subsec:1dbiharmonic}
Consider the solution operator of the Euler-Bernoulli equation
\begin{equation}\label{eqn:1dbiharmonic}
\begin{split}
	\frac{\rd^2}{\rd x^2} \left( a(x) \frac{\rd^2 u}{\rd x^2} \right) = f(x),& \qquad 0 < x < 1, \\
	u(0) = u'(0)=0, \quad & u(1) = u'(1) = 0,
\end{split}
\end{equation}
which describes the deflection $u$ of a clamped beam subject to a transverse force $f\in L^2([0,1])$. The flexural rigidity $a(x)$ of the beam is modeled by
\begin{equation}\label{eqn:flexural}
	a(x) := 1 + \frac{1}{2} \sin\left(\sum_{k=1}^K k^{-\alpha} (\zeta_{1k} \sin(kx) + \zeta_{2k} \cos(kx))\right),
\end{equation}
where $\{\zeta_{1k}\}_{k=1}^K$ and $\{\zeta_{2k}\}_{k=1}^K$ are two independent random vectors with independent entries uniformly distributed in $[-1/2, 1/2]$. This oscillatory coefficient is also used in~\cite{hou_multiscale_1997, ming2006numerical, owhadi_polyharmonic_2014}, and has no scale separation. We choose $\alpha = 0$ and $K = 40$ in the numerical experiment. A sample coefficient is shown in Figure~\ref{fig:exp1_acoef}.
\begin{figure}[ht]
\centering
\includegraphics[width = 0.38\textwidth]{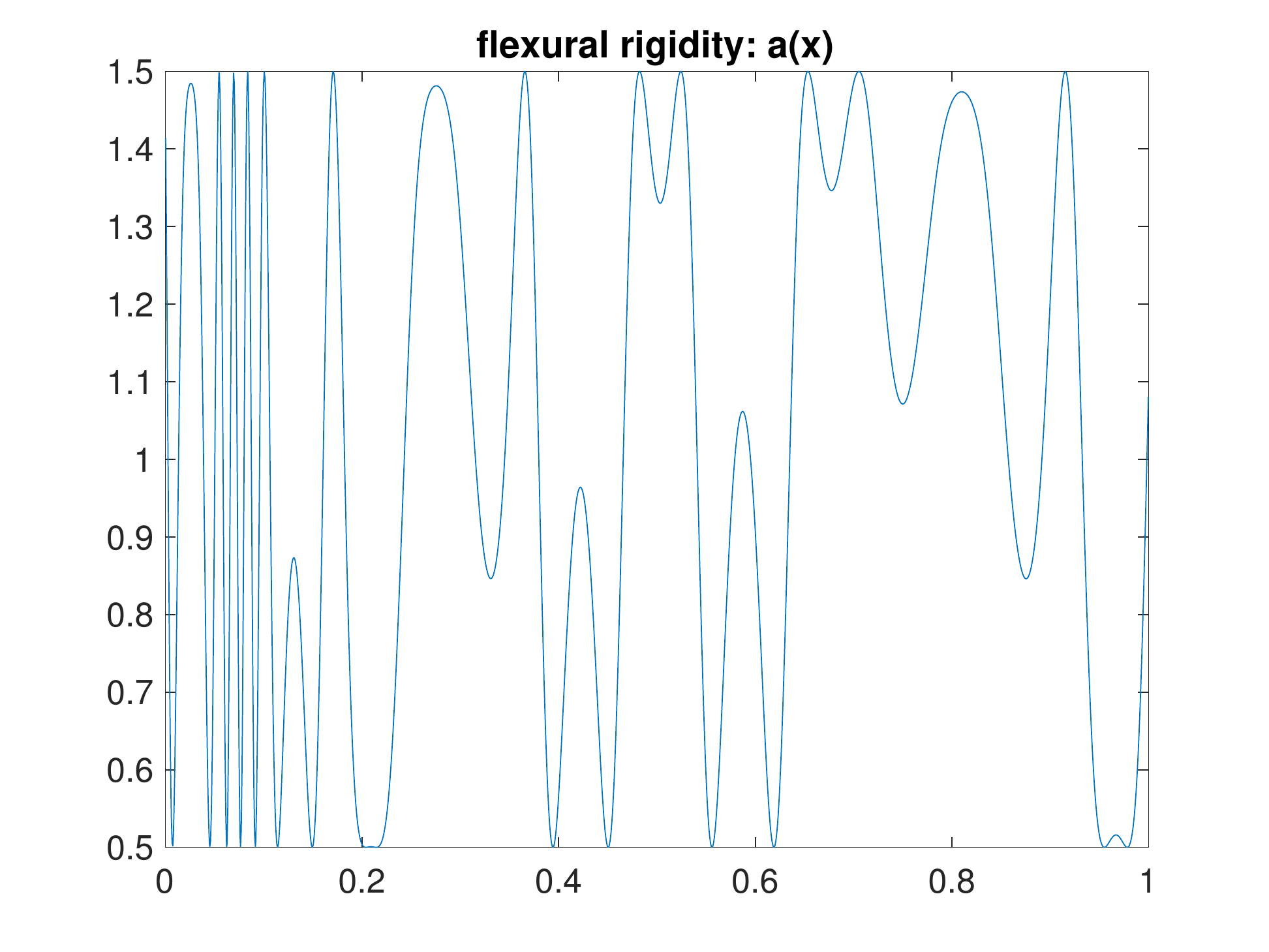}
\caption{Highly oscillatory flexural rigidity without scale separation.}\label{fig:exp1_acoef}
\end{figure}

We partition the physical space $[0,1]$ uniformly into $m = 2^6$ patches, where the $i$th patch $I_i = [(i-1) h, i h]$ with $h = 1/m$. In this fourth-order case, our theory requires the piecewise polynomial space $\Phi$ be the space of (discontinuous) piecewise linear functions, which has dimension $n = 2 m$. We have two $\phi$'s, denoted as $\phi_{i,1}$ and $\phi_{i,2}$, associated with the patch $I_i$. Solving the quadratic optimization problem~\eqref{eqn:psivariationalk2}, we obtain the exponentially decaying basis functions. We also have two $\psi$'s, denoted as $\psi_{i,1}$ and $\psi_{i,2}$, associated with the patch $I_i$. We plot $\phi_{i,1}$ and $\phi_{i,2}$ associated with the patch $I_{32} = [1/2-h, 1/2]$ in Figure~\ref{fig:1dbiharmonicpsi} A. In Figure~\ref{fig:1dbiharmonicpsi}(B-C), we plot the basis functions $\psi_{32,1}$ and $\psi_{32,2}$, which clearly show exponential decay. 

To demonstrate the necessity for $\Psi$ to contain all piecewise linear functions, in the third column of Figure~\ref{fig:1dbiharmonicpsi}, we also plot the basis functions associated the patch $I_{32}$ when $\Phi$ is the space of piecewise constant functions. In this case, we have only one $\phi$, denoted as $\phi_{i}$, associated with the patch $I_i$. In the third column of Figure~\ref{fig:1dbiharmonicpsi}(A) and (B), we plot $\phi_{32}$ and $\psi_{32}$. Solving the quadratic optimization problem~\eqref{eqn:psivariationalk2}, we obtain only one basis function $\psi$, denoted as $\psi_{i}$, associated with the patch $I_i$. In Figure~\ref{fig:1dbiharmonicpsi}(C), we plot the basis function $\psi_{32}$ in the third column. Note that $\psi_{32}$ also shows an exponential decay, but its decay rate is much smaller than that of $\psi_{32,1}$ and $\psi_{32,2}$. 
\begin{figure}[t!]
\centering
    \subfloat[$\phi_{32,1}, \phi_{32,2}$ for piecewise linear $\Phi$ and $\phi_{32}$ for piecewise constant $\Phi$]{
       \centering
        \includegraphics[width = 0.3\textwidth, height = 0.2\textwidth]{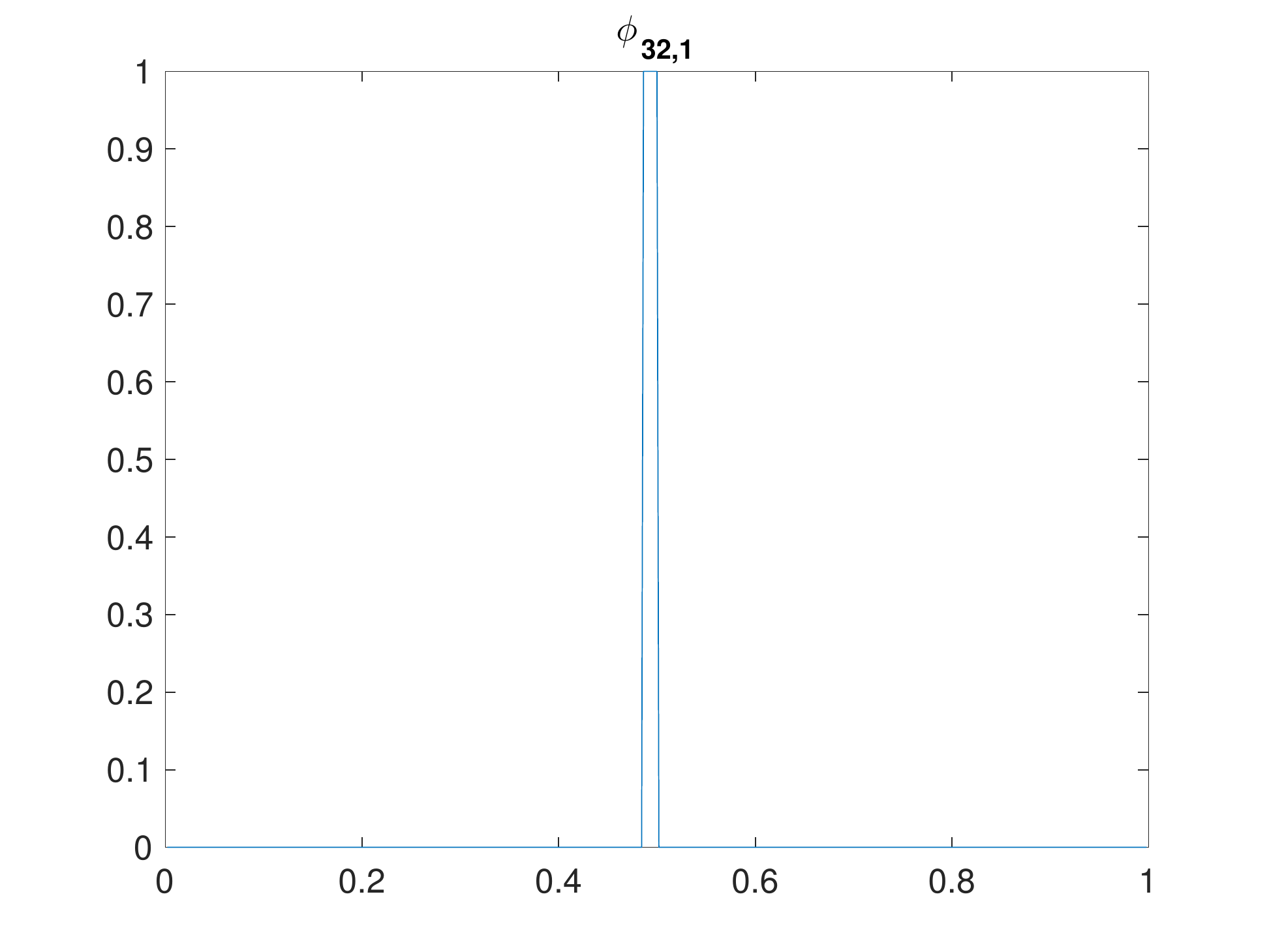}
        \includegraphics[width = 0.3\textwidth, height = 0.2\textwidth]{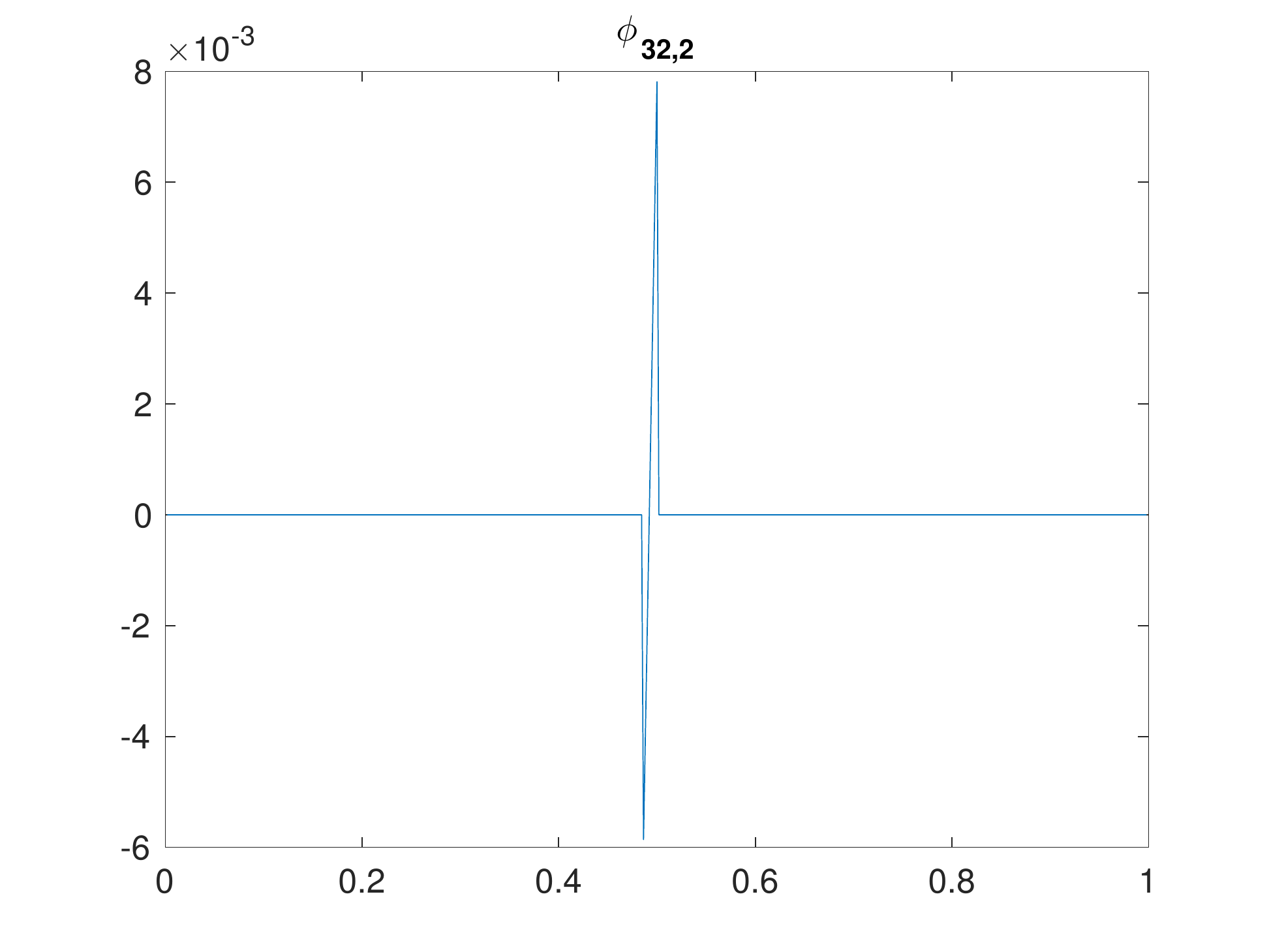}
        \includegraphics[width = 0.3\textwidth, height = 0.2\textwidth]{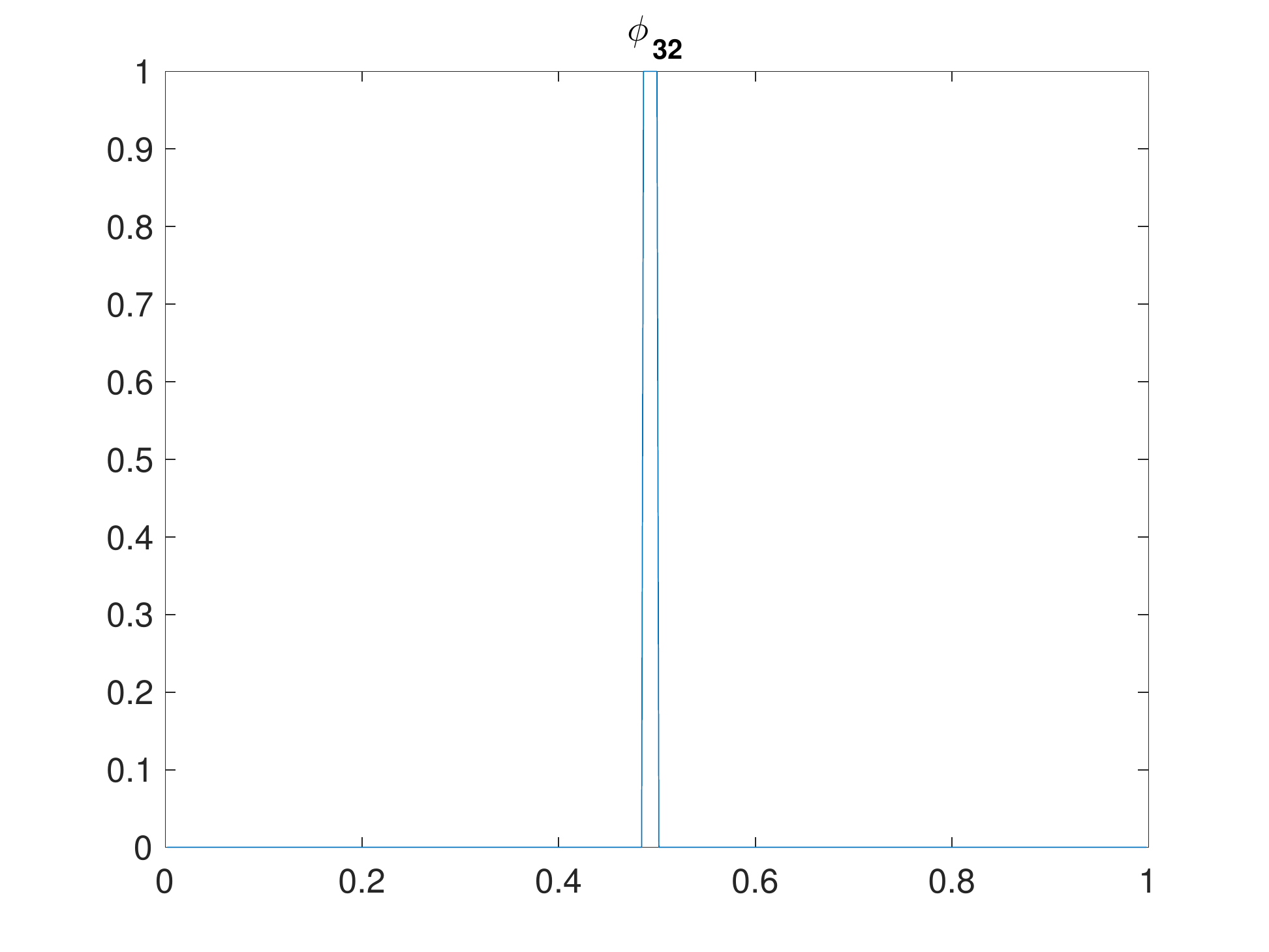}}
    
%    \subfloat[$\CalK \phi_{32,1}, \CalK \phi_{32,2}$ for piecewise %linear $\Phi$ and $\CalK \phi_{32}$ for piecewise constant $\Phi$]{
%        \centering
%%%        \includegraphics[width = 0.3\textwidth, height = 0.2\textwidth]{biharmonic1d/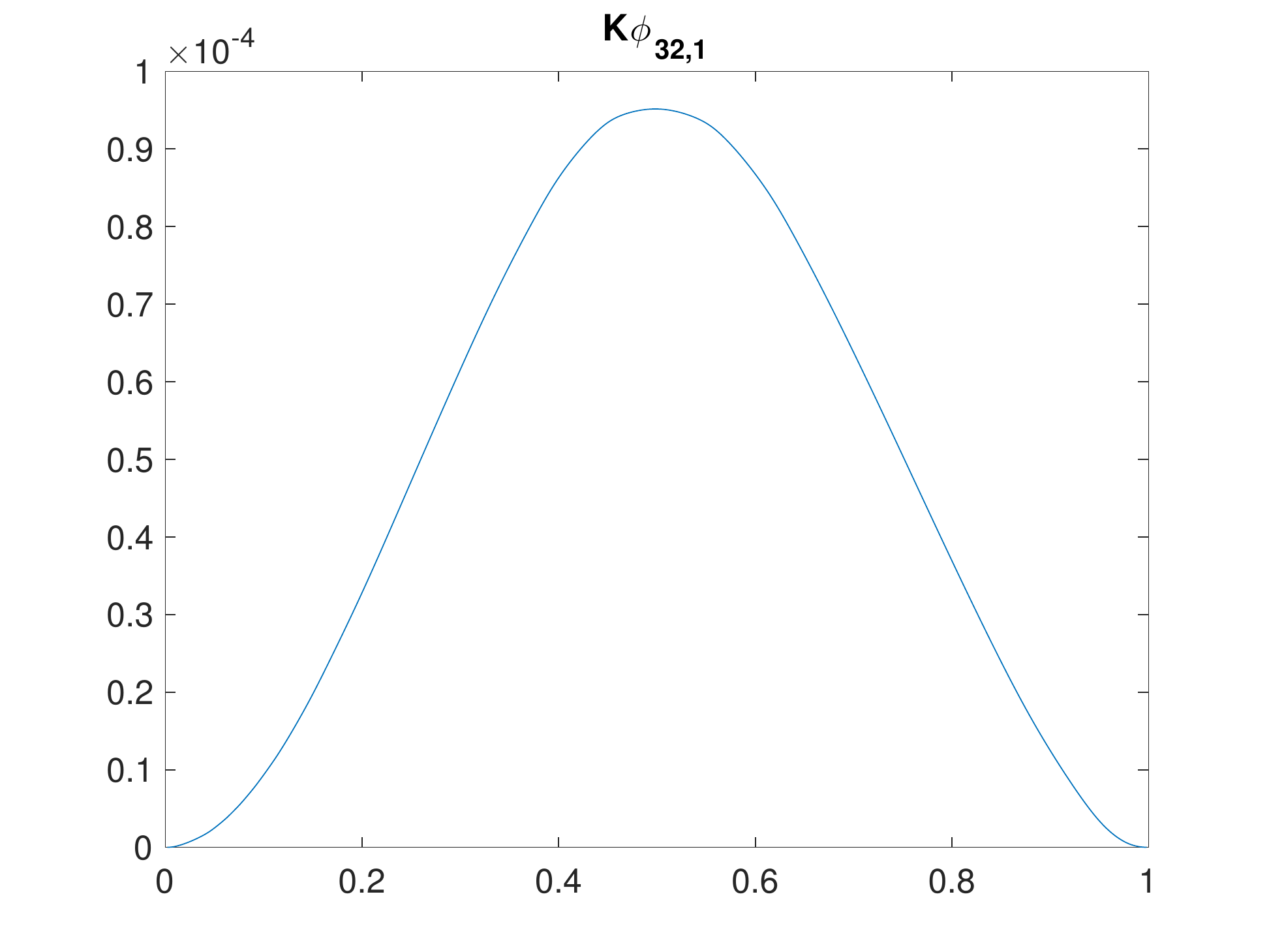}
%        \includegraphics[width = 0.3\textwidth, height = 0.2\textwidth]{biharmonic1d/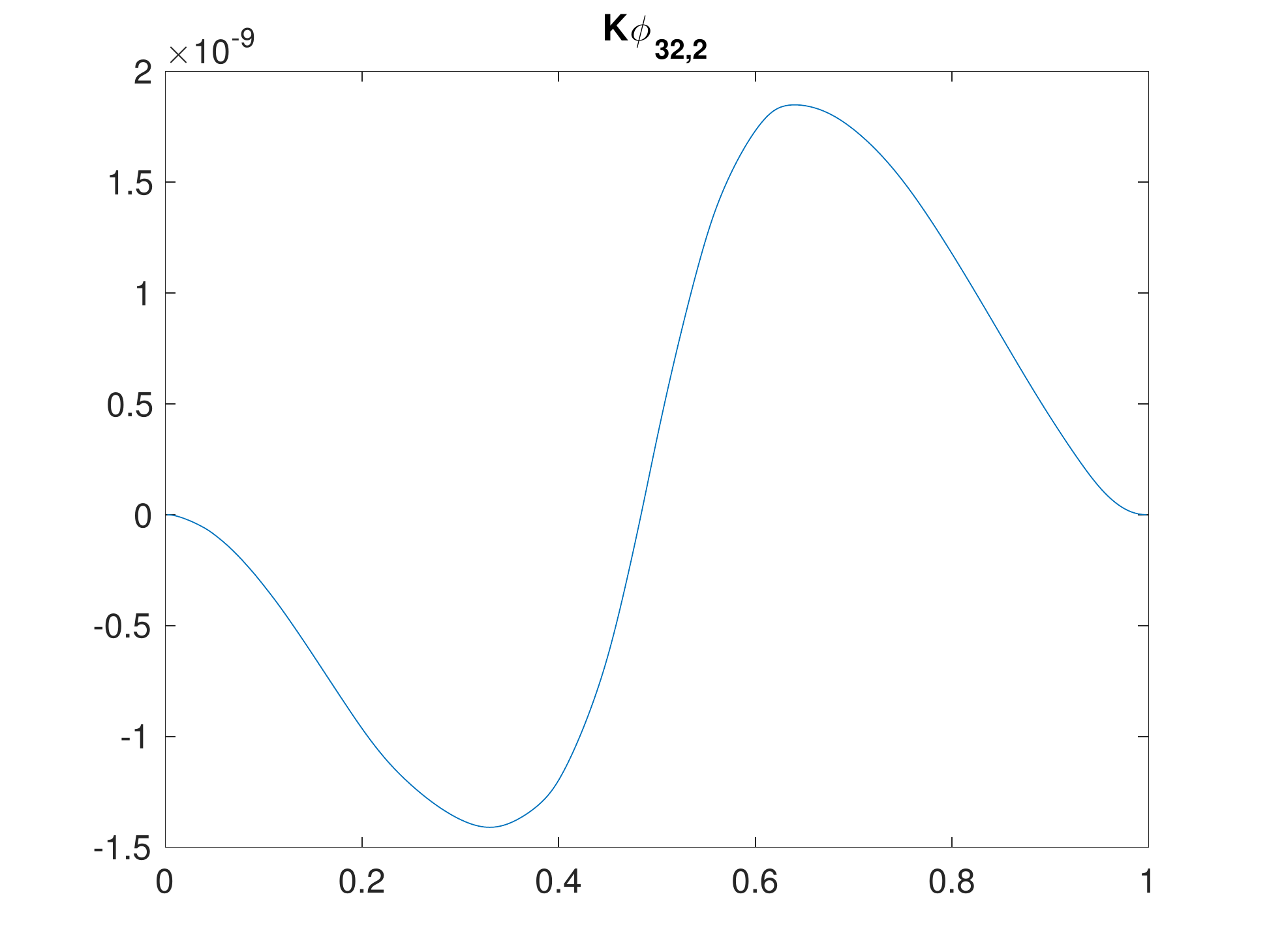}
%        \includegraphics[width = 0.3\textwidth, height = 0.2\textwidth]{biharmonic1d/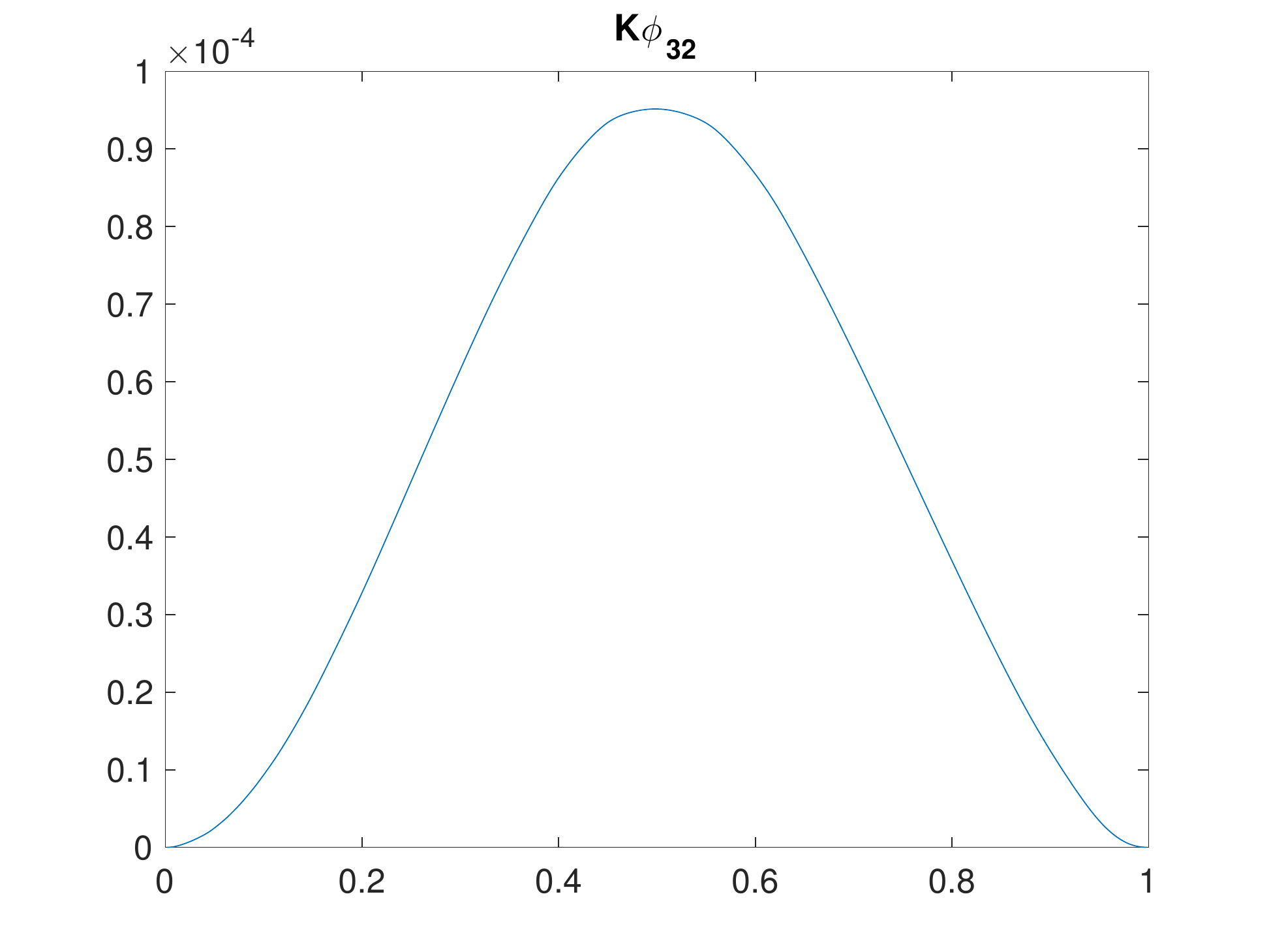}}
    
    \subfloat[$\psi_{32,1}, \psi_{32,2}$ for piecewise linear $\Phi$ and $\psi_{32}$ for piecewise constant $\Phi$]{
        \centering
        \includegraphics[width = 0.3\textwidth, height = 0.2\textwidth]{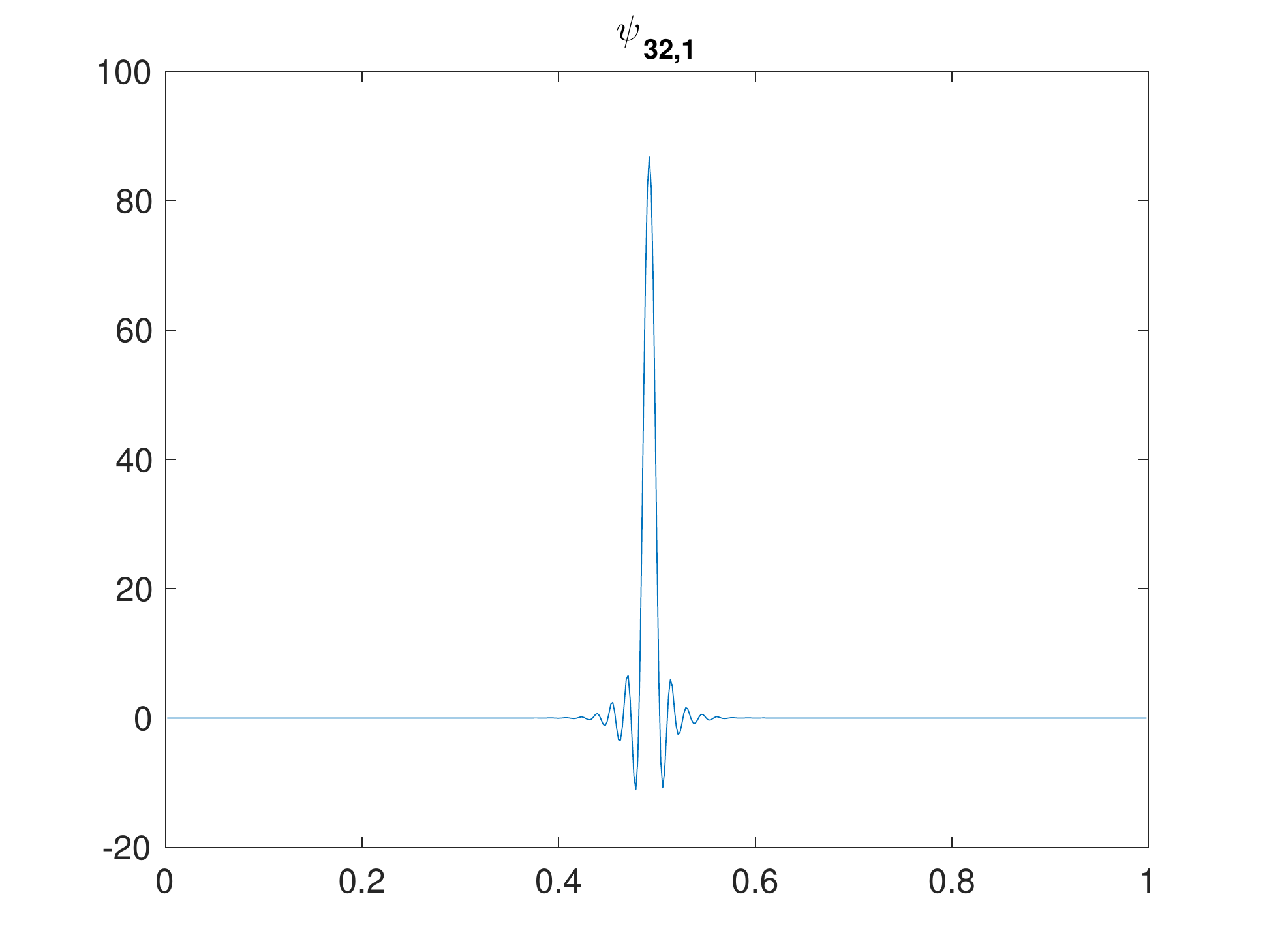}
        \includegraphics[width = 0.3\textwidth, height = 0.2\textwidth]{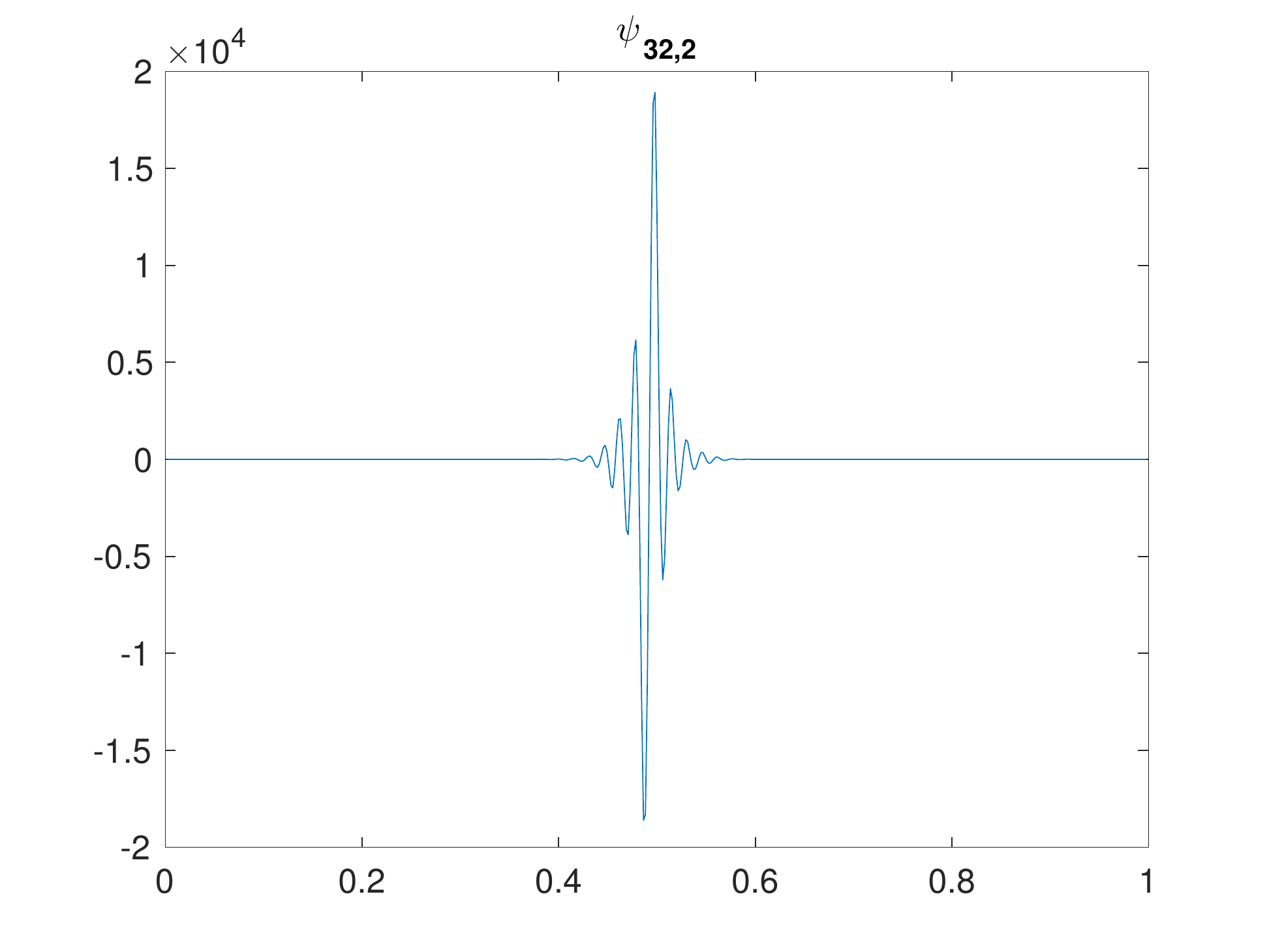}
        \includegraphics[width = 0.3\textwidth, height = 0.2\textwidth]{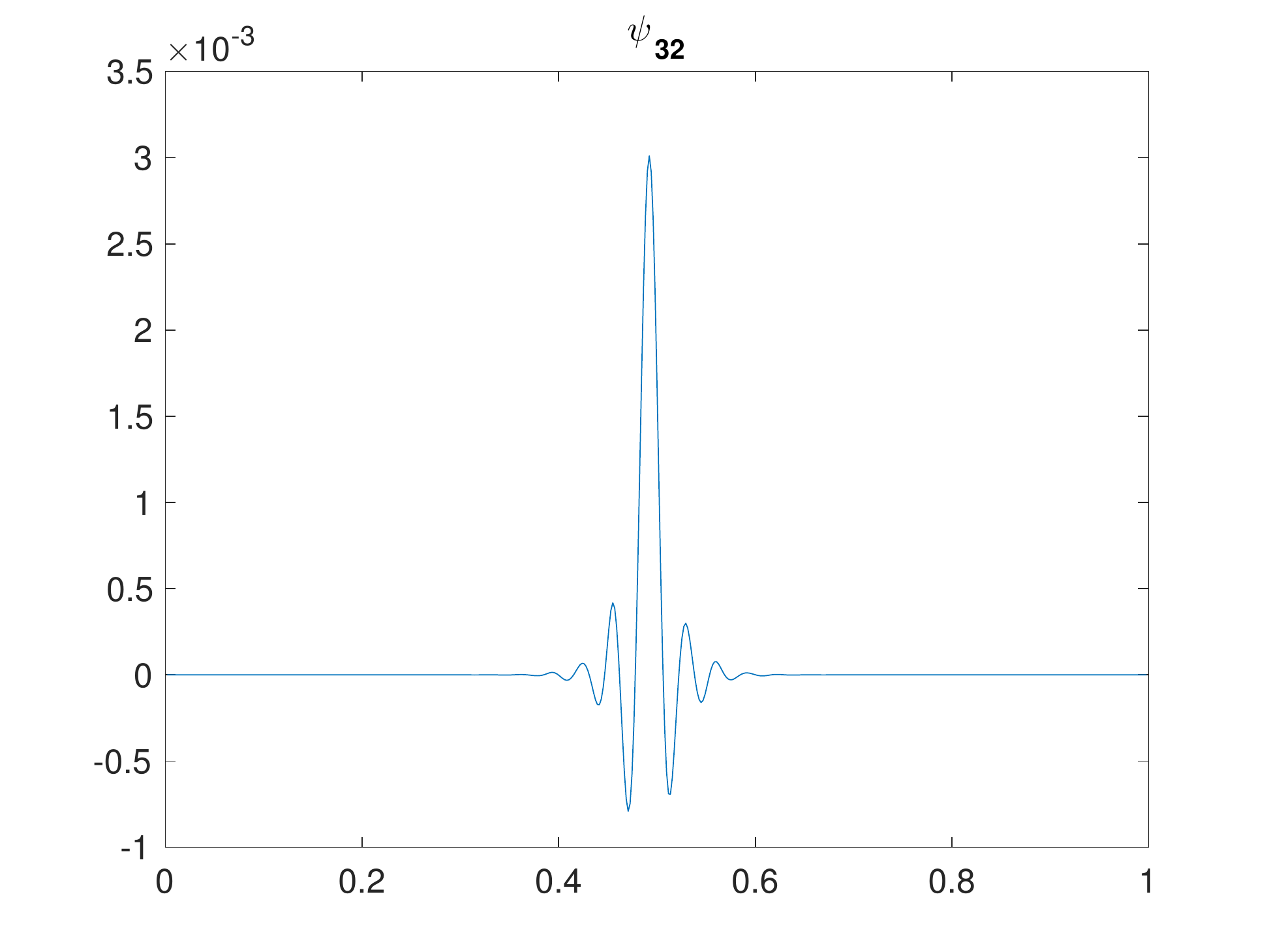}}
    
    \subfloat[In log-scale: $\psi_{32,1}, \psi_{32,2}$ for piecewise linear $\Phi$ and $\psi_{32}$ for piecewise constant $\Phi$]{
        \includegraphics[width = 0.3\textwidth, height = 0.2\textwidth]{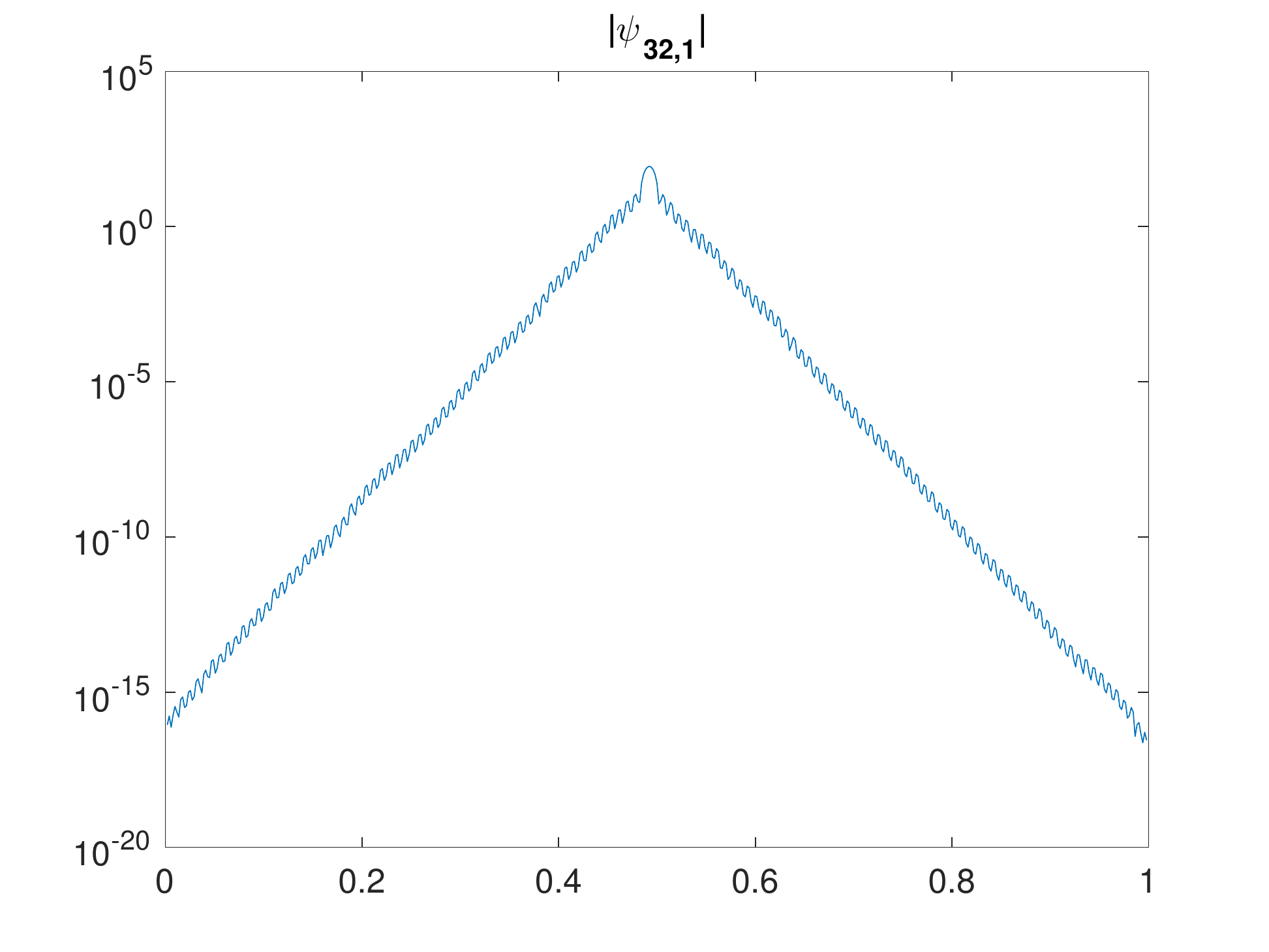}
        \includegraphics[width = 0.3\textwidth, height = 0.2\textwidth]{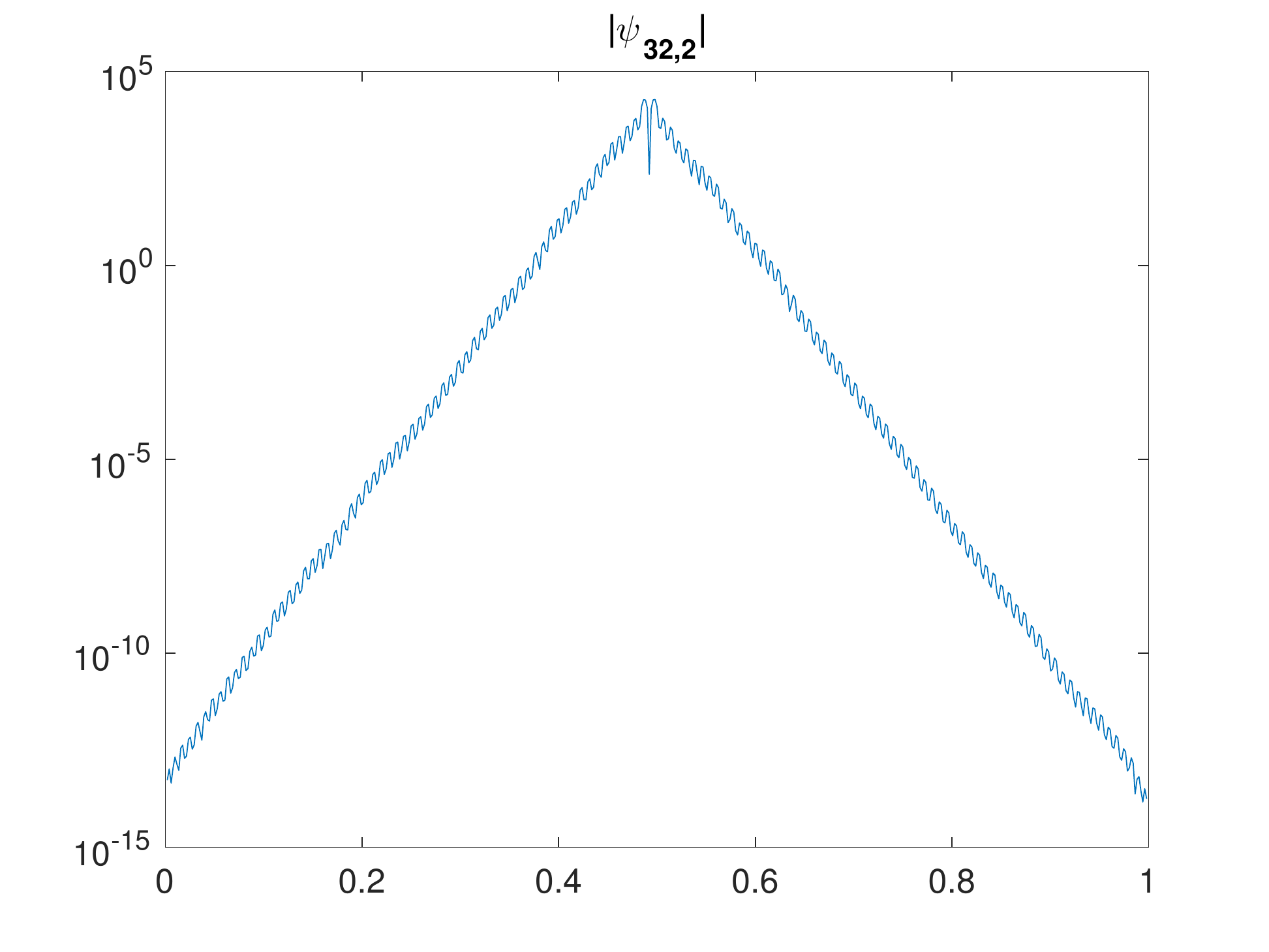}
        \includegraphics[width = 0.3\textwidth, height = 0.2\textwidth]{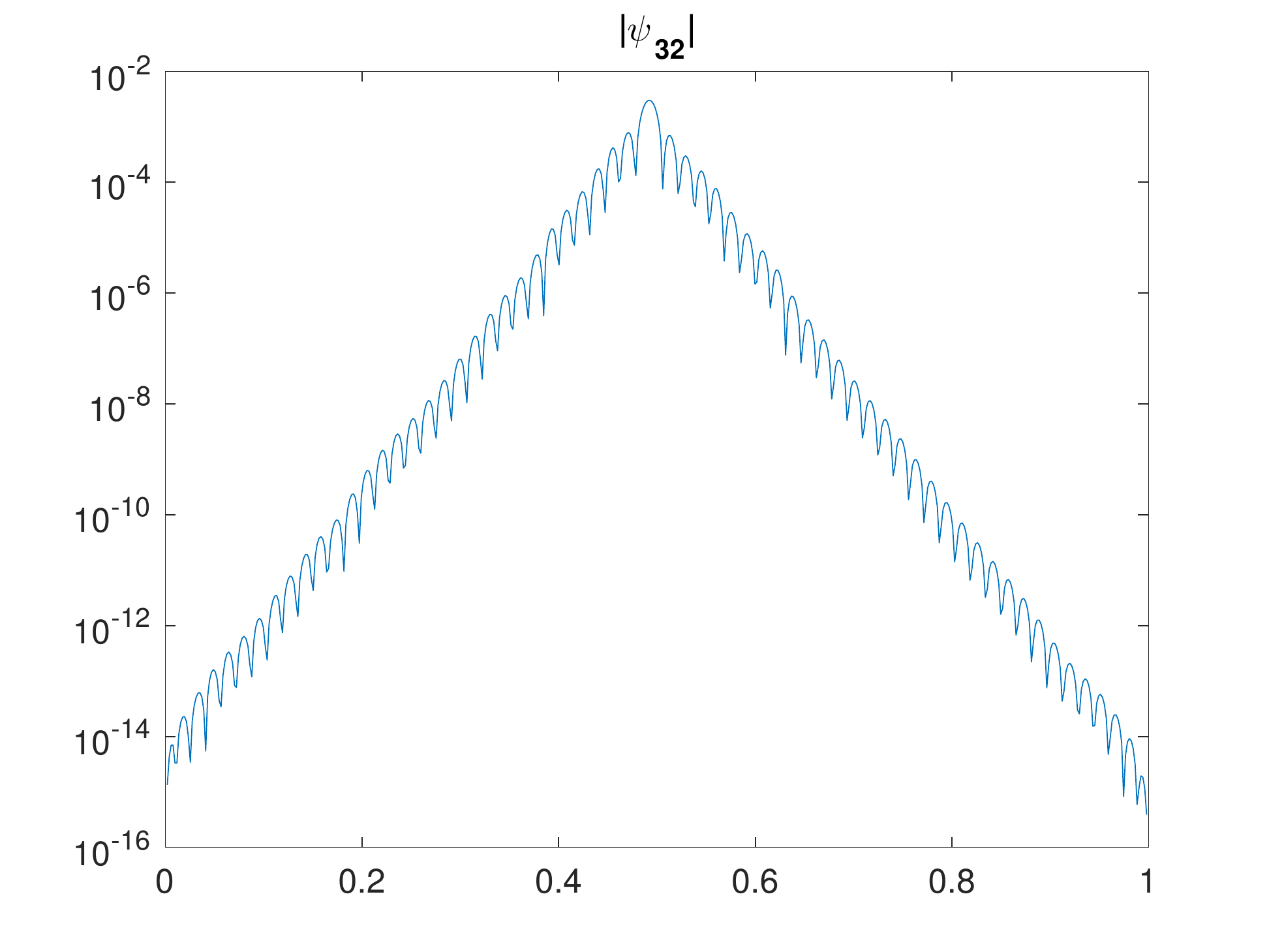}}
    
%     \subfloat[Patch-wise energy norm: $\psi_{32,1}, \psi_{32,2}$ for piecewise linear $\Phi$ and $\psi_{32}$ for piecewise constant $\Phi$]{
%        \includegraphics[width = 0.3\textwidth, height = 0.2\textwidth]{biharmonic1d/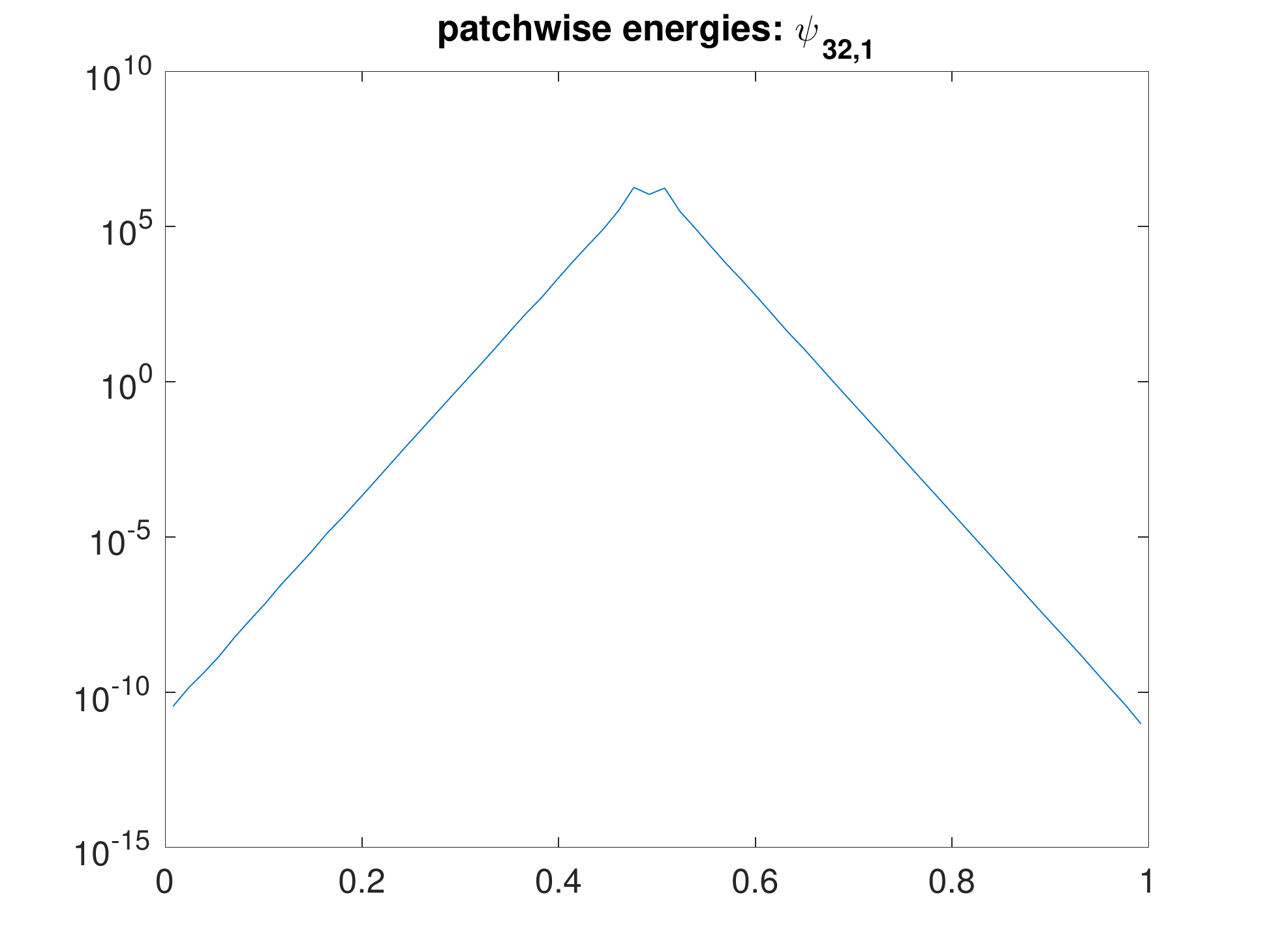}
%        \includegraphics[width = 0.3\textwidth, height = 0.2\textwidth]{biharmonic1d/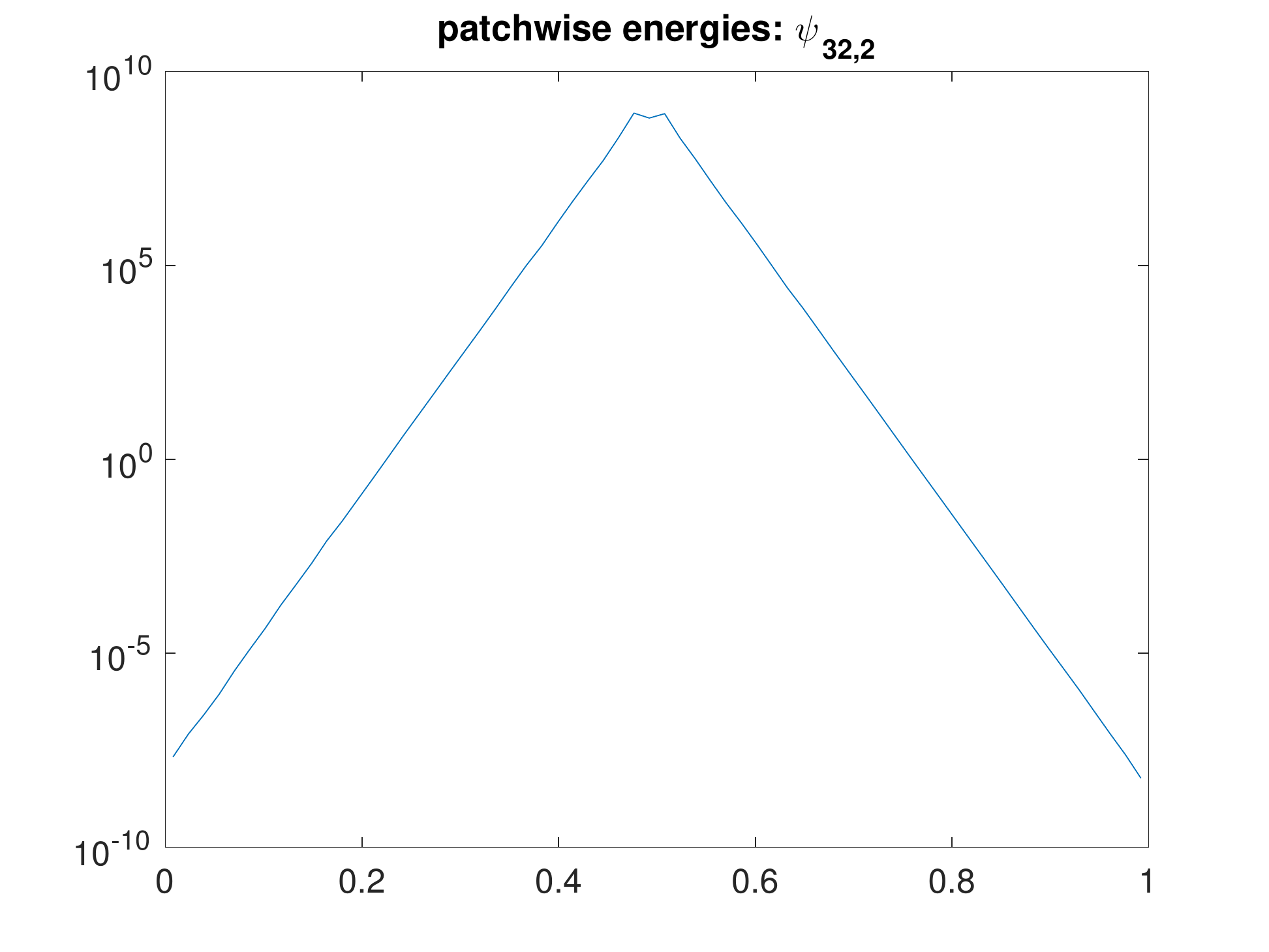}
%        \includegraphics[width = 0.3\textwidth, height = 0.2\textwidth]{biharmonic1d/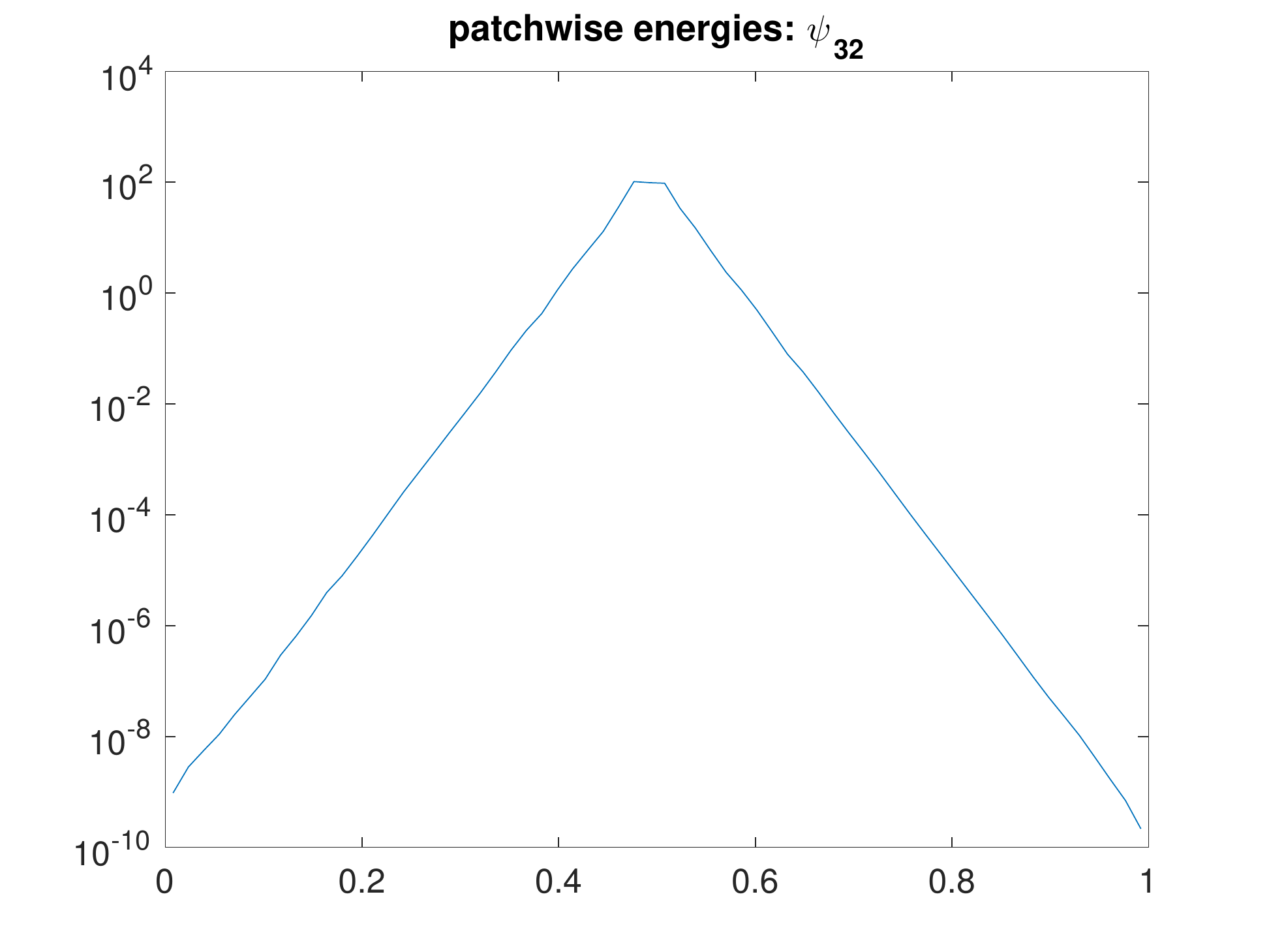}}
    \caption{One-dimensional fourth-order elliptic operator~\eqref{eqn:1dbiharmonic}.}
    \label{fig:1dbiharmonicpsi}
\end{figure}

We have sampled a force $f \in L^2(D)$ from the same model~\eqref{eqn:flexural} as the flexural rigidity. Using the MsFEM, we use two different sets of basis functions $\{\psi_{i,q}\}_{i=1, q=1}^{m, 2}$ and $\{\psi_{i}\}_{i=1}^{m}$ to solve the corresponding fourth-order elliptic equation~\eqref{eqn:1dbiharmonic}, and get solutions $u_{h,1}$ and $u_{h,0}$ respectively. We show their errors in the energy norm, i.e., $\|u_{h,1}-u\|_H$ and $\|u_{h,0} - u\|_H$ in Figure~\ref{fig:exp1_errordecay}. We can see that $\|u_{h,1}-u\|_H$ decays quadratically with respect to the patch size $h$, while $\|u_{h,0}-u\|_H$ decays only linearly. Therefore, to obtain the optimal convergence rate $h^2$ in the energy norm, it is necessary to include all the piecewise linear functions in the space $\Phi$, as we have proved in Theorem~\ref{thm:conditioning1} and Eqn.~\eqref{eqn:Herror2}.
%\begin{figure}[ht]
%\centering
%\includegraphics[width = 0.4\textwidth]{biharmonic1d/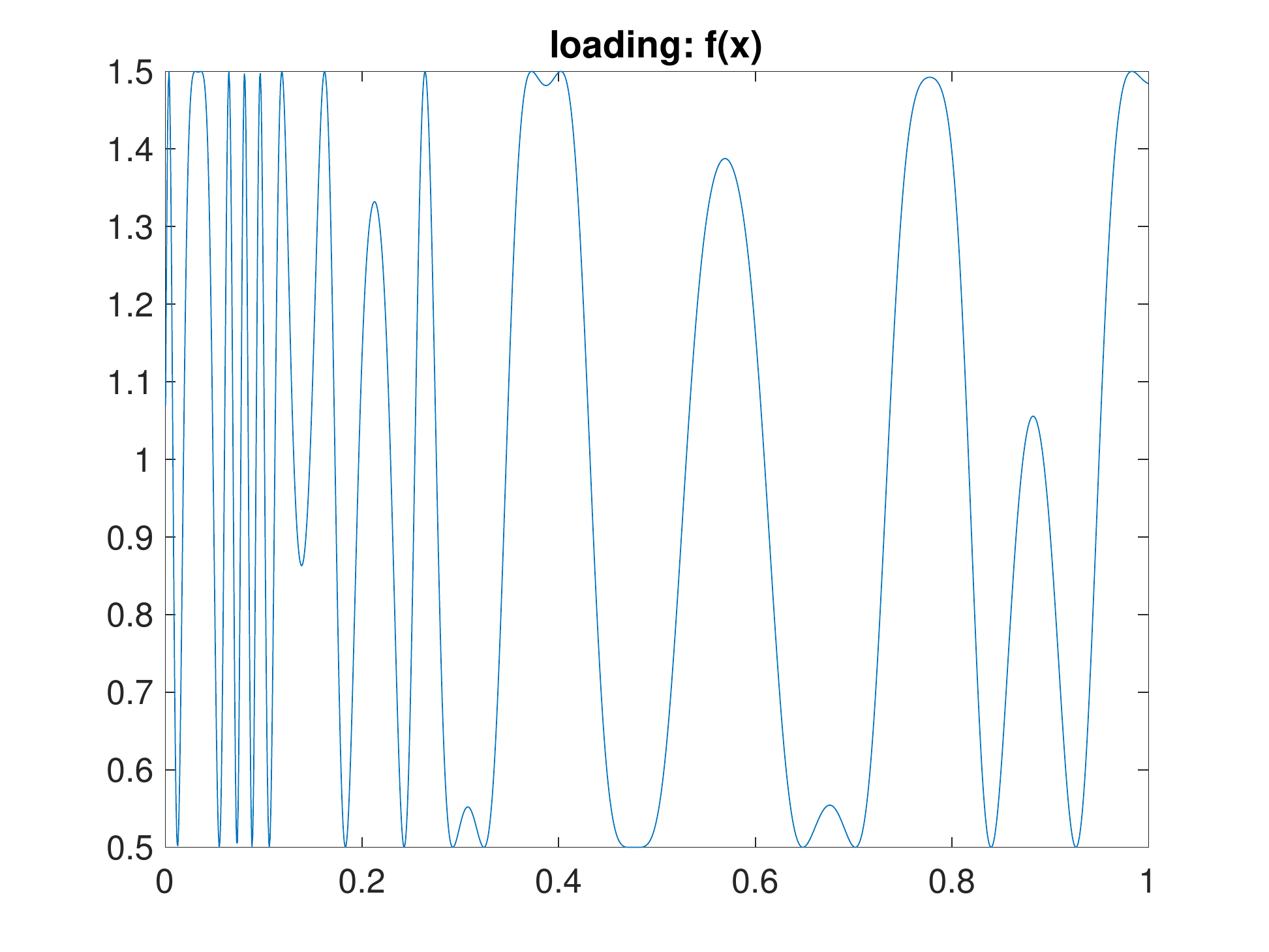}
%\caption{Highly oscillatory load without scale separation.}\label{fig:exp1_load}
%\end{figure}
\begin{figure}[ht]
\centering
\includegraphics[width = 0.4\textwidth]{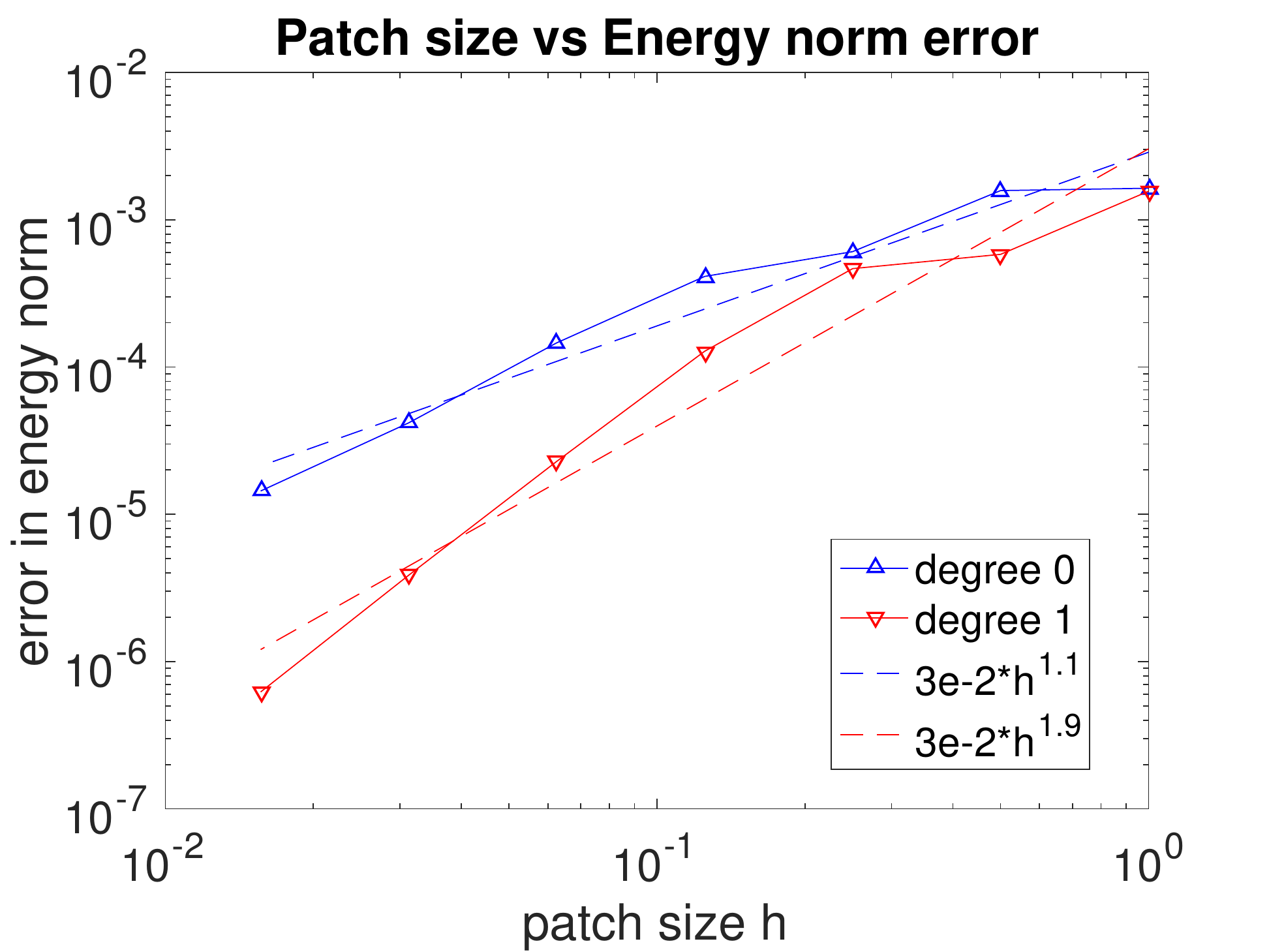}
\caption{Error of the finite element solutions: $\|u_{h,0}-u\|_H$ and $\|u_{h,1} - u\|_H$.}\label{fig:exp1_errordecay}
\end{figure}

\subsection{The 2D fourth-order elliptic operator}
\label{subsec:2dbiharmonic}
\referee{
Consider the solution operator of the 2D fourth-order elliptic equation on domain $D = (0,1)^2$
\begin{equation}\label{eqn:2dbiharmonic}
\begin{split}
	&\partial_x^2( a_{20}(x,y) \partial_x^2 u(x,y)) + \partial_y^2( a_{02}(x,y) \partial_y^2 u(x,y)) + 2 \partial_{xy}( a_{11}(x,y) \partial_{xy} u(x,y)) = f(x,y),\\
	&u \in H_0^2(D),
\end{split}
\end{equation}
which describes the vibration $u$ of a clamped plate subject to a transverse force $f\in L^2(D)$. The coefficients in the operator are given by
\begin{equation}\label{eqn:2dbiharmonic_coef}
\begin{split}
	a_{20}(x,y) = a_{02}(x,y) = &\frac{1}{6}( \frac{1.1 + \sin(2\pi x/\epsilon_1)}{1.1 + \sin(2\pi y/\epsilon_1)} + \frac{1.1 + \sin(2\pi y/\epsilon_2)}{1.1 + \cos(2\pi x/\epsilon_2)} + \\
	& \frac{1.1 + \cos(2\pi x/\epsilon_3)}{1.1 + \sin(2\pi y/\epsilon_3)} + \frac{1.1 + \sin(2\pi y/\epsilon_4)}{1.1 + \cos(2\pi x/\epsilon_4)} + \sin(4 x^2 y^2) + 1 ),\\
	a_{11}(x,y) = & 1 + \frac{1}{2} \sin\left(\sum_{k=1}^K k^{-\alpha} (\zeta_{1k} \sin(kx) + \zeta_{2k} \cos(ky))\right),
\end{split}
\end{equation}
where $\epsilon_1=\frac{1}{5}, \epsilon_2=\frac{1}{13}, \epsilon_3=\frac{1}{17}, \epsilon_4=\frac{1}{31}$, $K=20$, $\alpha = 0$, and $\{\zeta_{1k}\}_{k=1}^K$ and $\{\zeta_{2k}\}_{k=1}^K$ are two independent random vectors with independent entries uniformly distributed in $[-1/2, 1/2]$. }

\referee{
Based on the uniform partition with grid size $h_x = h_y = \frac{1}{8}$, we construct the piecewise linear function space $\Phi$, which has dimension $n = 3 m = 192$. We solve the quadratic optimization problem~\eqref{eqn:psivariationalk2} with the weighted extended B-splines (Web-splines~\cite{Hollig2005}) of degree 3 on the uniform refined grid with grid size $h_{x,f} = h_{y,f} = \frac{1}{32}$. The 2D Gaussian quadrature with 5 points on each axis is utilized to compute the integral on each fine grid cell. The three basis functions associated with the patch $[1/2-h_x, 1/2]\times [1/2-h_y, 1/2]$ are shown in Figure~\ref{fig:2dbiharmonicpsi}. We also show them in the log-scale in Figure~\ref{fig:2dbiharmonicpsi_log}. We can clearly see that the basis functions decay exponentially fast away from its associated patch, which validates our Theorem~\ref{thm:expdecay2simple}.}
\begin{figure}[ht]
\centering
\includegraphics[width = 0.3\textwidth]{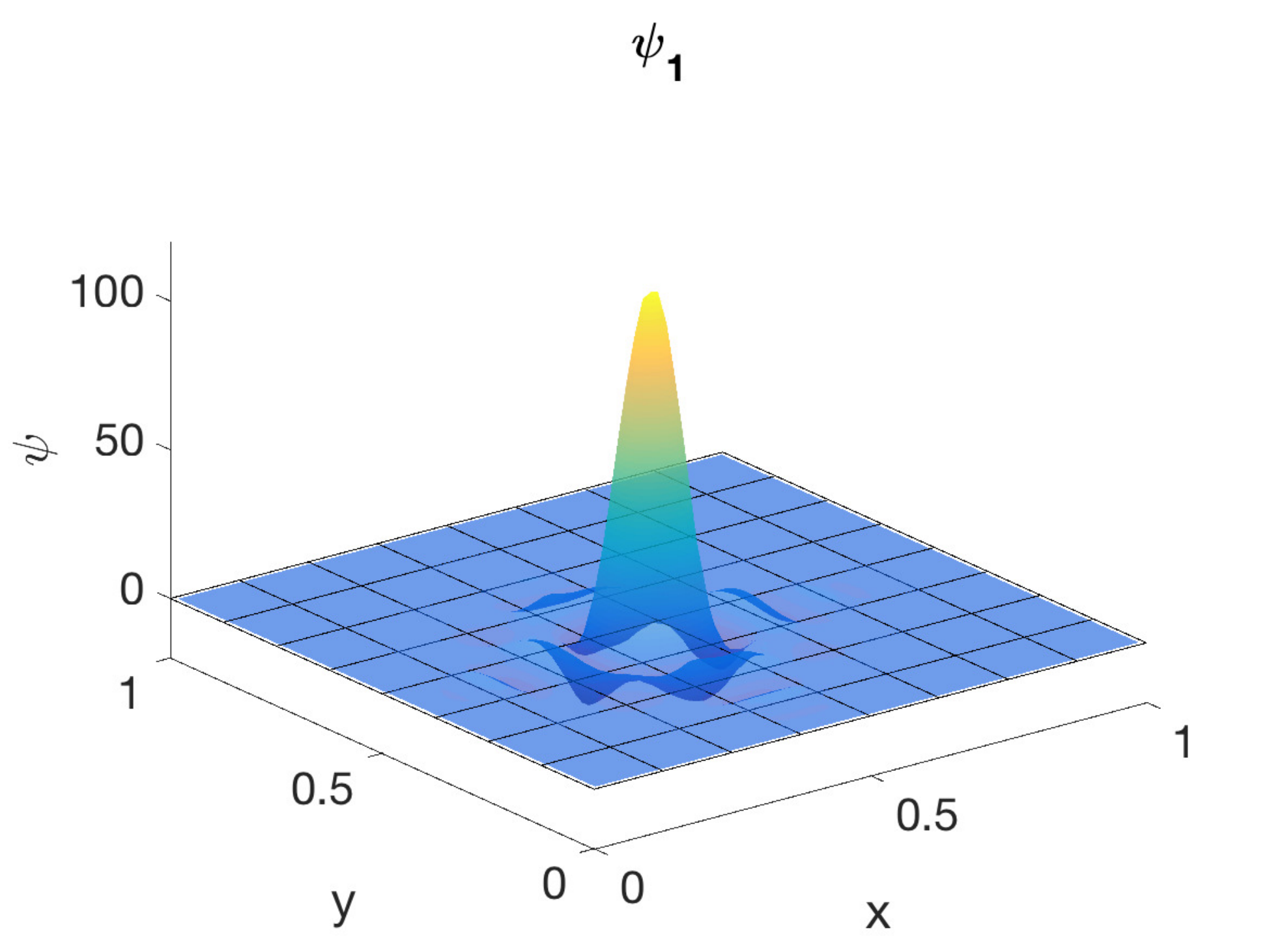}
\includegraphics[width = 0.3\textwidth]{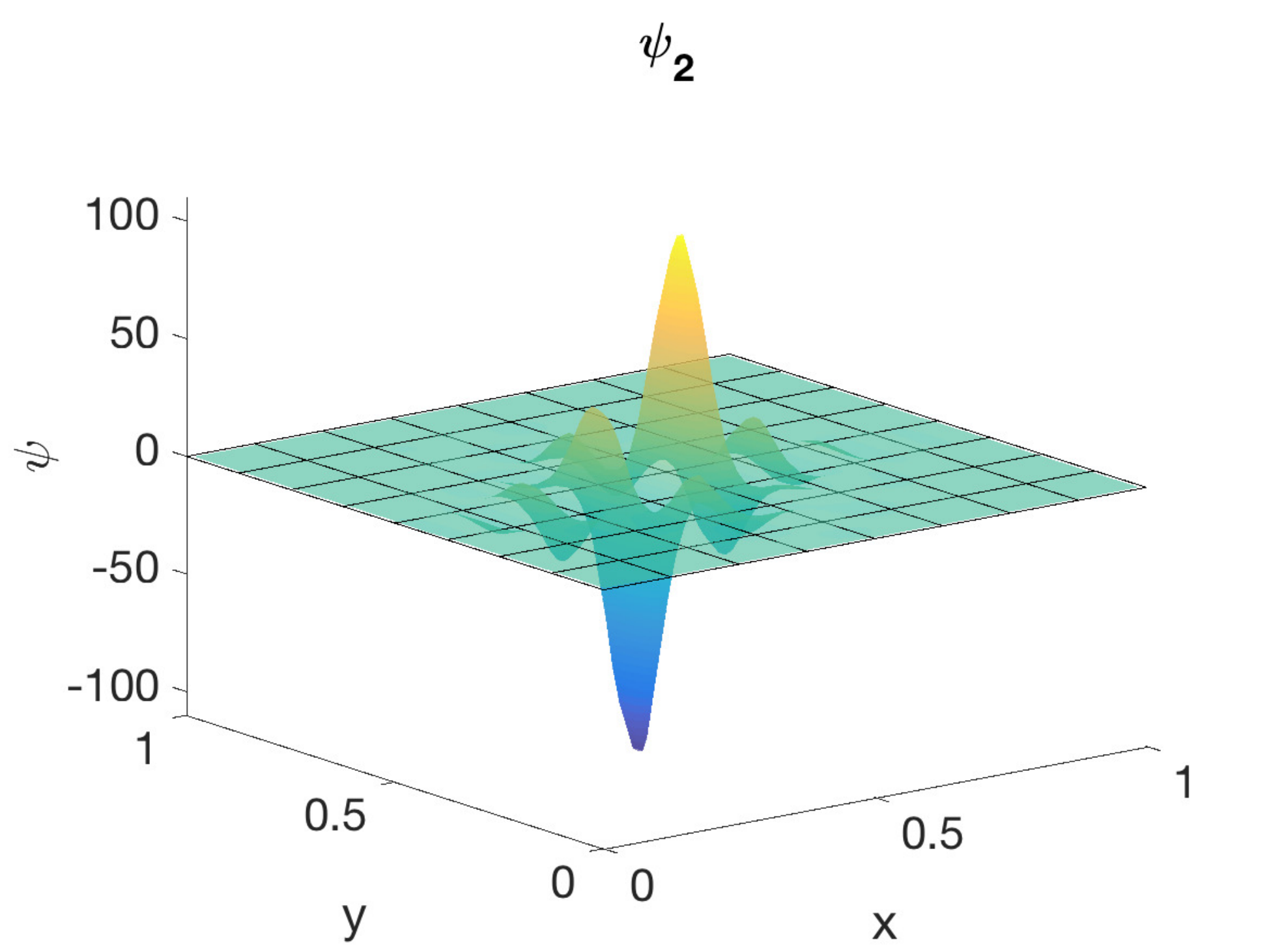}
\includegraphics[width = 0.3\textwidth]{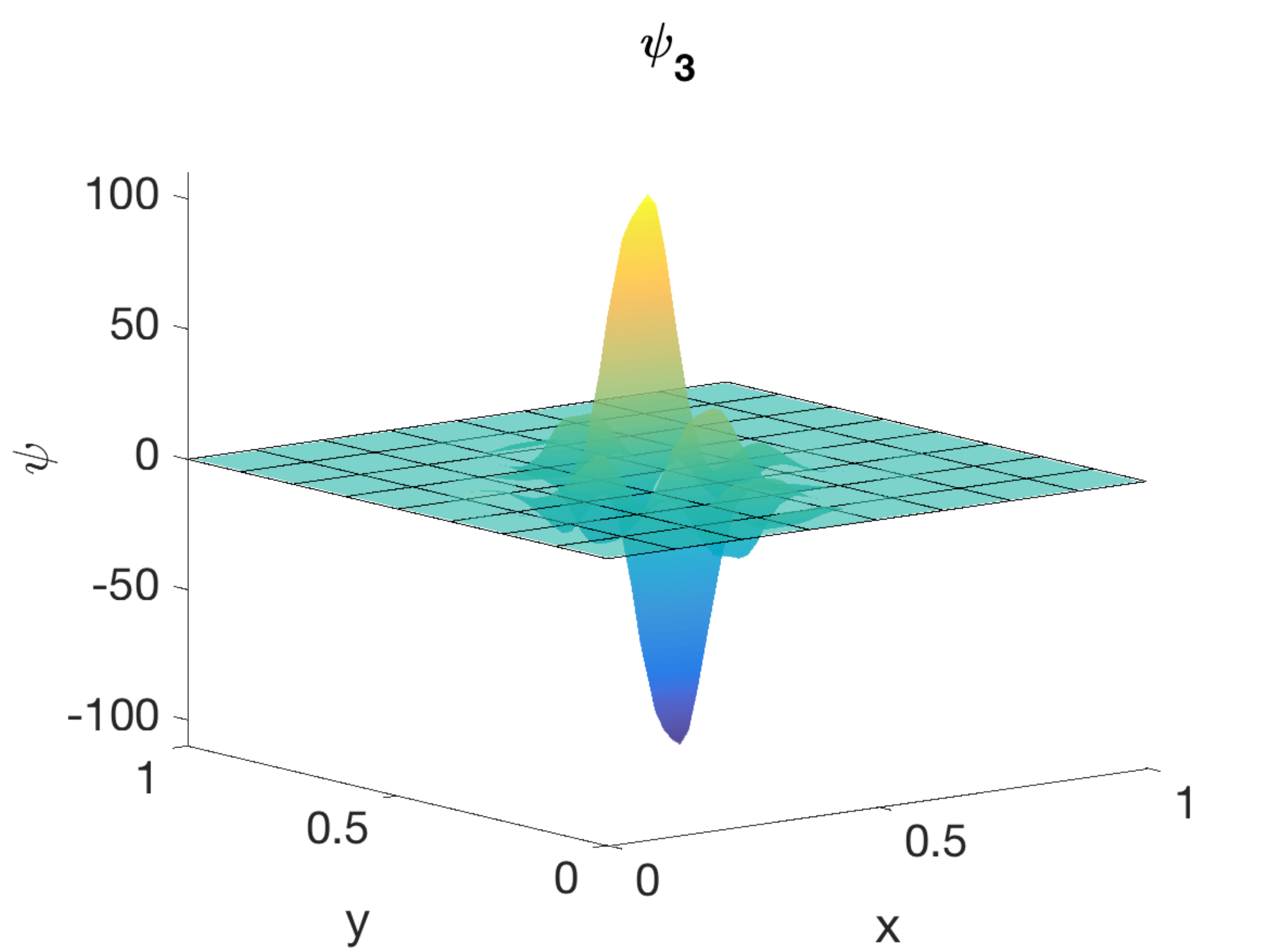}
\caption{Three basis functions associated with patch $[1/2-h_x, 1/2]\times [1/2-h_y, 1/2]$.}\label{fig:2dbiharmonicpsi}
\end{figure}
\begin{figure}[ht]
\centering
\includegraphics[width = 0.3\textwidth]{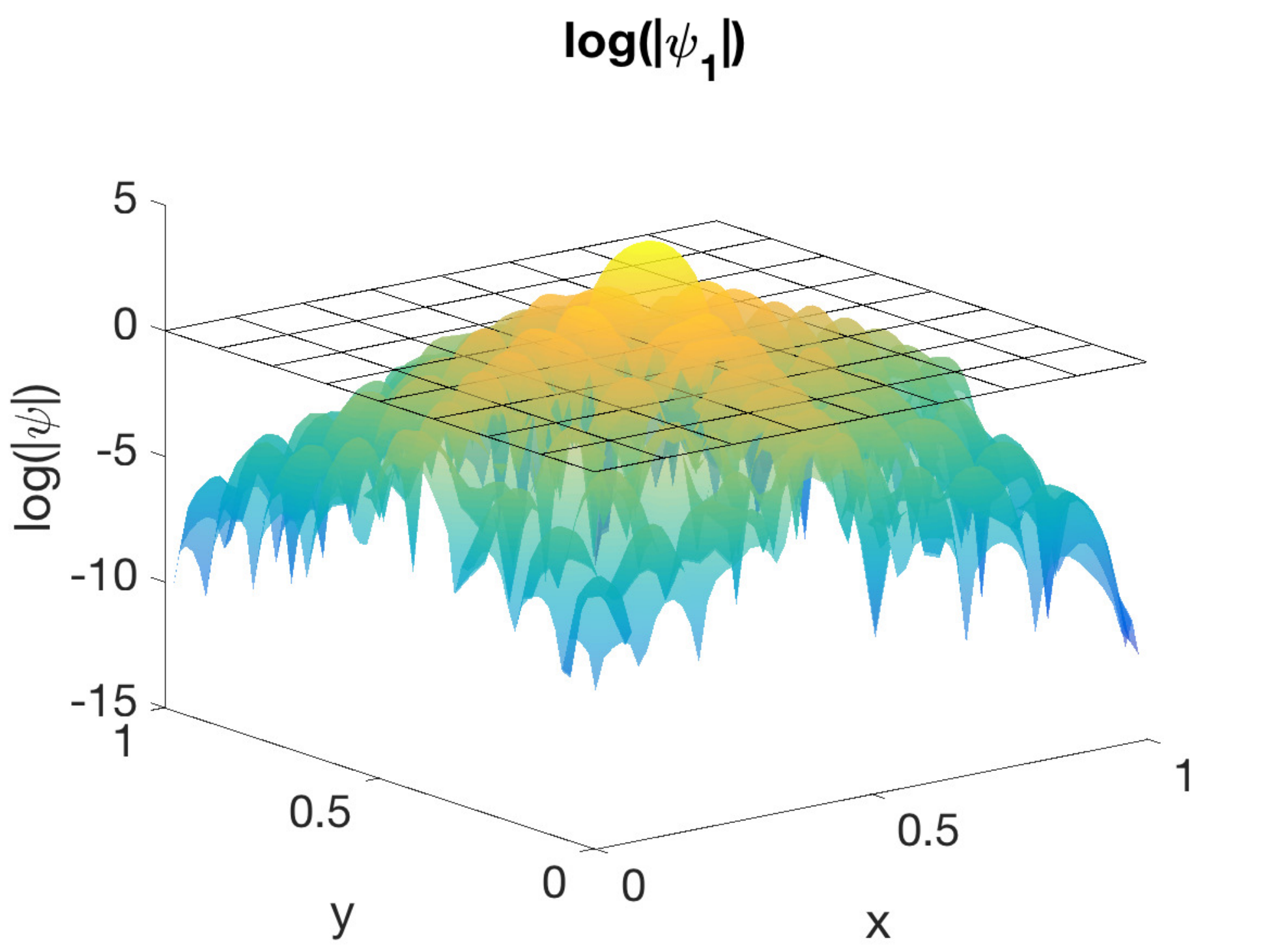}
\includegraphics[width = 0.3\textwidth]{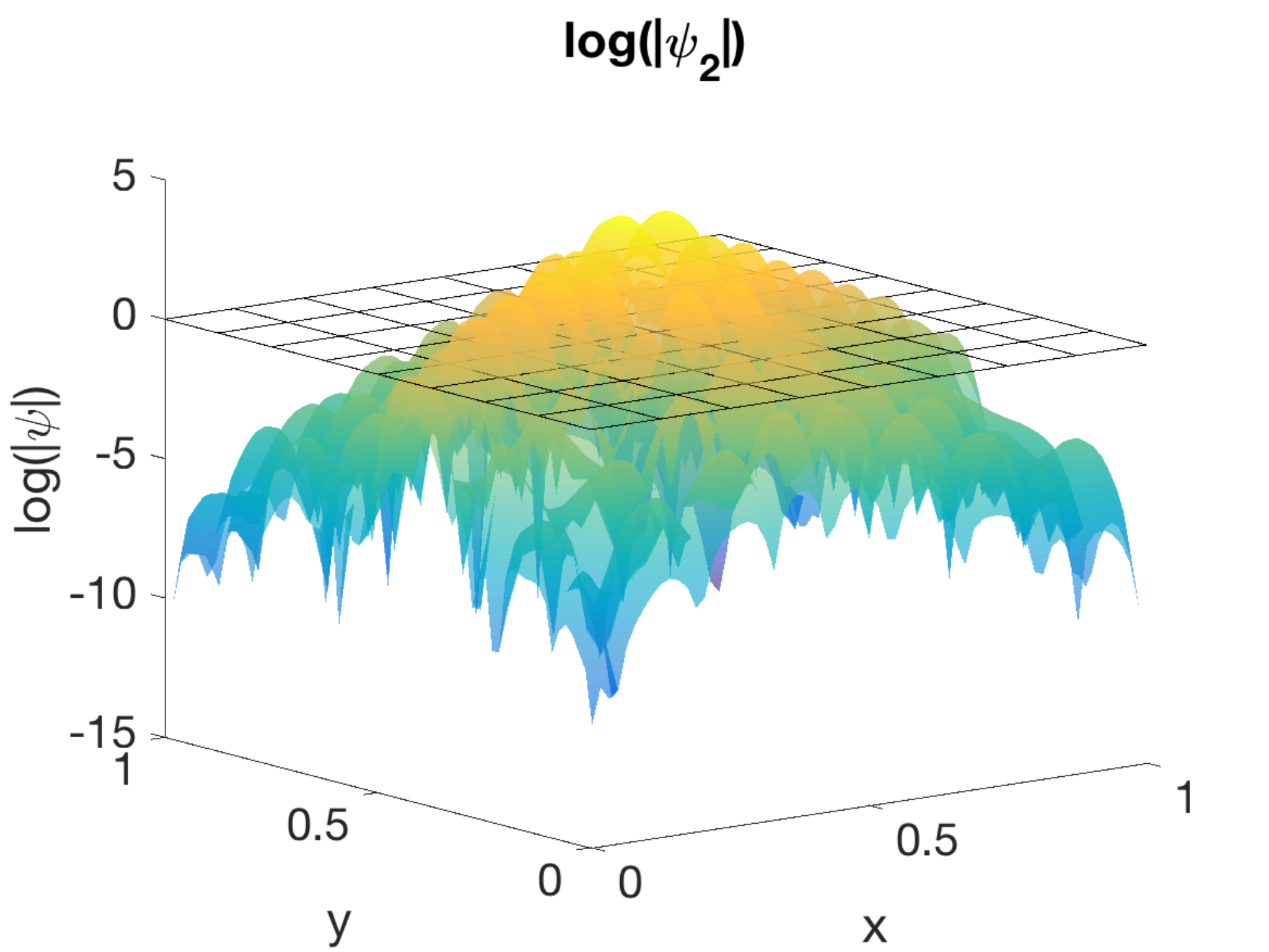}
\includegraphics[width = 0.3\textwidth]{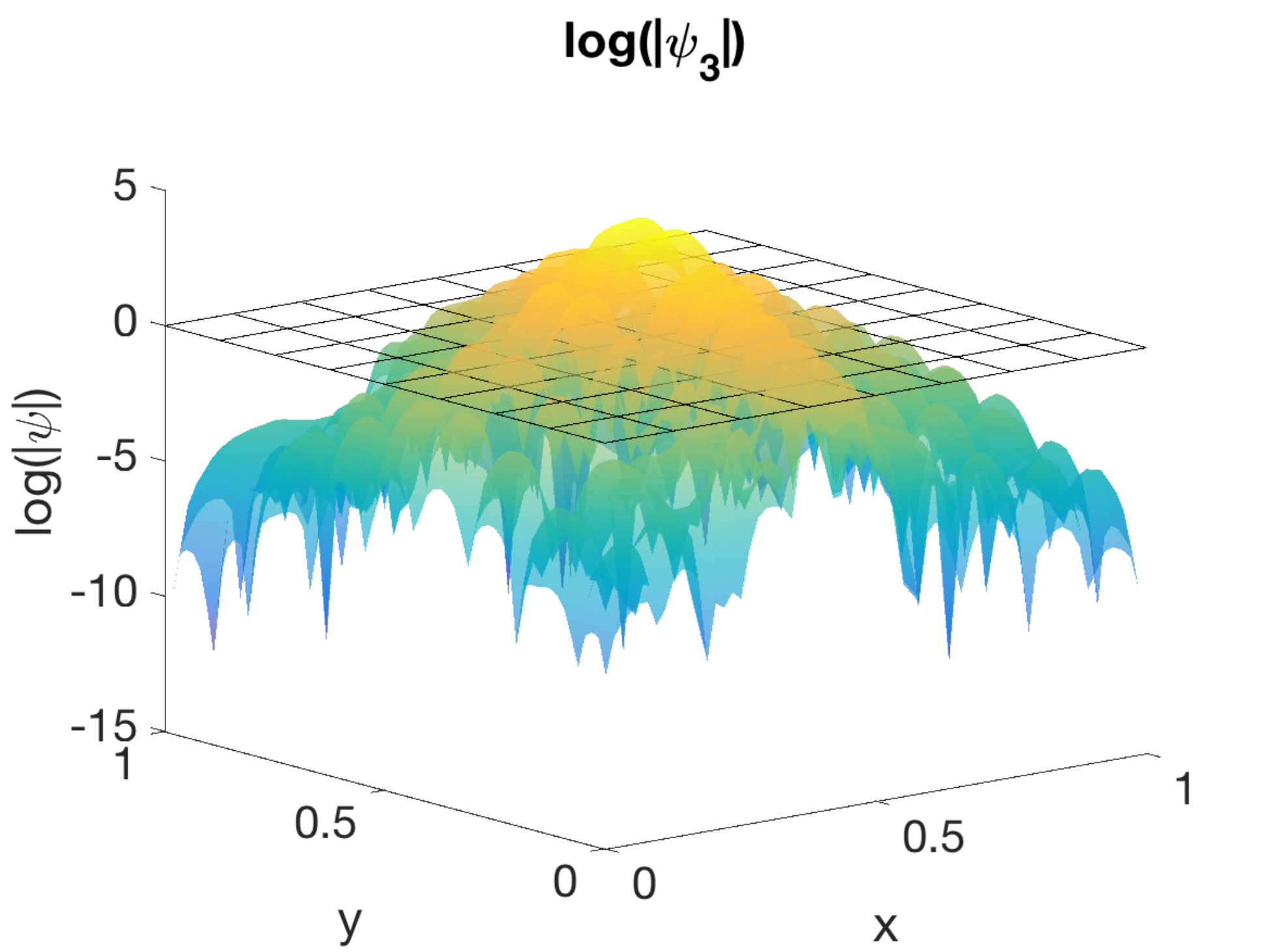}
\caption{Three basis functions associated with patch $[1/2-h_x, 1/2]\times [1/2-h_y, 1/2]$ in log-scale.}\label{fig:2dbiharmonicpsi_log}
\end{figure}

\referee{
We point out that the stiffness matrix for the fourth-order elliptic operator~\eqref{eqn:2dbiharmonic} becomes ill-conditioned very quickly when we refine the grid size. A carefully designed numerical strategy is required to validate the optimal convergence rate. We will leave this to our future work.}

\section{Concluding Remarks}
\label{k1sec:conclusions}
In this paper, we have developed a general strategy to compress a class of self-adjoint higher-order elliptic operators by minimizing the energy norm of the localized basis functions. These energy-minimizing localized basis functions are obtained by solving decoupled local quadratic optimization problems with linear constraints, and they give optimal approximation property of the solution operator. For a self-adjoint, bounded and strongly elliptic operator of order $2k$ ($k \ge 1$), we have proved that with support size $O(h \log(1/h))$, our localized basis functions can be used to compress higher-order elliptic operators with the optimal compression rate $O(h^{2k})$. We have applied our new operator compression strategy in different applications. For elliptic equations with rough coefficients, our localized basis functions can be used as multiscale basis functions, which gives the optimal convergence rate $O(h^k)$ in the energy norm. In the application of the sparse PCA, our localized basis functions achieve nearly optimal sparsity and the optimal approximation rate simultaneously when the covariance operator to be compressed is the solution operator of an elliptic operator. We remark that a number of Mat\'{e}rn covariance kernels are related to the Green's functions of some elliptic operators. 

There are several directions we can explore in the future work. First of all, the constants in both the compression error and the localization depend on the contrast of the coefficients, which makes the existing methods inefficient for coefficients with high contrast. Other methods (e.g., \cite{Grasedyck_ALbasis_2012, maalqvist2014localization, owhadi2015bayesian, owhadi2015multi}) also suffer from the same limitation. Our sparse operator compression framework can be used to deal with this high contrast case, and we will report our findings in our upcoming paper. Secondly, in the application of the sparse PCA, our current construction requires the knowledge of the underlying elliptic operator $\CalL$. We believe that it is possible to construct these localized basis functions using only the covariance function. Moreover, given any covariance operator, which may not be the solution operator of an elliptic operator, we can still define the Cameron--Martin space and the corresponding energy-minimizing basis functions. We are interested in the localization and compression properties of these energy-minimizing basis functions in this general setting. Our preliminary results show that the energy-minimizing basis functions still enjoy fast decay rate away from its associated patch, although the exponential decay may not hold true any more. Thirdly, it is interesting to apply our framework to the graph Laplacians, which can be viewed as discretized elliptic operators. Along this direction, we would like to develop an algorithm with nearly linear complexity to solve linear systems with graph Laplacians. Finally, we are also interested in applying our method to construct localized Wannier functions and to compress the Hamiltonian in quantum chemistry. Unlike the second-order elliptic operators with multiscale diffusion coefficients, all multiscale features of the Hamiltonian $\CalH = -\Delta + V(x)$ lie in its potential $V(x)$. Some adaptive domain partition strategy may prove to be useful in this application.

\appendix

\section{More on Lemma~\ref{lem:scaling}}\label{app:moreonscaling}
In this section, we prove that $C(k, s, d, \Omega_1)$ can be bounded by $C(k, s, d, \delta)$ and give an explicit formula of $C(k, s, d, \delta)$ for the case $k = s =1$. Before we do this, we need the following comparison lemma.
\begin{lemma}\label{lem:compare2}
Let $\Omega$ be a smooth, bounded, open subset of $\R^d$ and $S$ is a smooth subdomain in $\Omega$. Let $G_{\Omega}$ be the Green's function of $\CalL = (-1)^k \sum_{|\sigma| = k} D^{2 \sigma}$ with the homogeneous Dirichlet boundary condition on $\partial \Omega$ and $G_{S}$ be the Green's function of $\CalL$ with the homogeneous Dirichlet boundary condition on $\partial S$. Then, for all $f\in L^2(\Omega)$, we have
\begin{equation}\label{eqn:compare2}
	\int_{S} \int_{S} G_{S}(x,y) f(x) f(y) \rd x\, \rd y \le \int_{\Omega} \int_{\Omega} G_{\Omega}(x,y) f(x) f(y) \rd x\, \rd y.
\end{equation}
\end{lemma}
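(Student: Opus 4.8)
The plan is to mimic the variational-comparison argument used in the proof of Lemma~\ref{lem:compare12}, with the roles of "same domain, ordered coefficients" replaced by "same operator, nested domains". Note first that the bilinear form associated with $\CalL = (-1)^k\sum_{|\sigma|=k} D^{2\sigma}$ is $\sum_{|\sigma|=k}\int D^\sigma u\, D^\sigma u = |u|_{k,2,\cdot}^2$, which by the Friedrichs inequality (Theorem~\ref{thm:generalFriedrich}) is equivalent to the full $H^k$-norm on $H_0^k$; hence for $f\in L^2(\Omega)$ the Dirichlet problems $\CalL\psi_\Omega = f$ on $\Omega$ (with $\psi_\Omega\in H_0^k(\Omega)$) and $\CalL\psi_S = f|_S$ on $S$ (with $\psi_S\in H_0^k(S)$) are uniquely solvable. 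As in Lemma~\ref{lem:compare12}, introduce the energy functionals
\[
	I_\Omega(u,f) = \tfrac12 |u|_{k,2,\Omega}^2 - \int_\Omega u f,\quad u\in H_0^k(\Omega),\qquad
	I_S(u,f) = \tfrac12 |u|_{k,2,S}^2 - \int_S u f,\quad u\in H_0^k(S),
\]
whose unique minimizers are $\psi_\Omega$ and $\psi_S$, and evaluating at the minima gives
\[
	-2 I_\Omega(\psi_\Omega,f) = \int_\Omega\!\int_\Omega G_\Omega(x,y) f(x) f(y)\,\rd x\,\rd y,\qquad
	-2 I_S(\psi_S,f) = \int_S\!\int_S G_S(x,y) f(x) f(y)\,\rd x\,\rd y.
\]

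Next I would use that extension by zero is a linear map $H_0^k(S)\hookrightarrow H_0^k(\Omega)$ that preserves the $|\cdot|_{k,2}$ seminorm: let $\tilde\psi_S\in H_0^k(\Omega)$ denote the extension of $\psi_S$ by $0$ on $\Omega\setminus S$, so that $|\tilde\psi_S|_{k,2,\Omega} = |\psi_S|_{k,2,S}$ and, crucially, $\int_\Omega \tilde\psi_S f = \int_S \psi_S f$ because $\tilde\psi_S$ vanishes off $S$ (so the value of $f$ on $\Omega\setminus S$ is irrelevant). Hence $I_\Omega(\tilde\psi_S,f) = I_S(\psi_S,f)$. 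Since $\psi_\Omega$ minimizes $I_\Omega$ over all of $H_0^k(\Omega)$ and $\tilde\psi_S$ is a competitor, $I_\Omega(\psi_\Omega,f)\le I_\Omega(\tilde\psi_S,f) = I_S(\psi_S,f)$, i.e. $-2I_\Omega(\psi_\Omega,f)\ge -2I_S(\psi_S,f)$. Translating this through the two displayed identities yields exactly \eqref{eqn:compare2}.

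The argument is essentially routine; the only points needing a little care are (i) unique solvability of the two Dirichlet problems, which follows from coercivity of $\CalL$ on $H_0^k$ as noted above, and (ii) that zero-extension genuinely lands in $H_0^k(\Omega)$ and preserves the seminorm — standard, since elements of $H_0^k(S)$ are $H^k$-limits of $C_c^\infty(S)$ functions, whose zero-extensions lie in $C_c^\infty(\Omega)$ and have the same weak derivatives a.e. I do not anticipate a serious obstacle. This lemma then feeds into bounding $C(k,s,d,\Omega_1)$ by $C(k,s,d,\delta)$: applying \eqref{eqn:compare2} with $\Omega = \Omega_1$ and $S$ the inscribed ball $B(0,\delta/2)\subset\Omega_1$ lower-bounds the quadratic form $\int\!\int G_{\Omega_1} p\, p$ by the corresponding quantity on a ball, on which everything is scale-invariant and explicitly computable, which in turn upper-bounds the largest generalized eigenvalue $\lambda_{\max}(M_1,S_1)$ defining $C(k,s,d,\Omega_1)$.
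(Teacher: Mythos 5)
Your proposal is correct and follows essentially the same variational-comparison argument as the paper: both identify $\int\!\int G\, f f$ with $-2I(\psi)$ at the energy minimizer and exploit that $H_0^k(S)$ (extended by zero) is a subspace of $H_0^k(\Omega)$, so the minimum over the larger space is smaller. The paper simply writes the zero-extension implicitly by working with $H_0^k(S;\Omega):=\{u\in H_0^k(\Omega): u\equiv 0 \text{ on } \Omega\backslash S\}$, whereas you make it explicit; the content is identical.
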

\begin{proof}
Let $f\in L^2(\Omega)$. Let $\psi_{\Omega}$ be the solution of $\CalL \psi_{\Omega} = f$ with the homogeneous Dirichlet boundary conditions on $\partial \Omega$ and $\psi_{S}$ be the solution of $\CalL \psi_{S} = f$ with the homogeneous Dirichlet boundary conditions on $\partial S$. Observe that $\psi_{\Omega}$ and $\psi_{S}$ are the unique minimizers of $I_{\Omega}(u, f) = \frac{1}{2} \sum_{|\sigma|=k}\int_{\Omega} |D^{\sigma} u|^2 - \int_{\Omega} u f$ with
\begin{equation}\label{eqn:variational2}
\begin{split}
	\psi_{\Omega} &= \argmin_{u \in H_0^k(\Omega)} I_{\Omega}(u, f), \qquad \psi_{S} = \argmin_{u \in H_0^k(S; \Omega)} I_{\Omega}(u, f)\\
	H_0^k(S; \Omega) &:= \{u \in H_0^k(\Omega): u \equiv 0 \text{ on } \Omega\backslash S\}.
\end{split}
\end{equation}
Moreover, we have
\begin{equation}\label{eqn:minimal2}
\begin{split}
	I_{\Omega}(\psi_{\Omega}, f) &= -\frac{1}{2} \int_{\Omega} \psi_{\Omega} f = - \frac{1}{2} \int_{\Omega} \int_{\Omega} G_{\Omega}(x,y) f(x) f(y) \rd x\, \rd y, \\
	I_{\Omega}(\psi_{S}, f) &= - \frac{1}{2} \int_{S} \psi_{S} f = - \frac{1}{2} \int_{S} \int_{S} G_{S}(x,y) f(x) f(y) \rd x\, \rd y.
\end{split}
\end{equation}
Since $H_0^k(S; \Omega)$ is a subset of $H_0^k(\Omega)$, we obtain
\begin{equation}\label{eqn:variationalcompare2}
	I_{\Omega}(\psi_{\Omega}, f) \le I_{\Omega}(\psi_{S}, f),
\end{equation}
which proves the lemma.
\end{proof}

Note that Lemma~\ref{lem:compare2} in fact holds true for the general operator $\sum\limits_{0 \le |\sigma|, |\gamma| \le k} (-1)^{|\sigma|} D^{\sigma} (a_{\sigma \gamma}(x) D^{\gamma} u)$ with various boundary conditions. 
%It is related to the Law of Total Variance in probability. 
Notice that $\Omega_1$ is a smooth, bounded, open subset of $\R^d$ that satisfies $B(0,\delta/2) \subset \Omega_1 \le B(0,1)$. By Lemma~\ref{lem:compare2}, we are able to bound the energy norm on $\Omega_1$ by that on $B(0,\delta/2)$ and $B(0,1)$. To simplify the notation, we omit the subscript ``1'' in the rest of this section.

\begin{proposition}\label{prop:Clowerbound}
$C(k, s, d, \Omega)$ (defined in Eqn.~\eqref{eqn:Cbyeig}) can be bounded by $C(k, s, d, \delta)$ which only depends on $k, s, d$ and $\delta$. Moreover, we can set
\begin{equation}\label{eqn:cases1}
	C(1, 1, d, \delta) = 2 \sqrt{d (d+2)} \delta^{-1-d/2}.
\end{equation}
\end{proposition}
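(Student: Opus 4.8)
The plan is to sandwich the shape-dependent quantity $C(k,s,d,\Omega_1)$ between its values on the inscribed ball $B(0,\delta/2)$ and the circumscribed ball $B(0,1)$, using the monotonicity Lemma~\ref{lem:compare2}. First I would record the variational description of the constant. Writing $p=\sum_i\alpha_i p_i\in\CalP_{s-1}$, the matrices $M_1$ and $S_1$ from the proof of Lemma~\ref{lem:scaling} satisfy $\alpha^T M_1\alpha=\|p\|_{L^2(\Omega_1)}^2$ and $\alpha^T S_1\alpha=\mathcal{E}_{\Omega_1}(p)$, where $\mathcal{E}_\Omega(p):=\int_\Omega\int_\Omega G_\Omega(x,y)p(x)p(y)\,\rd x\,\rd y=|u_p|_{k,2,\Omega}^2$ with $u_p\in H_0^k(\Omega)$ the weak solution of $\CalL u_p=p$ (the last identity by integration by parts, since $\CalL=(-1)^k\sum_{|\sigma|=k}D^{2\sigma}$). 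Hence, cf.\ Eqn.~\eqref{eqn:varM0A0},
\begin{equation*}
	C(k,s,d,\Omega_1)^2=\lambda_{\max}(M_1,S_1)=\sup_{p\in\CalP_{s-1}\setminus\{0\}}\frac{\|p\|_{L^2(\Omega_1)}^2}{\mathcal{E}_{\Omega_1}(p)}.
\end{equation*}

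Next I would exploit the geometry $B(0,\delta/2)\subset\Omega_1$ and $\mathrm{diam}(\Omega_1)\le1$. Since $0\in B(0,\delta/2)\subset\Omega_1$, every $x\in\Omega_1$ satisfies $|x|\le\mathrm{diam}(\Omega_1)\le1$, so $\Omega_1\subset\overline{B(0,1)}$ and therefore $\|p\|_{L^2(\Omega_1)}^2\le\|p\|_{L^2(B(0,1))}^2$. On the other hand, Lemma~\ref{lem:compare2} applied with $S=B(0,\delta/2)\subset\Omega_1$ and $f=p$ gives $\mathcal{E}_{\Omega_1}(p)\ge\mathcal{E}_{B(0,\delta/2)}(p)$. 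Combining the two bounds,
\begin{equation*}
	C(k,s,d,\Omega_1)^2\le\sup_{p\in\CalP_{s-1}\setminus\{0\}}\frac{\|p\|_{L^2(B(0,1))}^2}{\mathcal{E}_{B(0,\delta/2)}(p)}=:C(k,s,d,\delta)^2,
\end{equation*}
and the right-hand side is finite and depends only on $k,s,d,\delta$: it is the largest generalized eigenvalue of two symmetric quadratic forms on the finite-dimensional space $\CalP_{s-1}$, the denominator being positive definite because $\mathcal{E}_{B(0,\delta/2)}(p)=0$ forces $u_p\in H_0^k(B(0,\delta/2))$ with vanishing $k$th-order seminorm, hence $u_p\equiv0$ and $p\equiv0$. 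This establishes the general claim.

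Finally, for $k=s=1$ I would make the bound explicit. Here $\CalP_{s-1}$ consists of constants $p\equiv c$ and $\CalL=-\Delta$; solving $-\Delta u=c$ on $B(0,R)$ with zero Dirichlet data yields $u(x)=\frac{c}{2d}(R^2-|x|^2)$, so a routine radial integration gives $\mathcal{E}_{B(0,R)}(c)=c\int_{B(0,R)}u=\frac{c^2 V_d R^{d+2}}{d(d+2)}$, where $V_d=|B(0,1)|$. For the numerator I would use the isodiametric inequality in place of the crude inclusion $\Omega_1\subset B(0,1)$: $|\Omega_1|\le V_d\big(\mathrm{diam}(\Omega_1)/2\big)^d\le V_d2^{-d}$, so $\|c\|_{L^2(\Omega_1)}^2=c^2|\Omega_1|\le c^2V_d2^{-d}$. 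Dividing and taking $R=\delta/2$,
\begin{equation*}
	C(1,1,d,\Omega_1)^2\le\frac{V_d2^{-d}}{V_d(\delta/2)^{d+2}/(d(d+2))}=4\,d(d+2)\,\delta^{-d-2},
\end{equation*}
i.e.\ $C(1,1,d,\delta)=2\sqrt{d(d+2)}\,\delta^{-1-d/2}$, with equality when $\Omega_1=B(0,1/2)$ and $\delta=1$. The only substantive ingredient is the comparison Lemma~\ref{lem:compare2}, which turns the $\Omega_1$-dependent energy into its worst case on the inscribed ball; the main care needed is to use the isodiametric inequality (rather than $\Omega_1\subset B(0,1)$) in the second step in order to land on the sharp constant.
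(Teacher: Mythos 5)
Your proof is correct and follows essentially the same route as the paper: the variational characterization of $C(k,s,d,\Omega_1)^2$ as a generalized eigenvalue, Lemma~\ref{lem:compare2} to push the energy denominator down to the inscribed ball $B(0,\delta/2)$, the crude inclusion in $B(0,1)$ for the general claim, and the explicit radial solve on $B(0,\delta/2)$ for $k=s=1$. The only point of divergence is the sharpened numerator bound: the paper asserts that a set of diameter at most $1$ is contained in some ball of radius $1/2$ (which is false in general, e.g.\ for an equilateral triangle, by Jung's theorem), whereas you invoke the isodiametric inequality $|\Omega_1|\le V_d 2^{-d}$, which is precisely the correct volume bound that the argument actually requires; both routes yield the same constant $2\sqrt{d(d+2)}\,\delta^{-1-d/2}$.
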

\begin{proof}
From the definition~\eqref{eqn:Cbyeig}, we have
\begin{equation}\label{eqn:Cbyeig2}
	\left( C(k, s, d, \Omega) \right)^2 = \lambda_{\max}(M, S) = \max_{p \in \CalP_{s-1}} \frac{\int_{\Omega} p^2(x)\rd x}{ \int_{\Omega} \int_{\Omega} G(x,y)p(x)p(y)\rd x\, \rd y},
\end{equation}
where $G(x,y)$ is the Green's function of $\CalL = (-1)^k \sum_{|\sigma| = k} D^{2 \sigma}$ with the homogeneous Dirichlet boundary condition on $\partial \Omega$. Notice that $B(0,\delta/2) \subset \Omega \subset B(0,1)$. Utilizing Lemma~\ref{lem:compare2}, we have
\begin{equation*}
	\lambda_{\max}(M, S) \le \max_{p \in \CalP_{s-1}} \frac{\int_{B(0,1)} p^2(x)\rd x}{\int_{B(0,\delta/2)} \int_{B(0,\delta/2)} G_{\delta/2}(x,y)p(x)p(y)\rd x\, \rd y} := \lambda_{\max}(\hat{M}, \hat{S}),
\end{equation*}
where $G_{\delta/2}$ is the Green's function of $\CalL$ with the homogeneous Dirichlet boundary condition on $\partial B(0,\delta/2)$, $\lambda_{\max}(\hat{M}, \hat{S}) > 0$ is the largest generalized eigenvalue of $\hat{M}$ and $\hat{S}$ with
\begin{equation}\label{eqn:SMhat}
	\hat{S}(i,j) = \int_{B(0,\delta/2)} \int_{B(0,\delta/2)} G_{\delta/2} p_i p_j = \int_{B(0,\delta/2)} u_{_{\delta/2,i}} p_j, \qquad \hat{M}(i,j) = \int_{B(0,1)} p_i p_j.
\end{equation}
Here, $\{p_1, p_2, \ldots, p_Q\}$ are all the monomials defined in Lemma~\ref{lem:scaling} and $u_{_{\delta/2,i}} = \CalL^{-1} p_i$ with the homogeneous Dirichlet boundary condition on $\partial B(0,\delta/2)$. It is obvious that $\lambda_{\max}(\hat{M}, \hat{S})$ only depends on $k$, $s$, $d$ and $\delta$. Thus, we can choose
\begin{equation}\label{eqn:Cbyeig3}
	C(k, s, d, \delta) = \sqrt{\lambda_{\max}(\hat{M}, \hat{S})}.
\end{equation}
Since $\Omega$ has diameter at most 1, there exists $x_0 \in \Omega$ such that $\Omega \subset B(x_0, 1/2)$. Therefore, we have $\int_{\Omega} p^2(x)\rd x \le \int_{B(x_0, 1/2)} p^2(x)\rd x$, and we have a tighter bound for $M$ in the case $s = 1$: $M \le \hat{M} := \int_{B(x_0,1/2)} \rd x = A_{d-1} / (d 2^d)$, where $A_{d-1}$ is the surface area of the $(d-1)$-sphere of radius 1 (set $A_{0} = 2$). 

For the case $s=k=1$, $u_{_{\delta/2,1}}$ (defined as $\CalL^{-1} p_1$ with the homogeneous Dirichlet boundary condition on $\partial B(0,\delta/2)$) can be solved explicitly:
\begin{equation*}
	u_{_{\delta/2,1}} = \left( (\delta/2)^2 - r^2 \right)/(2d).
\end{equation*}
Then, we have
\begin{equation*}
\quad\hat{S} = \frac{1}{d^2 (d+2)} \left(\frac{\delta}{2}\right)^{d+2} A_{d-1}, \quad \hat{M} = A_{d-1} / (d 2^d).
\end{equation*}
Since $\lambda_{\max}(\hat{M}, \hat{S}) = \hat{M}/\hat{S}$ in the case of $s=1$, Eqn.~\eqref{eqn:cases1} naturally follows.
\end{proof}

\section{Derivations involving \texorpdfstring{$I_1$}{I1}}
\label{app:I1derivation}
\subsection{From Eqn.~\eqref{eqn:I1_1_2} to Eqn.~\eqref{eqn:I1_1_3} in the proof of Theorem~\ref{thm:expdecay2}}
\label{subapp:l1_expdecay2}
We want to prove that there exists a constant $C_1(k,d)$ such that
\begin{equation}\label{eqn:l1_expdecay2}
	\sum_{|\sigma| \le k} \int_{S^*} \left| \sum_{\stackrel{\sigma_1 + \sigma_2 = \sigma}{|\sigma_1| \ge 1}} \binom{\sigma}{\sigma_1} D^{\sigma_1} \eta D^{\sigma_2} \psi_{i,q} \right|^2 \le C_1^2 C_{\eta}^2 \sum_{s=1}^k \sum_{s'=1}^s (l h)^{-2 s'} |\psi_{i,q}|_{s-s', 2, S^*}^2.
\end{equation}
\begin{proof}
We re-arrange terms on the left-hand side with the same $|\sigma|$ and use the Cauchy inequality:
\begin{eqnarray}
	LHS &=& \sum_{s=1}^k \sum_{|\sigma| = s} \int_{S^*} \left| \sum_{\sigma_1 \le \sigma, |\sigma_1| \ge 1} \binom{\sigma}{\sigma_1} D^{\sigma_1} \eta D^{\sigma - \sigma_1} \psi_{i,q} \right|^2 \nonumber \\
	&\le& \sum_{s=1}^k \sum_{|\sigma| = s} \left(\sum_{\sigma_1 \le \sigma, |\sigma_1| \ge 1} \binom{\sigma}{\sigma_1}^2\right) \left(\sum_{\sigma_1 \le \sigma, |\sigma_1| \ge 1} \int_{S^*} |D^{\sigma_1} \eta|^2 |D^{\sigma - \sigma_1} \psi_{i,q}|^2 \right) \nonumber\\
	&\le& C_{1,1}^2 C_{\eta}^2 \sum_{s=1}^k \sum_{|\sigma| = s} \sum_{\sigma_1 \le \sigma, |\sigma_1| \ge 1} \int_{S^*} (l h)^{- 2|\sigma_1|} |D^{\sigma - \sigma_1} \psi_{i,q}|^2, \label{eqn:appl1_1}
\end{eqnarray}
where we have used $|D^{\sigma_1} \eta| \le C_{\eta} (l h)^{-|\sigma_1|}$ and $C_{1,1} := \max\limits_{|\sigma| \le k} \sum\limits_{\sigma_1 \le \sigma, |\sigma_1| \ge 1} \binom{\sigma}{\sigma_1}^2$. We re-arrange the terms in Eqn.~\eqref{eqn:appl1_1} by grouping terms with the same $|\sigma_1|$, and we get
\begin{equation*}
	\sum_{|\sigma| = s} \sum_{\sigma_1 \le \sigma, |\sigma_1| \ge 1} \int_{S^*} (l h)^{- 2|\sigma_1|} |D^{\sigma - \sigma_1} \psi_{i,q}|^2 \le \sum_{s' = 1}^s \sum_{|\sigma_1| = s'} N(s, \sigma_1) (l h)^{- 2|\sigma_1|} |D^{\sigma - \sigma_1} \psi_{i,q}|^2,
\end{equation*}
where $N(s,\sigma_1) = \sum\limits_{|\sigma| = s} \sum\limits_{\sigma_1 \le \sigma, |\sigma_1| \ge 1} 1$. Suppose that $N(s,\sigma_1) \le C_{1,2}$ for all $1\le s \le k$ and $1 \le |\sigma_1| \le s$. Then, we have
\begin{equation}\label{eqn:appl1_2}
	\sum_{|\sigma| = s} \sum_{\sigma_1 \le \sigma, |\sigma_1| \ge 1} \int_{S^*} (l h)^{- 2|\sigma_1|} |D^{\sigma - \sigma_1} \psi_{i,q}|^2 \le C_{1,2} \sum_{s' = 1}^s (l h)^{- 2 s'} |\psi_{i,q}|^2_{s-s', 2, S^*}.
\end{equation}
Combining Eqn.~\eqref{eqn:appl1_1} and \eqref{eqn:appl1_2}, and denoting $C_1 = C_{1,1} C_{1,2}^{1/2}$, we have proved Eqn.~\eqref{eqn:l1_expdecay2}.
\end{proof}

\begin{remark}
If there are no lower-order terms, we can obtain
\begin{equation}\label{eqn:l1_expdecay2simple}
	\sum_{|\sigma| = k} \int_{S^*} \left| \sum_{\stackrel{\sigma_1 + \sigma_2 = \sigma}{|\sigma_1| \ge 1}} \binom{\sigma}{\sigma_1} D^{\sigma_1} \eta D^{\sigma_2} \psi_{i,q} \right|^2 \le C_1^2 C_{\eta}^2 \sum_{s'=1}^k (l h)^{-2 s'} |\psi_{i,q}|_{k-s', 2, S^*}^2.
\end{equation}
Here, we can take $C_1 = C_{1,1} C_{1,2}^{1/2}$ with $C_{1,1} := \max\limits_{|\sigma| = k} \sum\limits_{\sigma_1 \le \sigma, |\sigma_1| \ge 1} \binom{\sigma}{\sigma_1}^2$ and $C_{1,2} = \max\limits_{1 \le |\sigma_1| \le k} N(k,\sigma_1)$. Of course, we can simply take the same $C_1$ as in Eqn.~\eqref{eqn:l1_expdecay2}.

Eqn.~\eqref{eqn:l1_expdecay2simple} is used from Eqn.~\eqref{eqn:I1_1_2simple} to Eqn.~\eqref{eqn:I1_1_3simple} in the proof of Theorem~\ref{thm:expdecay2simple}.
\end{remark}

\subsection{Estimation of $\|\eta \psi_{i,q}\|_{H(S^*)}$ in the proof of Theorem~\ref{thm:localization}}
\label{subapp:l1_localization}
In this subsection, we will prove the following result that is used in in the proof of Theorem~\ref{thm:localization}: for all $h>0$ such that $\frac{1-h^{2k}}{1-h^2} \le 2$, we have
\begin{equation}\label{eqn:hs_bound}
	\|\eta \psi_{i,q}\|_{H(S^*)} \le \frac{C}{2} |\psi_{i,q}|_{k,2,S^*} + \sqrt{\frac{C^2}{4} |\psi_{i,q}|_{k,2,S^*}^2 + C |\psi_{i,q}|_{k,2,S^*} \|\psi_{i,q}\|_{H(S^*)} + \|\psi_{i,q}\|_{H(S^*)}^2},
\end{equation}
where $C = C_1 C_{\eta} C_p \sqrt{2 k \theta_{k,\max}}$.
\begin{proof}
We begin by expressing the following integral as a sum of two terms:
\begin{equation}\label{eqn:hs_1}
\begin{split}
&\sum_{0 \le |\sigma|, |\gamma| \le k} \int_{S^*} a_{\sigma \gamma}D^{\sigma}(\eta \psi_{i,q}) D^{\gamma}(\eta \psi_{i,q}) = \underbrace{\sum_{0 \le |\sigma|, |\gamma| \le k} \int_{S^*} \eta a_{\sigma \gamma}(x) D^{\sigma}\psi_{i,q} D^{\gamma} (\eta \psi_{i,q})}_{I_3} \\
& + \underbrace{\sum_{0 \le |\sigma|, |\gamma| \le k}\sum_{\stackrel{\sigma_1 + \sigma_2 = \sigma}{|\sigma_1| \ge 1}} \binom{\sigma}{\sigma_1} \int_{S^*} a_{\sigma \gamma}(x) D^{\sigma_1} \eta D^{\sigma_2} \psi_{i,q} D^{\gamma} (\eta \psi_{i,q})}_{I_4}.
\end{split}
\end{equation}
Repeating the same argument from Eqn.~\eqref{eqn:I1_1_1} to Eqn.~\eqref{eqn:I1_1_3}, we obtain
\begin{equation}\label{eqn:I4_1}
	|I_4| \le C_1 C_{\eta} \left( \sum_{s=1}^k \sum_{s'=1}^s h^{-2 s'} |\psi_{i,q}|_{s-s', 2, S^*}^2 \right)^{1/2} \|\eta \psi_{i,q}\|_{H(S^*)} \sqrt{\theta_{k,\max}}.
\end{equation}
Since $\psi_{i,q} \perp \CalP_{k-1}$ locally in $L^2$, from Eqn.~\eqref{eqn:generalPoincare} we have 
\begin{equation*}
	|\psi_{i,q}|_{s-s', 2, S^*} \le C_p h^{s'} |\psi_{i,q}|_{s, 2, S^*}.
\end{equation*}
Repeating the same argument from Eqn.~\eqref{eqn:I1_2_1} to Eqn.~\eqref{eqn:I1_2_3}, we conclude
\begin{eqnarray}
	  I_4 &\le& C_1 C_{\eta} C_p \sqrt{\theta_{k,\max}} \left(\sum_{s=1}^k \sum_{s'=1}^s |\psi_{i,q}|_{s, 2, S^*}^2\right)^{1/2} \|\eta \psi_{i,q}\|_{H(S^*)}  \label{eqn:I4_2_1}\\
	  &\le& C_1 C_{\eta} C_p \sqrt{\theta_{k,\max}} \left(\sum_{s=1}^k s |\psi_{i,q}|_{s, 2, S^*}^2\right)^{1/2} \|\eta \psi_{i,q}\|_{H(S^*)} \label{eqn:I4_2_2}\\
	 &\le& C_1 C_{\eta} C_p \sqrt{2 k \theta_{k,\max}} |\psi_{i,q}|_{k,2,S^*} \|\eta \psi_{i,q}\|_{H(S^*)} \label{eqn:I4_2_3}. 
\end{eqnarray}
In the last inequality~\eqref{eqn:I1_2_3}, we have used the polynomial approximation property~\eqref{eqn:generalPoincare} again and take $\frac{h^2-h^{2k}}{1-h^2} \le 1/C_p^2$ to make it true.

%Repeating the same process for $\|\eta \psi_{i,q}\|_{H(S^*)}$, we have
Repeating the same process for $I_3$, we have
\begin{equation}\label{eqn:I3_1}
\begin{split}
&I_3 = \underbrace{\sum_{0 \le |\sigma|, |\gamma| \le k} \int_{S^*} \eta^2 a_{\sigma \gamma}(x) D^{\sigma}\psi_{i,q} D^{\gamma} \psi_{i,q}}_{I_5} \\
& + \underbrace{\sum_{0 \le |\sigma|, |\gamma| \le k}\sum_{\stackrel{\sigma_1 + \sigma_2 = \sigma}{|\sigma_1| \ge 1}} \binom{\sigma}{\sigma_1} \int_{S^*} \eta a_{\sigma \gamma}(x) D^{\sigma_1} \eta D^{\sigma_2} \psi_{i,q} D^{\gamma} \psi_{i,q}}_{I_6}.
\end{split}
\end{equation}
Here, we have exchanged the index $\sigma$ and $\gamma$ so that $I_6$ has a structure similar to that of $I_4$. Since \\$\sum_{0 \le |\sigma|, |\gamma| \le k} a_{\sigma \gamma}(x) D^{\sigma}\psi_{i,q} D^{\gamma} \psi_{i,q} \ge 0$ and $|\eta(x)| \le 1$ for every $x\in D$, we obtain
\begin{equation}\label{eqn:I5}
	I_5 \le \|\psi_{i,q}\|_{H(S^*)}^2
\end{equation}
Repeating the same argument from Eqn.~\eqref{eqn:I1_1_1} to Eqn.~\eqref{eqn:I1_1_3} again, we obtain
\begin{eqnarray}
	  I_6 &=& \sum_{0 \le |\sigma|, |\gamma| \le k}\sum_{\stackrel{\sigma_1 + \sigma_2 = \sigma}{|\sigma_1| \ge 1}} \binom{\sigma}{\sigma_1} \int_{S^*} a_{\sigma \gamma}(x) \eta D^{\sigma_1} \eta D^{\sigma_2} \psi_{i,q} D^{\gamma} \psi_{i,q}  \nonumber \\
	 &\le& \left( \sum_{|\sigma| \le k} \int_{S^*} \left| \sum_{\stackrel{\sigma_1 + \sigma_2 = \sigma}{|\sigma_1| \ge 1}} \binom{\sigma}{\sigma_1} \eta D^{\sigma_1} \eta D^{\sigma_2} \psi_{i,q} \right|^2 \right)^{1/2} \|\psi_{i,q}\|_{H(S^*)} \sqrt{\theta_{k,\max}} \nonumber\\
	 &\le& C_1 C_{\eta} \sqrt{\theta_{k,\max}} \left( \sum_{s=1}^k \sum_{s'=1}^s h^{-2 s'} |\psi_{i,q}|_{s-s', 2, S^*}^2 \right)^{1/2} \|\psi_{i,q}\|_{H(S^*)} \; . \label{eqn:I6_1}
\end{eqnarray}
The derivation of Eqn.~\eqref{eqn:I6_1} is nearly the same as that of Eqn.~\eqref{eqn:l1_expdecay2} and the only difference is that we need to use $|\eta D^{\sigma_1} \eta| \le C_{\eta} h^{-|\sigma_1|}$ (thanks to $|\eta| \le 1$) in Eqn.~\eqref{eqn:appl1_1}. Using exactly the same argument from Eqn.~\eqref{eqn:I4_2_1} to Eqn.~\eqref{eqn:I4_2_3}, we conclude that for all $h>0$ such that $\frac{1-h^{2k}}{1-h^2} \le 2$,
\begin{equation}\label{eqn:I6_2}
	  I_6 \le C_1 C_{\eta} C_p \sqrt{2 k \theta_{k,\max}} |\psi_{i,q}|_{k,2,S^*} \|\psi_{i,q}\|_{H(S^*)}. 
\end{equation}
Combining Eqn.~\eqref{eqn:I3_1}, \eqref{eqn:I5} and \eqref{eqn:I6_2}, we obtain
\begin{equation}\label{eqn:I3_2}
|I_3| \le \|\psi_{i,q}\|_{H(S^*)}^2 + C_1 C_{\eta} C_p \sqrt{2 k \theta_{k,\max}} |\psi_{i,q}|_{k,2,S^*} \|\psi_{i,q}\|_{H(S^*)}. 
\end{equation}
Combining Eqn.~\eqref{eqn:hs_1}, \eqref{eqn:I4_2_3} and \eqref{eqn:I3_2}, we have
\begin{equation}\label{eqn:hs_2}
	\|\eta \psi_{i,q}\|_{H(S^*)}^2 \le \|\psi_{i,q}\|_{H(S^*)}^2 + C_1 C_{\eta} C_p \sqrt{2 k \theta_{k,\max}} |\psi_{i,q}|_{k,2,S^*} (\|\psi_{i,q}\|_{H(S^*)} + \|\eta \psi_{i,q}\|_{H(S^*)}). 
\end{equation}
Solving the above quadratic inequality, we have proved the lemma.
\end{proof}

\vspace{0.1in}
\noindent
{\bf Acknowledgments.}
The research was in part supported by NSF Grants DMS 1318377 and DMS 1613861. We would like to thank Professor Lei Zhang and Venkat Chandrasekaran for several stimulating discussions, and Professor Houman Owhadi for valuable comments.

%%% bibliography
\bibliographystyle{abbrv}
\bibliography{thesis} 

\end{document}